\def\st{\textrm{st}}
\def\N{\mathbb{N}}
\def\E{\mathbb{E}}
\def\R{\mathbb{R}}
\def\P{\mathcal{P}}
\newcommand*{\slim}{\rm{Lim} \ }
\newcommand*{\unright}{\underset{N}{\longrightarrow}}
\providecommand{\U}[1]{\protect \rule{.1in}{.1in}}
\newtheorem{theorem}{Theorem}[section]
\newtheorem{convention}[theorem]{Convention}
\newtheorem{corollary}[theorem]{Corollary}
\newtheorem{definition}[theorem]{Definition}
\newtheorem{example}[theorem]{Example}
\newtheorem{lemma}[theorem]{Lemma}
\newtheorem*{notation}{Notation}
\newtheorem{proposition}[theorem]{Proposition}
\newtheorem{remark}[theorem]{Remark}
\DeclareMathOperator{\Ima}{im}
\begin{document}
	\title{On flexible sequences}
	\author{Bruno Dinis}
\address[B. Dinis]{Departamento de Matem\'{a}tica, Faculdade de Ci\^{e}ncias da
Universidade de Lisboa, Campo Grande, Ed. C6, 1749-016, Lisboa, Portugal}
\email{bmdinis@fc.ul.pt}
\author{Tran Van Nam}
\address[T.V. Nam]{The University of Danang, Campus in Kontum
704 Phan Dinh Phung street, Kontum City, Kontum province, Vietnam.}
\email{vannamtran1205@gmail.com}
\author{Imme van den Berg}
\address[I.P. van den Berg]{Departamento de Matem\'{a}tica, Universidade de \'{E}vora, Portugal}
\email{ivdb@uevora.pt}
\thanks{The first author acknowledges the support of the Centro de Matem\'atica, Aplica\c c\~oes Fundamentais e Investiga\c c\~ao Operacional~/ Funda\c c\~ao da Faculdade de Ci\^encias da Universidade de Lisboa via the grant~UID/MAT/04561/2013 and a postdoc-grant from Erasmus Mundus Mobility with Asia--East~14.  \\
The second author acknowledges a PhD-grant of Erasmus Mundus Mobility with Asia--East~14.}
\subjclass[2010]{03H05, 40A05}
\keywords{External numbers, flexible sequences, convergence, nonstandard analysis}

	\begin{abstract}
		In the setting of nonstandard analysis we introduce the notion of flexible sequence. The terms of flexible sequences are external numbers. These are a sort of analogue for the classical \emph{O$ (\cdot ) $} and \emph{o$ (\cdot ) $} notation for functions, and have algebraic properties similar to those of real numbers. The flexibility originates from the fact that external numbers are stable under some shifts, additions and multiplications. We introduce two forms of convergence, and study their relation. We show that the usual properties of convergence of sequences hold or can be adapted to these new notions of convergence and give some applications.
		
	\end{abstract}
	\maketitle
	\tableofcontents

	\section{Introduction}
	
	In this article we introduce the notion of flexible sequences and study a general form as well as convergence properties and behavior under operations. In our context flexibility deals with stability under some shifts, additions and multiplications, and is a matter of order of magnitude, or degree. The term ``flexible'' is borrowed from \cite{Justino}, where the propagation of errors in matrix calculus is modeled similarly. Our objectives are to extend this error analysis to sequences, and to make rigorous and precise certain ways of speaking in asymptotics, such as ``converging to $o(\epsilon)$'' where $\epsilon$ is arbitrarily small, or ´´neglecting small fluctuations´´. We were inspired by the \textit{ars negligendi} of Van der Corput \cite{VdCorput}; this is the art of neglecting some values which are are small or unimportant with respect to other aspects of some problem.
	
	Van der Corput introduced as a tool \emph{neutrices}, referring to additive commutative groups of functions without unity. 
	Here we use the (scalar) neutrices introduced in \cite{koudjetithese,koudjetivandenberg} in the setting of nonstandard analysis. Then a neutrix is an additive convex subgroup of the (nonstandard) real numbers. Except for the obvious neutrices $\{0\}$ and the set of real numbers itself, neutrices are external sets. Some simple examples of neutrices are $\oslash $, the external set of all real infinitesimals, and $\pounds $, the external set of all limited real numbers (numbers bounded in absolute value by a standard number). An \emph{external number} is the algebraic sum of a neutrix and a real number. A \emph{flexible sequence} is a sequence of external numbers. By convexity and the group property, neutrices, hence also external numbers are stable under some shifts, additions and multiplications indeed, and there is a large, in fact infinite variety of neutrices \cite{neutrixdecompositie}.
	
	In the setting of external numbers more algebraic properties and properties of order are valid than in the context of Van der Corput's functional neutrices. External numbers satisfy, to a large extent, the algebraic and analytic properties of the real numbers \cite{koudjetithese,koudjetivandenberg,dinisberg 2011}, including a Generalized Dedekind completeness property (see \cite{vdbnaa}). Structures with these properties were called \emph{complete arithmetical solids} in \cite%
	{dinisberg 2016}(see also \cite{dinisbergind1,dinisbergind2}). This means in particular that we may use the algebraic properties of neutrices to study properties of convergence. 
	
	We study two types of convergence to external numbers. When $N$ is a neutrix, $N$-convergence expresses convergence to an external number with a tolerance $N$, i.e. we approach the external number within distance $\varepsilon  $ for all $\varepsilon > N  $. \emph{Strong convergence} says that the sequence enters the external number in (nonstandard) finite time, meaning that from a certain natural number onwards the terms of the sequence are included in the limit set; observe that the sequence still may exhibit fluctuations which are less than the neutrix of the external number. Due to the fact that the convergence may happen in finite time, also a refinement is considered, taking into consideration the segment on which the convergence actually happens. 
	
	A main result is that $N$-convergence rather universally implies strong convergence. Loosely translated into classical terms, this means that if we approach a $O(\cdot)$, or a $o(\cdot)$-neighborhood of some object, we enter this neighborhood.
	The proof needs a thorough investigation on the foundations of flexible sequences, as external sets of nonstandard analysis. We do so in the context of Nelson's axiomatic approach Internal Set Theory $\mathsf{IST}$ \cite{Nelson}, in the bounded form permitting to deal with external sets presented by Kanovei and Reeken \cite{KanoveiReeken}. In this setting strong convergence results are provable for definable sequences. 
	The strong convergence theorems are particularly useful for the study of to what extent the properties of limits of sequences of real numbers are valid or can be adapted to flexible sequences. In the last part of this article we consider Cauchy properties and present some applications. They deal with the Borel-Ritt Theorem on asymptotic expansions, the problem of matching in singular perturbations, and a concept of near-stability. 
	
	Section \ref{Section_nature_external_sequences} starts by investigating the form that flexible sequences may take. We use the relatively low complexity of external sets with internal elements which is a consequence of Nelson's Reduction Algorithm \cite{Nelson} and a form of Saturation \cite{Nelsonsyntax} valid for ordered sets. We prove a Representation Theorem for \emph{precise} sequences, i.e external sequences of real numbers. External sequences of external numbers happen to admit an internal partial choice sequence. This is also true for sequences defined on an (external) initial segment, and then the domain of such a choice function enters in the complement of the segment, giving rise to the notion of \emph{internal bridge}. These internal choice sequences are a tool in proving convergence theorems, for they satisfy classical notions of convergence.
	
	In Section \ref{Section e-Sequences} we define the notions of $N$-convergence, strong convergence, and convergence with respect to an initial segment. We consider relevant properties of the ordering and of absolute values of external numbers in Subsection  \ref{Section Order relations}. In Subsection \ref{Subsection infinite sequences} we derive some useful simplifications: the neutrix $ N $ may be taken minimal, and then the neutrix of the limit may be taken equal to this minimal neutrix. This subsection ends with the strong convergence theorem for infinite sequences. Convergence and strong convergence with respect to an initial segment is defined in Subsection \ref{Subsection initial segment}, which also contains a strong convergence theorem for these notions.
	
	Profiting from the foundational results of Section \ref{Section_nature_external_sequences} the strong convergence theorems are proved in Section \ref{subsequences}.
	
	Section \ref{Section Operations Limits} studies the fundamental properties of convergence in the context of flexible sequences. Subsection \ref{Boundedness and monotonicity} deals with boundedness and ordering and Subsection \ref{Operations} with the behavior under operations, where we note that the size of the neutrix in the $ N $-convergence may change. The strong convergence theorems appear to be particularly useful.
	
	In Section \ref%
	{Section Cauchy} we study $N$-Cauchy sequences and the relation between $%
	N$-Cauchy sequences and $N$-convergence.
	
	In the final Section \ref{Section applications} we give some applicatioms. A nonstandard version of the Borel-Ritt theorem happens to be a consequence of the fact that $N$-Cauchy convergence with respect to an initial segment implies $N$-convergence with respect to this initial segment. Subsection \ref{Subsection Matching Principles} interprets the matching problem for singular perturbations in the setting of $N$-convergence, and shows that a modified form of the strong convergence theorem yields a general matching theorem for sequences, which can be adapted to functions. Some examples are given. Subsection \ref{subsection stability} defines solutions to recurrent relations with external numbers, notions of approximate stability and asymptotic stability, and gives conditions for such stability to hold.

	\section{On the nature of external sequences}\label{Section_nature_external_sequences}
	
	We define flexible sequences and investigate the form that these external sequences and some relevant
	external sets may take. 
	
	\begin{convention}\label{convention}
	We will always assume that external sets $ E $ of internal elements are definable in $ \mathsf{IST} $, and that they can be defined by a \emph{bounded} formula; this means that every variable is bounded by a standard set. An axiomatics for external sets is contained in \cite{KanoveiReeken}.
	\end{convention}
	By Nelson's Reduction Algorithm such an external set $ E $ can be written in the form 
	\begin{equation}\label{external set}
	E =\bigcup \limits_{\mathrm{st} (x)\in X}\bigcap \limits_{\mathrm{st}(y)\in Y}I_{xy},  
	\end{equation}%
	where $X$ and $Y$ are standard sets and $I$ is an internal set-valued mapping.
	If $ J $ is an internal set-valued mapping, a set of the form
	\begin{equation*}
	G_{x}=\bigcup \limits_{\mathrm{st}(x)\in X}J_{x}  
	\end{equation*}
	is called a \emph{pregalaxy}, and a \emph{galaxy} if it is not internal. We denote by $^{\sigma}X$ the set that consists of the standard elements of a standard set $X$. It is a galaxy if $X$ is infinite. A set of the form
	\begin{equation*}
	H_{x}=\bigcap \limits_{\mathrm{st}(x)\in X}J_{x}  
	\end{equation*}
	is called a \emph{prehalo}, and a \emph{halo} if it is not internal. In general, an external set which is not internal is called \emph{strictly external}. 
	The somewhat trivial fact that no strictly external set is internal is called the \emph{Cauchy Principle}. The so-called \emph{Fehrele Principle} \cite{Diener-Van den Berg} states that no halo is a galaxy. Both these principles act as \emph{Permanence Principles}, enabling to show that a property which is of a different nature than the set on which it is established, still holds somewhere outside this set.
	
	Let  $C$ be an initial segment of $ \mathbb{N} $.
	If $C$ is internal it is just a discrete interval $\{0,...,c\}$ with $c=\max C$. If $C$ is a galaxy, it may be written in the form%
	\begin{equation*}
	C=\bigcup \limits_{\st(z)\in Z}\{0,...,c_{z}\},
	\end{equation*}%
	where $Z$ is a standard ordered set and $c:Z\rightarrow 
	\mathbb{N}
	$ is increasing, at least on the standard elements of $Z$. If $C$ is a halo, it may be written in the form%
	\begin{equation*}
	C=\bigcap \limits_{\st(w)\in W}\{0,...,c_{w}\},
	\end{equation*}%
	where $W$ is a standard ordered set and $c:W\rightarrow 
	\mathbb{N}
	$ is decreasing, at least on the standard elements of $W$.	
	
	Let $ X,Y $ be standard sets. Sometimes it is useful to suppose that for fixed $ x\in X $, the sets $ I_{xy} $ of \eqref{external set} are closed under finite intersections. Then
	\begin{equation}\label{finite instersection}
		\forall^\mathrm{stfin}z\subseteq Y \exists^\mathrm{st}\overline{y}\in Y\left( \bigcup \limits_{\mathrm{st}(y)\in z}I_{xy}=I_{x\overline{y}}\right) .
	\end{equation} 
	Let $ \Phi$ be a possibly external property. The \emph{Saturation Principle} \cite{Nelson} states that
	\begin{equation*}
	\forall^\mathrm{st} x\in X\exists y \in Y\Phi(x,y)\Leftrightarrow\exists \widetilde{y}\left(\widetilde{y}:X\rightarrow  Y \wedge \forall^ \mathrm{st} x\in X \Phi\left(x,\widetilde{y}(x)\right)\right).
	\end{equation*}
	We often use Saturation in the following form.
		
	\begin{lemma}\label{cofinal sets}
		Let $ H\subseteq \N  $ be a convex prehalo, let  $ X $ be a standard set and $ f:X \rightarrow \P(\N) $ be a definable mapping such that $ f(x)\subseteq H $ for all standard $ x\in X $.
		\begin{enumerate}
			\item\label{cs1} If for all standard $ x\in X $ there exists $ n\in H$  such that $ f(x)<n $, then there exists $ h\in H $ such that $ f(x)<h $ for all standard $ x\in X $.
			\item\label{cs2} If $ H=\bigcup _{\mathrm{st}(x)\in X}f_{x}  $, then there exists a standard $ x\in X $ such that $ f(x) $ is cofinal in $ H $.
		\end{enumerate}
	\end{lemma}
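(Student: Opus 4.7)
My plan is to reduce the problem to an internal, finitely satisfiable family and then appeal to Idealization (a consequence of the Saturation stated above). By the representation of convex (pre)halos in $\N$ recalled just before the lemma, one may write
\[ H = \bigcap_{\st(w) \in W} \{0, 1, \ldots, c_w\}, \]
with $W$ a standard set and $c : W \to \N$ decreasing on standard elements; equivalently, $h \in H$ iff $h \leq c_w$ for every standard $w \in W$. This presents the external constraint ``$h \in H$'' as a conjunction of internal inequalities indexed by a standard set, which is what lets a permanence argument deliver a single uniform $h$.

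For Part \ref{cs1}, I consider the internal formula $\phi((x, w), h) := (f(x) < h) \wedge (h \leq c_w)$ on $(X \times W) \times \N$ and verify its finite satisfiability. Given a standard finite $z \subseteq X \times W$ with projections $z_X$ and $z_W$, the hypothesis provides $n_x \in H$ with $f(x) < n_x$ for each $x \in z_X$; setting $h^\ast := \max_{x \in z_X} n_x$, convexity keeps $h^\ast$ inside $H$, so $h^\ast \leq c_w$ for all standard $w$, including those in $z_W$, while $f(x) < n_x \leq h^\ast$ handles the first conjunct for each $x \in z_X$. Idealization then supplies a single $h$ satisfying $\phi((x,w),h)$ for every standard $(x,w)$, which is exactly $h \in H$ and $f(x) < h$ for every standard $x \in X$.

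I obtain Part \ref{cs2} by contraposition. If no standard $x$ had $f(x)$ cofinal in $H$, then for each standard $x$ there would exist $n \in H$ with $f(x) < n$, and Part \ref{cs1} would produce $h \in H$ with $f(x) < h$ for every standard $x$; such $h$ would lie outside every $f_x$ with $x$ standard, contradicting $H = \bigcup_{\st(x) \in X} f_x$.

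The delicate step is the Idealization in Part \ref{cs1}: the formula $\phi$ must be internal in the variables being idealised, which holds here because for each standard $x$ the set $f(x)$ is internal (so ``$f(x) < h$'' reduces to the internal inequality $\max f(x) < h$) and $c_w \in \N$ for each standard $w$. Everything else is routine halo/galaxy permanence in the spirit of the Cauchy and Fehrele principles introduced just before the lemma.
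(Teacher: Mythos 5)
Your overall architecture (a finite-satisfiability check followed by a permanence principle, then Part~\ref{cs2} by contraposition from Part~\ref{cs1}) matches the paper's, and your Part~\ref{cs2} is exactly the paper's one-line reduction. The gap is in the key step of Part~\ref{cs1}: you apply Idealization to $\phi((x,w),h)=(f(x)<h)\wedge(h\le c_w)$, but Idealization in $\mathsf{IST}$ is only licensed for \emph{internal} formulas, and $f$ is merely a definable, in general external, mapping. The fact that each value $f(x)$ is an internal set when $x$ is standard does not make the relation ``$f(x)<h$'' (equivalently ``$\max f(x)<h$'') an internal formula in $(x,h)$: the function $x\mapsto\max f(x)$ is still external, so $\phi$ is external and the Idealization schema does not apply to it. This is precisely why the paper first invokes the Saturation Principle, which it states for a \emph{possibly external} property $\Phi$: Saturation converts the hypothesis $\forall^{\st}x\in X\,\exists n\in H\,(f(x)<n)$ into an \emph{internal} function $\widetilde n:X\to\N$ with $\widetilde n(x)\in H$ and $f(x)<\widetilde n(x)$ for all standard $x$, and only then applies a permanence argument (the Fehrele Principle: the pregalaxy $\bigcup_{\st(x)\in X}\{0,\dots,\widetilde n(x)\}$ cannot exhaust the prehalo $H$) to produce a single $h\in H$ dominating all $\widetilde n(x)$, hence all $f(x)$.

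Your argument is repaired by inserting exactly this Saturation step: once the internal $\widetilde n$ is available, your two-index Idealization over $X\times W$ with the now internal formula $(\widetilde n(x)\le h)\wedge(h\le c_w)$ goes through verbatim (your finite-satisfiability check is fine, though note that $h^{\ast}=\max_{x\in z_X}n_x$ is simply one of the $n_x$, so convexity of $H$ is not needed there). A secondary, more cosmetic point: writing $H=\bigcap_{\st(w)\in W}\{0,\dots,c_w\}$ presumes that $H$ is an initial segment containing $0$ and that $H$ is a genuine halo rather than an internal interval; the lemma only assumes a convex prehalo, so the internal case should be dispatched separately (there the Cauchy Principle, or simply taking $h=\max H$, suffices).
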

	\begin{proof}
		\ref{cs1}. By the Saturation Principle there exists an internal function $ \widetilde{n } :X\rightarrow \N$ such that $ \widetilde{n }(x)\in H $ and $ f(x)< \widetilde{n }(x)$ for all standard $ x\in X $. By the Fehrele Principle there exists $ h\in H $ such that $ f(x)<h $ for all standard $ x\in X $.
		
		\ref{cs2}. This is a direct consequence of Lemma \ref{cofinal sets}.\ref{cs1}.
	\end{proof}
	
We denote by $\mathbb{E}$ the set of external
	numbers as constructed in \cite[Section 6]{dinisberg 2016}. An external number $\alpha$ is always of the form $\alpha=a+A$, where $a$ is a (possibly nonstandard) real number and $A$ is a neutrix, called the \emph{neutrix part} of $\alpha$. We may write $A=N(\alpha)$. 
	An external number $\alpha$ is said to be \emph{zeroless} if $0 \notin \alpha$.

	\begin{definition}
		A \emph{flexible} sequence $(u_{n})$ is an external mapping  $u:\mathbb{N} \rightarrow 	\mathbb{E} 	
		$. A flexible sequence $(u_{n})$ is called \emph{precise}, if $u_{n}\in \mathbb{R}$ for all $ n\in \mathbb{N} $. We call $(u_{n})$ definable if its graph
		\begin{equation*}
		\Gamma(u):=\{ (n,y): n \in \mathbb{N} \wedge y \in u_{n} \} 
		\end{equation*} has the form
		\begin{equation}
		\Gamma(u) =\bigcup \limits_{\mathrm{st} (x)\in X}\bigcap \limits_{\mathrm{st}(y)\in Y}I_{xy},  \label{Formule u}
		\end{equation}%
		where $X$ and $Y$ are standard sets and $I:X\times Y\rightarrow \mathcal{P}(\N \times \R)$ is an internal mapping.  
	\end{definition}

As a consequence of Convention \ref{convention} all flexible sequences are tacitly supposed to be definable indeed.

Assume that $(u_n)$ is a precise sequence. In \eqref{Formule u} put for all standard $x\in X$ 
	\begin{equation*}
	Q_{x}=\bigcap \limits_{\mathrm{st}(y)\in Y}I_{xy}.  \label{Formule Q}
	\end{equation*}%
	Then 
	\begin{equation}\label{uunionQ}
	u=\bigcup \limits_{\mathrm{st}(x)\in X}Q_{x}.  
	\end{equation}%
	Since $(u_n)$ is functional, for all standard $x\in X$ the prehalo $Q_{x}$ is
	functional. For every standard $x\in X$, we define the prehalo $\Delta_{x}$ by
	\begin{equation}
	\Delta _{x}=\text{dom}(Q_{x})\text{.}  \label{Formule Delta}
	\end{equation}

Let $ n\in \N $. Assume that \eqref{finite instersection} holds. If $ u_{n} $ is a pregalaxy, at this  $ n $ the representation \eqref{Formule u} can be refined. We will use the functional notation also for $ I_{xy}(n) $, $ Q_{x}(n) $, etc. to indicate values taken at $ n $. We show that there exists a standard $ \overline{y}\in Y $ such that
\begin{equation}\label{Uxyn}
u_n=\bigcup _{\mathrm{st}(x)\in X}I_{x\overline{y}}(n).   
\end{equation}
To prove this, suppose that $ Q_{x}(n) $ is cofinal in $ u_{n} $ for some  standard $x\in X$. By convexity $ Q_{x}(n) = u_{n} $, in contradiction with the Fehrele Principle. Hence there exist $ s,t\in u_{n} $ such that $ Q_{x}(n)\subseteq [s,t] $. By Idealization there exists standard $\overline{y}\in Y $ such that $  I_{x\overline{y}}(n)\subseteq [s,t] \subset u_{n}$. So
\begin{equation*}
u_{n}\subseteq\bigcup _{\mathrm{st}(x)\in X}I_{x\overline{y}}(n) \subseteq  \bigcup _{\mathrm{st}(x)\in X}\bigcap_{\mathrm{st}(y)\in Y} I_{xy}(n) \subseteq u_{n},   
\end{equation*}
which implies \eqref{Uxyn}.

	We extend the notion of sequence to external functions $u:E \rightarrow \mathbb{E} 	
	$, where $ E $ is some (definable) external subset of $ \mathbb{N} $.
	
	We prove the following theorem of representation for external sequences of real numbers.

	\begin{theorem}[Representation Theorem]\label{Representationsequence}
		Let $ (u_n)$ be a precise sequence. If $ (u_n) $ is not internal, it can neither be a galaxy, nor a halo. Then there exist a standard set $ X $, an internal family of (internal) sequences $ \left( v_{x}\right) _{x \in X} $ and a family of prehalos  $ \left( \Delta_{x}\right) _{x \in X} $ such that
		\begin{equation}
		u=\bigcup \limits_{\mathrm{st}(x)\in X}v_{x\upharpoonright\Delta_{x}}.  \label{Formule uv}
		\end{equation}
	\end{theorem}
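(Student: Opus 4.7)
My plan exploits the functionality of $u$ to rule out the pure galaxy and pure halo forms, and the very same idea yields the internal sequences $v_{x}$ via a finite sub-intersection.

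\emph{Galaxy case.} Suppose $u=\bigcup_{\mathrm{st}(z)\in Z}J_{z}$ with $J$ an internal set-valued mapping. Since $u$ is functional, every $J_{z}\subseteq u$ is functional, and $D_{z}:=\pi_{1}(J_{z})\subseteq \N$ is internal with $\bigcup_{\mathrm{st}(z)}D_{z}=\N$. The statement ``$\exists n\in\N\,\forall^{\mathrm{st}}z\in Z\,(n\notin D_{z})$'' fails, so by Idealization there is a standard finite $z_{0}\subseteq Z$ with $\bigcup_{z\in z_{0}}D_{z}=\N$. Then $\bigcup_{z\in z_{0}}J_{z}$ is an internal, functional relation of full domain $\N$; it must equal $u$, forcing $u$ to be internal.

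\emph{Halo case.} Suppose $u=\bigcap_{\mathrm{st}(w)\in W}K_{w}$ with $K$ an internal set-valued mapping. The functionality of $u$ means that the internal formula
\begin{equation*}
\Psi(n,r_{1},r_{2},w):\ (n,r_{1})\in K_{w}\wedge (n,r_{2})\in K_{w}\wedge r_{1}\neq r_{2}
\end{equation*}
admits no triple $(n,r_{1},r_{2})$ satisfying it for every standard $w$. Idealization then produces a standard finite $w_{0}\subseteq W$ such that $\bigcap_{w\in w_{0}}K_{w}$ is functional. This internal functional relation contains $u$ and has domain $\N$, so it equals $u$, whence $u$ is internal.

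\emph{The representation.} From the reduced form $u=\bigcup_{\mathrm{st}(x)\in X}Q_{x}$ with $Q_{x}=\bigcap_{\mathrm{st}(y)\in Y}I_{xy}$ and $\Delta_{x}=\text{dom}(Q_{x})$, the same Idealization applied to each functional prehalo $Q_{x}$ yields, for every standard $x$, a standard finite $y_{0}(x)\subseteq Y$ such that $J_{x}:=\bigcap_{y\in y_{0}(x)}I_{xy}$ is a functional internal relation containing $Q_{x}$. Extending $J_{x}$ off $\pi_{1}(J_{x})$ to an internal sequence $w_{x}:\N\to\R$ gives $w_{x}\upharpoonright\Delta_{x}=Q_{x}$. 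The Saturation Principle applied to the (external) property ``$v$ is an internal sequence and $v\upharpoonright\Delta_{x}=Q_{x}$'' then promotes this pointwise choice to an internal family $(v_{x})_{x\in X}$ with $v_{x}\upharpoonright\Delta_{x}=Q_{x}$ for every standard $x$, so that $u=\bigcup_{\mathrm{st}(x)\in X}v_{x\upharpoonright\Delta_{x}}$.

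The main obstacle is arranging the Idealization around an internal formula (the existence of two distinct values $r_{1},r_{2}$ in $K_{w}$ or $I_{xy}$ above a single $n$) whose universal-standard form is refuted precisely by the externally phrased functionality of $u$ or $Q_{x}$. Once this translation is made, the remaining work is routine Saturation bookkeeping to upgrade the pointwise $v_{x}$ to an internal family.
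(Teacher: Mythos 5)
Your proof is correct, but it takes a genuinely different route from the paper's. The paper derives both halves of the statement from its internal-extension lemma (Lemma~\ref{Lemma internal extension}): a strictly external precise sequence on the internal set $\mathbb{N}$ can be neither a galaxy nor a halo because that lemma would render it internal, and the representation \eqref{Formule uv} is obtained by applying the halo case of that lemma to each functional prehalo $Q_{x}$ --- a construction which, for each $n$ separately, extracts a finite subintersection that is a singleton, assembles these into a pregalaxy $w=\bigcup_{z}w_{z}$, and then invokes the Fehrele Principle to squeeze an internal function between the halo $Q_{x}$ and $w$. You instead exploit functionality through a single global Idealization: for a functional pregalaxy with domain $\mathbb{N}$ the union collapses to a standard finite subunion (a functional sub-relation of $u$ with full domain, hence equal to $u$ and internal), and for a functional prehalo the internal formula in $(n,r_{1},r_{2},w)$ asserting two distinct values above one index fails for all standard $w$, so Idealization yields one uniform standard finite subintersection that is already functional and therefore agrees with $Q_{x}$ on $\Delta_{x}$. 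This avoids the Fehrele Principle entirely and replaces the paper's per-$n$ finite subintersections by a single finite index set; the concluding Saturation step promoting the pointwise choices $v_{x}$ to an internal family is the same in both proofs. What the paper's route buys is the reusable Lemma~\ref{Lemma internal extension}, which also covers galactic and halic sequences on arbitrary external subdomains $E\subseteq\mathbb{N}$ and is needed later for internal bridges; what your route buys is a shorter, self-contained argument for this theorem. Two details worth making explicit: one may assume $K_{w}\subseteq\mathbb{N}\times\mathbb{R}$ (and likewise for $I_{xy}$) so that the finite subintersection has domain exactly $\mathbb{N}$, and the finite subunion in the galaxy case is functional precisely because it is contained in the functional relation $u$.
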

	
	The theorem implies that a truly external sequence of internal elements must be of full complexity ($ \Sigma_{2}^{\st}=\Pi_{2}^{\st} $), but with properties coming from internal sequences. Indeed, the external sequence originates from an internal family of internal sequences, for which we consider only sequences with standard indices, each sequence restricted to a prehalic domain. This suggests that typically such an external sequence is defined by cases. We present here a simple example.

	Let $u:\N	\rightarrow	\R	$ be defined by%
	\begin{equation}\label{caseu}
	u_{n}= 
	\begin{cases}
	1 &   n\text{ }\in \text{}^{\sigma }%
	\N
	\\ 
	0 &   n\text{ }\in \N 	\cap \centernot{\infty}%
	\end{cases}%
	; 
	\end{equation}%
	here formula (\ref{Formule uv}) takes the form%
	\begin{equation}\label{unionintersectionu}
	u=\bigcup \limits_{\st(m)\in 
		\mathbb{N}
	}\bigcap \limits_{\st(n)\in 
		\mathbb{N}
	}(I_m\times \{1\} \cup J_n\times \{0\}),
	\end{equation}%
	where $I_m=\{k \in \N : 0 \leq k \leq m\}$ and $J_n=\{p \in \N : p \geq n\}$.
	On the halo $
	\mathbb{N} \cap \centernot{\infty}$ the external sequence $(u_n)$ is the restriction of, say, the internal sequence which is identically zero. Also, the domains $\{0,...,m\} $ for $ \st(m) $ are internal, and their union  $^{\sigma }
	\mathbb{N}$ is a galaxy. On this galaxy the external sequence $(u_n)$ is the restriction of, say, the internal sequence which is identically one. 
	So, it may very well happen that some of the prehalic domains  $ \Delta_{x} $ are internal, and give way to a union which is a galaxy, say $ G $. As we will see in Lemma \ref{Lemma internal extension} below, the restrictions of the external sequence $(u_n)$ to these internal domains are also internal, so their union is a galaxy, which in turn is the restriction of an internal sequence to the galaxy $ G $. We recognize in the example above indeed that the representation  \eqref{unionintersectionu} in terms of unions of prehalos reduces to the two cases given in \eqref{caseu}.
		
	By \eqref{uunionQ}, to prove Theorem \ref{Representationsequence}, we need to show that the prehalos $ Q_{x} $ have internal extensions, and as such are the restrictions to the prehalos $ \Delta _{x}$ of these internal extensions.
	Next lemma says that galactic sequences and halic sequences are internal on
	every internal subdomain, and have an internal extension  indeed.
	
	\begin{lemma}
		\label{Lemma internal extension}Let $E$ be an nonempty subset of $%
		\mathbb{N}
		$ and $J\subseteq E$ be internal. Let $u:E \rightarrow 
		\mathbb{R}
		$ be a sequence, which is either a galaxy or a halo. Then $u_{\upharpoonright
			J}$ is internal, and there exist $K$ such that $E\subseteq K\subseteq \mathbb{N}	$ and an internal sequence $v:K\rightarrow 	\mathbb{R}$ such that $v_{\upharpoonright E}=u$.
	\end{lemma}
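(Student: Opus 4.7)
The plan is to handle the galaxy and halo cases in parallel. In each case I rewrite the graph $\Gamma(u)$ via Nelson's Reduction Algorithm, close the defining internal family under finite unions (galaxy) or intersections (halo) so that it becomes directed, and then apply Idealization to produce a single internal function witnessing both the internality of $u_{\upharpoonright J}$ and the existence of an internal extension of $u$.

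\emph{Galaxy case.} Write $\Gamma(u)=\bigcup_{\mathrm{st}(x)\in X}A_x$ with $A_x$ internal. Since $A_x\subseteq\Gamma(u)$ and $u$ is single-valued, each $A_x$ is the graph of an internal partial function $v_x$ on an internal domain $D_x\subseteq E$, and $E=\bigcup_{\mathrm{st}(x)}D_x$. For internal $J\subseteq E$ the prehalo $\bigcap_{\mathrm{st}(x)}(J\setminus D_x)$ is empty, so by the contrapositive of Idealization some standard-finite union $\bigcup_{x\in F}D_x$ already covers $J$, and by directedness a single $D_{x^*}$ does, giving $u_{\upharpoonright J}=v_{x^*\upharpoonright J}$ internal. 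For the internal extension, apply Idealization to the internal formula ``$v:\mathbb{N}\to\mathbb{R}$ and $v_{\upharpoonright D_x}=v_x$'': for each standard-finite $F$ the function $v_{x^*}$ (with $x^*$ dominating $F$) extended by zero satisfies the demand, so one obtains internal $v$ extending every $v_x$ and hence $v_{\upharpoonright E}=u$. Take $K=\mathbb{N}$.

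\emph{Halo case.} Write $\Gamma(u)=\bigcap_{\mathrm{st}(y)\in Y}B_y$ with $B_y$ internal and, by closing under finite intersections, directed decreasingly; set $K_y=\operatorname{dom}(B_y)$, and note that a short Idealization gives $E=\bigcap_{\mathrm{st}(y)}K_y$. For $u_{\upharpoonright J}$ I apply Idealization to the internal formula $\Phi(y,v)=$ ``$v:J\to\mathbb{R}$ with $(n,v(n))\in B_y$ for every $n\in J$''. Any standard-finite $F$ is witnessed by internal choice from the nonempty fibers of $B_{y^*}$ at $n\in J\subseteq K_{y^*}$ (with $y^*$ dominating $F$), producing internal $v$ whose graph lies in $\bigcap_{\mathrm{st}(y)}B_y=\Gamma(u)$; totality on $J$ then forces $v(n)=u_n$, so $u_{\upharpoonright J}$ is internal. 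For the internal extension take $K=\mathbb{N}$ with the formula $\Phi(y,v)=$ ``$(n,v(n))\in B_y$ for every $n\in K_y$''. For a standard-finite $F$ arranged as a chain $y_1\le\cdots\le y_k$ (so the fibers of $B_{y_i}$ at each $n$ are nested), let $i(n)=\max\{i:n\in K_{y_i}\}$ and pick $v(n)$ from the nonempty fiber of $B_{y_{i(n)}}$ at $n$ (setting $v(n)=0$ when the set is empty); nesting ensures $(n,v(n))\in B_{y_i}$ for every $i\le i(n)$, while for larger $i$ the condition is vacuous. Idealization yields internal $v:\mathbb{N}\to\mathbb{R}$ satisfying $\Phi(y,v)$ for every standard $y$, and for $n\in E=\bigcap_{\mathrm{st}(y)}K_y$ this forces $(n,v(n))\in\Gamma(u)$, hence $v(n)=u_n$.

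The main obstacle is producing a single internal $v$ that agrees with the external function $u$ on the whole external set $E$ in the halo extension: it is resolved by exploiting the nested structure of the fibers under directedness to satisfy every standard-finite slate of constraints simultaneously, and then invoking Idealization to glue them into one witness.
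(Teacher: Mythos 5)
Your galaxy case and your treatment of $u_{\upharpoonright J}$ in the halo case are sound and close in spirit to the paper's argument (the paper obtains the internality of $u_{\upharpoonright J}$ as a byproduct of the extension, $u_{\upharpoonright J}=v_{\upharpoonright J}$, whereas you prove it directly; your direct Idealization works because $J\subseteq E$ and every fibre $B_y(n)$ with $n\in E$ contains $u_n$, so the constraints are simultaneously satisfiable). The gap is in the halo \emph{extension}. Your formula $\Phi(y,v)$ demands $(n,v(n))\in B_y$ for \emph{every} $n\in K_y=\operatorname{dom}(B_y)$, and the finite-satisfiability premise of Idealization can fail for it: for $n\notin E$ one may have $n\in K_{y_1}\cap K_{y_2}$ with $B_{y_1}(n)$ and $B_{y_2}(n)$ both nonempty yet disjoint (only the full standard intersection $\bigcap_{\mathrm{st}(y)}B_y$ is required to be functional; the individual $B_y$ and their pairwise intersections are unconstrained off $E$), so no value $v(n)$ meets both constraints. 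The step ``arranged as a chain'' is what hides this: closing the family under finite intersections makes it directed, not totally ordered, so a general standard finite $F\subseteq Y$ need not be a chain; and replacing $F$ by the chain of its successive intersections does not help, because $\Phi(y',v)$ for $y'\le y$ only controls $v$ on the smaller domain $K_{y'}$, not on all of $K_y$.

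The paper circumvents exactly this obstacle by a different mechanism. It first shows, applying Idealization at a fixed $n$ together with the functionality of $u$, that $u_n$ is already the unique element of some \emph{standard finite} intersection $\bigcap_{y\in z}I_y(n)$ of fibres. It then collects all pairs $\left(n,a\right)$ for which some standard finite intersection of fibres equals the singleton $\{a\}$ into a set $w$, checks that $w$ is functional and is a pregalaxy containing the halo $u$, and invokes the Fehrele Principle to interpolate an internal function $v$ with $u\subseteq v\subseteq w$; then $K=\operatorname{dom}(v)$. To repair your argument you would need either this singleton-intersection/Fehrele device or some other way of confining your constraint to an internal superset of $E$ on which the fibres remain pairwise compatible; as written, the Idealization in your extension step does not go through.
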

	
	\begin{proof}
		We consider first the case where $u$ is a galaxy. In this case we may suppose that the sets $Q_{x}$ of formula (\ref{uunionQ})  are internal. Then also the sets $\Delta_{x} $ defined by (\ref%
		{Formule Delta}) are internal. Let $J\subseteq E$ be internal. Now%
		\begin{equation*}
		\forall k\in J\exists ^{\st}x\in X(k\in \Delta _{x}).
		\end{equation*}%
		So by Idealization%
		\begin{equation*}
		\exists ^\mathrm{st fin}z\subseteq X\forall k\in J\exists ^{\st}x\in z(k\in \Delta
		_{x}).
		\end{equation*}%
		Then $J\subseteq \bigcup _{x\in z}\Delta _{x}$. For $x\in z$, put $J_{x}=\Delta
		_{x}\cap J$, then $J_{x}$ is internal, hence also $u_{\upharpoonright
			J_{x}}=Q_{x{\upharpoonright J_{x}}}$ is internal. Because $J=\bigcup _{x\in
			z}J_{x}$, we have $u_{\upharpoonright J}=\bigcup _{x\in z}u_{\upharpoonright
			J_{x}}$, which is internal as it is a standard finite union of internal sets.
		
		To construct an internal extension of $u$, we note that by the above%
		\begin{equation*}
		\begin{split}
			\forall ^\mathrm{st fin}z &\subseteq X\exists v\forall x\in z \\
			(v &:%
			\mathbb{N}
			\rightarrow 
			\mathbb{R}
			\text{ is a partial function such that }\forall n\in \Delta
			_{x}(v_n=Q_{x_{\upharpoonright J_{x}}}(n)).
			\end{split}
		\end{equation*}%
		By Idealization%
		\begin{equation*}
		\begin{split}
			\exists v\forall ^\mathrm{st}x &\in X \\
			(v &:%
			\mathbb{N}
			\rightarrow 
			\mathbb{R}
			\text{ is a partial function such that }\forall  n\in \Delta
			_{x}(v_n=Q_{x_{\upharpoonright J_{x}}}(n)).
			\end{split}
		\end{equation*}%
		Put $K=$ dom$(v)$. Then $v$ is an internal real-valued function, such that $%
		v_n=u_n$ for all $n\in E$, and $E\subseteq K\subseteq 
		\mathbb{N}
		$.
		
		We turn now to the case where $u$ is a halo. We construct first an internal
		extension. We may write $u=\bigcap _{\st(y)\in Y}I_{y}$, where $Y$ is a standard
		set and $I:Y\rightarrow \mathcal{P}(\mathbb{N}\times\mathbb{R})$ is an internal mapping. Put for every $n\in 
		\mathbb{N}
		$ and $y\in Y$%
		\begin{equation*}
		I_{y}(n)=\left \{ a\in \R : (n,a)\in I_{y} \right \} .
		\end{equation*}%
		Let $n\in \N$ be given, and suppose that%
		\begin{equation*}
		\forall ^\mathrm{st fin}z\subseteq Y\exists a\in 
		\mathbb{R}
		(a\in I_{y}(n)\wedge a\neq u_{n}).
		\end{equation*}%
		By Idealization%
		\begin{equation*}
		\exists a\in 
		\mathbb{R}
		\forall ^{\st}y\in Y(a\in I_{y}(n)\wedge a\neq u_{n}).
		\end{equation*}%
		Then $(n,a)\in u$, in contradiction with the fact that $u$ is functional. We
		conclude that there exists a standard finite $z\subseteq Y$ such that 
		\begin{equation}
		u_{n}=\bigcap \limits_{y\in z}I_{y}(n).
		\label{Formule eindige doorsnijding functioneel}
		\end{equation}%
		For arbitrary standard finite $z\subseteq Y$ we define 
		\begin{equation*}
		w_{z}=\left \{ \left(n,\bigcap \limits_{y\in z}I_{y}(n)\right): n\in 
		\N		\wedge \left \vert \bigcap \limits_{y\in z}I_{y}(n)\text{ }\right \vert =1\right \} .
		\end{equation*}%
		Also we define%
		\begin{equation*}
		w=\bigcup \left \{ w_{z}: z\subseteq Y\text{ standard finite}		\right \} .
		\end{equation*}%
		Let $n\in E$. By (\ref{Formule eindige doorsnijding functioneel})
		for some standard finite $z\subseteq Y$ we have $u_{n}=\bigcap _{y\in z}I_{y}(n)
		$. Then $w_{z}(n)=u_{n}$. Let $z^{\prime }\subseteq Y$ be also standard
		finite and such that $\left \vert \bigcap _{y\in z^{\prime }}I_{y}(n)\right \vert =1$%
		. Now $u_{n}=\bigcap _{\st(y)\in Y}I_{y}(n)\subseteq \bigcap _{y\in z^{\prime
		}}I_{y}(n)$, and then $u_{n}=\bigcap _{y\in z^{\prime }}I_{y}(n)$ because $ u $ is functional. Hence $%
		w_{z^{\prime }}(n)=\bigcap _{y\in z^{\prime }}I_{y}(n)=u_{n}=$ $w_{z}(n)$.
		Hence $w$ is functional, and $u\subseteq w$. Because $u$ is a halo and $w$ a
		pregalaxy, by the Fehrele Principle there exists an internal function $v$
		with $u\subseteq v\subseteq w$. Put $K=$ dom$(v)$. Then $E\subseteq
		K\subseteq 
		\mathbb{N}
		$ and $v_{\upharpoonright E}=u$. To finish the proof, let $J\subseteq 
		E$ be an internal set. Then $u_{\upharpoonright J}=v_{\upharpoonright J}$
		is internal.
	\end{proof}
	
	\begin{proof}[Proof of Theorem \ref{Representationsequence}] A strictly external sequence $ u $ defined on the internal set $ \mathbb{N} $ cannot be a galaxy, for then it would be internal by Lemma \ref{Lemma internal extension}. For the same reason it cannot be a halo. Hence it must have full complexity. By \eqref{uunionQ} it has the representation $ u=\bigcup _{\mathrm{st}(x)\in X}Q_{x}  $, where $Q_{x}$ is a functional prehalo, with domain $ \Delta _{x}  $, for all standard $x\in X$. By Lemma \ref{Lemma internal extension} and the Saturation Principle  there exists an internal family of internal sequences $ \left( v_{x}\right) _{x \in X} $ such that $ v_{{x}{\upharpoonright {\Delta_{x}}}}=Q_{x} $ for all standard $x\in X$. This proves \eqref{Formule uv}.
	\end{proof}
	
	We now define subsequences for external sequences, and consider properties of cofinality and coinitiality.
	
	\begin{definition}\label{Def subsequence-ad} Let $(u_n)$ be a flexible sequence. Let $ S\subseteq \N$ be cofinal with $ \N$. Let $v= \{(n, u_n): n\in S\}$. Then $ v $ is called a \emph{subsequence} of $(u_n)$. 
	\end{definition}
	
	If $(u_n)$ is an internal sequence and $ S $ is internal, Definition \ref{Def subsequence-ad} coincides with the conventional notion of subsequence. In this case a subsequence is again an internal sequence and we adopt the usual notation $(u_{m_n})$ to indicate a subsequence of $(u_n)$. 
	If $ S $ is external, we no longer have such a representation.
	
	Cofinality and coinitiality are notions related to ordered sets. With some abuse of language we define cofinality and coinitiality also for sequences. We will show that a definable sequence of external sets has an internal choice sequence. To that end we introduce the following notation.
	
	\begin{notation}
		Let $S\subset \N\times \R$. We denote by $\pi(S)$ the projection of $S$ into $\N$.
	\end{notation}
	
	\begin{definition}\label{cofinitial}
		Let $(C,D)$ be a cut of $\N
		$ into a nonempty initial segment $C$ and a final segment $D$. Let $%
		(u_n)$ be an external flexible sequence and $v$ a subsequence. The sequence $v$ is said to
		be \emph{cofinal} with $u$ on $C$ if 
		\begin{equation*}
		\forall m\in C \exists n\in C(n\geq m \wedge n\in dom(v));
		\end{equation*}
		 if $ D $ is nonempty, the sequence $v$  is said to be \emph{coinitial}
		with $u$ on $D$ if 
		\begin{equation*}
		\forall \nu \in D \exists \mu \in D(\mu \leq \nu \wedge \mu \in dom(v)).
		\end{equation*}
			\end{definition}

	\begin{proposition}\label{exists internal subsequence2} Let $V=\left\{(n, V_{n})  \right\} \subseteq \N \times \P(\R)$ be an external mapping with definable graph, where $ \pi (V)$ is cofinal with $ \N $. Then there exist an internal set $ S\subseteq \N $ cofinal with $ \N $ and an internal mapping $ v:S\rightarrow \R $ such that $ v_{n} \in V_{n} $ for all $ n \in S $.
	\end{proposition}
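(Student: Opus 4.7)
The plan is to combine Nelson's Reduction Algorithm with the Saturation, Overspill, and Fehrele Principles, adapting the strategy of Lemma \ref{Lemma internal extension} and Theorem \ref{Representationsequence}.

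By the Reduction Algorithm, write $V=\bigcup_{\mathrm{st}(x)\in X}\bigcap_{\mathrm{st}(y)\in Y}I_{xy}$ with $X,Y$ standard and $I$ internal; set $Q_x=\bigcap_{\mathrm{st}(y)}I_{xy}$. The cofinality of $\pi(V)$ expresses $\forall^{\mathrm{st}}n\in\N\,\exists(m,a):m\geq n\wedge(m,a)\in V$. Applying the Saturation Principle to this (possibly external) formula produces internal $\tilde m:\N\to\N$ and $\tilde a:\N\to\R$ with $\tilde m(n)\geq n$ and $(\tilde m(n),\tilde a(n))\in V$ for every standard $n$. A standardised application of Saturation next yields a standard function $\chi:\N\to X$ with $(\tilde m(n),\tilde a(n))\in Q_{\chi(n)}$ for each standard $n$.

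For each standard $y\in Y$, the internal set $F_y:=\{n\in\N:\tilde m(n)\geq n\wedge(\tilde m(n),\tilde a(n))\in I_{\chi(n),y}\}$ contains ${}^{\sigma}\N$, and hence by Overspill contains $[0,K_y]$ for some unlimited $K_y$. Idealization over the pair $(y,k)\in Y\times\N$, applied to the internal property $M\geq k\wedge [0,M]\subseteq F_y$, yields a single unlimited $M$ with $[0,M]\subseteq F_y$ for all standard $y$, so that $(\tilde m(n),\tilde a(n))\in Q_{\chi(n)}$ for every $n\leq M$. Setting $S:=\{\tilde m(n):n\leq M\}$ produces an internal subset of $\N$ which is cofinal with $\N$ since $\tilde m(M)\geq M$ is unlimited, and we define $v:S\to\R$ internally by $v(m):=\tilde a(\mu(m))$ where $\mu(m):=\min\{n\leq M:\tilde m(n)=m\}$.

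The main obstacle is to conclude $v(m)\in V_m$ for \emph{every} $m\in S$, not merely for those whose minimal preimage $\mu(m)$ is standard. When $\mu(m)$ is standard, the standard function $\chi$ returns a standard $\chi(\mu(m))\in X$, so $Q_{\chi(\mu(m))}\subseteq V$ and $v(m)\in V_m$ follows directly; when $\mu(m)$ is nonstandard, $\chi$ may take a nonstandard value and the inclusion $Q_{\chi(\mu(m))}\subseteq V$ can fail. I would close this gap by a Fehrele-type refinement: the collection of $n\leq M$ along which $\chi(n)\in{}^{\sigma}X$ contains ${}^{\sigma}\N$ and, being a pregalactic set, cannot by the Fehrele Principle coincide with a strict halo, so it meets an internal cofinal subset of $[0,M]$; restricting $v$ and $S$ to the $\tilde m$-image of that subset produces the required internal pair.
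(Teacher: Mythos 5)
Your argument has two genuine gaps, and the second is fatal to the route you chose.

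First, the set $S=\{\tilde m(n):n\leq M\}$ is the image of the internal finite set $\{0,\dots,M\}$ under an internal map, hence internally finite and bounded in $\N$. Containing the single unlimited element $\tilde m(M)\geq M$ does not make it cofinal with $\N$ in the sense of Definition \ref{cofinitial}: for any $m$ exceeding $\max S$ there is no $n\in S$ with $n\geq m$. Genuine unboundedness is what the proposition is for --- it is used to produce subsequences to which Proposition \ref{suhoitucuamoidaycon} and Lemma \ref{hoitutrong} apply --- so this cannot be patched by pointing at one unlimited element.

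Second, the Fehrele-type repair of your ``main obstacle'' does not go through. Take $X=\N$ and $\chi$ the identity (nothing in the representation \eqref{external set} forbids the witness $x$ for $(\tilde m(n),\tilde a(n))\in Q_x$ from depending essentially on $n$): then $\{n\leq M:\chi(n)\in{}^{\sigma}X\}$ is exactly ${}^{\sigma}\N$. This set is a galaxy, not a halo, so the Fehrele Principle is not violated and yields nothing; it contains no unlimited element and \emph{a fortiori} no internal subset cofinal in $[0,M]$. The root of the trouble is that you distribute the witnesses over all standard $x\in X$ via the choice function $\chi$. The paper's proof avoids this entirely by first observing that, since $\pi(V)=\bigcup_{\st(x)\in X}\pi(H_x)$ with $H_x=\bigcap_{\st(y)\in Y}I_{xy}$ is cofinal with $\N$, already a \emph{single} standard $x$ has $\pi(H_x)$ cofinal with $\N$ (otherwise Idealization would give one bound dominating all $\pi(H_x)$ for standard $x$, bounding $\pi(V)$). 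For that fixed $x$, every finite intersection $\bigcap_{y\in z}I_{xy}\supseteq H_x$ is an internal set cofinal with $\N$, so Idealization over $Y$ --- with ``$J$ is cofinal with $\N$'' built into the internal matrix --- produces one internal $J\subseteq H_x\subseteq V$ that is genuinely cofinal with $\N$; an internal choice function on $\pi(J)$ then finishes the proof. I suggest reorganizing your argument around that single-$x$ reduction rather than the standardized choice function $\chi$.
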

	
	\begin{proof}
		Let $X, Y$ be standard and $I:X\times Y \rightarrow  \mathcal{P}(\N\times \R)$ be an internal mapping such that $ V=\bigcup_{\st(x)\in X}\bigcap_{\st(y)\in Y} I_{xy}$. For all $ x\in X  $, let $H_x=\bigcap_{\st(y)\in Y} I_{xy}.$ Because $\pi (V)$ is cofinal with $ \N $, there exists $\st(x)\in X$ such that $H_x$ is cofinal with $ \N $. It follows that  $I_{xy}$ is cofinal with $ \N $ for all $\st(y)\in Y$. Hence  $$\forall^\mathrm{st fin} z\subseteq Y\, \exists J\, \forall^{\st}y\in z (J\subseteq I_{xy}), $$ where $J$ is cofinal with $ \N $.  In fact, we can take $J=\bigcap_{\st(y)\in z} I_{xy}$. By Idealization $$\exists J\ \forall^{\st} y\in Y (J\subseteq I_{xy}),$$ where $J$ is cofinal with $\N$. Put $ S=\pi(J) $. Because $J$ is internal, applying the Axiom of Choice, it has an internal selection $ v:S\rightarrow \R $, i.e. $ v_{n} \in J_{n} \subseteq V_{n} $ for all $ n \in S $.
	\end{proof} 	
	
	\begin{corollary}\label{exist internal subsequence}
		Every precise external sequence $(u_n)$ admits an internal subsequence.
	\end{corollary}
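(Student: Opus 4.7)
The plan is to deduce the corollary as an immediate specialization of Proposition \ref{exists internal subsequence2}. Starting from the precise sequence $(u_n)$, I would define an auxiliary external mapping $V\subseteq \N\times \P(\R)$ by setting $V_n=\{u_n\}$ for each $n\in \N$. Since $(u_n)$ is definable by Convention \ref{convention}, and the map $y\mapsto \{y\}$ from $\R$ to $\P(\R)$ is internal, the graph of $V$ is still definable by a bounded formula in the sense required. Moreover $\pi(V)=\N$, which is trivially cofinal with $\N$.

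Proposition \ref{exists internal subsequence2} then yields an internal set $S\subseteq \N$ cofinal with $\N$ together with an internal mapping $v:S\rightarrow \R$ such that $v_n\in V_n$ for every $n\in S$. Since each $V_n$ is a singleton, the condition $v_n\in V_n$ forces $v_n=u_n$ on $S$. Consequently $v=\{(n,u_n):n\in S\}$ is internal, and by Definition \ref{Def subsequence-ad} it is a subsequence of $(u_n)$.

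I do not expect any serious obstacle here: the entire technical work, namely extracting a cofinal internal set $S$ and an internal choice on $S$ from the union-of-prehalos representation of $\Gamma(u)$ by means of the Fehrele/Idealization step and the Axiom of Choice, has already been carried out inside the proof of Proposition \ref{exists internal subsequence2}. The only mild subtlety is the passage from an external function into $\R$ to an external function into $\P(\R)$, but this is merely a notational reformulation. Thus the corollary reduces to a one-line application once the singleton-valued lift $V$ has been introduced.
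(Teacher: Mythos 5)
Your proposal is correct and matches the paper's intent exactly: the corollary is stated without proof precisely because, as you observe, the graph of a precise sequence is itself a definable subset of $\N\times\R$ with projection $\N$ (your singleton lift $V_n=\{u_n\}$ is just this graph read as a set-valued map), so Proposition \ref{exists internal subsequence2} applies directly and the selection $v_n\in V_n$ forces $v_n=u_n$ on the internal cofinal set $S$. Nothing further is needed.
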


	We now turn to sequences which are defined on subsets of $ \N $ which are typically not cofinal with $ \N $.
	
	\begin{definition}
		Let $ C\subseteq \N $ be an initial segment. A \emph{local sequence} is an external mapping $ u:C \rightarrow \E $.
	\end{definition}

	A local sequence still admits an internal choice sequence, but by the Cauchy Principle the domain of this sequence must have nonempty intersection with the complement of $C$. Such a sequence will be called an internal bridge.
	 
	\begin{definition}\label{definition internal bridge}
		Let $ C\subseteq \N $ be an initial segment, not cofinal with $ \N $, let $ D= \N \setminus C $, let $ E\subseteq C $ be cofinal with $ C $ and $ u:E \rightarrow \E $ be a local flexible  sequence. Let $ K $ be an internal set which is cofinal with $ C $ and coinitial with $ D $. An internal sequence $ b:K\rightarrow \R $ such that  $ \{n \in K\cap C: b_{n}\in u_{n} \} $ is cofinal with C is called an \emph{internal bridge} from $ C $ to $ D $ along $ u $.
	\end{definition}	
		
	Taking $b$ as in Definition \ref{definition internal bridge}, we have that $ b_{\upharpoonright(K\cap C)} $ is a local sequence cofinal with $ C $ and $ b_{\upharpoonright(K\cap D)} $  is a local sequence coinitial with $ D $, according to Definition \ref{cofinitial}. Internal bridges are shown to exist in Theorem \ref{Stelling flexible brug}. We prove the theorem first for precise sequences.
	\begin{lemma}
		\label{Lemma brug}Let $(C,D)$ be a cut of $ \mathbb{N} $
		 into a halo $C$ and a galaxy $D$. Let $ E\subseteq \mathbb{N} $ be an external set which is cofinal with $C$ and $u:E\rightarrow \mathbb{R}	$ be a local precise sequence. Then $u$ admits an internal bridge from $C$ to $D$.
	\end{lemma}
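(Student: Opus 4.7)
The plan is to combine the Representation machinery of Theorem \ref{Representationsequence} with Lemma \ref{cofinal sets} and an Idealization on the galactic structure of $D$. First I would extract from $u$ one functional prehalo $Q_{x_0}$ whose domain is already cofinal with $C$; this prehalo has an internal extension $v_{x_0}$ furnished by Lemma \ref{Lemma internal extension}, which automatically provides cofinality with $C$. Then I would glue on an internal set coinitial with $D$ in order to secure the coinitiality clause of Definition \ref{definition internal bridge}.

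In detail, by \eqref{uunionQ} write $u=\bigcup_{\st(x)\in X} Q_x$ with each $Q_x$ a functional prehalo of domain $\Delta_x\subseteq E$, and apply Lemma \ref{Lemma internal extension} to obtain internal extensions $v_x: K_x\to\R$ with $v_{x\upharpoonright \Delta_x}=Q_x$ and $\Delta_x\subseteq K_x\subseteq\N$. Since $E=\bigcup_{\st(x)}\Delta_x$ is cofinal with the convex halo $C$ and every $\Delta_x\subseteq C$, Lemma \ref{cofinal sets}(\ref{cs1}) applied to $f(x)=\Delta_x$ and $H=C$ forces at least one standard $\Delta_{x_0}$ to be cofinal with $C$: otherwise every standard $\Delta_x$ would admit some $n_x\in C$ with $\Delta_x<n_x$, and the lemma would produce a single $h\in C$ bounding all standard $\Delta_x$ simultaneously, contradicting the cofinality of $E$ with $C$. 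Fix such an $x_0$; on $\Delta_{x_0}$ the internal sequence $v_{x_0}$ agrees with $u$.

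For coinitiality I exploit that $D$ is a galaxy: write $D=\bigcup_{\st(z)\in Z} J_z$ with $J_z$ internal and $J_z\subseteq D$, and observe that $\{\min J_z:\st(z)\in Z\}$ is coinitial with $D$, because any $\nu\in D$ lies in some standard $J_{z_0}$ and hence dominates $\min J_{z_0}$. By Idealization there exists an internal $D^{\ast}$ containing all $\min J_z$ for standard $z$, so $D^{\ast}\cap D$ is coinitial with $D$. Setting $K=K_{x_0}\cup D^{\ast}$ and defining $b: K\to\R$ internally by $b(n)=v_{x_0}(n)$ for $n\in K_{x_0}$ and $b(n)=0$ on $D^{\ast}\setminus K_{x_0}$, I obtain an internal $K$ which is cofinal with $C$ (via $\Delta_{x_0}$) and coinitial with $D$ (via $D^{\ast}\cap D$), and since $b$ coincides with $u$ on all of $\Delta_{x_0}$, the set $\{n\in K\cap C: b(n)\in u_n\}$ contains $\Delta_{x_0}$ and is cofinal with $C$. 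The main obstacle is precisely this last step: Lemma \ref{Lemma internal extension} gives no control on how the internal extension behaves beyond the halo $C$, so the galactic structure of $D$ must be invoked separately to straddle the cut from above as well as below.
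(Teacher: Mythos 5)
Your proof is correct and follows the paper's route: decompose $u$ into functional prehalos via \eqref{uunionQ}, use Lemma \ref{cofinal sets} to select a standard $\overline{x}$ with $\Delta_{\overline{x}}$ cofinal in $C$, and extend $Q_{\overline{x}}$ to an internal sequence $b:K\rightarrow\mathbb{R}$ by Lemma \ref{Lemma internal extension}. Your final gluing of $D^{\ast}$ is superfluous, and the ``main obstacle'' you name at the end is not one: $K\cap D$ is automatically coinitial with $D$, because for any $\nu\in D$ the internal set $K\cap\{0,\dots,\nu\}$ contains $\Delta_{\overline{x}}$ and hence is cofinal with the halo $C$, so its maximum cannot lie in $C$ (which has no greatest element, by the Cauchy Principle) and is therefore an element of $K\cap D$ below $\nu$.
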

\begin{proof}
	We have $u=\bigcup _{\mathrm{st}(x)\in X}Q_{x}$, where for all standard $x\in X$ the external sets $Q_{x}$ are functional, and are prehalos, like their domains $ \Delta _{x} $.
	
		By Lemma \ref{cofinal sets} there exists a standard $\overline{x} \in X$ such that $\Delta \overline{_{x}}$ is cofinal with $C$. Then $u_{\upharpoonright \Delta \overline{_{x}}}$ is
	a prehalo and by Lemma \ref{Lemma internal extension} there exists $K$ with $%
	\Delta \overline{_{x}}\subset K\subseteq \N
	$ and an internal sequence $b:K\rightarrow \R
	$ such that $b_{\upharpoonright \Delta \overline{_{x}}}=u$. Also $K\cap D$ is
	coinitial with $D$. Then $b$ is an internal bridge from $C$ to $D$ along $u$.
\end{proof}
	\begin{theorem}
		\label{Stelling flexible brug}Let $(C,D)$ be a cut of $%
		\mathbb{N}
		$ into a nonempty initial segment $C$ and a nonempty final segment $D$. Let $u:C \rightarrow \mathbb{E}$ be a local flexible sequence. Then $u$ admits an internal bridge from $C$ to $D$.
		\end{theorem}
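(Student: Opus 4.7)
The strategy is to reduce the theorem to its precise analogue, Lemma~\ref{Lemma brug}, by extracting from $u$ a precise local sequence $w$ satisfying $w_n \in u_n$ on a subset of $C$ cofinal with $C$; an internal bridge along $w$ then automatically serves as an internal bridge along $u$.

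First I would represent the definable graph of $u$ as in \eqref{uunionQ}, writing $\Gamma(u) = \bigcup_{\st(x)\in X} Q_x$ with $Q_x = \bigcap_{\st(y)\in Y} I_{xy}$ and $I_{xy}$ internal, and set $\Delta_x := \pi(Q_x) \subseteq C$, so $C = \bigcup_{\st(x)\in X} \Delta_x$. Depending on the nature of $C$, a standard $\overline{x}$ can be located for which $\Delta_{\overline{x}}$ is cofinal with $C$: when $C$ is a halo this is Lemma~\ref{cofinal sets}\ref{cs2}, while when $C$ is internal or a galaxy the Cauchy Principle applied to the pregalactic representation furnishes such an $\overline{x}$.

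Next I would extract a precise definable selection from the relation $Q_{\overline{x}}$. For every standard finite $z \subseteq Y$, the internal set $\bigcap_{y \in z} I_{\overline{x} y}$ contains $Q_{\overline{x}}$ and so its projection contains $\Delta_{\overline{x}}$; by the internal Axiom of Choice it admits an internal selector. A suitable Idealization, patterned on the proof of Proposition~\ref{exists internal subsequence2}, then yields an internal partial function $s$ with $s \subseteq I_{\overline{x} y}$ for every standard $y \in Y$ and whose domain stays cofinal with $C$. Setting $E := \mathrm{dom}(s) \cap C$ gives a precise local sequence $w: E \to \R$ with $E$ cofinal with $C$ and $w_n \in Q_{\overline{x}}(n) \subseteq u_n$. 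Lemma~\ref{Lemma brug} applied to $w$ then produces an internal bridge $b:K \to \R$ from $C$ to $D$ along $w$ in the halo/galaxy case; the remaining cuts (internal, or $C$ galactic and $D$ halic) are handled analogously by the dual of Lemma~\ref{Lemma brug} or by direct internal construction. Since $b_n = w_n \in u_n$ for every $n$ in the cofinal set $E \cap K$, the same $b$ is an internal bridge along $u$.

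The main obstacle lies in the extraction step: cofinality with the external set $C$ is not an internal property, so Idealization cannot be imposed on this condition directly. The argument must instead package cofinality through the prehalic structure of $\Delta_{\overline{x}}$, combining Saturation with the Cauchy or Fehrele Principle to ensure simultaneously inclusion in $Q_{\overline{x}}$ and reach throughout $C$.
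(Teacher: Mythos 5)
Your overall instinct---reduce to the precise case by selecting a representative inside $u_n$ cofinally in $C$ and then invoke Lemma~\ref{Lemma brug}---matches the spirit of part of the paper's argument, but two steps of your plan do not go through as stated. First, the claim that a standard $\overline{x}$ with $\Delta_{\overline{x}}$ cofinal in $C$ can be found when $C$ is a galaxy is false: take $C={}^{\sigma}\mathbb{N}$ and a decomposition in which $\Delta_{m}=\{0,\dots,m\}$ for standard $m$; then $C=\bigcup_{\mathrm{st}(m)}\Delta_{m}$ but no single $\Delta_{m}$ is cofinal with $C$. Lemma~\ref{cofinal sets}.\ref{cs2} applies only when $C$ is a (pre)halo, and the Cauchy Principle gives nothing of this kind for a galaxy. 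The paper instead handles the galactic case without any $\overline{x}$ at all, applying the Saturation Principle directly to the representation $C=\bigcup_{\mathrm{st}(z)\in Z}\{0,\dots,c_z\}$ to choose one point of $u_{c_z}$ for each standard $z$ and then extending to an internal function; the set $\{c_z:\mathrm{st}(z)\}$ is automatically cofinal in $C$.

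Second, and more seriously, the extraction of an internal selector from the prehalo $Q_{\overline{x}}$ with domain cofinal in $C$ is not a detail to be ``patterned on'' Proposition~\ref{exists internal subsequence2}: that proposition works precisely because ``cofinal with $\mathbb{N}$'' is an internal property of the witness $J$, whereas ``cofinal with $C$'' and ``$\mathrm{dom}(s)\supseteq\Delta_{\overline{x}}$'' are external and cannot appear inside the Idealization. You correctly flag this as the main obstacle, but the sentence ``combining Saturation with the Cauchy or Fehrele Principle'' does not resolve it, and it is exactly here that the paper is forced into a further case distinction that your uniform plan misses: if the terms $u_n$ are cofinally often galaxies, one does not select from $Q_{\overline{x}}$ at all but uses \eqref{Uxyn} (Fehrele plus Idealization inside each galaxy $u_n$) to find a standard $\overline{y}$ with $I_{x\overline{y}}(n)\subset u_n$ on a cofinal set, takes $M_{\overline{x}}(n)=\max I_{\overline{x}\overline{y}}(n)\in u_n$, and applies Lemma~\ref{Lemma brug} to this precise definable local sequence; if instead the terms are eventually all prehalos, the whole graph of $u$ is a halo and a single Idealization using the finite-intersection property yields a global internal choice function directly. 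Without this dichotomy (or a genuinely new argument replacing it), your selector $s$ is not known to exist, so the reduction to Lemma~\ref{Lemma brug} is not yet established.
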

\begin{proof}
	If $C$ is internal, it has a maximal element $m$. Choose $ b_{m} \in u_{m}$. We may define $ b_{m+1} \in \R $ arbitrarily. Then $%
	\{(m,b_{m}),(m+1,b_{m+1})\}$ is an internal bridge from $C$ to $D$ along $	u $.
	
	From now on we assume that $C$ is external. If $ C $ is a galaxy, it has the form
	$C=\bigcup _{\mathrm{st}(z)\in Z}\{0,...,c(z)\}$, where $ Z $ is a standard set and $ c:Z\rightarrow \mathbb{N} $ is internal. For all standard $ z\in Z $ there exists $ v\in u_{c(z)} $, so by the Saturation Principle we obtain an internal function $ \widetilde{v }:Z\rightarrow \mathbb{R} $ such that $ \widetilde{v }(z)\in u_{c(z)} $ for all standard $ z\in Z $. 
	
	For $n\in \Ima(c_{\upharpoonright {}^{\sigma }Z})$ we
	define $b_n=u_{c(z)}$, where $z$ is standard and such that $%
	c(z)=n$; then $b$ is a well-defined function on $\Ima%
	(c_{\upharpoonright {}^{\sigma }Z})$. By the Saturation
	Principle $b$ can be extended to an internal function $b:K\mathbb{%
		\rightarrow }%
	\mathbb{R}
	$. Then $\left \{ n\in K: b_{n}\in u_{n}\right \} $ is cofinal
	in $C$. Hence $b$ is an internal bridge from $C$ to $D$ along $u$.

	In the remaining case $ C $ is a halo. We consider separately the case where $ u $ has a cofinal sequence of galaxies, and the case where from some $ c\in C $ onwards all neutrices are prehalos. We have the representation
	\begin{equation*}
		\Gamma(u)=\bigcup _{\mathrm{st}(x)\in X}\bigcap_{\mathrm{st}(y)\in Y} I_{xy}(n), 
	\end{equation*}
	where $X,Y $ are standard sets and $ I:X\times Y\rightarrow \mathcal{P}  (\mathbb{N}\times\mathbb{R})  $ is internal; we may assume that $ I $ is closed under finite intersections.
	
	In the first case, let 
	\begin{equation*}
	F= \left\lbrace n \in C : u_{n} \text{ is a galaxy} \right\rbrace.
	\end{equation*}
		Observe that $ F $, and therefore also $ u_{\upharpoonright F} $, is definable, because, due to \eqref{Uxyn}, for all $ n \in F $  there exists a standard $ y\in Y $ such that
	\begin{equation*}
	u_n=\bigcup _{\mathrm{st}(x)\in X}I_{xy}(n).   
	\end{equation*}
	By Lemma \ref{cofinal sets} there exists $ \overline{y}\in Y $ such that $ \pi( I_{x\overline{y}}) $ is cofinal in the halo $ C $.
	For all standard $x\in X $ we define
	\begin{equation*}
	C_{x}=\left\lbrace n\in F: I_{x\overline{y}}(n)\subset u_n \right\rbrace .   
	\end{equation*}
	Again by Lemma \ref{cofinal sets} there exists a standard $ \overline{x}\in X $ such that $ C_{\overline{x}} $ is cofinal in $ C $. Let  $ M_{x}:C_{\overline{x}}\rightarrow \mathbb{R} $ be defined by
	\begin{equation*}
	M_{\overline{x}}(n) =	\max  I_{\overline{x}\overline{y}}(n)			
		\end{equation*}
		Then $ 	M_{\overline{x}}(n) \in u_{n}$ for all $n\in C_{\overline{x}} $. By Lemma \ref{Lemma brug} there exists an internal bridge $ b $ from $ C $ to $ D $ along $ M $. Then $ b $ is also an internal bridge from $ C $ to $ D $ along $ u $.		
		
		In the second subcase we may assume without restriction of generality that all terms $ u_{n} $ are prehalos. Hence $ u $ is a halo, and has the representation
		\begin{equation*}
	u=\bigcap_{\mathrm{st}(y)\in Y} J_{y},   
		\end{equation*}
		where $Y $ is a standard set and $ J:Y\rightarrow  \mathcal{P}(\mathbb{N}\times \mathbb{R})  $ is internal; we may assume that $ J $ is closed under finite intersections. By the Axiom of Choice, for all standard $ y \in Y$ there exists an internal function $ h:\mathbb{N} \rightarrow \mathbb{R}$ such that $ h_{\upharpoonright \pi (J_{y})} $ is a choice function for $ J_{y} $. Using the finite intersection property, we obtain by Idealization an internal function $ b:\mathbb{N} \rightarrow \mathbb{R}$ such that, whenever $y \in Y$
is standard and $n \in \pi(J_y)$, we have $b_n \in J_y$. This means that	$ b_{\upharpoonright \pi (\bigcap_{\mathrm{st}(y)\in Y} J_{y})} $ is a choice function for $ \bigcap_{\mathrm{st}(y)\in Y} J_{y}=u $. Then $ b $ is an internal bridge from $ C $ to $ D $ along $ u $.
\end{proof}

\section{Notions of convergence for flexible sequences\label{Section e-Sequences}}

In this section we introduce two types of convergence for sequences of external numbers which may be seen as ``outer'' ($ \varepsilon-n_0 $ like) and ``inner''(entrance into the limit set) forms of convergence.  The main results of this section say that except for the case where the sequence is infinite and the limit is precise, outer convergence induces inner convergence. Since outer convergence requires a notion of ordering we consider definitions and some properties of ordering for external numbers.

\subsection{Ordering}\label{Section Order relations}

	\begin{definition}[\cite{koudjetivandenberg,koudjetithese}]
		\label{Def order relations}Let $\alpha=a+A,\beta=b+B \in  \E$. We define (with abuse of notation)
		
		\begin{enumerate}
			\item \label{Def >=}$\alpha \geq \beta \Leftrightarrow \forall x\in \alpha
			\exists y\in \beta \left( x\geq y\right)$.
			
			\item \label{Def >}$\alpha>\beta \Leftrightarrow \forall x\in \alpha \forall
			y\in \beta \left( x>y\right)$.
			
			\item \label{Def <=}$\alpha \leq \beta \Leftrightarrow \forall x\in \alpha
			\exists y\in \beta \left( x\leq y\right)$.
			
			\item \label{Def <}$\alpha<\beta \Leftrightarrow \forall x\in \alpha \forall
			y\in \beta \left( x<y\right)$.
		\end{enumerate}
	\end{definition}
	
	Let $\varepsilon$ be an infinitesimal. We have $1+\varepsilon \pounds >\oslash $, $\varepsilon \leq \oslash $ and $\varepsilon \geq \oslash $.	
	Clearly $\alpha<\beta$ if and only if $\beta>\alpha$. However, we note that $%
	\alpha \geq \beta$ is not equivalent to $\beta \leq \alpha$. For example, we
	have $\oslash \geq \pounds $, yet $\pounds \not \leq \oslash$. Moreover, we have $\oslash \leq \pounds $. Indeed, $\alpha \leq \beta$ and $\alpha \geq
	\beta$ can occur simultaneously without equality if $\alpha \subset \beta$.
	Let $\alpha,\beta \in \E$. It follows from the definition that $\alpha \leq
	\beta$ if and only if $\alpha \geq-\beta$. However, $\alpha\not\leq \beta$ does not imply that $\alpha>\beta$ (nor $\beta< \alpha$). Indeed, it is enough to take for example $\alpha=\pounds$ and $\beta=\oslash$.

	The following characterizations of the order relations of Definition \ref{Def order relations} are a consequence of the the fact that two external numbers are either disjoint or one contains the other \cite[Prop. 7.4.1]{koudjetivandenberg}.
	\begin{proposition}
		\label{Lemma order relations}Let $\alpha,\beta \in \E$. Then
		
		\begin{enumerate}
			\item \label{OR1 <}$\alpha<\beta$ if and only if $\alpha \leq \beta \wedge
			\alpha \cap \beta=\emptyset.$
			
			\item \label{OR2 >}$\alpha>\beta$ if and only if $ \alpha \geq \beta \wedge
			\alpha \cap \beta=\emptyset.$
			
			\item \label{OR3 <=}$\alpha \leq \beta $ if and only if $ \alpha<\beta \vee
			\alpha \subseteq \beta.$
			
			\item \label{OR4 >=}$\alpha \geq \beta$ if and only if $ \alpha>\beta \vee
			\alpha \subseteq \beta.$
		\end{enumerate}
	\end{proposition}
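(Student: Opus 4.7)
The argument splits naturally: first I would establish parts (1) and (2), then deduce parts (3) and (4) from them using the cited trichotomy. The two structural facts I would lean on throughout are that each external number is a nonempty convex subset of $\R$ (being the algebraic sum of a real number and the convex subgroup forming its neutrix part) and the trichotomy that any two external numbers satisfy exactly one of $\alpha \cap \beta = \emptyset$, $\alpha \subsetneq \beta$, $\alpha = \beta$, or $\beta \subsetneq \alpha$.

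For part (1), the forward direction is routine unpacking of the definitions: if every $x \in \alpha$ lies strictly below every $y \in \beta$, then $\alpha \leq \beta$ is immediate, and any common point would violate $x < x$. For the converse, I would argue by contradiction: suppose $\alpha \leq \beta$ and $\alpha \cap \beta = \emptyset$ but $\alpha \not< \beta$, so there are $x_0 \in \alpha$ and $y_0 \in \beta$ with $y_0 \leq x_0$. The definition of $\alpha \leq \beta$ provides some $y_1 \in \beta$ with $x_0 \leq y_1$, and convexity of $\beta$ applied to $y_0 \leq x_0 \leq y_1$ forces $x_0 \in \beta$, contradicting disjointness. Part (2) is symmetric.

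For parts (3) and (4) the backward directions are immediate: $\alpha < \beta$ weakens to $\alpha \leq \beta$, and $\alpha \subseteq \beta$ gives $\alpha \leq \beta$ by taking $y = x$. For the forward direction of (3), I would apply the trichotomy: if $\alpha \cap \beta = \emptyset$ then part (1) gives $\alpha < \beta$, and if $\alpha \subseteq \beta$ we are done. The only remaining case, $\beta \subsetneq \alpha$, is the main obstacle and must be excluded. To do this, write $\alpha = b + A$ and $\beta = b + B$ (we may share the centre since $b \in \beta \subseteq \alpha$), whence $B \subsetneq A$; pick $c \in A \setminus B$, and after replacing $c$ by $-c$ if necessary take $c > 0$. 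Since $B$ is a convex subgroup containing $0$, the positive element $c \notin B$ must strictly exceed every element of $B$ (otherwise convexity would pull $c$ into $B$). Hence $b + c \in \alpha$ is strictly above every element of $\beta$, contradicting $\alpha \leq \beta$. Part (4) then follows by the same argument with all inequalities reversed, or alternatively by applying (3) to $-\alpha$ and $-\beta$.
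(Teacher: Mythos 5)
Your proof is correct and rests on exactly the ingredient the paper itself invokes: the fact that two external numbers are either disjoint or one contains the other, combined with convexity of external numbers (the paper states the proposition without a written proof, citing only this dichotomy). Your fleshed-out details — the convexity argument for the converse of (1) and the exclusion of $\beta \subsetneq \alpha$ in (3) via an element of $N(\alpha)\setminus N(\beta)$ — are the routine steps the paper leaves implicit, and they are all sound.
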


	The order relation ``$\leq$'' is compatible with addition.

	\begin{proposition} \cite[p. 93]{koudjetithese} \label{baotaonthutu}
		Let $\alpha, \beta, \gamma \in \E$. If $\alpha\leq \beta$ then $\alpha+\gamma\leq \beta+\gamma$.
	\end{proposition}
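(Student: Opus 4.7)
The plan is to unfold the definition of the order relation $\leq$ from Definition \ref{Def order relations}(\ref{Def <=}) and exploit the pointwise nature of addition of external numbers. Recall that $\alpha + \gamma$, viewed as an external set, consists precisely of the sums $x + c$ with $x \in \alpha$ and $c \in \gamma$, and similarly for $\beta + \gamma$. So showing $\alpha + \gamma \leq \beta + \gamma$ amounts to proving that for every $z \in \alpha + \gamma$ there exists $w \in \beta + \gamma$ with $z \leq w$.

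To carry this out, I would fix an arbitrary element $z \in \alpha + \gamma$ and write $z = x + c$ with $x \in \alpha$ and $c \in \gamma$. The hypothesis $\alpha \leq \beta$ delivers some $y \in \beta$ with $x \leq y$. Setting $w := y + c$, one has $w \in \beta + \gamma$ by definition, and monotonicity of real addition gives $z = x + c \leq y + c = w$. Since $z$ was arbitrary, this yields $\alpha + \gamma \leq \beta + \gamma$, as required.

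There is no real obstacle here: the proof is a direct verification from the definitions, using only the fact that real addition preserves the ordering on $\mathbb{R}$ and that the neutrix parts behave additively, so that any element of a sum of external numbers decomposes into a sum of elements from each summand. The only point worth being careful about is that the definition of $\leq$ on $\mathbb{E}$ is asymmetric (as the discussion following Definition \ref{Def order relations} emphasizes), so one must quantify in the correct direction: universally over elements of the left-hand side and existentially over elements of the right-hand side; but this is precisely the form produced by the argument above.
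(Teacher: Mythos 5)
Your argument is correct: since $\alpha+\gamma$ is the setwise sum, every $z\in\alpha+\gamma$ decomposes as $x+c$ with $x\in\alpha$, $c\in\gamma$, and applying the hypothesis to $x$ and translating by $c$ gives exactly the $\forall$--$\exists$ pattern required by Definition \ref{Def order relations}.\ref{Def <=}. The paper itself gives no proof, citing Koudjeti's thesis instead, and your direct verification is the standard argument for this fact.
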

	
	The following two results give conditions that allow to ``change'' the order relation.
	
	\begin{proposition}\label{change order}
		Let $M$ be a neutrix and let $\alpha$ and $\beta$ be two external numbers such that $N(\alpha)\subseteq M$ and $N(\beta)\subseteq M$. Then $\alpha+M\not \leq \beta+M$ implies that $\alpha+M>\beta+M$.
	\end{proposition}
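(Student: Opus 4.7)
\medskip

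\noindent\textbf{Plan for the proof of Proposition \ref{change order}.} Write $\alpha = a + A$ and $\beta = b + B$ with $A = N(\alpha) \subseteq M$ and $B = N(\beta) \subseteq M$. The first observation is that, since $M$ is a neutrix (an additive subgroup) and $A, B \subseteq M$, we have $A + M = M = B + M$, so $\alpha + M = a + M$ and $\beta + M = b + M$. Hence the statement reduces to showing that for real numbers $a, b$, if $a + M \not\leq b + M$, then $a + M > b + M$.

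Next, I would unfold the negation of Definition \ref{Def order relations}\ref{Def <=}. The assumption $a + M \not\leq b + M$ gives an element $x_{0} \in a + M$ such that for every $y \in b + M$, $x_{0} > y$. Writing $x_{0} = a + m_{0}$ with $m_{0} \in M$, this means $a - b + m_{0} > m$ for all $m \in M$. Because $M$ is a group, $m - m_{0}$ ranges over $M$ as $m$ does, and I can rewrite the inequality as $a - b > m'$ for every $m' \in M$; in other words, $a - b$ exceeds every element of $M$.

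To close the argument, I would take arbitrary $x \in a + M$ and $y \in b + M$ and write $x = a + m_{1}$, $y = b + m_{2}$ with $m_{1}, m_{2} \in M$. Then
\begin{equation*}
x - y = (a - b) - (m_{2} - m_{1}).
\end{equation*}
Since $M$ is a group, $m_{2} - m_{1} \in M$, so by the previous step $a - b > m_{2} - m_{1}$, whence $x > y$. By Definition \ref{Def order relations}\ref{Def >} this is exactly $a + M > b + M$, as required.

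The proof is essentially a bookkeeping exercise; no step is genuinely hard. The only subtle point is the interaction between the quantifier ``$\forall y \in b + M$'' and the group closure of $M$, which is what allows one to upgrade the inequality witnessed by a single $x_{0}$ to an inequality that holds for every pair $(x, y)$. In the original setting (without the hypothesis $N(\alpha), N(\beta) \subseteq M$), this upgrade fails, as illustrated by the examples $\oslash \leq \pounds$ and $\oslash \geq \pounds$ already mentioned in the text; absorbing both neutrices into a common $M$ is precisely what restores the dichotomy between $\leq$ and $>$.
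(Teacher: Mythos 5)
Your argument is correct, but it takes a genuinely different route from the paper's. You reduce everything to real representatives: since $A,B\subseteq M$ and $M$ is a group, $\alpha+M=a+M$ and $\beta+M=b+M$; you then unfold the negation of Definition \ref{Def order relations}.\ref{Def <=} to get a single witness $x_{0}=a+m_{0}$ dominating all of $b+M$, use group closure of $M$ to upgrade this to ``$a-b>m'$ for every $m'\in M$'', and conclude $x>y$ for every pair, which is Definition \ref{Def order relations}.\ref{Def >}. The paper instead argues structurally: $\alpha+M$ and $\beta+M$ are external numbers with the same neutrix part $M$, so by the fact that two external numbers are either disjoint or one contains the other, a nonempty intersection would force $\alpha+M=\beta+M$, contradicting $\alpha+M\not\leq\beta+M$; hence they are disjoint, and since $\alpha+M<\beta+M$ would imply $\alpha+M\leq\beta+M$ (Proposition \ref{Lemma order relations}.\ref{OR3 <=}), only $\alpha+M>\beta+M$ remains. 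Your version is more elementary and self-contained, relying only on the additive group property of $M$ and the raw quantifier definitions; the paper's is shorter because it reuses the disjoint-or-nested dichotomy and the order characterizations already established. Each step of your plan checks out, including the key quantifier-plus-group-closure upgrade, which you correctly identify as the point where the unrestricted statement would fail.
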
 
	
	\begin{proof}
		Observe that $N(\alpha+M)=N(\beta+M)=M$. If $\left(\alpha+M\right)\cap \left(\beta+M\right) \not=\emptyset$,  there exists $x\in \left(\alpha+M\right)\cap \left( \beta+M\right)$. It follows that  $\alpha+M=\beta+M=x+M$, in contradiction with the hypothesis. So $\left(\alpha+M\right)\cap \left(\beta+M\right)=\emptyset$. Since $\alpha+M<\beta+M$ cannot be, we conclude that $\alpha+M>\beta+M$.
	\end{proof}

	\begin{corollary}\label{change order2}
		Let $\alpha,\beta$ be two external numbers such that $N(\alpha)=N(\beta).$ If $\alpha\not\leq \beta$ then $\alpha>\beta.$
	\end{corollary}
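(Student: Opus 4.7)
The plan is to derive this corollary directly from Proposition \ref{change order} by taking the neutrix $M$ to be the common neutrix part $N(\alpha) = N(\beta)$. The key observation is that for any external number $\gamma = c + N(\gamma)$, we have $\gamma + N(\gamma) = \gamma$ because $N(\gamma)$ is an additive subgroup and already contained in $\gamma$; more generally, $\gamma + M = \gamma$ whenever $N(\gamma) = M$.

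First, I would set $M := N(\alpha) = N(\beta)$. Then trivially $N(\alpha) \subseteq M$ and $N(\beta) \subseteq M$, so the hypotheses of Proposition \ref{change order} are met. Second, I would note the identifications $\alpha + M = \alpha$ and $\beta + M = \beta$, so the assumption $\alpha \not\leq \beta$ is literally the same statement as $\alpha + M \not\leq \beta + M$. Third, applying Proposition \ref{change order} yields $\alpha + M > \beta + M$, which is $\alpha > \beta$.

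I do not expect any genuine obstacle here; this is essentially a specialization of the proposition to the minimal admissible choice of $M$. The only point worth being careful about is verifying the identity $\alpha + N(\alpha) = \alpha$, which follows from writing $\alpha = a + N(\alpha)$ and using that $N(\alpha) + N(\alpha) = N(\alpha)$ since neutrices are additive subgroups.
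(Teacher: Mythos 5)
Your proposal is correct and is exactly the paper's argument: the paper likewise derives the corollary by applying Proposition \ref{change order} with $M=N(\alpha)=N(\beta)$, and your extra verification that $\alpha+M=\alpha$ and $\beta+M=\beta$ just makes explicit what the paper leaves tacit.
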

	\begin{proof} It follows from Proposition \ref{change order} with $M=N(\alpha)=N(\beta)$. 
	\end{proof}

	
%
%

	\begin{definition}\label{trituyetdoidinhnghia}
		Let $\alpha=a+A$ be an external number. The \emph{absolute value} of $\alpha$ is defined (with abuse of notation) as follows $$|\alpha|= |a|+A.$$
	\end{definition}
	
	Let $N$ be a neutrix. Then $|N|=N$. Let $\alpha=a+A$. Then clearly, $|\alpha|=\alpha$, if $a>0$ and $|\alpha|=-a+A=-\alpha$, if $a<0$. 
	In the final part of this subsection we study some basic properties of the absolute value.
	
	\begin{proposition}\label{expliciteformulaOfAbsoluteexternalnumber} 
		If $\alpha=a+A \in \E$ is zeroless, then $|\alpha|=\left\{|x| : x\in \alpha\right\}.$
	\end{proposition}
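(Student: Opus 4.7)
The plan is to exploit the zeroless hypothesis to show that all elements of $\alpha$ share a common strict sign, then split on that sign and verify the equality directly in each case.

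First I would observe that zerolessness together with convexity of the neutrix part forces either every $x\in\alpha$ to be positive or every $x\in\alpha$ to be negative. Indeed, $0\notin a+A$ means $-a\notin A$, and since $A$ is an additive subgroup one also has $a\notin A$; if some $x=a+n\in\alpha$ had sign opposite to $a$, convexity of $\alpha$ would place $0$ between $a$ and $x$, hence in $\alpha$, contradicting zerolessness.

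Next I would split on the sign of $a$. If $a>0$, every $x\in\alpha$ is positive, so $|x|=x$ and the equality $\{|x|:x\in\alpha\}=\alpha=|a|+A=|\alpha|$ is immediate from Definition \ref{trituyetdoidinhnghia}. If $a<0$, every $x=a+n\in\alpha$ is negative, so $|x|=-a-n\in -a+A=|\alpha|$, using only that $-n\in A$; conversely, any $y=-a+m\in|\alpha|$ is realised as $|{-y}|$ via $-y=a-m\in\alpha$, giving both inclusions.

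There is no really hard step here: the only substantive point is the sign-determination from zerolessness and convexity, and the remainder is routine bookkeeping with the group structure of $A$ and Definition \ref{trituyetdoidinhnghia}. As a sanity check, the hypothesis cannot be dropped, since $|\pounds|=\pounds$ whereas $\{|x|:x\in\pounds\}$ contains only non-negative reals.
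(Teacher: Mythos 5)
Your proof is correct and follows essentially the same route as the paper's: a case split on the sign of $a$ (the paper phrases it as $a>A$ versus $a<A$) followed by a direct verification of both inclusions, with your preliminary observation that all elements of $\alpha$ share the sign of $a$ being exactly the fact the paper uses implicitly when it writes $|a|+u=a+u=|a+u|$.
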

	
	\begin{proof} 
		Let $\xi=\left\{|x| : x\in \alpha\right\}.$ We give the proof for the case where $a>A$. The case where $a<A$ is analogous.  
		Let $x\in |\alpha|$. Then $x=|a|+u$, for some $u\in A$. Since  $a>A$, we have $a+u \in \alpha$ and $x=|a|+u=a+u=|a+u|$ and we conclude that $x\in \xi$. Hence $|\alpha|\subseteq \xi$. On the other hand, let $y\in \xi$. Since $a>A$ we have $y=|a+v|=a+v=|a|+v$, for some $v \in A$. So, $y\in |\alpha|$ and therefore $\xi\subseteq |\alpha|$. Hence $\xi=|\alpha|$.
	\end{proof}
	
	Note that Proposition \ref{expliciteformulaOfAbsoluteexternalnumber} fails to be true if $\alpha$ is a neutrix. 
	
	\begin{corollary}
		Let $\alpha=a+A$ be a zeroless external number. The definition of absolute value of $\alpha$ does not depend on the choice of the representative $a$ of $\alpha.$	
	\end{corollary}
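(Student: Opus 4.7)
The plan is to derive the corollary as an immediate consequence of Proposition \ref{expliciteformulaOfAbsoluteexternalnumber}. Suppose $\alpha = a + A = a' + A$ are two representations of the same zeroless external number, with $a, a' \in \mathbb{R}$. I want to show that $|a| + A = |a'| + A$, where both sides are interpreted via Definition \ref{trituyetdoidinhnghia}.

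The key observation is that Proposition \ref{expliciteformulaOfAbsoluteexternalnumber} expresses the absolute value as $|\alpha| = \{|x| : x \in \alpha\}$, and the right-hand side manifestly depends only on $\alpha$ as a set, not on any choice of representative. So first I would simply apply that proposition to both $|a|+A$ (computed from the representative $a$) and $|a'|+A$ (computed from the representative $a'$). Since the external number $\alpha$ itself is the same in both cases, the set $\{|x| : x \in \alpha\}$ is unambiguously determined, and so both computations yield the same external number.

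For completeness I would also mention the direct argument, which does not even require invoking the earlier proposition. Since $\alpha$ is zeroless, either $a > A$ or $a < A$ (by the standard dichotomy for zeroless external numbers); and because $a$ and $a'$ represent the same external number, $a - a' \in A$, so $a'$ lies on the same side of $A$ as $a$. In the case $a > A$ we have $|a| = a$ and $|a'| = a'$, hence $|a| + A = a + A = a' + A = |a'| + A$; the case $a < A$ is analogous, with $|a|+A = -a+A = -a'+A = |a'|+A$.

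There is no real obstacle here: the corollary is essentially a sanity check that Definition \ref{trituyetdoidinhnghia}, written in terms of a representative, is well-defined on the equivalence class. The only thing one has to be careful about is that the argument genuinely uses the zeroless hypothesis (so that Proposition \ref{expliciteformulaOfAbsoluteexternalnumber} applies, and so that the dichotomy $a > A$ or $a < A$ holds); the remark following Proposition \ref{expliciteformulaOfAbsoluteexternalnumber} already warns that the formula fails if $\alpha$ is a neutrix, which is exactly the situation where the representative is not unique up to the zeroless sign.
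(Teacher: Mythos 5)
Your proposal is correct and its main argument is exactly the paper's proof: the corollary follows immediately from Proposition \ref{expliciteformulaOfAbsoluteexternalnumber}, since the set $\{|x| : x\in\alpha\}$ depends only on $\alpha$ and not on the representative. The additional direct argument via the dichotomy $a>A$ or $a<A$ is also valid but not needed.
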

	
	\begin{proof} 
		Assume that $\alpha=b+A$. Then $|\alpha|=|b|+A=\{|x|: x\in \alpha\}=|a|+A$.
	\end{proof} 
	
		Let $\alpha=a+A, \beta=b+B \in \E$. The following triangular inequalities follow directly from the definition of absolute value. We will use them implicitly throughout the article.
		\begin{equation}
			\label{DT1} |\alpha +\beta|\le |\alpha|+|\beta|.
		\end{equation}	
			
		\begin{equation}	
			\label{DT2} ||\alpha|-|\beta||\le |\alpha|-|\beta|.
		\end{equation}

	%

	\begin{proposition}\label{phanchiathanhhaibatdangthuc} 
		Let $\alpha \in \E$ and $\beta$ be a zeroless, positive external number. Then  $|\alpha|<\beta$ if and only if  $-\beta< \alpha<\beta$. 
	\end{proposition}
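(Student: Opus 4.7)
I would prove both implications directly from Definition \ref{Def order relations}.\ref{Def <} (strict inequality), splitting on whether $\alpha$ is zeroless or is a neutrix, in order to handle the absolute value cleanly. The key observation is that in both cases $|\alpha|$ admits a simple pointwise description: when $\alpha$ is zeroless, Proposition \ref{expliciteformulaOfAbsoluteexternalnumber} gives $|\alpha| = \{|x| : x \in \alpha\}$; when $\alpha = A$ is a neutrix, the definition yields $|\alpha| = |0| + A = A = \alpha$. Throughout I would use that $\beta$ is zeroless and positive, so every $y \in \beta$ satisfies $y > 0$, and that $-\beta = \{-y : y \in \beta\}$.

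For the zeroless case, the forward direction is as follows. Assuming $|\alpha| < \beta$, for every $x \in \alpha$ and $y \in \beta$ the element $|x|$ lies in $|\alpha|$, so $|x| < y$, which is equivalent to the two real inequalities $-y < x$ and $x < y$. Quantifying over all $x \in \alpha$ and $y \in \beta$, these give respectively $-\beta < \alpha$ and $\alpha < \beta$, hence $-\beta < \alpha < \beta$. Conversely, $-\beta < \alpha < \beta$ unfolds to $-y < x < y$ for all $x \in \alpha$ and $y \in \beta$, which means $|x| < y$; using the pointwise description of $|\alpha|$ and quantifying, this gives $|\alpha| < \beta$.

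For the neutrix case, write $\alpha = A$. Since $A$ is a subgroup, $-\alpha = A = \alpha$, and by definition $|\alpha| = A = \alpha$. The statement $|\alpha| < \beta$ therefore reduces to $\alpha < \beta$. On the other hand, unfolding Definition \ref{Def order relations}.\ref{Def <} and substituting $y \mapsto -y$ in the quantifier shows that $-\beta < \alpha$ is equivalent to $-\alpha < \beta$, which equals $\alpha < \beta$ here; hence $-\beta < \alpha < \beta$ collapses to $\alpha < \beta$, and the two sides of the equivalence agree.

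The reasoning is almost pure unfolding of definitions; the only genuine care is to treat the neutrix case separately, because Proposition \ref{expliciteformulaOfAbsoluteexternalnumber} explicitly fails there, and to note the elementary fact that $-\beta = \{-y : y \in \beta\}$, so that the quantifier manipulations involving $-\beta$ are legitimate.
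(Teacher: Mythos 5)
Your proof is correct. In the zeroless case your argument is essentially the paper's: both directions reduce, via Proposition \ref{expliciteformulaOfAbsoluteexternalnumber}, to the pointwise equivalence $|x|<y \Leftrightarrow -y<x<y$ for representatives, followed by quantification over $\alpha$ and $\beta$. Where you genuinely diverge is the neutrix case. The paper proves $-\beta<\alpha$ there by contradiction: assuming $u\le v$ for some $u\in\alpha$, $v\in-\beta$, it uses $v\le 0$, convexity of the neutrix to get $v\in[u,0]\subseteq\alpha$, the group property to get $-v\in\alpha$, and then the disjointness $\alpha\cap\beta=\emptyset$ coming from $|\alpha|<\beta$. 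You instead observe that $-A=A$ for a neutrix $A$, so that a direct quantifier substitution shows $-\beta<A$, $A<\beta$ and $|A|<\beta$ are all literally the same statement, making the equivalence trivial. Your route is shorter and avoids invoking convexity and Proposition \ref{Lemma order relations}; the paper's route is more in the spirit of its other order-theoretic arguments but does more work than necessary here. The one point worth making explicit in your write-up is the identity $-\beta=\{-y:y\in\beta\}$, which you do note and which follows from $B=-B$ for the neutrix part $B$ of $\beta$; with that, the quantifier manipulations are legitimate and the proof is complete.
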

	\begin{proof}
		We start with the direct implication. Assume first that $\alpha$ is a neutrix. Then $\alpha=|\alpha|<\beta$. Suppose that $-\beta \not<\alpha$. Then there exists $u\in \alpha$ and $v\in -\beta$ such that $u \leq v$. Observe that $-\beta$ is negative, so $v\leq 0$. Since $\alpha$ is a neutrix and $u,0\in \alpha$ we have that $v\in [u, 0]\subseteq \alpha$. It follows that $-v\in \alpha$. Then $-v \in \alpha\cap \beta$, a contradiction since $|\alpha|<\beta$ implies that $\alpha\cap \beta=\emptyset$. Hence $-\beta<\alpha<\beta$. Assume secondly that $\alpha$ is zeroless. Let $u\in \alpha$ and $v\in \beta$. Then $|u|\in |\alpha|$ by Proposition \ref{expliciteformulaOfAbsoluteexternalnumber}. So $|u|<v$, 
		hence $-\beta<\alpha.$ Also $\alpha\leq |\alpha|<\beta$. Thus, $-\beta<\alpha<\beta.$
		
		To prove the converse implication assume that $-\beta<\alpha<\beta.$ If $\alpha$ is a neutrix then $|\alpha|=\alpha<\beta$. If $\alpha$ is zeroless, let $y\in |\alpha|$ and $v\in \beta$. Then $y=|u|$ for some $u\in \alpha$ by Proposition \ref{expliciteformulaOfAbsoluteexternalnumber}. Since  $-\beta<\alpha<\beta$, it holds that $-v<u<v$, so $y=|u|<v$. 
		We conclude that $|\alpha|< \beta$.
	\end{proof}

\subsection{Convergence for infinite sequences\label{Subsection infinite sequences}}

\begin{definition}\label{Definite convergentie}
	Let $N$ be a neutrix. We say that a flexible sequence $u{:}$ $\mathbb{N}\rightarrow \mathbb{E}
	$ $N$-\emph{converges} to $\alpha \in \mathbb{E}$ if 
	\begin{equation}
	\forall \varepsilon>N\exists n_{0}\in \mathbb{N}\forall n\in \mathbb{N}(n\geq
	n_{0}\Rightarrow \left \vert u_{n}-\alpha \right \vert <\varepsilon).
	\label{N-conv}
	\end{equation}
	We write $u_{n}\unright \alpha$ or $N$-$ \lim u_{n}=\alpha$.  If $ N=N(\alpha) $, we simply say that $ u $ converges to $ \alpha $ and write  $u_{n}\longrightarrow \alpha$ or $ \lim u_{n}=\alpha$. A flexible sequence which is not $N$-convergent to any element $\alpha\in \E$ is called $N$-divergent.
\end{definition}

\begin{definition}\label{def strong convergence}
	Let $(u_n)$ be a flexible sequence and $\alpha$ be an external number.  The sequence $(u_n)$ is said to be \emph{strongly convergent to $\alpha$} if 
	\begin{equation*}
	\exists n_0 \in \N \forall n \in \N (n \geq n_0 \Rightarrow u_n \subseteq \alpha)
	\end{equation*}
	We write  $u_n  \rightsquigarrow  \alpha$ or $\slim  u_n=\alpha$. The external number $\alpha$ is called a \emph{strong limit} of $(u_n)$. 
\end{definition}

\begin{remark}
Clearly every flexible sequence is $\mathbb{R}$-convergent. To avoid this trivial case, whenever we refer to $%
N$-convergence we always assume that $N\neq \mathbb{R}$. 
Similarly, every flexible sequence strongly converges to $\mathbb{R}$, so we consider strong limits to be different from $\mathbb{R}$.
The other extreme case for $N$-convergence is when $N=0$. This corresponds to the usual notion of convergence. In
this case we adopt the usual notation and terminology, i.e. we say that $%
\left( u_{n}\right) $ converges to $\alpha $ and write $u_{n}\longrightarrow
\alpha $ or $\lim u_{n}=\alpha $. 
\end{remark}

\begin{remark}\label{equiv flexible}
	In the condition of $N$-convergence one may
	assume the elements $\varepsilon $ to be \emph{precise}, i.e. to be real
	numbers. In fact, (\ref{N-conv}) is equivalent to:%
	\begin{equation*}
	\forall \varepsilon  \in \mathbb{R}\left( \varepsilon >N\Rightarrow \exists n_{0}\in 
	\mathbb{N}(n\geq n_{0}\Rightarrow \left \vert u_{n}-\alpha \right \vert
	<\varepsilon )\right) . 
	\end{equation*}
	
	This equivalence often allows for simplifications in proofs and in calculations.
\end{remark}

Definition \ref{Definite convergentie} generalizes the usual $ \varepsilon-n_{0} $ criterium for convergence. Here it is asked that for $n \geq n_0$ the terms $ u_{n} $ enter in some interval, which is partly outside the limit set, justifying the term outer convergence. In the case of strong convergence the terms of the sequence reach the limit set within a finite time, leading to a form of inner convergence indeed.  
Let us illustrate Definitions \ref{Definite convergentie} and \ref{def strong convergence} with some examples. The examples also motivate some of the results proved throughout the section.

The first example illustrates the fact that $%
N$-convergent flexible sequences do not have a unique limit. In fact, as shown in Proposition \ref{tinhduynhatcuagioihansailechneutrix}, $N$-convergence is unable to distinguish between elements whose distance is less than or equal to the neutrix $N$. That is, $N$-limits are unique modulo $N$ in the sense that if $(u_n) $ is $N$-convergent to two (possibly distinct) elements $\alpha,\beta \in \E$ then the absolute	value of their difference must be less than $N$, in particular if $u_n \underset{N }\longrightarrow \alpha$, then $u_n \underset{N }\longrightarrow a$, for all $a \in \alpha$ (see Proposition \ref{converComponents}). 

\begin{example} \label{Example not unicity} Consider the flexible sequence defined by $u_{n}=\frac{1}{n}+\oslash $ and let $\delta \simeq 0 $. One
	has both $u_{n} \underset{\oslash }{\longrightarrow} \delta  $ and $u_{n} {\longrightarrow}  \oslash $ because $|u_n-\delta|=|u_n-\oslash|=\left|\frac{1}{n}+ \oslash\right|$.   
\end{example}

Next example illustrates the fact that every sequence is $N$-convergent for some neutrix $N$.
Clearly, if it is possible to approximate a certain quantity with a given imprecision, then it must also be possible to approximate it with a bigger imprecision. Adapting this intuition to flexible sequences we show in Proposition \ref{suhoitucuaneutrixlonhon} that if a flexible sequence is convergent with respect to a given neutrix, then it is convergent with respect to every neutrix which is larger than the original neutrix. Also, there exists a smallest neutrix such that the given sequence is convergent with respect to it. This means that when discussing $N$-convergence we will implicitly be looking for the smallest neutrix $N$ for which the sequence is $N$-convergent.

\begin{example}  \rm Consider the sequence defined by $u_n=(-1)^n$ for all $n\in \N$. Clearly $(u_n)$ is $\pounds$-convergent. We prove that $(u_n)$ is $\oslash$-divergent and since there are no neutrices between $\oslash$ and $\pounds$, we have that $N=\pounds$ is the smallest neutrix that gives $N$-convergence for $(u_n)$. Indeed, suppose that $u_n \underset{\oslash}{\longrightarrow} \alpha$ with $\alpha=a+A$ an external number. We have $N(\alpha)=A\leq \oslash.$ If $\alpha=1+A\subseteq  1+\oslash$, then taking $\varepsilon_0=1$ and choosing $n_0=2n+1>n$ we have $|u_{n_0}-\alpha|=|-1-1+A|=2+A>\varepsilon_0,$ a contradiction. If $\alpha \cap (1+\oslash)=\emptyset$, we take $\varepsilon_0=|1-a|/3>\oslash.$ Since $A\subseteq \oslash$, we have $-\varepsilon_0< A<\varepsilon_0.$ For all $n\in \N$, choosing $n_0=2n>n$ we obtain that $$|u_n-\alpha|=|1-a+A|=|1-a|+A =3\varepsilon_0-\varepsilon_0=2\varepsilon_0>\varepsilon_0,$$ a contradiction. Hence $(u_n)$ is $\oslash$-divergent.	
\end{example}

\begin{proposition} \label{suhoitucuaneutrixlonhon}
	Let $N$ be a neutrix and $(u_n)$ be a flexible sequence that
	$N$-converges to some element $\alpha  \in \E$. Let
	$M$ be a neutrix such that $N\subseteq M$. Then $(u_n) $
	also $M$-converges to $\alpha$. Moreover, there exists a unique neutrix $M_0$ such that $(u_n)$ is $M_0$-convergent and $M_0 \subseteq N$ for every neutrix $N$ such that $(u_n)$ is $N$-convergent. 
\end{proposition}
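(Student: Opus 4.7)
The first assertion is immediate from Definition \ref{Definite convergentie}. If $(u_n) \underset{N}{\longrightarrow} \alpha$ and $N \subseteq M$, then for every $\varepsilon > M$ we have $\varepsilon > m$ for all $m \in N \subseteq M$, so $\varepsilon > N$; applying $N$-convergence at tolerance $\varepsilon$ supplies $n_0$ with $|u_n - \alpha| < \varepsilon$ for $n \geq n_0$, proving $M$-convergence to $\alpha$.

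For the minimal neutrix, let $\mathcal{N}$ be the family of all neutrices $N$ such that $(u_n)$ is $N$-convergent. By hypothesis $\mathcal{N}$ is nonempty, and since any two neutrices are comparable by inclusion (being convex subgroups of the ordered additive group of nonstandard reals, as one checks by a short convexity argument), $\mathcal{N}$ is a chain. Put $M_0 = \bigcap_{N \in \mathcal{N}} N$; being an intersection of convex subgroups it is again a convex subgroup, hence a neutrix, and manifestly $M_0 \subseteq N$ for every $N \in \mathcal{N}$. It remains to show that $(u_n)$ is $M_0$-convergent. Choose for each $N \in \mathcal{N}$ a real $a_N$ with $(u_n) \underset{N}{\longrightarrow} a_N$. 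Combining the first part with the uniqueness of limits modulo $N$ (Proposition \ref{tinhduynhatcuagioihansailechneutrix}, whose proof uses only the triangle inequality) gives $|a_{N_1} - a_{N_2}| \in N_2$ whenever $N_1 \subseteq N_2$ in $\mathcal{N}$, so the external numbers $a_N + N$ form a nested decreasing chain. Appealing to the generalized Dedekind completeness of $\mathbb{E}$ (cf.\ \cite{vdbnaa}) one obtains a real $a \in \bigcap_{N \in \mathcal{N}} (a_N + N)$. Given any real $\varepsilon > M_0$, since $M_0 = \bigcap \mathcal{N}$ there exists $N \in \mathcal{N}$ with $\varepsilon/2 > N$; then $|u_n - a_N| < \varepsilon/2$ eventually and $|a - a_N| \in N < \varepsilon/2$ combine via the triangle inequality to yield $|u_n - a| < \varepsilon$ for $n$ large enough, so $(u_n) \underset{M_0}{\longrightarrow} a$.

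Uniqueness follows at once: any neutrix $M_0'$ enjoying both stated properties satisfies $M_0 \subseteq M_0'$ (because $M_0' \in \mathcal{N}$) and $M_0' \subseteq M_0$ (because $M_0 \in \mathcal{N}$). The one delicate step is the nonemptiness of the nested intersection $\bigcap_{N \in \mathcal{N}}(a_N + N)$: the chain $\mathcal{N}$ is genuinely external, so neither Idealization nor an internal compactness argument applies directly, and the existence of the common point $a$ must be secured via an external completeness principle specific to the arithmetic of external numbers.
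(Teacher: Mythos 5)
Your proof of the first assertion is fine and agrees with the paper's. The second part, however, has a genuine gap at exactly the step you yourself flag as delicate, and flagging it is not the same as closing it. Everything hinges on producing a real $a\in\bigcap_{N\in\mathcal{N}}(a_N+N)$, and this does not follow from the Generalized Completeness theorem: that theorem only describes the possible forms of a convex subset of $\R$, and it is perfectly consistent with the lower cut $\{x:\exists N\,(x<a_N+N)\}$ and the upper cut $\{x:\exists N\,(x>a_N+N)\}$ partitioning all of $\R$, leaving your intersection empty. Nonemptiness of such nested intersections is a substantive matter: for the chain $s_n+\pounds\varepsilon^n$ it is essentially the nonstandard Borel--Ritt theorem, which the paper obtains only as a corollary of the Cauchy completeness theorem (Theorem \ref{Stelling Cauchy limiet}), itself proved by a careful weak-supremum argument. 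Moreover, your index family $\mathcal{N}$ is an arbitrary external collection of external sets, not a family indexed by a standard set, so neither Saturation nor Idealization applies to it; for the same reason the simultaneous choice of representatives $a_N$ for all $N\in\mathcal{N}$ is not licensed by anything in the paper's framework. (By contrast, your step ``$\varepsilon>M_0=\bigcap\mathcal{N}$ yields some $N\in\mathcal{N}$ with $\varepsilon/2>N$'' is correct: $\varepsilon\notin M_0$ gives $N$ with $\varepsilon\notin N$, hence $\varepsilon>N$ and also $\varepsilon/2>N$ since $N$ is a group.)

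The paper avoids this entirely by never constructing a limit. It sets $M_0=\inf L$ where $L$ is the family of admissible neutrices, writes $M_0=p\cdot I$ with $I$ idempotent (Theorem \ref{canonical form}), and argues in two cases. If $I=\oslash$ and $M_0\notin L$, every $M\in L$ strictly contains $p\oslash$ and hence contains $p\pounds$ (there being no neutrix strictly in between), so $\inf L\supseteq p\pounds\supset M_0$, a contradiction. If $I\neq\oslash$ and $(u_n)$ were not $M_0$-convergent, a witnessing $\varepsilon>M_0$ satisfies $M_0\subset\varepsilon\oslash$ and forces $\varepsilon\oslash\notin L$; by the first part of the proposition every $M\in L$ then strictly contains $\varepsilon\oslash$, so $M_0\supseteq\varepsilon\oslash\supset M_0$, again a contradiction. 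To repair your argument you would need either to reproduce a case analysis of this kind, or to first prove a Cauchy-type completeness statement that actually delivers the common point $a$; as written, the existence of $a$ is asserted rather than proved.
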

To prove the previous proposition we recall that every neutrix can be represented as the product of a positive real number and an \emph{idempotent} neutrix, i.e. a neutrix $I$ such that $II=I$. 

\begin{theorem} [\cite{koudjetithese, koudjetivandenberg}]\label{canonical form} 
	Let $A$ be a neutrix. Then there exists a real positive number $t$ and a unique idempotent neutrix $I$ such that $A=t\cdot I.$
\end{theorem}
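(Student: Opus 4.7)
The plan is to split the proof into the short uniqueness argument and the substantive existence argument. Uniqueness of $I$ is algebraic and follows almost immediately from the idempotency property together with the fact that the two decompositions differ only by a positive real scalar. Existence will require producing, for each nontrivial neutrix $A$, a positive real $t$ at which the rescaled neutrix $t^{-1}A$ becomes multiplicatively closed.

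First I would dispose of uniqueness. Suppose $A=t_{1}I_{1}=t_{2}I_{2}$ with $I_{1},I_{2}$ idempotent neutrices and $t_{1},t_{2}>0$. Dividing by $t_{1}$ gives $I_{1}=cI_{2}$, where $c:=t_{2}/t_{1}>0$. Using the idempotency of both $I_{1}$ and $I_{2}$ and the fact that multiplication of neutrices by a positive real commutes with the neutrix product,
\begin{equation*}
I_{1}=I_{1}\cdot I_{1}=(cI_{2})\cdot(cI_{2})=c^{2}(I_{2}\cdot I_{2})=c^{2}I_{2}.
\end{equation*}
Comparing with $I_{1}=cI_{2}$ yields $cI_{2}=c^{2}I_{2}$, and multiplying by $c^{-1}$ gives $I_{2}=cI_{2}=I_{1}$, proving uniqueness of the idempotent factor (while of course $t$ is only determined up to the multiplicative stabilizer of $I$).

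For existence the trivial cases are immediate: if $A=\{0\}$ take $t=1$, $I=\{0\}$, and if $A=\mathbb{R}$ take $t=1$, $I=\mathbb{R}$. So assume $A$ is a proper nontrivial neutrix, hence contains a positive element but is bounded. My candidate for the idempotent part is $I:=t^{-1}A$ for an appropriately chosen positive scale $t$. This $I$ is automatically a neutrix containing $1$ (as soon as $t\in A^{+}$), and the inclusion $I\subseteq I\cdot I$ is then automatic from $1\in I$. What must be arranged is the opposite inclusion $I\cdot I\subseteq I$, equivalently $A\cdot A\subseteq tA$. The strategy is to select $t$ to be a positive element of $A$ lying at the "outer boundary" of $A$ with respect to the scales that still keep products of $A$-elements inside $A$.

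The main obstacle is producing such a $t$. For an arbitrary positive $a\in A$, the quotient $a^{-1}A$ need not be multiplicatively closed, so one cannot choose $t$ naively. My approach would be to consider the external set of all scales $s>0$ such that $sA\cdot A\subseteq A$ (i.e., the scales for which the rescaled neutrix is already closed under squaring), show it is nonempty and externally "bounded above" within $A^{+}$, and then invoke a permanence principle (Cauchy or Fehrele, as used systematically in the paper, cf.\ Lemma \ref{cofinal sets}) together with the Generalized Dedekind completeness for external numbers mentioned in the introduction to produce a limiting scale $t$ where $A\cdot A$ and $tA$ coincide. With that $t$, the neutrix $I=t^{-1}A$ is multiplicatively closed and contains $1$, hence idempotent, and $A=tI$ by construction. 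The delicate point is checking that the permanence argument yields an actual positive real $t$ (rather than an external number), which should follow from convexity of $A$ and the fact that the scales in question form a convex set inside $A^{+}$.
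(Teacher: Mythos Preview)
The paper does not prove this theorem; it is quoted from \cite{koudjetithese,koudjetivandenberg} and is only \emph{used} (in the proof of Proposition~\ref{suhoitucuaneutrixlonhon}). So there is no in-paper argument to compare your proposal against, and I can only assess the proposal on its own merits.

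Your uniqueness argument is correct and clean.

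Your existence sketch, however, has a genuine gap. You commit to choosing $t\in A^{+}$ so that $1\in I:=t^{-1}A$, in order to get the inclusion $I\subseteq I\cdot I$ for free. But the idempotent factor of a neutrix need not contain $1$. Take $A=\oslash$: the canonical decomposition is $t=1$, $I=\oslash$, and $1\notin\oslash$. Worse, for \emph{every} positive infinitesimal $t\in\oslash^{+}$ one has
\[
(t^{-1}\oslash)\cdot(t^{-1}\oslash)=t^{-2}(\oslash\cdot\oslash)=t^{-2}\oslash\supsetneq t^{-1}\oslash,
\]
since $t^{-1}\in t^{-2}\oslash$ but $t^{-1}\notin t^{-1}\oslash$; hence no choice of $t$ inside $A$ yields an idempotent $I$ in this case. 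By contrast, for $A=\varepsilon\pounds$ with $\varepsilon\simeq 0$ the decomposition is $t=\varepsilon\in A$, $I=\pounds\ni 1$. So whether the correct scale $t$ lies in $A$ or not depends on whether the idempotent ``type'' of $A$ is $\pounds$-like ($1\in I$) or $\oslash$-like ($1\notin I$), and your scheme of searching among scales $s$ with $s(A\cdot A)\subseteq A$ inside $A^{+}$, followed by a permanence/completeness step, cannot succeed uniformly. The argument in \cite{koudjetithese,koudjetivandenberg} does not proceed by scaling $1$ into $I$; you would need a different mechanism to locate $t$ that handles both regimes.
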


\begin{proof}[Proof of Proposition \ref{suhoitucuaneutrixlonhon}] Let $\varepsilon >M$. Then $\varepsilon >N.$ Since $(u_n)$ $N$-converges to $\alpha$, there exists $n_0\in \N$ such that  $|u_n-\alpha|<\varepsilon$ for all $n\geq n_0$. Hence $(u_n)$ $M$-converges to $\alpha$.
	Without loss of generality, we may assume that $(u_n)$ is $N$-convergent to a neutrix. Let $L=\{M : M \textup{ is a neutrix and } (u_n) \mbox{ is $M$-convergent}\}$ and $M_0$ be the infimum of $L$.   By Theorem \ref{canonical form} there exist a positive real number $p$ and a unique idempotent neutrix such that $M_0=p\cdot I$. 
	
	Suppose first that $M_0=p\cdot \oslash$ for some positive real number $p$. If $M_0 \notin L$, all elements $ M $ of $ L $ satisfy $ L\supseteq p\cdot  \pounds$, hence $M_0$ cannot be the infimum of $L$, a contradiction. Hence $M_0 \in L$ and so $M_0 \leq N$ for every neutrix $N$ such that $(u_n)$ is $N$-convergent.
	
	Suppose now that $M_0$ is not a multiple of $\oslash$. Suppose that the sequence is not $M_0$-convergent. Let $\varepsilon >M_0$. Note that  $M_0 \subseteq \varepsilon \oslash$, for $\varepsilon \oslash$ is the biggest neutrix less than $ \oslash $, hence $M_0 \subset \varepsilon \oslash$, for $M_0 $ is not isomorphic to $\varepsilon \oslash$. For all $n\in \N$ there exists a element $m\geq n$ such that $\varepsilon \leq u_m$.  Because $\varepsilon \oslash<\varepsilon$, the sequence $u_n$ is not $\varepsilon \oslash$-convergent. So $\varepsilon \oslash\not \in L$. By the first part, for every neutrix $M\in L$ we have $\varepsilon \oslash\subset M$. It follows that $\varepsilon \oslash\subseteq M_0$, a contradiction. Hence $u_n$ is $M_0$-convergent.   
\end{proof}

As already suggested by Proposition \ref{suhoitucuaneutrixlonhon}, the following proposition entails that, concerning $N$-convergence, it is possible to neglect quantities in the terms which are contained in $ N$. 
\begin{proposition}
	\label{equiv e-conv}Let $M,N$ be neutrices such that $M\subseteq N$, let $\alpha \in \E$ and let $%
	\left( u_{n}\right) $ be a flexible sequence. Then the following are
	equivalent
	
	\begin{enumerate}
		\item \label{equiv1}$\left( u_{n}+M\right) \unright \alpha$
		\item \label{equiv3}$u_{n}\unright \alpha$
	\end{enumerate}
\end{proposition}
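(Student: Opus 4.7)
My plan is to prove both implications directly from the definition of $N$-convergence, using three ingredients: the reduction to real $\varepsilon$ granted by Remark \ref{equiv flexible}; the triangle inequality \eqref{DT1}; and the observation that for a real $\varepsilon$ the strict inequality $\varepsilon > N$ entails $\varepsilon/2 > N$ (because $2y \in N$ whenever $y \in N$, $N$ being a subgroup, so $\varepsilon > 2y$ gives $\varepsilon/2 > y$).

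For the forward implication $(\ref{equiv3}) \Rightarrow (\ref{equiv1})$, I would fix a real $\varepsilon > N$, pass to the tolerance $\varepsilon/2 > N$, and apply the assumed $N$-convergence of $(u_n)$ to $\alpha$ to produce $n_0 \in \N$ with $|u_n - \alpha| < \varepsilon/2$ for all $n \geq n_0$. Since $M \subseteq N$, one also has $M < \varepsilon/2$. The triangle inequality \eqref{DT1} gives $|(u_n + M) - \alpha| = |(u_n - \alpha) + M| \leq |u_n - \alpha| + M$, and combining the two strict bounds pointwise (see the last paragraph below) yields $|(u_n + M) - \alpha| < \varepsilon$, as required. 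For the reverse implication $(\ref{equiv1}) \Rightarrow (\ref{equiv3})$, I would simply note that $0 \in M$ forces $u_n \subseteq u_n + M$, and hence $u_n - \alpha \subseteq (u_n + M) - \alpha$ as external sets; by Proposition \ref{phanchiathanhhaibatdangthuc}, the condition $|(u_n + M) - \alpha| < \varepsilon$ is equivalent to $-\varepsilon < (u_n + M) - \alpha < \varepsilon$, so every element of the subset $u_n - \alpha$ also lies in $(-\varepsilon,\varepsilon)$, whence $|u_n - \alpha| < \varepsilon$ with the same $n_0$. Here the hypothesis $M \subseteq N$ is not even needed; only the forward direction uses it, to bound $M$ by $\varepsilon/2$.

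I do not expect any serious obstacle. The only point that demands a moment's care is verifying that the strict bound $|u_n - \alpha| + M < \varepsilon$ really holds as an inequality of external numbers, not merely for a few representatives. This is precisely what \eqref{DT1} together with Proposition \ref{phanchiathanhhaibatdangthuc} provides: every element of $|u_n - \alpha| + M$ is a sum $g+m$ with $|g| < \varepsilon/2$ and $|m| < \varepsilon/2$, so $|g+m| < \varepsilon$, which by Proposition \ref{phanchiathanhhaibatdangthuc} is equivalent to $|u_n - \alpha| + M < \varepsilon$.
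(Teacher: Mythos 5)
Your proposal is correct and follows essentially the same route as the paper: both directions are direct $\varepsilon$--$n_0$ arguments, using the halving $\varepsilon/2>N$ together with the triangle inequality \eqref{DT1} for $(\ref{equiv3})\Rightarrow(\ref{equiv1})$, and the inclusion $u_n-\alpha\subseteq (u_n+M)-\alpha$ (which the paper records as $|u_n-\alpha|\leq|(u_n+M)-\alpha|$) for the converse. Your extra care in checking the strict bound elementwise, and your remark that $M\subseteq N$ is only needed in one direction, are fine but do not change the argument.
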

\begin{proof}
	Assume $\left( u_{n}+M\right)
	\unright \alpha$. Then there exists $n_{0}\in \mathbb{N}$ such
	that for $n\geq n_{0}$ we have $\left \vert \left( u_{n}+M\right) -\alpha
	\right \vert <\varepsilon$. Then, for $n\geq n_{0}$%
	\begin{equation*}
	\left \vert u_{n}-\alpha \right \vert \leq \left \vert \left( u_{n}+M\right)
	-\alpha \right \vert <\varepsilon.
	\end{equation*}
	Hence $u_{n}\unright \alpha$. Assume now that $u_{n}\unright \alpha$. Then there exists $n_{0}\in \mathbb{N}$ such
	that for $n\geq n_{0}$ we have $\left \vert u_{n}-\alpha \right \vert
	<\varepsilon/2$. Clearly $\varepsilon/2>N$. Then%
	\begin{equation*}
	\left \vert \left( u_{n}+M\right) -\alpha \right \vert \leq \left \vert
	\left( u_{n}+N\right) -\alpha \right \vert \leq \left \vert u_{n}-\alpha
	\right \vert +N<\frac{\varepsilon}{2}+\frac{\varepsilon}{2}=\varepsilon.
	\end{equation*}
	Therefore $(u_{n}+M) \unright \alpha$. Hence \eqref{equiv1} and \eqref{equiv3} are equivalent.
\end{proof}

As illustrated by Example \ref{Example not unicity}, $N$-convergence is unable to distinguish between elements whose
distance is less than or equal to the neutrix $N$. As a consequence, the $N$-limits of a flexible sequence are not unique. However, $N$-limits are unique modulo $N$ in the sense that if a flexible sequence $(u_n) $ is $N$-convergent to two (possibly distinct) elements $\alpha,\beta \in \E$ then the absolute value of their difference must be less than $N$. Moreover, if $(u_n)$ is $N$-convergent to some $\alpha \in \E$ then $N$ is the best possible, meaning that then $N(\alpha) \subseteq N$ and the largest limit set is $\alpha + N$.

\begin{proposition}\label{tinhduynhatcuagioihansailechneutrix}	 Let $u:  \N\longrightarrow \E$ be a flexible	sequence, $\alpha,\beta \in \E$ and $M$ be a neutrix such that $u_{n} \underset{M} {\longrightarrow} \alpha$
	and $u_{n} \underset{M}{\longrightarrow} \beta$. Then 
	\begin{enumerate}
		\item \label{tcntgh1} $N\left(  \alpha \right)  \subseteq  M$;
		\item \label{tcntgh2}$\left \vert \alpha-\beta	\right \vert \leq M$;
		\item \label{tcntgh3}$u_n  \underset{M}{\longrightarrow} \alpha+M$;
		\item \label{tcntgh4} $u_n  \underset{M}{\longrightarrow} \gamma$ if and only if $\gamma\subseteq \alpha+M$.
	\end{enumerate}
\end{proposition}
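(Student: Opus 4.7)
My plan is to prove the four items in the stated order, since (2), (3), and (4) all follow by combining the hypothesis of $M$-convergence with the triangle inequality \eqref{DT1} and the equivalent real-$\varepsilon$ formulation given in Remark \ref{equiv flexible}; the crucial foundational step is (1), which pins down the neutrix of any $M$-limit.

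For (1), I would argue by contradiction. Assuming $N(\alpha)\not\subseteq M$, the linear ordering of neutrices forces $M\subsetneq N(\alpha)$, so I pick a positive $t\in N(\alpha)\setminus M$; since $M$ is closed under addition, $t/2\notin M$ as well, and therefore $\varepsilon:=t/2$ is a positive real with $\varepsilon>M$. By $M$-convergence there exists $n_{0}$ with $|u_{n}-\alpha|<\varepsilon$ for $n\ge n_{0}$. Writing $\alpha=a+N(\alpha)$ and $u_{n}=r_{n}+N(u_{n})$, the external number $|u_{n}-\alpha|$ has neutrix part containing $N(\alpha)$, hence contains the real $|r_{n}-a|+t$. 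Since $|r_{n}-a|\ge 0$, this element is at least $t>\varepsilon$, contradicting $|u_{n}-\alpha|<\varepsilon$.

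For (2), the triangle inequality yields $|\alpha-\beta|\le|u_{n}-\alpha|+|u_{n}-\beta|$. Fix any real $\varepsilon>M$; by the same subgroup argument $\varepsilon/2>M$, so eventually both summands are smaller than $\varepsilon/2$ and therefore $|\alpha-\beta|<\varepsilon$. Since this holds for every real $\varepsilon>M$, every element of the external number $|\alpha-\beta|$ is trapped in $M$, and the absence of a centre strictly above $M$ forces $|\alpha-\beta|\subseteq M$, which by Proposition \ref{Lemma order relations}.\ref{OR3 <=} gives $|\alpha-\beta|\le M$. For (3), one more application of the triangle inequality gives $|u_{n}-(\alpha+M)|\le|u_{n}-\alpha|+M$; given $\varepsilon>M$, choosing $n_{0}$ so that $|u_{n}-\alpha|<\varepsilon/2$ and using $M\le\varepsilon/2$ lets me conclude $|u_{n}-(\alpha+M)|<\varepsilon$.

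For (4), the backward direction follows from (3) and the triangle inequality, since $\gamma\subseteq\alpha+M$ makes $|\gamma-(\alpha+M)|\subseteq M$, so $|u_{n}-\gamma|\le|u_{n}-(\alpha+M)|+M<\varepsilon$ eventually. For the forward direction I apply (2) to the pair $(\alpha,\gamma)$ to get $|\alpha-\gamma|\le M$, hence $|\alpha-\gamma|\subseteq M$; reading this back through Definition \ref{trituyetdoidinhnghia} shows $\alpha-\gamma\subseteq M$, and fixing an element $a_{0}\in\alpha$ yields $\gamma\subseteq a_{0}+M\subseteq\alpha+M$. The main obstacle I anticipate is the careful interpretation of the inequality $|\alpha-\beta|\le M$ in item (2) and the passage from it to a set-theoretic containment, since the order relations on $\E$ do not coincide with inclusion (as the discussion after Definition \ref{Def order relations} makes clear); Proposition \ref{Lemma order relations}, together with the fact that $|\alpha-\beta|$ has a non-negative centre and $M$ contains $0$, is the key tool for converting the order inequality into the inclusion needed to drive the forward direction of (4).
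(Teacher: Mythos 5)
Your proposal is correct and follows essentially the same route as the paper: item (1) by contradicting the bound $|u_n-\alpha|<\varepsilon$ with an element of $N(\alpha)\setminus M$ sitting inside the neutrix of $|u_n-\alpha|$, item (2) by the triangle inequality, item (3) by absorbing $M$ into $\varepsilon/2$, and item (4) by combining (1)--(3) with the passage from $|\alpha-\gamma|\leq M$ to the inclusion $\gamma-\alpha\subseteq M$. The only cosmetic difference is that in (2) the paper argues by contradiction with the external number $\varepsilon_0=|\alpha-\beta|/2$, whereas you work with precise $\varepsilon$ throughout and then convert the order bound into containment; both are sound.
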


\begin{proof} 
	\ref{tcntgh1}. Suppose that $M\subset N(\alpha)$.  Let $\varepsilon \in \R$ be such that $M<\varepsilon  \leq N\left(
	\alpha \right)  $. Then there exists $n_{0}\in\N \,
	$ such that for $n\geq n_{0}$ we have
	\[
	N\left(  \alpha \right)  \leq N\left(  \left \vert u_n-\alpha \right \vert
	\right)  \leq \left \vert u_n-\alpha \right \vert <\varepsilon \leq N\left(
	\alpha \right)  ,
	\]
	which is a contradiction. Hence  $N\left(  \alpha \right)  \subseteq  M$.
	
	\ref{tcntgh2}. Suppose $u_{n}\underset{M}{\longrightarrow} \alpha$ and $u_{n}  \underset
	{M}{\longrightarrow} \beta$ with $\left \vert \alpha-\beta \right \vert >M$. For 
	$\varepsilon_0 = \frac{\left \vert \alpha-\beta \right \vert }{2}>M$, there exists $n_{0}\in	\mathbb{N}
	$ such that  $\left \vert u_{n}-\alpha \right \vert
	<\varepsilon_0$ for $n\geq n_{0}$ and there exists $n_{1}\in
	\mathbb{N}
	$ such that  $\left \vert u_{n}-\beta \right \vert
	<\varepsilon_0$ for $n\geq n_{1}$. Let $k= \max \left \{  n_{0},n_{1}\right \}$.  Then for $n\geq k$,%
	\[
	\left \vert \alpha-\beta \right \vert \leq \left \vert \alpha-u_{n}+u_{n}%
	-\beta \right \vert \leq \left \vert \alpha-u_{n}\right \vert +\left \vert
	u_{n}-\beta \right \vert <2\varepsilon_0=\left \vert \alpha-\beta \right \vert \text{,}%
	\]
	a contradiction. Hence $\left \vert \alpha-\beta \right \vert \leq M$.
	
	\ref{tcntgh3}. 	Let $\varepsilon >M$. Then $\varepsilon /2>M/2=M$.  Because  $u_n \underset{M}{\longrightarrow} \alpha, $  there exists $n_0 \in \N$ such that  $|u_n-\alpha| < \varepsilon /2$ for all $n\geq n_0$. It follows that $$|u_n-\alpha+M|= |u_n-\alpha|+M< \frac {\varepsilon}{2}+\frac {\varepsilon}{2}=\varepsilon $$ for all $n\geq n_0$. Hence $u_n \underset{M}{\longrightarrow} \alpha+M$.
	
	\ref{tcntgh4}. If $\gamma\subseteq \alpha+M$ then   $|u_n-\gamma|\leq |u_n-\alpha+M|<\varepsilon $ for all $n\geq n_0,$ and hence $u_n	\underset{M}{\longrightarrow} \gamma$.
	Conversely, assume that $u_n	\underset{M}{\longrightarrow} \gamma$. Then by Proposition \ref{tinhduynhatcuagioihansailechneutrix}.\ref{tcntgh1} we have $N(\gamma) \subseteq M$. By Proposition \ref{tinhduynhatcuagioihansailechneutrix}.\ref{tcntgh2} it holds that $|\alpha -\gamma|\subseteq M$. So $\gamma - \alpha \subseteq M$. Then $\gamma \subseteq \gamma +N(\alpha) \subseteq \alpha + M$. 
\end{proof}

In Proposition \ref{tinhduynhatcuagioihansailechneutrix}.\ref{tcntgh2}, if $M=0$, then $\alpha=\beta$, i.e. we have uniqueness of the limit. This was to be expected because as mentioned above this case corresponds to the usual notion of convergence.

Let $(u_n)$ be a flexible sequence of the form $(a_n)+(A_n)$, where for each $n \in \N$ the element $ a_n $ is precise and $ A_n=N(u_n) $. Then we call the precise sequence $(a_n)$ a \emph{sequence of representatives}  of the sequence $(u_n)$. Let $N$ be a neutrix. In Proposition \ref{seq representatives well defined} we will show that a different choice for the sequence of representatives still leads to the same $N$-limit. First, we show that a flexible sequence $\left( u_{n}\right) $ is $N$-convergent to $\alpha $ if and only if both $(A_n)$ is $N$-convergent to the neutrix of $\alpha$ and a sequence of representatives of $(u_n)$ is $N$-convergent to an element of $ \alpha $.

\begin{proposition}
	\label{converComponents} Let $\left( u_{n}\right) $ be a flexible sequence such that for all $n\in \mathbb{N}$, $u_{n}=a_{n}+N(
	u_{n})$ where $(a_{n})$ is a sequence of representatives. Let $N$
	be a neutrix and let $\alpha=a+N(\alpha) \in \mathbb{E}$. Then $%
	u_{n}\unright \alpha$ if and only if $a_{n} \unright\ a$ and $N(u_{n}) \unright N(\alpha)$. In particular if $u_{n}\underset{N } {\longrightarrow} a$ and $N(\alpha) \leq N$, then $u_{n}\underset{N }{\longrightarrow} \alpha$.
\end{proposition}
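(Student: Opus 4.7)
My plan is to reduce the equivalence to two algebraic identities coming from the group structure of neutrices. Writing $\alpha = a + N(\alpha)$ and $u_n = a_n + N(u_n)$, and using that neutrices are symmetric groups, one has $u_n - \alpha = (a_n - a) + N(u_n) + N(\alpha)$, where $N(u_n) + N(\alpha)$ is itself a neutrix. Directly from Definition~\ref{trituyetdoidinhnghia}, and using $|M| = M$ for any neutrix $M$, I would record the two key identities
\begin{equation*}
|u_n - \alpha| = |a_n - a| + N(u_n) + N(\alpha), \qquad |N(u_n) - N(\alpha)| = N(u_n) + N(\alpha).
\end{equation*}
Once these are in place, both directions of the equivalence are essentially forced.

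For the forward implication, I would assume $u_n \unright \alpha$, fix $\varepsilon > N$, and pick $n_0$ so that $|u_n - \alpha| < \varepsilon$ for all $n \geq n_0$. By the first identity this immediately gives $|a_n - a| < \varepsilon$ and $N(u_n) + N(\alpha) < \varepsilon$; by the second identity the latter is precisely $|N(u_n) - N(\alpha)| < \varepsilon$. Hence a single $n_0$ witnesses both $a_n \unright a$ and $N(u_n) \unright N(\alpha)$.

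For the converse, I would fix $\varepsilon > N$, taken to be real via Remark~\ref{equiv flexible}, so that $\varepsilon/2 > N$ as well. Applying the two hypotheses to $\varepsilon/2$ yields a common $n_0$ such that $|a_n - a| < \varepsilon/2$ and $|N(u_n) - N(\alpha)| < \varepsilon/2$ for every $n \geq n_0$. Substituting into the first identity above gives $|u_n - \alpha| < \varepsilon$, establishing $u_n \unright \alpha$.

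For the ``in particular'' clause, assuming $u_n \unright a$ and $N(\alpha) \subseteq N$, I would appeal to Proposition~\ref{tinhduynhatcuagioihansailechneutrix}.\ref{tcntgh4} with the precise external number $a$ playing the role of the $N$-limit: since $\alpha = a + N(\alpha) \subseteq a + N$, the proposition immediately gives $u_n \unright \alpha$. I do not expect any serious obstacle; the only substantive point is isolating the decomposition $|u_n - \alpha| = |a_n - a| + N(u_n) + N(\alpha)$, after which both directions reduce to halving (or not halving) an $\varepsilon$.
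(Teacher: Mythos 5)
Your proof is correct and follows essentially the same route as the paper's: decompose $u_n-\alpha$ as $(a_n-a)+N(u_n)+N(\alpha)$ and run the $\varepsilon$--$n_0$ argument in both directions. The only (harmless) differences are that you work with the exact identity $|u_n-\alpha|=|a_n-a|+N(u_n)+N(\alpha)$ where the paper invokes the triangle inequalities \eqref{DT1}--\eqref{DT2} (so your forward direction gets by with a single $n_0$ and no halving), and that you supply an explicit argument for the ``in particular'' clause via Proposition~\ref{tinhduynhatcuagioihansailechneutrix}.\ref{tcntgh4}, which the paper leaves implicit.
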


\begin{proof}
	Assume first that $a_{n}\unright a$ and $N( u_{n})
	\unright N(\alpha) $. Let $\varepsilon>N$. Then
	there exist $n_{1},n_{2}\in \mathbb{N}$ such that $%
	\left \vert a_{n}-a\right \vert <\varepsilon/2$, for $n\geq n_{1}$ and  $%
	\left \vert N(u_{n}) +N(\alpha) \right \vert
	=\left
	\vert N(u_{n}) -N(\alpha) \right \vert
	<\varepsilon/2$ for $n\geq n_{2}$. Let $n_{0}=\max \left \{ n_{1},n_{2}\right \} $. Then for $%
	n\geq n_{0}$ we have $$\left \vert u_{n}-\alpha \right \vert =\left \vert
	a_{n}+N(u_{n}) -a+N(\alpha) \right \vert \leq
	\left
	\vert a_{n}-a\right \vert +\left \vert N(u_{n}) -N(\alpha) \right \vert <\frac {\varepsilon}{2}+\frac {\varepsilon}{2}=\varepsilon.$$ Hence $u_{n}\unright \ \alpha$.
	
	Conversely, assume that $u_{n}\unright \alpha$. Let $\varepsilon>N$. Then
	there exist $n_{1},n_{2}\in \mathbb{N}$ such that  $%
	\left \vert u_{n}-\alpha \right \vert <\varepsilon$ for $n\geq n_{1}$ and  $\left \vert u_{n}-\alpha \right \vert <\varepsilon/2$ for $n\geq n_2$. Then, for $n \geq n_1$,%
	\begin{equation*}
	\left \vert a_{n}-a\right \vert \leq \left \vert a_{n}+N(u_{n})
	-a+N(\alpha) \right \vert =\left \vert u_{n}-\alpha \right
	\vert <\varepsilon
	\end{equation*}
	and, for $n \geq n_2$, 
	\begin{equation*}
	\left \vert N(u_{n}) -N(\alpha) \right \vert =\left
	\vert u_{n}-u_{n}+\alpha-\alpha \right \vert \leq \left \vert u_{n}-\alpha
	\right \vert +\left \vert u_{n}-\alpha \right \vert <\frac{\varepsilon}{2}+%
	\frac{\varepsilon}{2}=\varepsilon \text{.}
	\end{equation*}
	
	\noindent Hence $a_{n}\unright a$ and $N(u_{n}) \unright N(\alpha) $.
\end{proof}

\begin{proposition} \label{seq representatives well defined}
	Let $(u_n)$ be a flexible sequence. Let $(a_n),(b_n)$ be precise sequences and $(A_n)$ be a sequence of neutrices such that for all $n \in \N$ it holds that $u_n=a_n+A_n$ and $u_n=b_n+A_n$. Let $\alpha=a+A$ be an external number and $N$ be a neutrix such that $u_n \unright \alpha$. Then $a_n \unright a$ if and only if $b_n \unright a$.  
\end{proposition}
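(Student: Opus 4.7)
The plan is to reduce the claim to a triangle inequality after invoking Proposition \ref{converComponents} to control the common neutrix sequence. Since $u_n = a_n + A_n = b_n + A_n$, the difference $c_n := a_n - b_n$ belongs to $A_n$ for every $n$, so any bound on the ``size'' of $A_n$ immediately transfers to a bound on $|c_n|$ via the symmetry of neutrices.

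First, I would apply Proposition \ref{converComponents} to the representation $u_n = a_n + A_n$: the hypothesis $u_n \unright \alpha$ yields in particular that $A_n \unright A$, where $A = N(\alpha)$. Assume now that $a_n \unright a$ and let us derive $b_n \unright a$. Fix an arbitrary real $\varepsilon > N$, which is permitted by Remark \ref{equiv flexible}; then $\varepsilon/2 > N$ as well, so there exist $n_0, n_1 \in \N$ with $|a_n - a| < \varepsilon/2$ for $n \geq n_0$ and $|A_n - A| < \varepsilon/2$ for $n \geq n_1$. Since the absolute value of a neutrix equals itself, the latter says $A_n + A < \varepsilon/2$. Because $c_n \in A_n \subseteq A_n + A$ and neutrices are symmetric and convex, this entails $|c_n| < \varepsilon/2$ for $n \geq n_1$. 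For $n \geq \max(n_0, n_1)$ the triangle inequality then gives
\begin{equation*}
|b_n - a| \leq |a_n - a| + |c_n| < \varepsilon,
\end{equation*}
so $b_n \unright a$. The converse implication follows by the same argument with the roles of $a_n$ and $b_n$ interchanged.

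The only bookkeeping point that deserves care is the passage from $|A_n - A| < \varepsilon/2$ to $|c_n| < \varepsilon/2$: this requires reading off the meaning of ``$G < r$'' from Definition \ref{Def order relations} for a neutrix $G$ and a precise positive real $r$ (namely, every element of $G$ is strictly less than $r$), combined with the identity $|A_n - A| = A_n + A$. No substantive obstacle is expected; the proof is essentially a two-line estimate once Proposition \ref{converComponents} is in place.
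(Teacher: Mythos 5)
Your proposal is correct and follows essentially the same route as the paper's proof: both invoke Proposition \ref{converComponents} to get $A_n \unright A$, observe that $b_n - a_n \in A_n$ since both are representatives of the same external number, and conclude by the triangle inequality $|b_n - a| \leq |b_n - a_n| + |a_n - a| < \varepsilon$. The bookkeeping step you flag (passing from $|A_n - A| < \varepsilon/2$ to a bound on $|b_n - a_n|$) is handled in the paper by the same chain of inequalities $|b_n-a_n| \leq A_n \leq |A_n - A| < \varepsilon/2$, so there is no gap.
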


\begin{proof}
	Suppose that $a_n \unright a$. Let $\varepsilon >N$. Then there exists $n_1 \in \N$ such that for $n \geq n_1$ it holds that $|a_n-a|<\varepsilon/2$. Also, since $u_n \unright \alpha$, by Proposition \ref{converComponents} it holds that $A_n \unright A$, so there exists $n_2 \in \N$ such that for $n \geq n_2$ it holds that $|A_n-A|<\varepsilon/2$. Let $n_0=\max\{n_1,n_2 \}$. Observe that for each $n \in \N$ we have $b_n-a_n \in A_n$ because $a_n$ and $b_n$ are both representatives of the same external number. Then for $n \geq n_0$ it holds that 
	\begin{alignat*}{2}
	|b_n-a|=&|b_n-a_n+a_n-a|\leq |b_n-a_n|+|a_n-a|<A_n+\frac{\varepsilon}{2} \\
	\leq & |An-A|+ \frac{\varepsilon}{2} 	< \frac{\varepsilon}{2}+\frac{\varepsilon}{2}=\varepsilon.
	\end{alignat*}
	
	Hence, $b_n \unright a$.
\end{proof}

The absolute value preserves $N$-convergence.
\begin{proposition}
	\label{Proposition abs conv}Let $(u_{n})$ be a flexible sequence
	and $N$ be a neutrix. If $u_{n}\unright \alpha$ then $%
	\left \vert u_{n}\right \vert \unright \left \vert \alpha
	\right \vert $.
\end{proposition}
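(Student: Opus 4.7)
My plan is to use the reverse triangle inequality stated as (DT2), namely $||\alpha|-|\beta||\le |\alpha-\beta|$, to directly transfer the $N$-closeness of $u_n$ to $\alpha$ into $N$-closeness of $|u_n|$ to $|\alpha|$. No structural work on the neutrices $N(u_n)$ or $N(\alpha)$ is needed, because the inequality already does the bookkeeping: subtracting an absolute value costs no more, in magnitude, than subtracting the number itself.

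Concretely, I would argue as follows. By Remark \ref{equiv flexible} it suffices to verify the condition of Definition \ref{Definite convergentie} for precise (real) $\varepsilon > N$. Fix such an $\varepsilon$. Since $u_n \unright \alpha$, there exists $n_0 \in \mathbb{N}$ such that $|u_n - \alpha| < \varepsilon$ for every $n \geq n_0$. Applying (DT2) term by term then gives
\begin{equation*}
\bigl||u_n| - |\alpha|\bigr| \leq |u_n - \alpha| < \varepsilon
\end{equation*}
for every $n \geq n_0$, which is precisely the statement that $|u_n| \unright |\alpha|$.

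The only point that merits any care is that the chained inequality above legitimately combines the (non-strict) inequality supplied by the reverse triangle inequality with the strict inequality supplied by the $N$-convergence hypothesis. This is a property of the order on $\mathbb{E}$: by Proposition \ref{Lemma order relations}, $\gamma \leq \delta$ means either $\gamma < \delta$ or $\gamma \subseteq \delta$, and in either case $\delta < \varepsilon$ forces $\gamma < \varepsilon$ (using convexity of $\varepsilon$ together with disjointness in the strict case). So there is no real obstacle — the proof is essentially a one-line consequence of (DT2) once one is comfortable manipulating external inequalities.
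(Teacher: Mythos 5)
Your proposal is correct and follows essentially the same route as the paper, which likewise deduces the result immediately from the reverse triangle inequality $\bigl||u_n|-|\alpha|\bigr|\le |u_n-\alpha|$ of \eqref{DT2}. Your additional remarks on combining the non-strict and strict inequalities via Proposition \ref{Lemma order relations} only make explicit what the paper leaves implicit.
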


\begin{proof} By \eqref{DT2} we have that $\left \vert \left \vert u_{n}\right \vert -\left \vert \alpha \right \vert
	\right \vert \leq \left \vert u_{n}-\alpha \right \vert$. The result is an immediate consequence of this fact.
\end{proof}

Let $N$ be a neutrix. By the results above most cases of $N$-convergence to an external number $\alpha$ reduce to the case where $N=N(\alpha)$, so we can talk about convergence instead of $N$-convergence and write simply $u_n \longrightarrow \alpha$ as indicated in Definition \ref{Definite convergentie}. 
Next theorem states that convergence implies strong convergence except in the case where the limit is precise. The proof will be given in Section \ref{subsequences}.

\begin{theorem}	\label{Stelling sterke convergentie onbegrensd}
	Let $(u_n)$ be a flexible sequence and $\alpha$ be an external number with $N(\alpha)\neq 0.$ Then $u_n{\longrightarrow} \alpha$ if and only if  $u_n \rightsquigarrow \alpha$.
\end{theorem}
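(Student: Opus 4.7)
The equivalence is proved in two parts. The direction $u_n \rightsquigarrow \alpha \Rightarrow u_n \longrightarrow \alpha$ is immediate and does not even require $N(\alpha) \neq 0$: once $u_n \subseteq \alpha$ for $n\geq n_0$, we have $u_n - \alpha \subseteq \alpha - \alpha = N(\alpha)$, hence $|u_n-\alpha|\subseteq N(\alpha)$, so $|u_n-\alpha|<\varepsilon$ for every precise $\varepsilon>N(\alpha)$.

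For the converse I would write $N=N(\alpha)$ and $\alpha=a+N$, and argue by contradiction: assume $u_n \longrightarrow \alpha$ but $\{n\in\N : u_n\not\subseteq\alpha\}$ is cofinal with $\N$. The definable external mapping $V_n=u_n\setminus\alpha$ then has cofinal projection, so Proposition~\ref{exists internal subsequence2} furnishes an internal set $S\subseteq \N$ cofinal with $\N$ together with an internal selector $v:S\to\R$ such that $v_n\in u_n$ and $v_n\notin a+N$; since $N$ is a symmetric additive subgroup the latter is equivalent to $|v_n-a|>N$. From $u_n\longrightarrow \alpha$ and $v_n\in u_n$, $a\in\alpha$ we obtain, for every precise $\varepsilon>N$, some $n_0$ with $|v_n-a|<\varepsilon$ for all $n\in S$ with $n\geq n_0$.

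Because $N(\alpha)\neq 0$ and $N\neq \R$, the neutrix $N$ is strictly external, so it is a galaxy or a halo. If $N=\bigcup_{\st(z)\in Z}[-d_z,d_z]$ is a galaxy, the internal set $I=\{|v_n-a| : n\in S\}$ satisfies $\inf I\geq d_z$ for every standard $z$; were $\inf I$ in $N$, some standard $z_0$ would bound the whole standard family $(d_z)$ from above, forcing $N=[-d_{z_0},d_{z_0}]$ to be internal, a contradiction. Hence $\inf I > N$ is a legitimate precise tolerance, and applying convergence with $\varepsilon=\inf I$ produces $n\in S$ with $|v_n-a|<\inf I$, contradicting $|v_n-a|\in I$. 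If instead $N=\bigcap_{\st(y)\in Y}[-c_y,c_y]$ is a halo, the identity $N=2N$ (valid because $N$ is a convex additive subgroup, so $r\in N \Leftrightarrow 2r\in N$) allows us, after enlarging the standard family by dyadic submultiples, to assume that for every standard $y$ there is a standard $y'$ with $2c_{y'}\leq c_y$. For each standard $y$ we have $2c_y\notin [-c_y,c_y]$ and therefore $2c_y>N$, so convergence supplies some $n\in S$ with $|v_n-a|<2c_y$. Applying the Idealization Principle to the internal sequence $v$ over standard finite subsets of $Y$, we obtain a single $n\in S$ satisfying $|v_n-a|<2c_y$ for every standard $y$; the halving assumption then upgrades this to $|v_n-a|<2c_{y'}\leq c_y$ for every standard $y$, so $|v_n-a|\in N$, once again contradicting $|v_n-a|>N$.

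The main obstacle is the halo case: convergence only gives, for each tolerance $\varepsilon>N$ separately, an eventual witness, and one must promote this to a single internal index $n\in S$ whose precise witness $|v_n-a|$ actually falls inside the halo $N$. This is exactly the role of the internal witness sequence delivered by Proposition~\ref{exists internal subsequence2}: it puts $v$ in the scope of the Idealization Principle and so lets the pointwise bounds be gathered into one. The galaxy case, by contrast, is essentially automatic from the existence of $\inf I$ and the fact that a strictly external galaxy admits no standard upper bound on its defining family.
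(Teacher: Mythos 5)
Your proof is correct, but the key step is carried out by a genuinely different mechanism than the paper's. Both arguments share the same skeleton: argue by contradiction and use Proposition~\ref{exists internal subsequence2} to extract an internal, cofinally defined choice sequence of witnesses lying outside the limit set. From there the routes diverge. The paper first applies Propositions~\ref{converComponents} and~\ref{strong conver of neutrix sequence} to force $N(u_n)\subseteq N(\alpha)$ eventually (so that $u_n\nsubseteq\alpha$ upgrades to $u_n\cap\alpha=\emptyset$), and then disposes of the internal witness sequence with Lemma~\ref{hoitutrong}, whose proof is a Bolzano--Weierstrass argument (bounded internal sequence, convergent subsequence, real-number estimates) that is uniform in the neutrix and requires no structural analysis of $N$. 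You bypass both the neutrix-part preprocessing (by selecting witnesses directly in $u_n\setminus\alpha$, which is legitimate and is exactly how the paper itself invokes Proposition~\ref{exists internal subsequence2} inside the proof of Proposition~\ref{strong conver of neutrix sequence}) and Lemma~\ref{hoitutrong}, replacing the latter by a permanence argument split on the nature of $N$: a Cauchy-Principle-style infimum argument when $N$ is a galaxy, and an Idealization argument collecting the $\varepsilon$-by-$\varepsilon$ convergence bounds into a single index when $N$ is a halo. What your route buys is a shorter, more self-contained contradiction that makes visible exactly where the external structure of $N$ enters; what it costs is the reliance on two structural facts you assert without proof, namely that a strictly external neutrix is either a galaxy or a halo and that it admits a presentation by symmetric intervals $[-d_z,d_z]$ (with $d_z\in N$ in the galaxy case). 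These are true and standard --- they follow from Saturation together with the convexity and group structure of $N$, in the same way that Section~\ref{Section_nature_external_sequences} presents the trichotomy for initial segments of $\N$ --- but the paper never states them for neutrices, precisely because Lemma~\ref{hoitutrong} lets it avoid the dichotomy altogether; a sentence justifying them would make your argument complete.
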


A well-known theorem of nonstandard analysis says that a
standard sequence $(u_n)$ which converges to a limit $a$ satisfies $u_{n}\simeq
a$ for all unlimited $n\in \mathbb{N}$. Reformulated in the sense of Definitions \ref{Definite convergentie} and \ref{def strong convergence}, this property implies that if the standard sequence $(u_n)$ is
$\oslash$-convergent to $a+\oslash $, then it converges strongly to $a+\oslash $. The property is of interest, for instance because operations with classical convergence 
involve epsilontics, and operations with the nonstandard criterium of convergence use simple
algebraic properties of infinitesimal numbers. Theorem \ref{Stelling sterke convergentie onbegrensd} extends this property to all external definable sequences and all external numbers. 
Again the theorem is of interest in relation to behavior under operations: operations with $N$-convergence involve epsilontics, and operations with strong convergence may profit from the algebraic properties of external numbers \cite{dinisberg 2011} (see Section \ref{Section Operations Limits}).

\begin{example} \rm
	Consider again the flexible sequence $(u_n)$ defined by $u_n=\frac{1}{n}+\oslash$. Then $ u_n \rightsquigarrow \oslash.$ In fact $ u_n\subseteq \oslash $ for all unlimited $ n \in \N  $.
\end{example}

\subsection{Convergence with respect to an initial segment\label{Subsection initial segment}}

Theorem \ref{Stelling sterke convergentie onbegrensd} states that a flexible
external sequence $(u_n)$ which converges to $\alpha \in \mathbb{E}$ reaches $\alpha $ in a finite
time. Put%
\begin{equation*}
C=\{n\in \mathbb{N} : \exists p>n(u_{p}\notin \alpha ) \}.
\end{equation*}%
Then $C\subset 
\mathbb{N}
$ is an initial segment of $%
\mathbb{N}
$. If $C\neq \emptyset $, one may say that $(u_n)$ converges to $\alpha $
already with respect to $C$: afterwards the terms $u_{n}$ stay within the
limit set $\alpha $. This property remains true if one truncates the sequence up to an index $\nu
\in $ $%
\mathbb{N}
\setminus C$. So one may define convergence also for sequences with finitely many terms. This
gives also the liberty to consider convergence for initial parts of a
sequence, that afterwards may exhibit different behavior, a situation that
is quite natural in the context of dynamical systems (see Section \ref{Section applications}).

Below we will extend the notions of convergence of Definitions \ref{Definite
	convergentie} and \ref{def strong convergence} to convergence with respect to an initial segment of $
\mathbb{N}$, note that by the above we may as well define this notion for local sequences. Again convergence implies strong convergence, as stated in
Theorem \ref{Convergence initial segment}. However in some cases there is a difference
with infinite sequences, meaning that the strong convergence may not happen
on the initial segment, but just beyond, along internal bridges.

\begin{definition}
	\label{Definition convergence local}Let $S,C\subseteq 
	\mathbb{N}
	$ be initial segments of $%
	\mathbb{N}
	$ with $C\subseteq S$, and $D=S\setminus C$. Let $u:S \rightarrow %
	\mathbb{R}
	$ be an external local sequence. Let $\alpha =a+A\in \mathbb{E}$. We say that $u$ 
	\emph{converges} to $\alpha $ with respect to $C$ if 
	\begin{equation*}
	\forall \varepsilon >A\exists m\in C\forall n\in S (n\geq
	m\Rightarrow \left \vert u_{n}-\alpha \right \vert < \varepsilon ).
	\end{equation*}%
	We say that $u$ \emph{strongly converges} to $\alpha $ with respect
	to $C$ if there exists a nonempty final segment $T$ of $S$ such that $%
	D\subseteq T$ and 
	\begin{equation*}
	\forall n\in S(n\in T\Rightarrow u_{n}\in \alpha ).
	\end{equation*}%
	In the first case we write $\lim_{n\rightarrow D}u_{n}=\alpha $ and in the
	second case \textrm{Lim}$_{n\rightarrow D}u_{n}=\alpha $. 
\end{definition}

Definition \ref{Definition convergence local} states that the local
convergence has to happen on the initial segment $C$\ of the domain $S$\ of
the sequence, but in the remaining part of $S$\ the sequence may not
deviate. In the case of strong convergence, observe that if $C=S$, we have
already $u_{n}\in \alpha $\ on a final segment of $C$. If $C\subset S$, we
have $u_{n}\in \alpha $\ for all $n\in S\setminus C$. So the strong
convergence already happens on $C$, or immediately beyond, and must continue
on the whole of $S$.

The property that $(u_n)$ already strongly converges to $\alpha $ within the
initial segment $C$, or just beyond, depends on the nature of the external
sets $C$ and $A$. They can either be internal, a galaxy or a halo. If the strong
convergence happens just beyond $C$, either $C$ is a galaxy, while $A$ is a
halo, or $C$ is a halo, while $A$ is a galaxy.
This is illustrated by the next example.

\begin{example}\label{Voorbeelden begin}
		\emph{Let} $u:\N	\rightarrow\R		$\emph{\ be defined by }$u_{n}=1/n$\emph{. If }$C={}^{\sigma }%
		\mathbb{N}
		$\emph{\ and }$D=\centernot{\infty}\cap 
		\mathbb{N}
		$\emph{, we have }$\lim_{n\rightarrow D}u_{n}=\oslash $\emph{\ and Lim}$%
		_{n\rightarrow D}u_{n}=\oslash $\emph{, because }$u_{n}\simeq 0$\emph{\ for
			all }$n\simeq \infty $\emph{. Let }$\omega \in 
		\mathbb{N}
		$\emph{\ be unlimited. If }$C^{\prime }=\oslash \omega $\emph{\ and }$%
		D^{\prime }=%
		\mathbb{N}
		\setminus C^{\prime }$\emph{, we have }$\lim_{n\rightarrow D^{\prime
		}}u_{n}=$\emph{Lim}$_{n\rightarrow D^{\prime }}u_{n}=%
		\pounds /\omega $\emph{. The limits are not unique, because also }$%
		\lim_{n\rightarrow D^{\prime }}u_{n}=\emph{Lim}%
		_{n\rightarrow D^{\prime }}u_{n}=\oslash $\emph{. However }$\pounds /\omega $%
		\emph{\ is minimal with respect to the latter limits, for if }$\alpha \in 
		\mathbb{E}$ \emph{such that }$\alpha \subset $\emph{\ }$\pounds /\omega $\emph{\
			there exists }$\nu \in \N
		$\emph{\ with }$\alpha <1/\nu <$\emph{\ }$\pounds /\omega $\emph{, so }$%
		u_{\nu }\notin \alpha $ \emph{(see Proposition \ref{suhoitucuaneutrixlonhon}). }
		
	\emph{Restricting the sequence }$(u_n)$ 
		\emph{to }$%
		S=\{0,...,\omega \}$\emph{, with a similar argument one shows that the sequence strongly converges to $\oslash $ with respect to $%
		^{\sigma }\N
		$ and to $\pounds /\omega $ with respect to $\oslash \omega $.}
\end{example}
Next theorem resumes all possibilities of strong convergence of precise sequences with respect to an initial segment.
The proof will be given in Section \ref{subsequences}.
\begin{theorem}
	\label{Convergence initial segment} Let $S,C\subseteq \mathbb{N}$ be nonempty initial segments of 
	$	\mathbb{N}	$ with $C\subseteq S$. Let $ D =\mathbb{N}\setminus C$ be nonempty and $\alpha =a+A\in \mathbb{E}$. Let $u:S\rightarrow 
	\mathbb{R}	$ be a local flexible sequence which converges to $\alpha$ on $S$ with respect to $C$. 
	Then $ u $ converges strongly to $ \alpha $ with respect to $C$ if $C \subset S$ or if $C=S$ and $C$ and $A$ are both prehalos or both pregalaxies.  
	
	If $C$ is a galaxy, while $A$ is a halo, or $C$ is a halo, while $A$ is a galaxy, for every internal bridge $ b:K\rightarrow \mathbb{R} $ from $C$ to $ D $ along $ u $, with $ K\subseteq \N$, there exists $ \nu \in D $ such that $ b_{\upharpoonright{K\cap \{0,..., \nu \}}} $ is strongly convergent with respect to $ K\cap C$. 
	\end{theorem}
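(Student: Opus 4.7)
\emph{Case $C \subsetneq S$.} Pick $\nu_0 := \min(S \setminus C)$. For every real $\varepsilon > A$, the convergence hypothesis provides $m(\varepsilon) \in C$ with $|u_n - \alpha| < \varepsilon$ for every $n \in S$, $n \geq m(\varepsilon)$; since $\nu_0 \notin C$ and $m(\varepsilon) \in C$ we have $\nu_0 > m(\varepsilon)$, so the estimate applies at every $n \in S$ with $n \geq \nu_0$. Letting $\varepsilon$ range over all reals $>A$ gives $|u_n - a| \leq A$, hence $u_n \in \alpha$. The final segment $T := S \setminus C$ then witnesses strong convergence.

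\emph{Case $C = S$, matched complexity.} Assume $C$ and $A$ are both pregalaxies (the prehalic subcase is dual). Write $C = \bigcup_{\st(z) \in Z} C_z$, $A = \bigcup_{\st(x) \in X} A_x$, so that $\mathcal{E} := \{\varepsilon \in \R_{>0} : \varepsilon > A\}$ is a halo. Convergence yields an (externally defined) minimal entry map $m:\mathcal{E} \to C$ with $|u_n - \alpha|<\varepsilon$ for all $n\in C$ past $m(\varepsilon)$. The key claim is $m(\mathcal{E}) \subseteq C_{\overline z}$ for some standard $\overline z$. For otherwise, for each standard $z \in Z$ there would be $\varepsilon > A$ whose entry exceeds $c_z$, and by Idealization across the standard parameters of $Z$ and $X$ (using internal approximations of $u_{\upharpoonright C_{z'}}$ supplied by Lemma \ref{Lemma internal extension}) one extracts a single $\varepsilon^* \in \mathcal{E}$ whose entry exceeds every standard $c_z$, i.e., lies outside $C$, contradicting convergence. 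Setting $m^* := c_{\overline z}$, for every $n \in C$ with $n \geq m^*$ and every real $\varepsilon > A$, $|u_n - \alpha| < \varepsilon$, whence $u_n \in \alpha$, and $T := \{n \in C : n \geq m^*\}$ gives strong convergence.

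\emph{Case $C = S$, mismatched complexity.} Invoke Theorem \ref{Stelling flexible brug} to produce an internal bridge $b:K \to \R$ from $C$ to $D := \N \setminus C$ along $u$, with $K$ internal, cofinal in $C$ and coinitial in $D$. For each real $\varepsilon > A$, the convergence of $u$ combined with the cofinal bridge property gives $|b_n - a| < \varepsilon$ on a cofinal subset of $K \cap C$, so the internal set $I_\varepsilon := \{n \in K : |b_n - a| < \varepsilon\}$ is cofinal in $K \cap C$. Since an internal subset of the galaxy $C$ must be bounded in $C$, $I_\varepsilon$ cannot lie inside $C$ and meets $D$. By a further Idealization across the standard family $(I_\varepsilon)_{\st(\varepsilon) > A}$ there exist $n_0 \in K$ and $\nu \in D$ with $K \cap [n_0, \nu] \subseteq \bigcap_{\st(\varepsilon) > A} I_\varepsilon$; on this internal tail $b_n \in \alpha$, and it is a final segment of $K \cap \{0, \dots, \nu\}$ containing $K \cap D \cap \{0, \dots, \nu\}$. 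Hence $b_{\upharpoonright K \cap \{0, \dots, \nu\}}$ is strongly convergent with respect to $K \cap C$.

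The main obstacle throughout is the diagonal selection across the halo $\mathcal{E}$: extracting either a uniform $m^* \in C$ (matched case) or a uniform pair $(n_0, \nu)$ along the bridge (mismatched case) from the $\varepsilon$-by-$\varepsilon$ witnesses. The galactic/halic complementarity between $C$ and $A$ governs on which side of the bridge the uniform witness can be found, and is precisely what gives rise to the stated dichotomy.
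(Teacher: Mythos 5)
Your three-way case split ($C\subset S$; matched complexity; mismatched complexity via an internal bridge) is exactly the architecture of the paper, which assembles Theorems \ref{Hoofdstelling bevat}, \ref{Stelling zelfde vorm} and \ref{Stelling sterke convergentie intern} (the last applied to the bridge supplied by Theorem \ref{Stelling flexible brug}). The first case is fine in substance, though $\min(S\setminus C)$ need not exist when $C$ is external; one simply argues at an arbitrary $\nu\in S\setminus C$, as the paper does. The matched pregalaxy case is also a workable sketch of the paper's Saturation--plus--Fehrele argument.

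The genuine gap is the sentence ``the prehalic subcase is dual.'' It is not. The clause ``$C$ and $A$ both prehalos'' includes $C$ a halo and $A=\{0\}$, i.e.\ a precise limit $\alpha=a$, and there the dual argument collapses: when $A$ is a strict halo one saturates over the standard-indexed family of radii $a_w$ with $A=\bigcap_{\st(w)}[-a_w,a_w]$ and then applies Fehrele to the pregalaxy $\bigcup_{\st(w)}\{0,\dots,\widetilde m(w)\}$ inside the halo $C$; when $A=\{0\}$ the tolerances range over \emph{all} positive reals, there is no standard index set to saturate over, and $\bigcup_{\varepsilon>0}\{0,\dots,\widetilde m(\varepsilon)\}$ is not a pregalaxy, so Fehrele yields nothing. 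The paper flags precisely this obstruction (``there is no direct proof for the existence of an internal choice function within an external environment'') and spends Lemma \ref{intern precies cofinal} and Theorem \ref{Stelling precies halo a} on it: one decomposes $u$ into functional prehalos $Q_x$, extends each to an internal sequence by Lemma \ref{Lemma internal extension}, and uses the stabilization of the internal decreasing family $B_k=\{n:\lvert u_n\rvert\le 1/k\}$ to show the internal extension is eventually \emph{equal} to $a$, before reassembling with Saturation and Lemma \ref{cofinal sets}. None of this is recoverable from your duality claim. A secondary, smaller gap is the ``further Idealization across the standard family $(I_\varepsilon)_{\st(\varepsilon)>A}$'' in the mismatched case: when $A$ is a galaxy the set $\{\varepsilon:\varepsilon>A\}$ is a halo, not indexed by a standard set, so you cannot idealize over it; the paper instead negates the conclusion, saturates over the standard index set of the halo $C$, and invokes Fehrele for the prehalo $\bigcap_{\st(z)}[0,\lvert u_{\widetilde n(z)}\rvert]$ versus the galaxy $A$, together with a Cauchy-principle permanence step to land $\nu$ in $D$.
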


		\section{Proofs of the strong convergence theorems\label{subsequences}}
	
	
	\subsection{Strong convergence of infinite sequences\label{global sequences}}
	We prove here Theorem \ref{Stelling sterke convergentie onbegrensd}, saying that definable flexible convergent sequences are strongly convergent. To do this, we prove some preliminary properties on cofinality, extend the classical theorem on the existence of a convergent subsequence for bounded sequences to external precise sequences, and prove separately the strong convergence theorem for precise sequences and sequences of neutrices. 
	We start by relating convergence and divergence of a sequence to corresponding properties of subsequences. 
	
	\begin{proposition}\label{suhoitucuamoidaycon}
		Let $N$ be a neutrix, $\alpha \in \mathbb{E}$ and let $(u_{n})$ be a flexible sequence. Then $u_{n} \unright \alpha 
		$ if and only if $u_{n\upharpoonright_{J}} \unright  \alpha$ for every set $J$ cofinal with $\mathbb{N}$.
	\end{proposition}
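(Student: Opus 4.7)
The plan is to prove the forward implication directly from the definition, and the converse by contrapositive, constructing an offending cofinal subsequence from a witness that convergence fails.

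For the forward implication, assume $u_{n}\unright\alpha$ and let $J\subseteq\mathbb{N}$ be cofinal with $\mathbb{N}$. Given $\varepsilon>N$, choose $n_{0}\in\mathbb{N}$ with $|u_{n}-\alpha|<\varepsilon$ for all $n\geq n_{0}$. Because $J$ is cofinal with $\mathbb{N}$, there exist $n\in J$ with $n\geq n_{0}$, and for every such $n$ the inequality $|u_{n}-\alpha|<\varepsilon$ still holds by hypothesis. Thus the same $n_{0}$ witnesses the $N$-convergence of $u_{n\upharpoonright J}$ to $\alpha$.

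For the converse I argue by contrapositive. Suppose that $u_{n}\not\!\unright\alpha$. By Remark \ref{equiv flexible} it suffices to test the convergence condition with real $\varepsilon$, so the negation provides a real $\varepsilon_{0}>N$ such that
\begin{equation*}
\forall n_{0}\in\mathbb{N}\,\exists n\geq n_{0}\,(|u_{n}-\alpha|\not<\varepsilon_{0}).
\end{equation*}
Define
\begin{equation*}
J=\{n\in\mathbb{N}:|u_{n}-\alpha|\not<\varepsilon_{0}\}.
\end{equation*}
By the preceding, $J$ is cofinal with $\mathbb{N}$, so by Definition \ref{Def subsequence-ad} the restriction $u_{n\upharpoonright J}$ is a genuine subsequence of $(u_{n})$. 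By construction $|u_{n}-\alpha|\not<\varepsilon_{0}$ for every $n\in J$, so no threshold can witness the $N$-convergence of $u_{n\upharpoonright J}$ to $\alpha$ at the scale $\varepsilon_{0}>N$. Hence $u_{n\upharpoonright J}\not\!\unright\alpha$, contradicting the hypothesis that every cofinal subsequence converges.

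The only delicate point is that the index set $J$ is generally external (it is defined from $u$, $\alpha$ and $\varepsilon_{0}$), but Convention \ref{convention} and Definition \ref{Def subsequence-ad} explicitly allow subsequences along external cofinal subsets of $\mathbb{N}$, so this causes no difficulty. I do not expect any genuine obstacle: the proposition is essentially the translation to the flexible setting of the standard fact that a sequence converges if and only if each of its cofinal subsequences does, and the restriction to real $\varepsilon$ afforded by Remark \ref{equiv flexible} makes the negation manipulable.
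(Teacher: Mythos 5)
Your forward implication is exactly the paper's argument: fix $\varepsilon>N$, take the threshold $n_{0}$ from the convergence of $(u_{n})$, and observe that cofinality of $J$ lets the same threshold work for the restriction. For the converse, however, you take a genuinely different and much heavier route than the paper. The paper simply notes that the converse is immediate because $\mathbb{N}$ itself is cofinal with $\mathbb{N}$, so $(u_{n})$ is one of the subsequences quantified over and the hypothesis applies to it directly. You instead argue by contrapositive, extracting a real $\varepsilon_{0}>N$ witnessing failure of convergence (via Remark \ref{equiv flexible}), forming the possibly external cofinal set $J=\{n:|u_{n}-\alpha|\not<\varepsilon_{0}\}$, and checking that the restriction to $J$ cannot converge. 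This is correct --- you are right that Definition \ref{Def subsequence-ad} admits external cofinal index sets, and your use of $\not<$ rather than $\geq$ is the proper negation for external numbers --- but it proves more than is needed: you establish the stronger fact that a divergent sequence has a cofinal subsequence staying uniformly $\varepsilon_{0}$-far from $\alpha$, when the proposition only requires observing that the full sequence is among the subsequences. The one-line argument is preferable here; your construction would be the natural tool if the hypothesis were restricted to \emph{proper} or \emph{internal} cofinal subsets, where the trivial choice $J=\mathbb{N}$ is unavailable.
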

	
	\begin{proof}
		Assume first that $(u_{n}) $ is $N$-convergent to $\alpha$. Let $J$ be cofinal with $\mathbb{N}$ and let $\varepsilon>N$%
		. Then there exists $n_{0}\in \mathbb{%
			N}$ such that for all $n\geq n_{0}$, we have $|u_{n}-\alpha|<\varepsilon.$ Because $J$ is cofinal with $\mathbb{N}$, there exists $m_0\in J$ such that $m_0\geq n_0$. So for all $m\in J$ such that $m\geq m_0\geq n_0$ we have $|u_{m_{n}}-\alpha|<\varepsilon$. We conclude	that $u_{n\upharpoonright_{J}} \unright  \alpha$. The other implication is	obvious because $\left( u_{n}\right) $ is a subsequence of itself.
	\end{proof}

	\begin{proposition}\label{bounded convergent}
		Every bounded precise sequence admits an internal convergent subsequence. 
	\end{proposition}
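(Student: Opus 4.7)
My plan is to string together Corollary~\ref{exist internal subsequence} with the classical Bolzano--Weierstrass theorem, applied internally via Transfer.

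First, I would invoke Corollary~\ref{exist internal subsequence} on the given bounded precise sequence $(u_n)$ to extract an internal subsequence $v \colon S \to \R$, where $S \subseteq \N$ is internal and cofinal with $\N$. Boundedness of $(u_n)$ yields a real $M > 0$ with $|u_n| \le M$ for all $n$, and hence $|v_n| \le M$ for every $n \in S$; so $v$ is an internal bounded real sequence.

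Next, since $S$ is internal and cofinal with $\N$ it has no largest element in $\N$, so it is (internally) infinite, and can be enumerated in increasing order as $(s_k)_{k \in \N}$ in an internal way. Set $w_k = v_{s_k}$; this is an internal bounded sequence indexed by all of $\N$. At this point I would appeal to Transfer: the classical statement \emph{every bounded sequence of reals has a convergent subsequence} holds internally, producing an internal strictly increasing $\varphi \colon \N \to \N$ and a real $\ell$ with $w_{\varphi(k)} \to \ell$. The internal set $T = \{s_{\varphi(k)} : k \in \N\}$ is cofinal with $\N$ because strictly increasing internal $\N$-valued sequences are unbounded in $\N$. Consequently $u_{\upharpoonright T}$ is an internal subsequence of $(u_n)$ in the sense of Definition~\ref{Def subsequence-ad} which converges (classically) to $\ell$.

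I do not foresee any serious obstacle: the argument is essentially an assembly of the extraction result Corollary~\ref{exist internal subsequence} with internal Bolzano--Weierstrass. The two points that need a moment of attention are (i) that ``bounded'' here really provides an absolute real bound which the internal subsequence inherits, so that the hypotheses of internal Bolzano--Weierstrass are met; and (ii) that the enumeration $(s_k)_{k\in\N}$ of the internal cofinal set $S$ is itself internal, which is standard for internal infinite subsets of $\N$ ordered by magnitude.
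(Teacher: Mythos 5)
Your proposal is correct and follows essentially the same route as the paper: extract an internal subsequence via Corollary~\ref{exist internal subsequence}, note it inherits the real bound, and apply the (internal) Bolzano--Weierstrass theorem. The paper's proof is just a terser version of yours; your extra care about the internal enumeration of $S$ and cofinality of the resulting index set fills in details the paper leaves implicit.
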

	\begin{proof} Let $(a_n)$ be a bounded precise sequence. 
		By Corollary \ref{exist internal subsequence} there exists an internal subsequence $(a_{n_m})$, of $(a_n)$ which is clearly also bounded. So there exists a convergent subsequence $(b_n)$ of $(a_{n_m})$, which is also a subsequence of $(a_n)$.
	\end{proof} 

\begin{remark} Let $\alpha=a+N$ and $(u_n)$ be a flexible sequence. Then $(u_n)$ is $ N $-convergent to $\alpha$ if and only if $(u_n-a)$ is $ N $-convergent to $N$. So when investigating $ N $-convergence, without restriction of generality we may consider $\alpha$ to be a neutrix. 
\end{remark}
	The next lemma is crucial in proving Theorem \ref{Stelling sterke convergentie onbegrensd}, and says that an internal sequence which remains outside a neutrix, does not have this neutrix as a limit.
	\begin{lemma}\label{hoitutrong} 
		Let $(a_n)$ be an internal precise sequence. Let $N\not=0$ be a neutrix and $n_0 \in \N$ be such that $a_n\not\in N$ for all $n \geq n_0$. Then  $(a_n)$ is not $N$-convergent to $N$. 
	\end{lemma}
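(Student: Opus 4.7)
I would argue by contradiction: assume $(a_n)$ is $N$-convergent to $N$ and derive a conflict with the hypothesis that $a_n \notin N$ for $n \geq n_0$. The key idea is that $N$-convergence to $N$ must eventually push $|a_n|$ below some positive real $\varepsilon^{\ast} \in N$, whereas $a_n \notin N$ keeps $|a_n|$ above every element of $N$.

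The central object is the set
\[
S = \{\varepsilon \in \R_{>0} : \exists n_1 \in \N\, \forall n \geq n_1\, (|a_n| < \varepsilon)\}.
\]
Since $(a_n)$ is internal, $S$ is an internal subset of $\R_{>0}$. Using Remark \ref{equiv flexible} and $|a_n - N| = |a_n| + N$, the assumed convergence yields $S \supseteq \{\varepsilon \in \R_{>0} : \varepsilon > N\}$ (indeed, $|a_n| + N < \varepsilon$ implies $|a_n| < \varepsilon$ by taking $0 \in N$). Because the hypothesis rules out $N = \R$ and we also have $N \neq \{0\}$, the neutrix $N$ is strictly external; by convexity of $N$, for positive reals one has $\varepsilon > N$ iff $\varepsilon \notin N$, so $\{\varepsilon \in \R_{>0} : \varepsilon > N\} = \R_{>0} \setminus N$, whose strict externality follows from that of $N$.

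Now the crux: by the Cauchy Principle the internal set $S$ cannot coincide with the strictly external set $\{\varepsilon \in \R_{>0} : \varepsilon > N\}$, so the inclusion must be strict. Hence there is $\varepsilon^{\ast} \in S$ with $\varepsilon^{\ast} \not> N$, and by positivity together with convexity of $N$ this forces $\varepsilon^{\ast} \in N$. From $\varepsilon^{\ast} \in S$ we obtain $n_1$ with $|a_n| < \varepsilon^{\ast}$ for all $n \geq n_1$, while $a_n \notin N$ combined with convexity and symmetry of $N$ (which contains $0$) yields $|a_n| > \varepsilon^{\ast}$ for every $n \geq n_0$. Any $n \geq \max(n_0, n_1)$ then supplies the contradiction $|a_n| < \varepsilon^{\ast} < |a_n|$.

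The main obstacle I foresee is the clean identification of $\{\varepsilon \in \R_{>0} : \varepsilon > N\}$ as strictly external and the correct invocation of the Cauchy Principle to separate it from the internal set $S$; once this step is in place, the remaining arguments are routine manipulations with convexity and symmetry of neutrices.
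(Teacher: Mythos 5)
Your argument is correct, but it takes a genuinely different route from the paper's. The paper first reduces to the case $a_n>N$ via Proposition \ref{Proposition abs conv} and then splits into two cases according to whether the internal sequence $(a_n)$ converges in the classical sense: if it converges to some $a\in\R$, an explicit $\varepsilon$-argument shows $a>N$ and contradicts $N$-convergence to $N$; if it diverges, boundedness and Bolzano--Weierstrass produce a classically convergent subsequence, and the first case is applied to it via Proposition \ref{suhoitucuamoidaycon}. You instead run a single permanence argument: the set $S$ of positive reals $\varepsilon$ that eventually dominate $|a_n|$ is internal (this is the only place internality of $(a_n)$ is used), the assumed convergence forces $S\supseteq\R_{>0}\setminus N$, and since $N\neq\{0\},\R$ is strictly external the Cauchy Principle yields some positive $\varepsilon^{\ast}\in S\cap N$, which contradicts $a_n\notin N$ by convexity and symmetry of $N$. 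Each step checks out: $\left\vert a_n-N\right\vert=\left\vert a_n\right\vert+N<\varepsilon$ does give $\left\vert a_n\right\vert<\varepsilon$, the identification of $\{\varepsilon>0:\varepsilon>N\}$ with $\R_{>0}\setminus N$ is a correct use of convexity, and strict externality of $\R_{>0}\setminus N$ follows from that of $N$. What your approach buys is economy: no case split, no appeal to Bolzano--Weierstrass or to the subsequence machinery (Propositions \ref{bounded convergent} and \ref{suhoitucuamoidaycon}), and a proof more in the idiom of nonstandard permanence principles that the paper uses elsewhere. What the paper's approach buys is a more classical-analytic flavour and, as a by-product of its case (i), the reusable observation that a classical limit of terms staying above $N$ must itself lie above $N$. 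Both proofs rest essentially on the internality of $(a_n)$ --- the paper through Bolzano--Weierstrass, you through the internality of $S$ --- which is exactly where the lemma would fail for general external precise sequences.
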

	
	\begin{proof}  By Proposition \ref{Proposition abs conv} we may assume that $a_n>N$ for all $n\geq n_0$.  We consider two cases, (i) the sequence  $(a_n)$ is convergent to some $a\in \R$, (ii) the sequence $(a_n)$ is divergent.
		
		(i) We have $a\not\in N$. Indeed, suppose  that $a\in N$. Let $\varepsilon\in N$ be such that $\varepsilon>0$. Then there exists $n_1\in \N$ such that $|a_n-a|<\varepsilon$ for all $n\geq n_1$. So, $|a_n|<|a|+\varepsilon$ for all $n\geq n_2$, where $n_2=\max \{n_0,n_1\}$. This means that $|a_n|<|a|+\varepsilon\in N$. By our assumption and by the convexity of $N$ it follows that $a_n\in N $ for all $ n\geq n_2$, and in particular $a_{n_2}\in N$, which contradicts the assumption. 
		We prove now that $a>N$. Suppose towards a contradiction that $a< N$. Then $a<0$ and so $|a|=-a>N$. Let $\eta=|a|/2$. Clearly $|a|/2>N$. Then there exists $n_3\in \N$ such that $|a_n-a|<\eta$ for $n\geq n_3$. Let $n_4=\max \{ n_0,n_3\}$. This implies that  $a_n<a+\eta=a/2<0$  for all $n\geq n_4$, a contradiction. Hence $a>N$.
		Suppose  that $a_n \unright N$. Let $\varepsilon=a/2$. Clearly $\varepsilon>N$. Then there exists $n_5 \in \N$ such that for $n \geq n_5$ it holds that $|a_n-N|< \varepsilon$. This means that $a/2=a-\varepsilon<a_n<a +\varepsilon $ for all $ n\geq n_5$.
		Observe that since $a \notin N$ and $a/4>N$ then $-a/4<N$ and we have that $a/4=a/2-a/4<a/2+N$.		
		It follows that $|a_n-N|=a_n+N>a/2+N> a/4>N$ for all $ n\geq n_6$, where $n_6=\max \{n_0,n_5 \}$, which is a contradiction. Hence $N$ is not a $N$-limit of $(a_n)$.
		
			(ii) Suppose towards a contradiction that $a_n \unright N$. Since $(a_n)$ is an internal real sequence, it follows that $(a_n)$ is bounded. So there exists a subsequence $(a_{m_n})\subset (a_n)$ such that $(a_{m_n})$ has a limit $b\in \R$. By (i) the subsequence $(a_{m_n})$ does not $N$-converge to $N$, in contradiction with Proposition \ref{suhoitucuamoidaycon}. Hence $N$ is not a $N$-limit of $(a_n)$. 		
	\end{proof}
	
	Next proposition proves Theorem \ref{Stelling sterke convergentie onbegrensd} for sequences of neutrices.
	\begin{proposition}\label{strong conver of neutrix sequence}
		Let $(A_n)$ be a sequence of neutrices and $N\not=0$ be a neutrix. If $A_n\longrightarrow N$ then   $A_n \rightsquigarrow N$. 
	\end{proposition}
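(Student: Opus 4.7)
The plan is to argue by contradiction and pull the failure of strong convergence back to the internal world, where Lemma~\ref{hoitutrong} applies.

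Suppose $A_n \longrightarrow N$ but $A_n \not\rightsquigarrow N$. Then the external set $E := \{n \in \mathbb{N} : A_n \not\subseteq N\}$ is cofinal with $\mathbb{N}$. Because any two convex additive subgroups of $\mathbb{R}$ are linearly ordered by inclusion, $n \in E$ forces $N \subsetneq A_n$, so the set-valued mapping $V_n := A_n \setminus N$ (for $n \in E$) has nonempty values. Its graph is bounded-formula definable (the graph of $(A_n)$ is so by Convention~\ref{convention}, and so is $N$), and $\pi(V) = E$ is cofinal with $\mathbb{N}$. Proposition~\ref{exists internal subsequence2} therefore yields an internal cofinal set $S \subseteq \mathbb{N}$ and an internal function $v : S \to \mathbb{R}$ such that $v_n \in A_n \setminus N$ for every $n \in S$; in particular $v_n \notin N$ throughout $S$.

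Next I would transfer the hypothesis $A_n \longrightarrow N$ to the precise sequence $(v_n)_{n \in S}$. Fix any real $\varepsilon > N$ (admissible by Remark~\ref{equiv flexible}) and choose $n_0$ with $|A_n - N| < \varepsilon$ for $n \geq n_0$. Since $A_n - N = A_n + N$ is a neutrix bounded in absolute value by $\varepsilon$, every element of $A_n + N$ lies in $(-\varepsilon, \varepsilon)$; as $v_n \in A_n \subseteq A_n + N$, this gives $|v_n| + N \subseteq A_n + N \subseteq (-\varepsilon, \varepsilon)$, i.e.\ $|v_n - N| < \varepsilon$. Enumerating the internal infinite set $S$ as $s_0 < s_1 < \cdots$ and setting $w_k := v_{s_k}$ produces an internal precise sequence $w : \mathbb{N} \to \mathbb{R}$ satisfying $w_k \notin N$ for every $k$ and $w_k \unright N$, contradicting Lemma~\ref{hoitutrong}.

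The step I expect to require the most care is the definability bookkeeping around Proposition~\ref{exists internal subsequence2}, together with the check that the cofinality of $E$ propagates to the internal domain $S$. Once the internal witness $v$ is in hand, the inclusion $v_n - N \subseteq A_n - N$ delivers $v_n \longrightarrow N$ essentially for free, and the contradiction with Lemma~\ref{hoitutrong} closes the argument after a routine reindexing.
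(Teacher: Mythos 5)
Your proof is correct and follows essentially the same route as the paper's: argue by contradiction, use Proposition~\ref{exists internal subsequence2} to extract an internal choice sequence $v_n\in A_n\setminus N$ on a cofinal internal set, observe that $A_n\longrightarrow N$ forces $v_n\unright N$, and contradict Lemma~\ref{hoitutrong}. The extra details you supply (linear ordering of neutrices to get $N\subsetneq A_n$, the definability bookkeeping, and the transfer of $N$-convergence to the internal witness) are all sound and merely make explicit what the paper leaves implicit.
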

	
	\begin{proof}
		Suppose that for all $n\in \N$ there exist $p_n\geq n$ such that $N\subset A_{p_n}$. Let $D=\{(n, A_n\setminus N): N\subset A_n\}$. Then $D$ is cofinal with $A_n$. By Proposition \ref{exists internal subsequence2}, there exists an internal sequence $(b_{k})$ such that $b_k\in A_k\setminus N$ for all $k\in \pi(D)$. Since $A_n \longrightarrow N$, we have $b_k \unright N$, in contradiction with Lemma \ref{hoitutrong}. Then there exists $n_0\in \N$ such that  $A_n\subseteq N$ for all $n\geq n_0$. Hence $A_n \rightsquigarrow N$. 
	\end{proof} 
	
	We are now able to prove Theorem \ref{Stelling sterke convergentie onbegrensd}. 
		
	\begin{proof}[Proof of Theorem \ref{Stelling sterke convergentie onbegrensd}] Without restriction of generality we may suppose that $\alpha =N(\alpha)=A$. Clearly, if there exists $n_0 \in \N$ such that for all $n\geq n_0$ we have $u_n\subseteq A$, then $u_n{\longrightarrow} A$.
		
		Conversely, assume that $u_n {\longrightarrow} A$. For all $ n\in N $, put $ A_n=N(u_{n}) $. Then $ (A_{n}) $ is a definable sequence of neutrices, which by Proposition \ref{converComponents} converges to $ A $. Then $ (A_{n}) \rightsquigarrow A$ by Proposition \ref{strong conver of neutrix sequence}. Hence there exists $m \in \N$ such that $A_n\subseteq A$ for all $n\geq m$. Suppose $ u $ does not converge strongly to $ A $. Then $ C=\left\lbrace n \in \N : u_{n}\nsubseteq N\right\rbrace  $ is cofinal in $ \mathbb{N} $. Put $ D= \left\lbrace n\in C : n\geq m\right\rbrace $. Then $ u_{n}\cap A =\emptyset $ for all $n \in D $. By Proposition \ref{exists internal subsequence2} there exists an internal set $ J\subseteq D $ cofinal with $ \mathbb{N} $ and an internal sequence $ (a_{n})_{n\in J} $ such that $ a_{n} \in u_{n} $ for all $ n\in J $. By Lemma \ref{hoitutrong} the sequence $ (a_{n})_{n\in J} $ does not converge to $ A $, hence also $ (u_{n})_{n\in J} $ does not converge to $ A $, in contradiction with Proposition \ref{suhoitucuamoidaycon}. Hence $ u $ converges strongly to $ A $.
	\end{proof}

	\subsection{Strong convergence with respect to initial segments\label{subsection initial segments}}
	Theorem \ref{Convergence initial segment} on strong convergence of precise sequences with respect to an initial segment will be a consequence of the next results, each of which dealing with a special case.
	
	We start with the case where the sequence is convergent with respect to an initial segment which is smaller than the initial segment on which it is defined.
	
	\begin{theorem}
		\label{Hoofdstelling bevat}Let $S\subseteq \N$ be an internal initial segment and $C\subset S$ be a possibly external
		initial segment of $\N
		$. Let $u:S \rightarrow %
		\mathbb{E}
		$ be a local flexible sequence. Let $\alpha =a+A\in \mathbb{E}$. Assume that $u$
		converges to $\alpha $ with respect to $C$. Then $u$ converges strongly to $%
		\alpha $ with respect to $C$.
	\end{theorem}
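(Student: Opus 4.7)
The plan is to take the witnessing final segment to be $T = D$ itself, bypassing any bridge construction. Because $S$ is internal and $C$ is a proper initial segment of $S$, the complement $D = S \setminus C$ is automatically a nonempty final segment of $S$, and Definition \ref{Definition convergence local} then reduces to verifying that $u_n \subseteq \alpha$ for every $n \in D$.

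First I would fix $n \in D$ and invoke Remark \ref{equiv flexible} to restrict the convergence hypothesis to precise thresholds: for each real $\varepsilon > A$ there exists $m \in C$ with $|u_k - \alpha| < \varepsilon$ for all $k \in S$ satisfying $k \geq m$. Since $C$ is an initial segment with $m \in C$ and $n \notin C$, one has $n > m$, and hence $|u_n - \alpha| < \varepsilon$. With $n$ fixed, this holds for every real $\varepsilon > A$.

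The concluding step converts this uniform family of inequalities into the single inclusion $u_n \subseteq \alpha$. Writing $u_n - \alpha = c + M$ where $c$ is a real representative and $M = N(u_n) + A$, the hypothesis forces both $|c| \in A$ (any nonnegative real outside $A$ would, by the convexity and symmetry of the neutrix, be strictly greater than $A$, yielding an $\varepsilon = |c| > A$ for which the inequality would fail) and $M \subseteq A$, in particular $N(u_n) \subseteq A$. Hence $u_n - \alpha \subseteq A$, and the group property of $A$ gives $u_n \subseteq \alpha + A = \alpha$, as desired.

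The main obstacle, such as it is, is the last algebraic step, which is a general property of neutrices rather than of the initial-segment structure. What makes this case of Theorem \ref{Convergence initial segment} clean, in contrast with the case $C = S$ treated subsequently, is that $D$ already contains the entire post-convergence portion of the sequence inside the given domain $S$, so no Fehrele- or Saturation-style extension beyond $S$ is required.
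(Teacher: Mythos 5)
Your proof is correct and takes essentially the same route as the paper's: both exploit the fact that any $\nu \in D$ dominates every $m \in C$, so the convergence estimate applies at $u_{\nu}$ for every real $\varepsilon > A$, which forces $u_{\nu} \subseteq \alpha$. The only cosmetic difference is that the paper concludes by contradiction, picking a single $\varepsilon$ with $A < \varepsilon < \left\vert u_{\nu} - \alpha \right\vert$ and deriving $\varepsilon < \tfrac{2}{3}\varepsilon$, whereas you conclude directly by decomposing $u_{\nu} - \alpha$ into a representative and a neutrix part.
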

	
	\begin{proof}
		Without restriction of generality we may assume that $\alpha =A$. If $C$ is
		internal, let $m=\max C$. Then $u_{m}\subseteq A$, otherwise we see that $u$
		does not converge to $A$ with respect to $C$, by taking $\varepsilon \in 
		\mathbb{R}
		$ such that $A<\varepsilon <\left \vert u_{m}\right \vert $. If $C$ is
		external, then $C\subset S$. Let $\nu \in S\setminus C$. Suppose $%
		u_{\nu }\nsubseteq A$. Let $\varepsilon \in \R	$ be such that $A<\varepsilon <\left \vert u_{\nu }\right \vert $. Then $%
		A<\varepsilon /3$ and there exists $m\in C$ such that $\left \vert 
		u_{n}-A\right \vert \leq \varepsilon /3$ for all $n\geq m$. Then $\varepsilon 
		< \left \vert u_{\nu } \right \vert < \frac{2}{3} \varepsilon $, a contradiction. Hence $u_{\nu }\subseteq A$%
		. We conclude that $u$ strongly converges to $A$ with respect to $C$.
	\end{proof}
The second case deals with internal sequences, for which convergence is only known on an external initial segment $ C $ of their domain $ S $. It considers cases where strong convergence possibly does not happen on $ C $, but by conveniently restricting the domain $ S $, they are strongly convergent with respect to this initial segment.

	\begin{theorem}
		\label{Stelling sterke convergentie intern}Let $(C,D)$ be a cut of $%
		\mathbb{N}
		$ into an external initial segment $C$ and a final segment $D$. Let $S$ be an
		internal initial segment of $%
		\mathbb{N}
		$ such that $C\subset S$. Let $u:S\rightarrow %
		\mathbb{R}
		$ be an internal sequence and $\alpha =a+A\in \mathbb{E}$ be such that $%
		u_{\upharpoonright C}$ converges to $\alpha $ with respect to $C$. If $C$ is
		a pregalaxy and $A$ is a halo, or if $C$ is a halo and $A$ is a galaxy, there
		exists $\nu \in S\cap D$ such that the sequence $u$ converges strongly to $%
		\alpha $ on $\{0,...,\nu \}$ with respect to $C$.
	\end{theorem}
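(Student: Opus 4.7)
The plan is to reduce by translation to the case $\alpha=A$, then in each case write $A$ in its standard-indexed form ($\bigcap_{\mathrm{st}(y)\in Y}I_y$ if $A$ is a halo, $\bigcup_{\mathrm{st}(x)\in X}J_x$ if $A$ is a galaxy) and exploit the internality of $u$ together with the Cauchy and Fehrele principles to push the convergence on $C$ just across the boundary into $D$. The key observation is that in the present setting $u$ itself plays the role that an internal bridge (as in Theorem~\ref{Stelling flexible brug}) would otherwise play, so the argument can be run directly on $u$.

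I would first treat Case~1 ($C$ a pregalaxy, $A$ a halo). For each standard $y\in Y$ I would apply the convergence hypothesis to pick $m_y\in C$ with $u_n\in I_y$ for all $n\in C\cap[m_y,\infty)$, and define the internal set
\[
\hat T_y=\{n\in S: n\geq m_y \text{ and } [m_y,n]\subseteq\{k\in S: u_k\in I_y\}\}=[m_y,\hat t_y].
\]
Since $\hat T_y$ contains the pregalactic tail $C\cap[m_y,\infty)$ while being internal, the Cauchy Principle forces $\hat t_y$ to exceed every element of $C$, hence $\hat t_y\in D\cap S$. Because $D\cap S$ is a convex prehalo without minimum and for each standard $y$ there is an element of $D\cap S$ strictly below $\hat t_y$, the coinitial dual of Lemma~\ref{cofinal sets}.\ref{cs1} (proved by the same Saturation + Fehrele argument, reversed) yields a single $\nu\in D\cap S$ with $\nu<\hat t_y$ for every standard $y$. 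Then $u_k\in I_y$ on $[m_y,\nu]$ for each standard $y$, so $u_k\in A$ on $\bigcap_{\mathrm{st}(y)}[m_y,\nu]$. Two sub-cases then produce the final segment $T$: if $\{m_y\}_{\mathrm{st}(y)}$ is bounded above in $C$ by some $h\in C$, take the internal $T=[h,\nu]$; otherwise $\{m_y\}$ is cofinal in $C$, so $\bigcap_{\mathrm{st}(y)}[m_y,\nu]$ collapses to $D\cap[0,\nu]$ and one takes the external $T=D\cap[0,\nu]$. In either sub-case $T$ is a non-empty final segment of $[0,\nu]$ containing $D\cap[0,\nu]$ on which $u_n\in A$, which gives strong convergence on $[0,\nu]$ with respect to $C$.

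Case~2 ($C$ a halo, $A$ a galaxy) is the dual and is where I expect the main obstacle. When $A$ is a genuine galaxy (for instance $A=\pounds$) no standard $\varepsilon$ satisfies $\varepsilon>A$, so the convergence hypothesis only delivers witnesses $m_\varepsilon$ for non-standard $\varepsilon$, and these cannot be standard-indexed directly. My intended remedy is to produce by Idealization a single internal $\varepsilon^{\ast}>A$ close to the top of $A$ (or, more flexibly, a standard-indexed family $\varepsilon_x=j_x+\delta$ with a common non-standard $\delta$ obtained from the decomposition $A=\bigcup_{\mathrm{st}(x)}J_x$), then to bound the corresponding $m_{\varepsilon_x}$'s uniformly in $C$ by some $h\in C$ using Lemma~\ref{cofinal sets}.\ref{cs1} applied this time directly to the convex prehalo $C$, and finally to perform the internal extension across the boundary by the same Cauchy/Fehrele mechanism as in Case~1 to produce $\nu\in D\cap S$ and the final segment $T$. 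Once the uniform $h$ and $\nu$ are secured, the construction of $T$ and the verification of the conclusion proceed exactly as in Case~1.
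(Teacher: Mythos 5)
Your Case~1 is essentially the paper's own argument: write $A=\bigcap_{\mathrm{st}(y)\in Y}I_y$, extract witnesses $m_y$ from the convergence hypothesis, use the internality of $u$ and the Cauchy Principle to push each interval of validity past $C$, and apply the Fehrele Principle (your ``coinitial dual'' of Lemma~\ref{cofinal sets}) to the pregalaxy $\bigcup_{\mathrm{st}(y)}[\hat t_y,\max S]$ sitting inside the halo $D\cap S$ to find $\nu$. The only point to tighten is that before invoking that lemma you must replace the externally chosen $m_y$ by an internal function of $y$ (Saturation Principle), so that $y\mapsto\hat t_y$ is definable; the paper does exactly this with Standardization and Saturation before applying Fehrele.

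Case~2, however, has a genuine gap, and your proposed remedy cannot close it. First, no tolerance bound can certify membership in a galaxy: for every $\varepsilon>A$ the Fehrele Principle guarantees points of $[0,\varepsilon]$ outside $A$, so $|u_n|<\varepsilon^{\ast}$ never yields $u_n\in A$; and $\varepsilon_x=j_x+\delta$ exceeds $A$ only if $\delta>A$ (since $j_x\in A$ and $A$ is a group), in which case the uniform bound $|u_n|<j_x+\delta$ for all standard $x$ again only gives $|u_n|<\delta$ with $\delta>A$. Membership in $A=\bigcup_{\mathrm{st}(x)}J_x$ is an existential condition over standard indices and cannot be reached by intersecting upper bounds. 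Second, and decisively, there is in general no $h\in C$ past which $u_n\in A$: take $C=\oslash\omega\cap\N$, $A=\pounds$ and $u_n=\omega/n$; this converges to $\pounds$ with respect to $C$, yet $u_n$ is unlimited for every $n\in C$, and the sequence enters $\pounds$ only at indices in $D$. So there is nothing in $C$ to ``extend across the boundary''. The paper's proof is instead by contradiction: if no $\nu$ works, Saturation yields an internal $\widetilde n(z)\in D$ with $\widetilde n(z)<c_z$ and $|u_{\widetilde n(z)}|>A$ for all standard $z$; the Fehrele Principle separates the galaxy $A$ from the prehalo $\bigcap_{\mathrm{st}(z)}[0,|u_{\widetilde n(z)}|]$ by a single real $\varepsilon$ with $A<\varepsilon<|u_{\widetilde n(z)}|$; and only then is the convergence hypothesis applied with this $\varepsilon$, the Cauchy Principle forcing a value $|u_m|>\varepsilon$ at some $m\in C$, a contradiction. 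Some such separation-by-contradiction argument is required here; a direct construction along the lines of Case~1 cannot succeed.
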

	
	\begin{proof}
		Without restriction of generality we may suppose that $\alpha =A$. Observe
		that $S\setminus C\neq \emptyset $, for $C$ is external and $S$ is
		internal. Assume first that $C$ is a galaxy and $A$ is a halo. Then there is
		a standard set $W$ and an internal function $a:W\rightarrow 
		\mathbb{R}
		^{+}$ such that $A=\bigcap _{\st (w)\in W}[-a_{w},a_{w}]$, and a standard set $Z$
		and an internal function $c:Z\rightarrow 
		\mathbb{N}
		$ such that $C=\bigcup _{\st(z)\in Z}\{0,...,c_{z}\}$. Then%
		\begin{equation*}
		\forall ^\mathrm{st fin}I\subseteq W\exists ^{\st}z\in Z\forall w\in I\forall n\in 
		\mathbb{N}
		(c_{z}\leq n\in C\rightarrow (\left \vert u_{n}\right \vert \leq a_{w}).
		\end{equation*}%
		Because $u$ is internal and $C$ is external, by the Cauchy Principle we
		obtain that 
		\begin{equation*}
		\forall ^\mathrm{st fin}I\subseteq W\exists ^{\st}z\in Z\exists f\in D\forall w\in
		I\forall n\in 
		\mathbb{N}
		(c_{z}\leq n\leq f\rightarrow (\left \vert u_{n}\right \vert \leq a_{w}).
		\end{equation*}%
		By Standardization, respectively the Saturation Principle, there exists a standard function $%
		\widetilde{z}:W\rightarrow Z$, respectively an internal function $\widetilde{%
			f}:W\rightarrow 
		\mathbb{N}
		$ such that $\widetilde{f}(w)\in D$ for all standard $w\in W$ and 
		\begin{equation*}
		\forall ^{\st}w\in W(\forall n\in 
		\mathbb{N}
		(c_{\widetilde{z}(w)}\leq n\leq \widetilde{f}(w)\rightarrow (\left \vert
		u_{n}\right \vert \leq a_{w}).
		\end{equation*}%
		Put $G=\bigcup _{\st(w)\in W}\{ \widetilde{f}(w),...,\max S\}$. Because $G$ is a
		pregalaxy included in the halo $S\cap D$, by the Fehrele Principle it is
		strictly included in $S\cap D$. Let $\nu \in (S\cap D)\setminus G$. Observe 
		$\bigcup _{\st(w)\in W}\{0,...,c_{\widetilde{z}(w)}\} \subseteq C$. Hence $u_n\in A
		$ at least for all $n\in \{0,...,\nu \} \cap D$. Then $u$ converges strongly
		to $A$ on $\{0,...,\nu \}$ with respect to $C$.
		
		Secondly, assume that $C$ is a halo and $A$ is a pregalaxy. Let $Z$ be a
		standard set and $c:Z\rightarrow 
		\mathbb{N}
		$ be an internal function such that $C=\bigcap _{\st(z)\in Z}\{0,...,c_{z}\}$; we may assume that $c$ is
		decreasing at least on $^{\sigma }Z$.  Suppose that there does not exist $\nu \in S\cap D$ such
		that $u$ converges strongly to $A $ on $\{0,...,\nu \}$ with respect to 
		$C$. Then%
		\begin{equation*}
		\forall ^{\st}z\in Z\exists n\in D
		(n<c_{z}\wedge \left \vert u_{n}\right \vert >A).
		\end{equation*}%
		By the Saturation Principle there exists an internal function $\widetilde{n}:Z\rightarrow 
		\mathbb{N}
		$ such that $\widetilde{n}(z)\in D, \widetilde{n}(z)<c_{z}$ and $\left \vert u_{\widetilde{n}%
			(z)}\right \vert >A$ for all standard $z\in Z$. By Idealization if $ A=0 $, and if $A$ is a
		galaxy, by the Fehrele Principle, because $\bigcap _{\st(z)\in
			Z}\left[0,\left \vert u_{\widetilde{n}(z)}\right \vert \right]$ is a prehalo, there exists $\varepsilon \in \R$ such that $A<\varepsilon <\left \vert u_{\widetilde{n}(z)}\right \vert $ for
		all standard $z\in Z$. Because $u_{\upharpoonright C}$ converges to $A$ with respect to $C$, there exists $ \overline{c}\in C $ such that $ \left \vert u_{n}\right \vert <\varepsilon$ for all $ n\in C, n \geq \overline{c} $. Choose standard $ \overline{z} \in Z$. Now
		\begin{equation*}
		\{\overline{c},...,{\widetilde{n}(\overline{z})}\}\cap D\subseteq  \{ n\in \mathbb{N}:	
		 \exists m\in 		\mathbb{N}		(\overline{c}\leq m\leq n\wedge \left \vert u_{m}\right \vert >\varepsilon)   \} .
		\end{equation*}
		By the Cauchy Principle the inclusion is strict. So there exists $n\in
		C$ such that $\left \vert u_{m}\right \vert >\varepsilon $, for some $ m $ with $\overline{c}\leq m\leq n$, a
		contradiction. Hence $u$ converges strongly to $A$ on some interval $%
		\{0,...,\nu \}$ with respect to $C$, with $\nu \in D\cap S$.
	\end{proof}	
Theorem \ref{Stelling sterke convergentie intern} is a sort of Matching Principle, enabling an overall description for the coexistence of different kinds of behavior for, say, solutions of singular perturbations (see Subsection \ref{Subsection Matching Principles}).
 
	 The case where the initial segment and the limit set are of the same nature is contained in Theorem \ref{Stelling zelfde vorm}. In this case the strong convergence already takes place on the initial segment. If the initial segment is a galaxy, or if the initial segment is a halo and the limit is external, the proof uses an internal choice function, which can be obtained through the Saturation Principle. If the initial segment is a halo and the limit is precise, such an argument is not available, for in general there is no direct proof for the existence of an internal choice function within an external environment (\cite{BergWallet} gives a proof for an internal choice function within a galaxy or a halo). So this case needs special care. The proof will be based on the fact that if an internal sequence has a limit on a (internal) finite initial segment, it strongly converges to this limit, i.e. will eventually assume this limit value. Next lemma extends this property to halos. Then we will prove it for precise sequences, then for sequences of neutrices, and finally for general flexible sequences.
	 
	\begin{lemma}\label{intern precies cofinal}
	Let $(C,D)$ be a cut of $%
	\mathbb{N}
	$ into a halo $C$ and a galaxy $D$. Let $u$ be an
	internal local sequence of real numbers such that $ dom(u) $ is cofinal in $ C $. Let $ H\subseteq C  $ be an external set cofinal in $ C $ and $a \in \mathbb{R}$ be such that $ u_{\upharpoonright H} $ converges to $ a $. Then there exists $ m \in \mathbb{N}$ such that $u_{n}=a$ for all $ n\in H,n\geq m $.

	\end{lemma}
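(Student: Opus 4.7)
Since $C$ is a halo, we may write $C=\bigcap_{\st(w)\in W}\{0,\ldots,c_{w}\}$ with $W$ a standard set and $c$ decreasing on $^{\sigma}W$, and because $C$ is strictly external, $W$ must have standard elements. The idea is to fix a standard $w_{0}\in W$ and trap all of $H$ inside the internal finite initial segment $\{0,\ldots,c_{w_{0}}\}$, thereby reducing the statement to the base case alluded to in the paragraph preceding the lemma: an internal sequence that has a limit on an internally finite initial segment eventually takes this limit as its value. Concretely, set $S=\mathrm{dom}(u)\cap\{0,\ldots,c_{w_{0}}\}$. Then $S$ is internal and, being bounded by $c_{w_{0}}\in\N$, is internally finite; moreover $H\subseteq C\subseteq\{0,\ldots,c_{w_{0}}\}$ forces $H\cap\mathrm{dom}(u)\subseteq S$.

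Next I would consider the internal finite set $E=\{n\in S:u_{n}\neq a\}$. If $E=\emptyset$ then $u_{n}=a$ on all of $S$, hence on $H\cap\mathrm{dom}(u)$, and the conclusion holds with $m=0$. Otherwise, since $E$ is internally finite and $|u_{n}-a|>0$ for all $n\in E$, the quantity $\mu:=\min\{|u_{n}-a|:n\in E\}$ is a genuine positive real number. Applying the hypothesis that $u_{\upharpoonright H}$ converges to $a$ with $\varepsilon:=\mu/2>0$ produces some $m\in\N$ such that $|u_{n}-a|<\mu/2$ for every $n\in H\cap\mathrm{dom}(u)$ with $n\geq m$. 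For any such $n$ we have $n\in S$ by the inclusion above, so if $u_{n}\neq a$ then $n\in E$ and $|u_{n}-a|\geq\mu>\mu/2$, a contradiction. Hence $u_{n}=a$ for all $n\in H$ with $n\geq m$, which is precisely the conclusion of the lemma.

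The delicate point is to arrange that $S$ simultaneously (i)~contains $H\cap\mathrm{dom}(u)$, so that the minimum $\mu$ really does dominate the values $|u_{n}-a|$ that can occur on $H$, and (ii)~is internally finite, so that $\mu$ is an honest positive real which can be fed back into the convergence hypothesis. Both properties come from the single observation that any standard $w_{0}\in W$ yields $c_{w_{0}}\in\N$ which is an upper bound of the halo $C$; this is the essential use of $C$ being a halo. Beyond this, no further Permanence Principle is required: the argument is the base case of strong convergence on an internally finite initial segment, transplanted to the halo setting through the internal approximation $\{0,\ldots,c_{w_{0}}\}$.
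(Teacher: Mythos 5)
Your proof is correct and is essentially the paper's argument: both reduce to an internal finite set containing $H$ (the paper simply assumes $\mathrm{dom}(u)$ finite, you cut it down by $\{0,\ldots,c_{w_0}\}$) and then produce a positive real threshold below which $|u_n-a|$ forces $u_n=a$, which is fed back into the convergence hypothesis. The only difference is cosmetic: you obtain the threshold directly as $\min\{|u_n-a| : n\in S,\ u_n\neq a\}$, whereas the paper extracts $1/\overline{k}$ from the stabilization of the non-increasing internal sequence of finite sets $B_k=\{n : |u_n-a|\leq 1/k\}$.
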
 
\begin{proof}
	Without restriction of generality we may assume that $ a=0 $. We may assume that $ u $ is finite. Put $ \Delta =dom(u) $. Then $ \Delta $ is internal, finite, cofinal in $ C $ and, as a consequence of the Cauchy Principle, coinitial in $ D $. For all $ k\in \mathbb{N} $, put
	\begin{equation*}
	B_{k}=\left\lbrace  n\in \Delta : \lvert u_{n}\rvert \leq \frac{1}{k} \right\rbrace.
	\end{equation*}
	Then $ \left( B_{k} \right)  $ is an internal non-increasing sequence of finite sets, which are nonempty because $ b_{\upharpoonright H} $ converges to zero. Let $ B=\bigcap_{k\in \mathbb{N}} B_{k}$. Then $ B=B_{\overline{k}} $ for some $ \overline{k}\in \mathbb{N} $. Observe that for all $ n\in B $ one has $ \lvert u_{n}\rvert \leq 1/k  $ for all $ k \geq \overline{k} $, hence $ u_{n}=0 $. Let $ m\in C $
	be such that $\lvert u_{n}\rvert \leq 1/\overline{k} $ for all $ n\in H, n\geq m $. Then for all $ n\in H, n\geq m$ it holds that $ u_{n}\in B_{\overline{k}}=B $, hence $ u_{n}=0 $. This proves the lemma.
\end{proof}
	\begin{corollary}\label{Stelling intern halo a}
	Let $(C,D)$ be a cut of $%
	\mathbb{N}
	$ into  a halo $C$ and a galaxy $D$. Let $S$ be an
	internal initial segment of $%
	\mathbb{N}
	$ such that $C\subseteq S$. Let $%
	u:S \rightarrow %
	\mathbb{E}
	$ be an internal sequence and $a \in \mathbb{R}$ be such that $u_{\upharpoonright C}$ 		converges to $ a $ with respect to $C$. Then $u_{\upharpoonright C}$ converges
	strongly to $a $ with respect to $C$.
	
\end{corollary}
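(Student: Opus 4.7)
The plan is to deduce the corollary directly from Lemma \ref{intern precies cofinal}, reading the internal sequence $u$ as real-valued (its range lies in $\mathbb{R}\subseteq\mathbb{E}$) and taking $H:=C$. First I note that $H=C$ is trivially cofinal in $C$, and since $C\subseteq S$ with $u$ internal, the domain $S$ of $u$ is internal and contains $C$, so $\mathrm{dom}(u)$ is cofinal in the halo $C$. The hypothesis that $u_{\upharpoonright C}$ converges to $a$ with respect to $C$ is then exactly the convergence of $u_{\upharpoonright H}$ to $a$ needed in the lemma.

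Applying Lemma \ref{intern precies cofinal} yields an index $m$ such that $u_n=a$ for every $n\in C$ with $n\geq m$. To derive strong convergence with respect to $C$ in the sense of Definition \ref{Definition convergence local} (now with $S:=C$ and hence $D_{\text{def}}=\emptyset$), I only need to exhibit a nonempty final segment $T$ of $C$ on which $u_n\in a$. For this I extract the witness $m$ directly from the convergence hypothesis: picking any positive real $\varepsilon$, there exists $m\in C$ with $|u_n-a|<\varepsilon$ for all $n\in C$ with $n\geq m$. Combining with the conclusion of the lemma we may assume $u_n=a$ on $T:=\{n\in C:n\geq m\}$, which is a nonempty final segment of $C$ (it contains $m$). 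Hence $u_{\upharpoonright C}\rightsquigarrow a$ with respect to $C$.

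All of the substantive work is done by Lemma \ref{intern precies cofinal}, whose proof exploits the fact that an internal descending chain of nonempty finite sets $(B_k)$ must stabilize, together with an application of the Cauchy Principle against the strictly external halo $C$. The only point requiring care in the present corollary is that the witness $m$ must be chosen in $C$ (not merely in $\mathbb{N}$) so that $T$ is nonempty as a final segment of $C$; this is immediate from the $\varepsilon$--$n_{0}$ clause in Definition \ref{Definition convergence local}. There is no genuine obstacle beyond correctly unpacking the definitions.
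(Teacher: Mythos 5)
Your proposal is correct and is essentially the derivation the paper intends: the corollary is stated without a separate proof precisely because it is Lemma \ref{intern precies cofinal} applied with $H:=C$ (which is external and trivially cofinal in itself) and $\mathrm{dom}(u)=S\supseteq C$, the values of the internal sequence being real numbers. Your one point of care --- that the witness $m$ must lie in $C$ so that the final segment $T$ is nonempty --- is well taken and is consistent with the paper, since the proof of Lemma \ref{intern precies cofinal} in fact produces $m\in C$ even though its statement only asserts $m\in\mathbb{N}$.
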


	\begin{theorem}
		\label{Stelling precies halo a}
		Let $(C,D)$ be a cut of $%
		\mathbb{N}
		$ into a halo $C$ and a galaxy $D$. Let $%
		u:C \rightarrow %
		\mathbb{R}
		$ be a precise external sequence and $a \in \mathbb{R}$ be such that $u$
		converges to $ a $ with respect to $C$. Then $u$ converges strongly to $a $ with respect to $C$.
	\end{theorem}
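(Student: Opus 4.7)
The plan is to argue by contradiction. Suppose $u$ does not converge strongly to $a$ with respect to $C$; then the definable set $F=\{n\in C : u_n\neq a\}$ is cofinal in $C$. Since the halo $C$ is not internal, no internal subset of $C$ can be cofinal in $C$ (otherwise $C$ would admit a maximum and hence be internal by the Cauchy Principle), so $F$ must be strictly external.

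The next step is to reduce to the internal case via an internal bridge. The restriction $u_{\upharpoonright F}:F\to \R$ is a precise local sequence on an external set cofinal with $C$, so Lemma~\ref{Lemma brug} provides an internal bridge $b:K\to \R$ from $C$ to $D$ along $u_{\upharpoonright F}$. Inspecting the proof of that lemma, $b$ is the internal extension of one piece supplied by the Representation Theorem applied to $u_{\upharpoonright F}$: one obtains a standard $\overline{x}$ and a prehalo $\Delta_{\overline{x}}$, cofinal in $C$ and contained in $F$ (since the prehalic pieces cover the domain $F$ of $u_{\upharpoonright F}$), such that $b_{\upharpoonright \Delta_{\overline{x}}}=u_{\upharpoonright \Delta_{\overline{x}}}$.

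Since $u$ converges to $a$ with respect to $C$, so does its restriction to $\Delta_{\overline{x}}\subseteq C$, and hence $b_{\upharpoonright \Delta_{\overline{x}}}$ converges to $a$. I would then apply Lemma~\ref{intern precies cofinal} to the internal sequence $b$ with $H=\Delta_{\overline{x}}$: the domain $K$ of $b$ is cofinal in $C$ because $\Delta_{\overline{x}}\subseteq K$, and the lemma yields $m\in C$ such that $b_n=a$ for every $n\in \Delta_{\overline{x}}$ with $n\geq m$. Cofinality of $\Delta_{\overline{x}}$ in $C$ guarantees that such an $n$ exists, and then $u_n=b_n=a$, contradicting $n\in \Delta_{\overline{x}}\subseteq F$.

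The main obstacle is the passage from internal to external: one cannot invoke Corollary~\ref{Stelling intern halo a} directly on $u$, because $u$ is external on the halo $C$, and the internal extensions furnished by the Representation Theorem agree with $u$ only on prehalic subdomains rather than on all of $C$. The internal bridge built from $u_{\upharpoonright F}$ is the device that preserves the obstruction $u_n\neq a$ on a cofinal prehalic subset while simultaneously making the internal machinery of Lemma~\ref{intern precies cofinal} available, which is what drives the contradiction.
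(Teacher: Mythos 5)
Your argument is correct: each lemma you invoke applies as you claim, and the final contradiction is genuine. It is, however, organized differently from the paper's proof. The paper argues directly: it decomposes $u$ itself as $\bigcup_{\mathrm{st}(x)\in X}Q_{x}$, isolates the standard indices whose domains $\Delta_{x}$ are cofinal with $C$, shows via Lemma \ref{cofinal sets} that these domains cover a final segment of $C$, applies Lemma \ref{intern precies cofinal} on each such piece to get $u_{n}=a$ eventually there, and then must uniformize the resulting thresholds over all standard indices by the Saturation Principle together with a second application of Lemma \ref{cofinal sets}, producing a single $\mu \in C$ beyond which $u_{n}=a$ on all of $C$. Your proof by contradiction sidesteps the covering and the uniformization entirely: by restricting to the exceptional set $F$ you need only one cofinal prehalic piece inside $F$, and a single application of Lemma \ref{intern precies cofinal} on that piece already contradicts $u_{n}\neq a$ there. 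The price is that you must open up the proof of Lemma \ref{Lemma brug} to extract the stronger property $b_{\upharpoonright \Delta_{\overline{x}}}=u_{\upharpoonright \Delta_{\overline{x}}}$ on a specific prehalo, since the bridge's defining property alone (agreement on some cofinal set) would not feed into Lemma \ref{intern precies cofinal}; the gain is a shorter endgame with no Saturation step. One small point worth making explicit: $\Delta_{\overline{x}}$ is external for the same reason $F$ is (an internal subset of $\mathbb{N}$ that is cofinal with the halo $C$ and contained in it would force $C$ to be internal), and this is needed because Lemma \ref{intern precies cofinal} requires the set $H$ to be external.
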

	
	\begin{proof}
		Without restriction of generality we may assume that $ a=0 $. By \eqref{uunionQ} we may write $u=\bigcup _{\mathrm{st}(x)\in X}Q_{x}   $, where for all standard $x\in X$ the prehalo $Q_{x}$ is
		functional, with domain $ \Delta _{x}= \text{dom}(Q_{x}) $. Because $ C $ is a halo, by Lemma \ref{cofinal sets}  
		at least one of the $ \Delta _{x} $ is cofinal with $ C $, and therefore it is a halo. Let
		\begin{equation*}  
		Y:=\,^{\sigma}\left\lbrace \st(x) \in X : \Delta _{x} \text{ is not cofinal with } C\right\rbrace,
		\end{equation*}
		and
		\begin{equation*}  
		Z:=\,^{\sigma}\left\lbrace \st(x) \in X : \Delta _{x} \text{ is  cofinal with } C\right\rbrace.
		\end{equation*}
		Again by by Lemma \ref{cofinal sets} there exists $ \overline{c} \in C $ such that for all $ n \in C, n\geq \overline{c}$ it holds that $ n \in  	\Delta  _{x} $ 	for some standard $ x\in Z $. 
		Let $ x\in Z $ be standard. By Lemma \ref{Lemma internal extension}, there exist $K$ with $\Delta  _{x}\subseteq K\subseteq \mathbb{N}	$ and an internal sequence $b:K\rightarrow 	\mathbb{R}$ such that $b_{\upharpoonright \Delta  _{x}}=Q_{x}$, implying that $ b $ is an internal bridge from $ \Delta _{x} $ to $ D $ along $ u $. Because $ u_{\upharpoonright \Delta _{x}} $ converges to $ 0 $, the sequence $ b_{\upharpoonright \Delta _{x}} $ converges to $ 0 $ with respect to $ \Delta _{x}$. By Lemma \ref{intern precies cofinal} there exists $ m \in \Delta _{x} $ such that $b_{n}=0 $ for all $ n \in \Delta _{x},n \geq m $. Because $u_{n}=b_{n} $ for all $ n \in \Delta _{x}$, it holds that $u_{n}=0 $ for all $ n \in \Delta _{x},n \geq m $. By the Saturation Principle there exists an internal function $ \widetilde{m}:X\rightarrow 	\mathbb{N} $ such that for all standard $ x \in X $ one has $ \widetilde{m}(x)\in C $ and $u_{n}=0 $ for all $ n \in \Delta _{x},n \geq \widetilde{m}(x) $. By Lemma \ref{cofinal sets} there exists $ \mu \in C$ such that $ \mu > \widetilde{m}(x) $ for all standard $ x \in Z$; we may assume that $ \mu \geq \overline{c} $. Let $ n \in C,n \geq \mu $. Then $ n \in \Delta _{x} $ for some standard $ x \in Z $. Hence $ u_{n}=0 $. We conclude that $ u $ converges strongly to $0$
		with respect to $C$.
	\end{proof}
	
	To deal with sequences of neutrices converging to $ 0 $ with respect to a halo, we recall some terminology from \cite{BergWallet}.
	\begin{definition}
		Let $ k\in \mathbb{N} $ be standard and $\emptyset \subset E \subseteq \R^{k}$ be external. We
		say that $E$ is \emph{strongly open} if there exists $r > 0$ such that $B(u, r) \subseteq E$ for all $u \in E$, where $B(u, r) = \left\{x \in \R^{k} : \lvert x -u\rvert \leq r \right\}$.
	\end{definition}
\begin{definition}
	Let $ k\in \mathbb{N} $ be standard and $\emptyset \subset E \subset \R^{k}$ be external. We
	say that $E$ has \emph{external borders} if for every internal path $ \phi:[0,1]\rightarrow \R^{k} $ such that $\phi(0)\in E $ and $\phi(1)\in \R^{k}\setminus E $ the set $ \{t\in [0,1] :  \forall s(0\leq s\leq t \Rightarrow \phi(s)\in E) \} $ is external. We
say that $E$ is \emph{border-external}, if both $ E $ and $ \R^{k}\setminus E  $ have external borders.
	\end{definition}
	
	A possibly external subset of $ \mathbb{R}^{k} $ is said to be \emph{precompact} if it is contained in an (internal) compact set.
	
	We also recall two results from \cite{BergWallet}. Proposition 22 states that a clopen set $ E $ such that $\emptyset \subset E \subset \R^{k}$  is border-external. Theorem 28 states that if in addition $ E $ is precompact, such a border-external set is strongly open. The following theorem is an immediate consequence of these two results.
	\begin{theorem}\label{stelling sterk open}
		Let $ V\subset \mathbb{R}^{k} $	be a precompact galaxy or halo. If both $ V $ and its complement are open, then $ V $ is strongly open.
	\end{theorem}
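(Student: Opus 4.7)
The plan is to invoke the two cited results from \cite{BergWallet} essentially as black boxes, after first verifying the set-theoretic hypotheses. The statement of Theorem \ref{stelling sterk open} has been set up precisely so that these two results chain together: Proposition 22 converts ``clopen'' into ``border-external,'' and Theorem 28 converts ``border-external plus precompact'' into ``strongly open.'' So the proof is essentially a one-line composition.

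First I would check that the hypotheses of Proposition 22 from \cite{BergWallet} are satisfied, namely that $\emptyset \subset V \subset \R^{k}$ and that $V$ is clopen. Non-emptiness comes from the fact that $V$ is a galaxy or a halo (either sort of set is by convention nonempty, as reflected in the definitions of galaxy and halo earlier in the paper). Strict inclusion in $\R^{k}$ follows from precompactness: $V$ is contained in an internal compact set $K\subseteq \R^{k}$, and $K\neq \R^{k}$, so $V\subsetneq \R^{k}$. The clopen property is exactly the hypothesis that both $V$ and its complement are open. Hence Proposition 22 of \cite{BergWallet} applies and tells us that $V$ is border-external.

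Next I would apply Theorem 28 of \cite{BergWallet} to the border-external set $V$. Its hypotheses are border-externality, which we just obtained, and precompactness, which is assumed in our statement. The conclusion is that $V$ is strongly open, which is exactly what we want.

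There is no substantive obstacle here; the only thing to be careful about is that Proposition 22 and Theorem 28 of \cite{BergWallet} are formulated in terms of border-externality of $E$ and $\R^{k}\setminus E$ simultaneously (through the notion ``border-external''), so one should cite them as stated rather than reprove the border-externality for each side. Since the paper is using these results as a black box to handle a topological step inside the bigger convergence program, I would keep the argument as brief as the two-line chain above, without redeveloping the machinery of strongly open sets.
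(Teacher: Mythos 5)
Your proposal is correct and matches the paper exactly: the paper introduces Proposition 22 and Theorem 28 of \cite{BergWallet} immediately before the statement and declares the theorem ``an immediate consequence of these two results,'' which is precisely the two-step chain (clopen and $\emptyset \subset V \subset \mathbb{R}^{k}$ gives border-external; border-external plus precompact gives strongly open) that you spell out. Your verification of the nonemptiness and strict-inclusion hypotheses is a reasonable small addition that the paper leaves implicit.
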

	
	\begin{lemma}
		\label{Convergence neutrices same form} Let $(C,D)$ be a cut of $%
		\mathbb{N}
		$ into a halo $C$ and a galaxy $D$. Let $%
		U:C \rightarrow  \mathbb{E}$ be a definable sequence of neutrices converging to $0 $ with respect to $C$. Then $U$ converges
		strongly to $0 $ with respect to $C$.
	\end{lemma}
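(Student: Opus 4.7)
The plan is to argue by contradiction and to reduce the problem, via an internal selection of nonzero values of $U$, to the precise case already handled in Lemma~\ref{intern precies cofinal}.

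Suppose $U$ does not converge strongly to $0$ with respect to $C$. Then the external set $F:=\{n\in C:U_n\neq\{0\}\}$ is cofinal with $C$. Since each $U_n$ is a convex additive subgroup of $\R$ distinct from $\{0\}$ whenever $n\in F$, the set $V_n:=U_n\setminus\{0\}$ is nonempty for each $n\in F$, and the definable graph $V:=\{(n,v):n\in F,\ v\in V_n\}$ satisfies $\pi(V)=F$ cofinal with $C$.

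The central step is to produce an internal sequence $b:\N\to\R$ and an external set $H\subseteq C$ cofinal with $C$ such that $b_n\in U_n\setminus\{0\}$ for every $n\in H$. To that end I would apply Nelson's Reduction Algorithm to write $V=\bigcup_{\st(x)\in X}Q_x$ with $Q_x=\bigcap_{\st(y)\in Y}I_{xy}$, with the $I_{xy}$ internal and, without loss of generality, closed under finite intersections. Lemma~\ref{cofinal sets}.\ref{cs2} then yields a standard $\bar x$ such that $H:=\pi(Q_{\bar x})$ is cofinal with $C$. Mirroring the halo subcase in the proof of Theorem~\ref{Stelling flexible brug}, the Axiom of Choice applied internally to the slices $I_{\bar x y}(n)$, combined with the Saturation Principle and the finite intersection property, produces an internal $b:\N\to\R$ with $b_n\in I_{\bar x y}(n)$ for every standard $y$ and every $n\in H$. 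Hence $b_n\in Q_{\bar x}(n)\subseteq V_n$ for $n\in H$, so $b_n\in U_n$ and $b_n\neq 0$.

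From $U\to 0$ with respect to $C$, for every real $\varepsilon>0$ there exists $m_\varepsilon\in C$ with $U_n<\varepsilon$ for all $n\in C$ with $n\geq m_\varepsilon$; since $b_n\in U_n$ on $H$, this gives $|b_n|<\varepsilon$, so $b_{\upharpoonright H}$ converges classically to $0$. Applying Lemma~\ref{intern precies cofinal} to the internal $b$ on $\N$ (trivially cofinal with $C$) and the external cofinal $H\subseteq C$ then yields $m\in C$ with $b_n=0$ for every $n\in H$ with $n\geq m$. Since $H$ is cofinal with $C$ and $m\in C$, such $n$ exists, contradicting $b_n\neq 0$; thus $U$ converges strongly to $0$ with respect to $C$. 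The main obstacle is precisely the internal-selector construction in the central step: $V$ is multi-valued and supported on the external cofinal subset $F\subseteq C$, which forces a bridge-style construction on $V$ rather than a direct appeal to Proposition~\ref{exists internal subsequence2} or Theorem~\ref{Stelling flexible brug}.
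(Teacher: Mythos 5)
Your reduction of the problem to Lemma \ref{intern precies cofinal} is attractive, and the final step is sound: \emph{if} one has an internal $b$ and an external $H\subseteq C$ cofinal with $C$ such that $b_n\in U_n\setminus\{0\}$ for all $n\in H$, then $b_{\upharpoonright H}$ converges to $0$ with respect to $C$, Lemma \ref{intern precies cofinal} forces $b_n=0$ cofinally on $H$, and the contradiction follows. The gap is exactly where you locate it, in the ``central step'', and it is not a removable technicality: producing an internal selection of \emph{nonzero} elements of the $U_n$ on a set cofinal with the halo $C$ is an instance of constructing an internal choice function within an external environment, which the paper explicitly flags (in the introduction to Subsection \ref{subsection initial segments}) as unavailable precisely in the configuration at hand --- halo initial segment, precise limit. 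The halo subcase of Theorem \ref{Stelling flexible brug} that you propose to mirror selects from the \emph{full} fibers $u_n$ of a flexible sequence (external numbers, so the selection is welcome to pick $0$); transplanting that Idealization argument to the punctured fibers $U_n\setminus\{0\}$ supported on the external set $F$ is not justified. Concretely: the finite case of the Idealization requires a single internal function meeting $\bigcap_{y\in z}I_{\bar x y}(n)$ at every $n$ in an \emph{internal} domain, while nonemptiness of these finite intersections of punctured fibers is only guaranteed on the external set $H=\pi(Q_{\bar x})$, which cannot appear in the internal formula being idealized; and even granting an internal $b$ with $b_n\in Q_{\bar x}(n)$ on some internal set $K$, nothing in your argument shows that $K\cap C$ is cofinal with $C$ (an internal subset of the halo $C$ that does not meet $D$ is never cofinal with $C$). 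Without cofinality the contradiction evaporates.

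The paper's own proof is organized precisely to avoid this selection. It first reduces to non-increasing $U$ and then splits according to the nature of the neutrices $U_n$. If from some point on every $U_n$ is a halo, there is no canonical nonzero element to select, and the paper instead fattens the graph into a precompact clopen set $V\subset\R\times\R$ and invokes Theorem \ref{stelling sterk open} (strong openness, from \cite{BergWallet}) to produce a single $\varepsilon>0$ with $\varepsilon\in U_n$ for all $n\in C$, contradicting convergence to $0$ outright. If cofinally many $U_n$ are pregalaxies, it uses the \emph{canonical} (hence definable, choice-free) selection $s_x(n)=\max I_{x\overline y}(n)$, which yields a definable precise sequence converging to $0$; Theorem \ref{Stelling precies halo a} then gives $s_x(n)=0$ eventually, and since $s_x(n)$ \emph{bounds} $U_n$ this forces $U_n=\{0\}$. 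If you want to salvage your approach, you would at minimum need to reproduce this case distinction: in the pregalaxy case your $b$ can be taken to be (an internal extension of) the maximum of an internal slice, but in the halo case you must either prove the existence of the internal nonzero selector on a cofinal subset of $C$ as a separate lemma or fall back on the topological argument.
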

	
	\begin{proof}
		We may define $ U'_{n}= \bigcup{\left\lbrace U_{p} : p\in C \wedge p\geq n\right\rbrace } $ for all $ n\in C $. Then $ U' $ is non-increasing, and still converging to $ 0 $ with respect to $ C $. So without restriction of generality we may assume that $ U $ is non-increasing. We may write the graph of $ U $ in the form $\Gamma (U)=\bigcup _{\mathrm{st}(x)\in X}\bigcap_{\mathrm{st}(y)\in Y} I_{xy}   $, where $ I:X\times Y \rightarrow  \mathcal{P}(\mathbb{N} \times\mathbb{R}) $ is internal. Also without loss of generality we may suppose that for each $ n\in C $ the sets $ I_{xy}(n)  $ are convex, compact and symmetrical with respect to $ 0 $, and moreover are closed under finite intersections. For all $x\in X$ we put $H_{x}=\bigcap_{\mathrm{st}(y)\in Y}I_{xy}$; then also $H_{x}(n)  $ is convex and symmetrical with respect to $ 0 $ for all $ n\in C $. 
		
		Suppose towards a contradiction that there exists $ \overline{c}\in C $ such that $ U_{n} $ is a halo for all $ n\geq \overline{c} $; for simplicity we assume that $ \overline{c}=0  $. Then every neutrix $ U_{n},n\in C $ is external and stable under at least some shifts in the second coordinate. Note that $ C $ being an external initial segment of $\mathbb{N}  $, it is stable under the addition of limited numbers ($ c\in C\Rightarrow c+k\in C $ for all limited $ k\in \mathbb{N} $). We will associate to $ U $ a clopen set $ V\subset \mathbb{R} \times \mathbb{R} $ as follows. First we define
		\begin{equation*}
		\overline{U}= \left\lbrace (s,t)\in \mathbb{R} \times \mathbb{R} : t\in  U_{\left[ s\right]}  \right\rbrace,
		\end{equation*} 
		where $\left[ s\right]$ denotes the integer part of the real number $s$.
		We associate to $ \overline{U} $ a set $ V $ which is stable under infinitesimal shifts in the first coordinate, by putting
		\begin{equation*}
		V= \left\lbrace (s,t)\in \mathbb{R} \times \mathbb{R} : \exists\sigma \simeq \lvert s\rvert  \left((\sigma,t)\in\overline{U} \right)\right\rbrace. 
		\end{equation*}
		Then $ \pi (V) =C\cup\oslash$. 
		Both $ V $ and its complement are open in $ \mathbb{R} \times \mathbb{R} $. Indeed, let $ (s,t) \in V$. Because $ U_{s} $ is an external neutrix, there exists $ \delta >0 $ such that $ (s,t+\tau) \in U_{t}  $ for all $ \lvert\tau\rvert <\delta $. Also $ (\sigma,\tau) \in U_{\sigma}\subseteq V$ for all $ \sigma \simeq s $. Hence $ V $ is open. With a similar argument we prove that the complement of $ V $ in $ \mathbb{R} \times \mathbb{R} $ is open. Being a precompact halo, by Theorem \ref{stelling sterk open}, it holds that $ V $ is strongly open. Hence there exists $ \varepsilon >0 $ such that $ (n, \varepsilon) \in U_{n} $ for all $ n\in C $. Hence $ U $ does not converge to $ 0 $ with respect to $ C $, a contradiction.

		As a consequence $ U $ has a cofinal sequence of pregalaxies. Put
		\begin{equation*}
		F= \left\lbrace n \in C : U_{n} \text{ is a pregalaxy} \right\rbrace.
		\end{equation*}
		Let $ n \in F $. From the representation 
		\begin{equation*}
		U_{n}=\bigcap_{\mathrm{st}(x)\in X}H_{x}(n)
		\end{equation*}
		we derive from \eqref{Uxyn} that there exists standard $ y\in Y $ such that
		\begin{equation*}
		U_{n}=\bigcup _{\mathrm{st}(x)\in X} I_{xy}(n).   
		\end{equation*}
		
		For all standard $ y\in Y $ we define
		\begin{equation*}
		C_{y}=\left\lbrace n\in F : U(n)=\bigcup _{\mathrm{st}(x)\in X} I_{xy}(n)\right\rbrace .   
		\end{equation*}
		Then $ C_{\overline{y}} $ is cofinal in the halo $ C $ for some standard $ \overline{y}\in Y $ by Lemma \ref{cofinal sets}.
		
		For all standard $ x\in X $ we define a function $ s_{x}:C_{\overline{y}}\rightarrow \mathbb{R} $ by
		\begin{equation*}
		s_{x}(n) = 		
		\begin{cases}
		\max I_{x\overline{y}}(n) &  \text{if it is defined}\\
		0 &  \text{otherwise.}
		\end{cases} 
		\end{equation*}
		Then $ s_{x} $ is a definable precise function such that $s_{x}(n)\in U_{n}  $ for all $ n\in C_{\overline{y}}$. Hence $ s_{x} $ converges to $ 0 $ with respect to $ C $ and by Theorem
		\ref{Stelling precies halo a} it converges strongly to $ 0 $ with respect to $ C $. So there exists $ \gamma \in C $ such that $ s_{x}(n)=0 $ for all $ n \in C_{\overline{y}},n\geq \gamma $. Again by Lemma \ref{cofinal sets} we find $ \overline{\gamma}\in C$ such that $ s_{x}(n)=0 $ for all standard $x\in X$ and all $n \in C_{\overline{y}}$ such that $  n\geq\overline{\gamma} $. 
		
		Because $ C_{\overline{y}} $ is cofinal in $ C $, there exists $ m\in C_{\overline{y}}, m\geq\overline{\gamma} $. Let $ v\in U_{m}  $. Then there exists standard $ x\in X $ such that $ v\in  I_{x\overline{y}}(m)  $. So $ \lvert v\rvert\leq s_{x}(m)=0 $, implying that $ v=0 $. Hence $ U_{m}=\left\lbrace 0\right\rbrace  $. Because $ U $ is non-increasing on $ C $, it holds that  $ U_{n}={0} $ for all $ n \in C, n\geq m$.
		We conclude that $ U $ converges strongly to $ 0 $ with respect to $ C $.
		
	\end{proof}
	\begin{theorem}\label{flexible halo a}
		Let $(C,D)$ be a cut of $%
		\mathbb{N}
		$ into a nonempty initial segment $C$ and a nonempty final segment $D$. Let $%
		u:C \rightarrow  \mathbb{E}$ be a flexible sequence and $a\in \mathbb{R}$ be such that $u$
		converges to $a$ with respect to $C$. If $ C $ is a halo, the sequence $u$ converges
		strongly to $a $ with respect to $C$.
	\end{theorem}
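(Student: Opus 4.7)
The plan is to reduce to the two previously established strong-convergence results by first eliminating the neutrix part of $u$ and then handling the surviving precise sequence.

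The first step is to observe that the sequence of neutrices $A_n := N(u_n) = u_n - u_n$ converges to $0$ with respect to $C$. Indeed, for every $n \in C$ one has
\begin{equation*}
|A_n| = |u_n - u_n| = |(u_n - a) - (u_n - a)| \leq 2\,|u_n - a|,
\end{equation*}
so whenever $|u_n - a| < \varepsilon/2$ we also have $|A_n| < \varepsilon$. This is the initial-segment analogue of the easy direction of Proposition \ref{converComponents} and requires no choice of representatives for $u$. Because $u$ is definable, so is $(A_n)_{n \in C}$. Lemma \ref{Convergence neutrices same form} therefore supplies a nonempty final segment $T_1$ of $C$ on which $A_n = 0$; that is, $u_n$ is a precise real number for every $n \in T_1$.

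The second step is to introduce the auxiliary definable precise sequence $\tilde{u}:C \to \mathbb{R}$ defined by
\begin{equation*}
\tilde{u}_n = \begin{cases} u_n, & n \in T_1, \\ a, & n \in C \setminus T_1. \end{cases}
\end{equation*}
Then $\tilde{u}$ still converges to $a$ with respect to $C$: on $T_1$ it coincides with $u$, and on $C \setminus T_1$ it is identically $a$. Theorem \ref{Stelling precies halo a} applied to $\tilde{u}$ (a precise external sequence on the halo $C$) yields a nonempty final segment $T_2$ of $C$ with $\tilde{u}_n = a$ for every $n \in T_2$.

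Taking $T := T_1 \cap T_2$, which is again a nonempty final segment of $C$ (two nonempty final segments of a halo meet in a nonempty final segment, since one can take the maximum of the two thresholds), we obtain $u_n = \tilde{u}_n = a$ for all $n \in T$, whence $u_n \subseteq \{a\}$ on $T$. This is exactly the strong convergence of $u$ to $a$ with respect to $C$.

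The main obstacle is the definability bookkeeping: one must check that $(A_n)$ is a genuinely \emph{definable} sequence of neutrices so that Lemma \ref{Convergence neutrices same form} applies, and that the piecewise $\tilde{u}$ is both definable and precise so that Theorem \ref{Stelling precies halo a} applies. Once these two inputs are in place the argument closes immediately.
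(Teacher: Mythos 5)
Your proposal is correct and follows essentially the same route as the paper: first apply Lemma \ref{Convergence neutrices same form} to the definable neutrix sequence $N(u_n)$ to make the terms precise from some $c\in C$ onwards, then invoke Theorem \ref{Stelling precies halo a} on the resulting precise sequence. The only (harmless) difference is that you extend by the constant $a$ on $C\setminus T_1$ to keep the domain equal to the halo $C$, whereas the paper restricts $u$ to the final segment $\{n\in C : n\geq c\}$ and checks definability of the restriction directly.
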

	\begin{proof}
		Without restriction of generality we may assume that $ a=0 $. We obtain a definable sequence of neutrices $ U $ by putting $ U_{n}=N(u_{n}) $ for all $ n\in C $. Since $ u $ converges to $ 0 $ with respect to $ C $, then $ U $ must also converge to $ 0 $ with respect to $ C $. By Theorem \ref{Convergence neutrices same form} the sequence $ U $ converges strongly to $ 0 $ with respect to $ C $. Then there exists $ c\in C $ such that $ U_{n}=0 $ for all $ n\in C,n\geq c $. Put $ C'= \left\lbrace n\in C: n\geq c \right\rbrace  $. Because $ u $ is definable its graph has the form $  
		\Gamma(u) =\bigcup _{\mathrm{st} (x)\in X}\bigcap _{\mathrm{st}(y)\in Y}I_{xy},  $
		where $X$ and $Y$ are standard sets and $I:X\times Y \rightarrow\mathcal{P}( \N \times\R)$ is an internal mapping. Then
		\begin{equation*}
		\Gamma(u_{\upharpoonright C'}) =\bigcup _{\mathrm {st} (x)\in X}\bigcap _{\mathrm{st}(y)\in Y}((I_{xy}\cap C')\times \mathbb{R}),  
		\end{equation*}
		hence $ u_{\upharpoonright C'} $ is definable. Being precise, it strongly converges to $ 0 $ with respect to $ C' $ by Theorem \ref{Stelling precies halo a}. Hence $ u$  strongly converges to $ 0 $ with respect to $ C $.
	\end{proof}
	
	\begin{theorem}
		\label{Stelling zelfde vorm}Let $(C,D)$ be a cut of $%
		\mathbb{N}
		$ into a nonempty initial segment $C$ and a nonempty final segment $D$. Let $%
		u:C \rightarrow %
		\mathbb{E}
		$ be a flexible sequence and $\alpha =a+A\in \mathbb{E}$ be such that $u$
		converges to $\alpha $ with respect to $C$. If $C$ and $A$ both are
		pregalaxies, or both prehalos, the sequence $u$ converges
		strongly to $\alpha $ with respect to $C$.
	\end{theorem}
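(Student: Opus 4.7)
The plan is to first reduce to the case $\alpha=A$ (a neutrix) by replacing $u_{n}$ with $u_{n}-a$, so the conclusion becomes the existence of a final segment of $C$ on which $u_{n}\subseteq A$. If $C$ is internal one takes $\max C$ as the witness, and if $A$ is internal (so $\alpha$ is precise) the conclusion in the prehalo subcase below follows from Theorem~\ref{flexible halo a}; the genuinely new content therefore lies in the case where both $C$ and $A$ are strictly external, which splits into the two parity options.

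For the halo--halo case I would write $A=\bigcap_{\mathrm{st}(y)\in Y}(-a_{y},a_{y})$ with $Y$ a standard set and $a:Y\to \R^{+}$ an internal function chosen so that $a_{y}>A$ for every standard $y$. Applying the convergence hypothesis at $\varepsilon=a_{y}$ gives, for each standard $y\in Y$, some $m_{y}\in C$ with $u_{n}\subseteq(-a_{y},a_{y})$ whenever $n\in C$ and $n\geq m_{y}$; by the Saturation Principle these $m_{y}$ can be taken to be the values of an internal function $\widetilde{m}:Y\to C$. Lemma~\ref{cofinal sets}\ref{cs1}, applied to the halo $C$, then produces some $\mu\in C$ dominating $\widetilde{m}(y)$ uniformly over standard $y$, and for every $n\in C$ with $n\geq \mu$ one has $u_{n}\subseteq(-a_{y},a_{y})$ for every standard $y$, so $u_{n}\subseteq A$, which is the required strong convergence.

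For the galaxy--galaxy case I would instead extend $u$ to a definable global sequence $\widetilde{u}:\N\to \E$ by setting $\widetilde{u}_{n}=A$ for every $n\in D:=\N\setminus C$. On $D$ one has $|\widetilde{u}_{n}-A|=A<\varepsilon$ for every $\varepsilon>A$, so the local $m_{\varepsilon}\in C$ supplied by the convergence of $u$ also serves globally, and $\widetilde{u}\to A$ on all of $\N$. Since $A$ is a galaxy, $A\neq 0$, and Theorem~\ref{Stelling sterke convergentie onbegrensd} yields the smallest $n^{\ast}\in \N$ with $\widetilde{u}_{n}\subseteq A$ for all $n\geq n^{\ast}$. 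If $n^{\ast}=0$ we are done; otherwise minimality forces $\widetilde{u}_{n^{\ast}-1}\not\subseteq A$, and because $\widetilde{u}_{n}=A$ for every $n\in D$ this forces $n^{\ast}-1\in C$. The index $n^{\ast}$ itself cannot lie in $D$, for then $n^{\ast}=\min D$, contradicting the fact that $D$, as the halo complement of the strictly external galaxy $C$, has no minimum element. Hence $n^{\ast}\in C$ and $u_{n}=\widetilde{u}_{n}\subseteq A$ for all $n\in C$ with $n\geq n^{\ast}$.

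The main obstacle is confined entirely to the galaxy--galaxy case: once the global strong-convergence threshold $n^{\ast}$ is produced, ruling out $n^{\ast}\in D$ is the crucial step, and it is exactly here that the structural asymmetry between a strictly external galaxy and its halo complement intervenes. The halo--halo case, by contrast, is a clean combination of the Saturation Principle with the halo cofinality result of Lemma~\ref{cofinal sets}.
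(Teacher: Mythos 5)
Your reduction to $\alpha=A$, your treatment of internal $C$, and your halo--halo argument are all correct; the halo--halo case (Saturation applied to an internal representation $A=\bigcap_{\st(y)\in Y}(-a_y,a_y)$, followed by Lemma~\ref{cofinal sets}.\ref{cs1} to find a single $\mu\in C$ dominating all the thresholds $\widetilde{m}(y)$) is essentially the paper's own proof of that case.

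The galaxy--galaxy case, however, contains a genuine gap. The set $T=\{n_0\in\N:\forall n\geq n_0\,(\widetilde{u}_n\subseteq A)\}$ is a final segment of $\N$ but is in general external, and external subsets of $\N$ need not have a least element (the unlimited integers are the basic counterexample), so ``the smallest $n^{\ast}$'' is simply not available; your observation that $D$ has no minimum is the symptom that $T$ may fail to have one, not a contradiction establishing $n^{\ast}\in C$. Worse, the extension $\widetilde{u}_n=A$ on $D$ makes the conclusion of Theorem~\ref{Stelling sterke convergentie onbegrensd} vacuous: since $D$ is a final segment of $\N$ on which $\widetilde{u}$ is constantly $A$, every $n_0\in D$ already satisfies $\widetilde{u}_n\subseteq A$ for all $n\geq n_0$. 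The global theorem therefore returns no information about $u$ on $C$: the bad set $\{n\in C: u_n\nsubseteq A\}$ can be cofinal in $C$ without being cofinal in $\N$, which is exactly the difficulty the local statement has to overcome. Indeed, if your argument were sound it would apply verbatim to an arbitrary sequence on $C$, since the convergence hypothesis is used only to invoke a theorem whose conclusion holds here trivially. The paper instead argues directly on the galaxy: writing $C=\bigcup_{\st(z)\in Z}\{0,\dots,c_z\}$ and supposing strong convergence fails, Saturation produces internal functions $\widetilde{n},\widetilde{v}$ with $\widetilde{n}(z)>c_z$ and $A<\widetilde{v}(z)<\lvert u_{\widetilde{n}(z)}\rvert$ for all standard $z$; the pregalaxy $\bigcup_{\st(z)\in Z}[\widetilde{v}(z),\infty)$ is disjoint from the pregalaxy $A$, so the Fehrele Principle (Idealization when $A$ is internal) yields a single $\varepsilon>A$ lying below all $\lvert u_{\widetilde{n}(z)}\rvert$, contradicting convergence because the $\widetilde{n}(z)$ are cofinal in $C$. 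Note that this also covers the subcase where $C$ is a galaxy and $A=0$, which your restriction to ``both strictly external'' leaves untreated.
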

	
	\begin{proof}
		Without restriction of generality we may suppose that $\alpha =A$. Assume
		first that $C$ is internal. Let $M=\max C$. Suppose $u_{M} \nsubseteq A$. Let $%
		\varepsilon \in 
		\mathbb{R}
		,\varepsilon >A$ be such that $\left \vert u_{M}\right \vert >\varepsilon $. Then $%
		\forall m\in C\exists n\in C(n\geq m\wedge \left \vert u_{n}\right \vert
		>\varepsilon )$, in contradiction with Definition \ref{Definition convergence
			local}. Hence $u_{M}\subseteq A$.
			
		Secondly, we assume that $C$ is a galaxy; and therefore $A$ is a pregalaxy. Then there exist
		a standard set $Z$ and an internal function $c:Z\rightarrow \N
		$ such that $C=\bigcup _{\st(z)\in Z}\{0,...,c_{z}\}$. Suppose that $u$ does not
		converge strongly to $A$ with respect to $C$. Then for all standard $z\in Z$
		there is $n\in C,n>c_{z}$ such that $\left \vert u_{n}\right \vert>A$. By the Saturation Principle we find an
		internal sequence $\widetilde{n}:Z\rightarrow 
		\mathbb{N}
		$ such that for all standard $z\in Z$ it holds that $\widetilde{n}(z)\in C,%
		\widetilde{n}(z)>c_{z}$ and $\left \vert u_{\widetilde{n}(z)}\right \vert >A$.  For all standard $ z\in Z $ there exists $ v\in \mathbb{R} $ such that $ A<v $ and $ v<\left \vert u_{\widetilde{n}(z)}\right \vert$, because $ \left \vert u_{\widetilde{n}(z)}\right \vert $ is an external number. So by the Saturation Principle we find an
		internal sequence $\widetilde{v}:Z\rightarrow 
		\mathbb{R}
		$ such that for all standard $z\in Z$ it holds that $\widetilde{v}(z)<  \left \vert u_{\widetilde{n}(z)}\right \vert $.  Then $ \bigcup_{\st(z) \in Z}\left[ \widetilde{v}(z),\infty\right) $ is a pregalaxy, disjunct from the pregalaxy $A$. Then  
		by Idealization if $A$ is internal, and by the Fehrele Principle, if $A$ is
		external, there exists $\varepsilon \in 
		\mathbb{R}
		$, such that $A<\varepsilon $ and $\varepsilon <\widetilde{v}(z)<\left \vert u_{\widetilde{n}(z)}\right \vert$ for all
		standard $z\in Z$. Then $u$ does not converge to $A$ with respect to $C$, a
		contradiction. Hence $u$ converges strongly to $A$ with respect to $C$.
		
Finally, we assume that $C$ is a halo. The case where $ A =0$  is contained in Theorem \ref{flexible halo a}.
		In the remaining case $ A $ is a halo. Then there exist a standard set $W$ and an internal function $a:W\rightarrow 
		\mathbb{R}
		^{+}$ such that $A=\bigcap _{\st(w)\in W}[-a_{w},a_{w}]$. For all standard $w\in W$
		there exists $m\in C$ such that $\left \vert u_{n}\right \vert < a_{w}$ for
		all $n\in C,n\geq m$. By the Saturation Principle we find an internal function $\widetilde{%
			m}:W\rightarrow 
		\mathbb{N}
		$ such that for all standard $w\in W$ it holds that $\widetilde{m}(w)\in C$
		and that $\left \vert u_{n}\right \vert < a_{w}$ for all $n\in C,n\geq 
		\widetilde{m}(w)$. By the Fehrele Principle there exists $\mu \in C$ such
		that $\widetilde{m}(w)<\mu $ for all standard $w\in W$. Then for all $n\in
		C,n\geq \mu $ we have that $\left \vert u_{n}\right \vert < a_{w}$ for all
		standard $w\in W$, i.e. $u_{n}\in A$. Hence $u$ converges strongly to $A$
		with respect to $C$.
	\end{proof}

\begin{proof}[Proof of Theorem \ref{Convergence initial segment}]
	If $C\subset S$, the theorem follows from Theorem \ref{Hoofdstelling bevat}. If $C$ and $A$ are both pregalaxies or both prehalos, the result follows from Theorem \ref{Stelling zelfde vorm}.
	
	In the remaining cases $C$ is a galaxy and $A$ is a halo, or $C$ is a halo and $A$ is a galaxy. We may assume that $ \alpha =A $. Let $ b $ be an internal bridge from from $C$ to $D$ along $u$, such bridges exist by Theorem \ref{Stelling flexible brug}. Put $ K=\text{dom} (b)$, $ K'=\{n\in \N : \exists k\in K(n\leq k) \} $ and let $ b':K'\rightarrow \mathbb{R} $ be defined by $ b'_n=b_n $ for $ n\in K $ and $ b'(n)=0 $ for $ n\notin K $. Then $b' $ converges to $ A $ with respect to $ C$. By Theorem \ref{Stelling sterke convergentie intern} there exists $ \nu \in D $ such that $ b'_{\upharpoonright{\{0,..., \nu \}}} $ is strongly convergent to $ A $ with respect to $ C$. Then $ b_{\upharpoonright{K\cap \{0,..., \nu \}}} $ is strongly convergent to $ A $ with respect to $ K\cap C$.		
\end{proof}
	
	\section{Operations on flexible sequences}
	
	\label{Section Operations Limits}
In this section we show that the usual properties of the convergence of sequences are still valid or may be adapted to the context of flexible sequences.	
	
\subsection{Boundedness and monotonicity}\label{Boundedness and monotonicity}

Let $N$ be a neutrix. In this subsection we study the relation between
boundedness and $N$-convergence. 

\begin{definition}
	We say that a flexible sequence $(u_{n})$ is \emph{%
		bounded} if there exists $\alpha \in \mathbb{E}$ such that $\alpha \neq 
	\mathbb{R}$ and $\left \vert u_{n}\right \vert \leq \alpha$, for all $n\in 
	\mathbb{N}$.
\end{definition}

Clearly the element $\alpha$ in the previous definition may be supposed to be precise.

It is well-known that convergent sequences are bounded. This result is no longer true for flexible sequences, as shown by Example \ref{example not bounded}. However, it is possible to give an adapted version of this result, proving that a $N$-convergent flexible sequence is bounded for large enough indices. We will call such sequences \emph{eventually bounded}. 

\begin{definition} 
	A flexible sequence $(u_n)$ is said to be \emph{eventually bounded} if there exist $n_0\in \N$ and $\alpha\not=\R$ such that $|u_n|\leq \alpha$ for all $n\geq n_0$.
\end{definition}

\begin{example} \label{example not bounded}
	Let $(u_n)$ be the flexible sequence defined by $u_n=\begin{cases} \R &\text{if }n\in \pounds\\
	1 &\text{if } n\not\in \pounds\end{cases}$.
	It is easy to see that this flexible sequence is convergent to  $1$ but not bounded. 
\end{example}

The sequence in Example \ref{example not bounded} above is eventual bounded but not bounded, so the two notions of boundedness are not equivalent in general. However the two notions do coincide if the sequence is precise. 

\begin{proposition} \label{propboundedfinite}
A precise sequence $(a_n)$ defined on a finite set is bounded. 
\end{proposition}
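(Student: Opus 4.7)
The plan is to split on whether the precise sequence $(a_n)$, defined on an (internal) finite set $E$, is itself internal, and in the strictly external case to apply the Representation Theorem together with Idealization. If $(a_n)$ is internal, the conclusion is immediate: $n \mapsto |a_n|$ is an internal real-valued function on the internal finite set $E$, so $M := \max_{n \in E}|a_n|$ is a real number bounding $(a_n)$.

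Suppose now that $(a_n)$ is strictly external. Theorem~\ref{Representationsequence} remains valid for sequences defined on an internal finite $E$, since its proof only invokes Lemma~\ref{Lemma internal extension}, which is stated for an arbitrary nonempty subset of $\N$: indeed, if $(a_n)$ were a galaxy or a halo, Lemma~\ref{Lemma internal extension} applied with $J = E$ would force $(a_n)$ to be internal, contradicting our assumption. Hence one obtains a representation $a = \bigcup_{\st(x)\in X} v_{x\upharpoonright \Delta_x}$, where $X$ is standard, $(v_x)_{x \in X}$ is an internal family of internal sequences, and each $\Delta_x \subseteq E$ is a prehalo. Since each $v_x$ is internal and $E$ is internal finite, the quantity $B_x := \max_{n \in E}|v_x(n)|$ is a real number, and the map $B: X \to \R$ is internal.

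The final step turns the pointwise bounds $B_x$ into a uniform bound: for every standard-finite $z \subseteq X$ the real number $M_z := \max_{x \in z}B_x$ dominates $B$ on $z$, so by Idealization there exists $M \in \R$ such that $B_x \leq M$ for all standard $x \in X$. For any $n \in E$, choosing a standard $x \in X$ with $n \in \Delta_x$ gives $|a_n| = |v_x(n)| \leq B_x \leq M$, hence $(a_n)$ is bounded by $M \in \R \subset \E$.

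The only genuine obstacle is precisely the passage from the standard-indexed family of pointwise bounds $(B_x)_{\st(x)}$ to a single uniform bound $M$; this is delivered by Idealization, once the internal family $(v_x)$ has been extracted via the Representation Theorem. Everything else reduces either to the triviality that an internal function on an internal finite set attains its maximum, or to a verification that Theorem~\ref{Representationsequence} carries over verbatim when the domain $\N$ is replaced by an internal finite $E$.
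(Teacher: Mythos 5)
Your proof is correct and follows essentially the same route as the paper: the internal case is immediate, and in the strictly external case one applies the Representation Theorem, takes the maximum of each internal $v_x$ over the finite domain, and then passes from the standard-indexed pointwise bounds to a uniform bound by a permanence argument. The only (cosmetic) difference is that you obtain the uniform bound by Idealization applied to the internal relation $B_x\leq M$, whereas the paper invokes Lemma~\ref{cofinal sets} (Saturation plus Fehrele); your version also supplies the justification, left implicit in the paper, that the Representation Theorem applies when the domain is a finite internal set.
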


\begin{proof}
If $a$ is internal the result is obvious. If $ a $ is not internal, by the Representation Theorem, there exist a standard set $ X $, an internal family of (internal) sequences $ \left( u_{x}\right) _{x \in X} $ and a family of prehalos  $ \left( \Delta_{x}\right) _{x \in X} $ such that $a=\bigcup _{\mathrm{st}(x)\in X}u_{x\upharpoonright\Delta_{x}}$. 
	For all $x$ put $S_x= \Delta_x \cap [0,...,k]$, then $u_{x\upharpoonright S_x}$ has a maximum. By Lemma \ref{cofinal sets} there exists $M$ such that $M >u_x$, for all standard $x$. Hence $a_{\upharpoonright [0,...,k]}$ has a maximum. As a consequence $a$ has a maximum.
\end{proof}

\begin{corollary}\label{Propboundedprecise}
	A precise sequence $(a_n)$ is bounded if and only if it is eventually bounded.
\end{corollary}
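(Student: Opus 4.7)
The forward direction is immediate: if $(a_n)$ is bounded by some $\alpha \neq \R$, then trivially it is eventually bounded, with $n_0 = 0$ and the same $\alpha$.

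For the converse, suppose $(a_n)$ is eventually bounded. Then there exist $n_0 \in \N$ and $\alpha \neq \R$ such that $|a_n| \leq \alpha$ for all $n \geq n_0$. By the remark following the definition of ``bounded'', we may assume $\alpha \in \R$ is precise (possibly nonstandard). The plan is to produce an overall bound by separately controlling the initial segment $\{0,\ldots, n_0-1\}$ and then taking the maximum of the two bounds.

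The restriction $b = a_{\upharpoonright \{0,\ldots,n_0-1\}}$ is a precise sequence defined on a (possibly unlimited, but internally) finite set. Applying Proposition \ref{propboundedfinite} directly to $b$ yields a bound $\beta \neq \R$, which again we may take to be a precise real number, such that $|a_n| \leq \beta$ for all $n < n_0$. Setting $M = \max(\alpha, \beta) \in \R$, we obtain $|a_n| \leq M$ for every $n \in \N$, and since $M$ is a real number, certainly $M \neq \R$. Hence $(a_n)$ is bounded.

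There is no real obstacle here; the proof is essentially a bookkeeping argument splitting the sequence at $n_0$. The only point worth flagging is that the initial segment, though possibly of unlimited length, is internally finite, so that Proposition \ref{propboundedfinite} applies and delivers a bound on it even when the restricted sequence is external.
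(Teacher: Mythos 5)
Your proof is correct and follows essentially the same route as the paper: apply Proposition \ref{propboundedfinite} to the (internally finite, possibly unlimited) initial segment and combine the resulting bound with the eventual bound. The point you flag — that the initial segment is internal and finite even when $n_0$ is unlimited, so the proposition applies despite the restricted sequence being external — is exactly the content the paper relies on.
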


\begin{proof}
		Because $a$ is eventually bounded there exist $k\in \N$ and $M_0 \in \R$ such that $|a_n|<M_0$, for $n \geq k$. By Proposition \ref{propboundedfinite} there exists $M_1$ such that $a_{\upharpoonright [0,...,k]}<M_1$. Hence $a$ is bounded.
		The other implication is obvious.
\end{proof}

\begin{theorem}
	\label{e-conv is bounded}Every $N$-convergent sequence is eventually bounded.
\end{theorem}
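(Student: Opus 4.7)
The plan is to unpack the definition of $N$-convergence with a single judicious choice of $\varepsilon$ and then apply the triangle inequality. Suppose $(u_n)$ is $N$-convergent to some $\alpha \in \E$. By the standing convention (see the Remark following Definition \ref{def strong convergence}) we have $N \neq \R$, so there exists a real number $\varepsilon$ with $\varepsilon > N$; by Remark \ref{equiv flexible} such a precise $\varepsilon$ is legitimate in the $N$-convergence criterion. Applying the definition, there exists $n_0 \in \N$ such that $|u_n - \alpha| < \varepsilon$ for all $n \geq n_0$.

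Now I would use the triangular inequality \eqref{DT1} to write, for $n \geq n_0$,
\begin{equation*}
|u_n| = |(u_n - \alpha) + \alpha| \leq |u_n - \alpha| + |\alpha| < \varepsilon + |\alpha|.
\end{equation*}
Put $\beta = \varepsilon + |\alpha|$. Since $\alpha \in \E$, also $|\alpha| \in \E$, and adding a real number keeps us inside $\E$; in particular $\beta \neq \R$. Hence $|u_n| \leq \beta$ for all $n \geq n_0$, which is precisely the definition of eventually bounded.

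There is no real obstacle here: the argument is a direct analogue of the classical fact that convergent real sequences are bounded, where the only point needing attention is that the bound $\beta$ must be a genuine external number (not all of $\R$). This is ensured by picking $\varepsilon$ to be a precise real number strictly greater than $N$, which is possible precisely because $N \neq \R$. Note that we cannot conclude \emph{bounded} (rather than \emph{eventually bounded}) in general, as Example \ref{example not bounded} already shows; the finitely (or rather, non-standardly) many initial terms may fail to lie in any proper external set.
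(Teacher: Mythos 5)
Your proof is correct and follows essentially the same route as the paper's: choose a precise $\varepsilon>N$, extract $n_0$ from the $N$-convergence criterion, and bound $|u_n|$ by $\varepsilon+|\alpha|$ via the triangle inequality (the paper uses the reverse form $|u_n|-|\alpha|\leq|u_n-\alpha|$, you use \eqref{DT1}; these are interchangeable here). Your extra remarks on why $\varepsilon$ may be taken precise and why $\varepsilon+|\alpha|\neq\mathbb{R}$ are a welcome bit of care that the paper leaves implicit.
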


\begin{proof}
	Let $N$ be a neutrix and $(u_n) $ be a flexible sequence such that
	$u_n\unright  \alpha$. Let $\varepsilon>N$. Then there is
	$n_{0}\in\N
	$ such that for $n\geq n_{0}$ we have $|u_n|-|\alpha|\leq |u_n-\alpha|<\varepsilon$ for all $n\geq n_0$. Hence $|u_n|\leq \varepsilon+|\alpha|$ for all $n\geq n_0$ and we conclude that $(u_n)$ is eventually bounded. 
\end{proof}

Sequences which are eventually constant and equal to some $\alpha \in 
\mathbb{E}$ are convergent to $\alpha$.

\begin{proposition}
	\label{constant seq e-conv}Let $(u_{n})$ be a flexible sequence and $\alpha \in \E$. If there exists $n_{0}\in \mathbb{N}$ such that $u_n= \alpha$ for $n\geq
	n_{0}$, then $u_{n}\longrightarrow\alpha $.
\end{proposition}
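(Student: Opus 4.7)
The plan is very short because the statement is essentially immediate from the definitions. Unravelling Definition \ref{Definite convergentie} with $N = N(\alpha)$ (the convention attached to the notation $u_n \longrightarrow \alpha$), what needs to be shown is: for every $\varepsilon > N(\alpha)$ there exists $n_1 \in \mathbb{N}$ with $|u_n - \alpha| < \varepsilon$ whenever $n \geq n_1$.

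The natural choice is simply $n_1 := n_0$. For $n \geq n_0$ we have $u_n = \alpha$, so $u_n - \alpha = \alpha - \alpha = N(\alpha)$, writing $\alpha = a + N(\alpha)$ and noting that $a - a = 0 \in N(\alpha)$ while the neutrix is additively stable. Hence $|u_n - \alpha| = N(\alpha) < \varepsilon$ by the hypothesis on $\varepsilon$ (and the description of the order between a neutrix and a real number $\varepsilon$ strictly larger than it, via Proposition \ref{Lemma order relations}).

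There is no real obstacle; the only point worth being careful about is the computation $\alpha - \alpha = N(\alpha)$, which relies on the fact that $\alpha$ is an external number of the form $a + A$ with $A$ an additive subgroup, so $\alpha - \alpha = (a - a) + (A - A) = A = N(\alpha)$. Once this is noted, the conclusion is immediate.
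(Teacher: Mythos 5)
Your proof is correct and is essentially identical to the paper's: both take $n_1=n_0$ and use $u_n-\alpha=\alpha-\alpha=N(\alpha)<\varepsilon$. The extra remark justifying $\alpha-\alpha=N(\alpha)$ via the group property of the neutrix is a fine elaboration of the same computation.
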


\begin{proof}
	Let $\varepsilon >N(\alpha)$. Then for $n\geq n_{0}$
	we have $\left\vert u_{n}-\alpha \right\vert =\left\vert \alpha -\alpha \right\vert
	=N(\alpha) <\varepsilon$. Hence $u_{n}\longrightarrow \alpha $.
\end{proof}

Let $(u_{n})$ be an internal sequence of real numbers and let $n_{0}\in \N$.
It is well-known that if  $u_{n}\geq0$ for $n\geq n_{0}$ and $%
u_{n}\longrightarrow a$ for some $a\in \mathbb{R}$ then $a\geq0$. The following
proposition gives an adapted version of this result for flexible
sequences.

\begin{proposition}
	\label{positive N-limit} Let $N$ be a neutrix, and $\alpha=a+A \in \E$. Let $(u_n)$ be a flexible sequence such that $u_n\unright  \alpha$. If there exists $n_{0}\in\N$ such that $N\leq u_n$, for $n\geq n_{0}$, then $N\leq \alpha+N$.
\end{proposition}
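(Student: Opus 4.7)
The plan is to argue by contradiction, converting the failure of $N \leq \alpha+N$ into a strict inequality $N > \alpha+N$ via Corollary \ref{change order2}, and then exploit $N$-convergence to push the terms $u_n$ strictly below $0$, contradicting the hypothesis $N \leq u_n$.

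First I would set up the ambient neutrix parts. Since $u_n \underset{N}{\longrightarrow} \alpha$, Proposition \ref{tinhduynhatcuagioihansailechneutrix}.\ref{tcntgh1} gives $A = N(\alpha) \subseteq N$, hence $N(\alpha+N) = A+N = N = N(N)$. This is precisely the hypothesis needed to apply Corollary \ref{change order2} to the two external numbers $N$ and $\alpha+N$: if $N \not\leq \alpha+N$, then $N > \alpha+N$.

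Assume for contradiction that $N > \alpha+N$. Since $0 \in N$, the strict inequality (Definition \ref{Def order relations}.\ref{Def >}) gives $y < 0$ for every $y \in \alpha+N$, i.e., $\alpha+N < 0$. Writing $\alpha+N = a+N$, this forces $a \notin N$ and $a<0$, so $|a| > N$. I would then take $\varepsilon := |a|/2$. Because $N$ is an additive group (hence closed under doubling), $|a| \notin N$ implies $|a|/2 \notin N$, and since $|a|/2 > 0$ we get $\varepsilon > N$. By $N$-convergence there exists $n_1$ with $|u_n - \alpha| < \varepsilon$ for all $n \geq n_1$, and Proposition \ref{phanchiathanhhaibatdangthuc} yields $u_n < \alpha + \varepsilon$.

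A short computation shows $\alpha + \varepsilon = a + |a|/2 + A = -|a|/2 + A \subseteq -|a|/2 + N$, and since $|a|/2 > N$ every element of $-|a|/2 + N$ is strictly negative, so $\alpha+\varepsilon < 0$. Combining this with $u_n < \alpha+\varepsilon$ (using Proposition \ref{Lemma order relations}) gives $u_n < 0$ for all $n \geq \max(n_0,n_1)$. On the other hand, from $N \leq u_n$ and $0 \in N$ there must exist some $y \in u_n$ with $0 \leq y$, directly contradicting $u_n < 0$. Hence $N \leq \alpha+N$.

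The main obstacle I anticipate is the careful bookkeeping around the ordering of external numbers: because $\leq$ on $\mathbb{E}$ is neither antisymmetric nor the negation of $>$, it is essential to first align the neutrix parts of $N$ and $\alpha+N$ (via Proposition \ref{tinhduynhatcuagioihansailechneutrix}.\ref{tcntgh1}) before invoking Corollary \ref{change order2}, and to keep track of strict versus non-strict inequalities when comparing $u_n$, $\alpha$, and $\varepsilon$. Everything else is a routine $\varepsilon$-manipulation.
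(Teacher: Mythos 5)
Your proof is correct and follows essentially the same route as the paper's: both reduce to the case $\alpha+N<N$ (you via Corollary \ref{change order2}, the paper via a neutrix/disjointness case split), take $\varepsilon=|a|/2>N$, and use $N$-convergence to force $u_n$ strictly negative, contradicting $N\leq u_n$. The extra care you take in justifying $\varepsilon>N$ and in extracting a nonnegative element of $u_n$ from $N\leq u_n$ is sound and matches the paper's intent.
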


\begin{proof}
	Observe that $N(\alpha)=A\subseteq N$, so $\alpha+N=a+N$ and $\alpha+N$ is a $N$-limit of the sequence $(u_n)$. If $\alpha+N$ is a neutrix, then $\alpha+N=N$ and we are done. If not, we may assume that $N$ and $\alpha+N$ are disjoint and therefore $N \leq \alpha+N$ is equivalent to $N< \alpha+N$. Suppose that $\alpha+N=a+N<N.$ Let $\varepsilon=-\frac{a}{2}>N$. Then there exists $k_0\in \N, k_0\geq n_0 $ such that for $n\geq k_0$ it holds that $|u_n-a+N|< \varepsilon$. So $a-\varepsilon<u_n+N<a+\varepsilon$ for all $n\geq k_0$. This means that $\frac{3}{2}a< u_n<\frac{a}{2}< N$ for all $n\geq k_0,$ in contradiction with the assumption. Hence $N<\alpha+N$.
	Combining the two cases we conclude that $N\leq \alpha+N$.
\end{proof}

\begin{proposition}
	\label{Proposition monotony} Let $N$ be a neutrix and $(u_n),(v_n)$ be two flexible sequences  such that $u_n\unright  \alpha,$ $v_{n}\unright  \beta$, for some
	$\alpha,\beta\in \E$. If  there is $n_{0}\in\N 
	$ such that $ u_n\leq v_{n}$  for all $n\geq n_{0}$, then $\alpha \leq  \beta$. 
\end{proposition}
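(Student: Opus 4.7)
The plan is to argue by contradiction after first normalizing the neutrices of the two limits. By Proposition \ref{tinhduynhatcuagioihansailechneutrix}.\ref{tcntgh1} both $N(\alpha)$ and $N(\beta)$ are contained in $N$, and by Proposition \ref{tinhduynhatcuagioihansailechneutrix}.\ref{tcntgh3}, $u_n \unright \alpha+N$ and $v_n \unright \beta+N$. Replacing $\alpha$ and $\beta$ by $\alpha+N$ and $\beta+N$ I may therefore assume $\alpha = a+N$ and $\beta = b+N$. Suppose towards a contradiction that $\alpha\not\leq\beta$. Since $N(\alpha)=N(\beta)=N$, Corollary \ref{change order2} yields $\alpha>\beta$, which by Definition \ref{Def order relations}.\ref{Def >} unfolds to $a+n_1 > b+n_2$ for all $n_1,n_2 \in N$, that is, $a-b>N$. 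In particular $a-b>0$.

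Set $\varepsilon = (a-b)/3$. I first verify that $\varepsilon>N$: otherwise some $n \in N$ would satisfy $0<\varepsilon\leq n$, so $3n\in N$ and $0<a-b\leq 3n$, and convexity of $N$ would force $a-b\in N$, contradicting $a-b>N$. With $\varepsilon>N$ available, the two convergence hypotheses provide indices $n_1,n_2$ such that $|u_n-\alpha|<\varepsilon$ for $n\geq n_1$ and $|v_n-\beta|<\varepsilon$ for $n\geq n_2$. By Proposition \ref{phanchiathanhhaibatdangthuc} these are equivalent to $-\varepsilon<u_n-\alpha<\varepsilon$ and $-\varepsilon<v_n-\beta<\varepsilon$. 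Using $a\in\alpha$, $b\in\beta$, and Definition \ref{Def order relations}.\ref{Def <}, this gives the element-wise bounds $u > a-\varepsilon$ for every real $u \in u_n$ and $v < b+\varepsilon$ for every real $v \in v_n$.

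Finally, for $n\geq\max(n_0,n_1,n_2)$, the hypothesis $u_n\leq v_n$ provides, for any chosen $u\in u_n$, some $v\in v_n$ with $u\leq v$. Chaining the three inequalities yields $a-\varepsilon < u \leq v < b+\varepsilon$, hence $a-b < 2\varepsilon = \tfrac{2}{3}(a-b)$, which forces $a-b<0$ and contradicts $a-b>0$. I therefore conclude $\alpha\leq\beta$. The main obstacle is the bookkeeping that translates the external-number inequality $|u_n-\alpha|<\varepsilon$ into pointwise real bounds on the elements of $u_n$; the auxiliary claim $(a-b)/3>N$ is the only subtle step, and it uses both the group structure and the convexity of the neutrix $N$.
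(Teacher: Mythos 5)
Your proof is correct and follows essentially the same route as the paper's: both normalize the limits to $\alpha=a+N$, $\beta=b+N$, suppose $\alpha\not\leq\beta$ so that $\alpha>\beta$, pick $\varepsilon$ as a fixed fraction of the gap $a-b$ (you use $\tfrac{1}{3}$, the paper $\tfrac{1}{2}$), and derive $v_n<u_n$ for large $n$, contradicting the hypothesis. Your unfolding of the external-number inequalities into pointwise real bounds, and your explicit verification that $(a-b)/3>N$, only make explicit what the paper leaves implicit; note that the initial reduction to $\alpha+N,\beta+N$ (which you share with the paper) tacitly reads the conclusion modulo $N$.
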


\begin{proof}  We may assume that $\alpha=a+N$ and $\beta=b+N$, for some $a,b \in \R$. By Proposition \ref{converComponents} we may assume that $\alpha$ and $\beta$ are precise. Suppose that $\alpha \not\leq  \beta$. This means that $\beta < \alpha$. It follows that $\varepsilon
	= \frac{\alpha-\beta}{2}>N$. So there exist $n_{1},n_{2}\in\N$ such that  $\left \vert u_n-\alpha \right \vert
	<\varepsilon$ for $n\geq n_{1}$ and  $\left \vert v_{n}-\beta \right \vert
	<\varepsilon$ for $n\geq n_{2}$. Let $n_{3}=\max \left \{  n_{0},n_{1},n_{2}\right \}  $. Then for $n\geq n_{3}$ we have
	\[
	v_{n}\leq v_n+N<\beta+\varepsilon=\beta+\frac{\alpha-\beta}{2}=\alpha-\frac{\alpha-\beta
	}{2}=\alpha-\varepsilon<u_n+N.
	\] So $v_n < u_n$ for all $n\geq n_3,$ which is a contradiction. Hence $\alpha \leq \beta$.
\end{proof}
\begin{remark} By similar arguments one shows that the conclusions in Propositions \ref{positive N-limit} and \ref{Proposition monotony} remain true if one replaces $ \leq $ by $\geq$.
\end{remark}

We show next an adapted version of the well-known result saying that the
product of a bounded sequence with an infinitesimal sequence (sequence which
converges to zero) is an infinitesimal sequence. This may also be seen as a special case of the product of two sequences, which will be dealt with in Theorem \ref{operationsNconv}.

\begin{proposition}
	\label{inf.bounded=inf}Let $N$ be a neutrix and let $(u_{n}),(v_{n})$ be flexible sequences. If $u_{n} \unright 0$ and $(v_n)$ is eventually bounded by $ \alpha \in \mathbb{E}$,
	then $\left( u_{n}v_{n}\right) \underset{\alpha N}{\longrightarrow} 0$.
\end{proposition}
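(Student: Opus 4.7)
The plan is to carry out an adapted $\varepsilon$--$n_0$ proof, reducing the situation to a standard epsilontic argument by exploiting that the bounding external number may be taken precise and that the convergence tolerance $\varepsilon$ may also be taken precise.

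First I would invoke the observation following the definition of boundedness to assume without loss of generality that $\alpha \in \R$, and moreover that $\alpha>0$ (the trivial case $\alpha=0$ forces $v_n=0$ eventually, making the conclusion immediate). By the hypothesis there exists $n_1 \in \N$ such that $|v_n| \leq \alpha$ for all $n \geq n_1$. Next, to show $u_n v_n \underset{\alpha N}{\longrightarrow} 0$, I would take $\varepsilon>\alpha N$; by Remark \ref{equiv flexible} it suffices to consider $\varepsilon \in \R$. Since $\alpha$ is a positive real number and $\varepsilon > \alpha N$, dividing through by $\alpha$ (which merely scales the neutrix $\alpha N$ back to $N$) yields $\varepsilon/\alpha > N$.

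Now I would apply the $N$-convergence of $(u_n)$ to $0$ with the precise tolerance $\delta := \varepsilon/\alpha$ to obtain $n_2 \in \N$ such that $|u_n| < \varepsilon/\alpha$ for all $n \geq n_2$. Setting $n_0 = \max\{n_1,n_2\}$, for every $n \geq n_0$ the triangle inequality for absolute values of external numbers gives
\begin{equation*}
|u_n v_n| = |u_n|\,|v_n| \leq |u_n|\,\alpha < \frac{\varepsilon}{\alpha}\cdot \alpha = \varepsilon,
\end{equation*}
which is precisely the defining condition for $\alpha N$-convergence of $(u_n v_n)$ to $0$.

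The main obstacle, which is really a matter of routine verification rather than a deep difficulty, is ensuring that the arithmetic of external numbers cooperates in the final chain of inequalities: namely, that multiplication by the positive real $\alpha$ preserves the strict inequality $|u_n|<\varepsilon/\alpha$, and that the identity $(\varepsilon/\alpha)\cdot \alpha = \varepsilon$ holds in $\E$ (true because $\alpha$ is precise and nonzero). Once these basic algebraic facts are invoked, the proof mirrors the classical argument that the product of a null sequence with a bounded sequence is null, with the neutrix $\alpha N$ absorbing the flexibility.
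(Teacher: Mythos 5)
Your proposal is correct and follows essentially the same route as the paper's own proof: reduce to a precise bound $\alpha$, take a precise $\varepsilon>\alpha N$, use $\varepsilon/\alpha>N$ to invoke the $N$-convergence of $(u_n)$, and conclude via $|u_nv_n|=|u_n||v_n|<(\varepsilon/\alpha)\alpha=\varepsilon$. The only difference is that you make explicit the step $\varepsilon/\alpha>N$ and the trivial case $\alpha=0$, which the paper leaves implicit.
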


\begin{proof}
	Since $(v_n)$ is eventually bounded, there exist $n_1 \in \N$ and $\alpha \in \mathbb{E}$ such that $\alpha \neq \mathbb{R}$ and $\left \vert v_{n}\right \vert <\alpha$, for all $n \geq n_1$. Without restriction in generality we assume that $\alpha$ is precise. Let $%
	\varepsilon>\alpha N$. Then there exists $n_{2}\in \mathbb{N}$ such that  $\left \vert u_{n}\right \vert <\varepsilon/\alpha$ for $%
	n\geq n_{0}$. Let $n_0=\max\{n_1,n_2\}$. Then, for $n \geq n_0$ we have that  
	\begin{equation*}
	\left \vert u_{n}v_{n}\right \vert =\left \vert u_{n}\right \vert \left
	\vert v_{n}\right \vert < \frac{\varepsilon}{\alpha}\alpha=\varepsilon \text{.}
	\end{equation*}
	Hence $\left( u_{n}v_{n}\right) \underset{\alpha N}{\longrightarrow} 0$.
\end{proof}

We finish this subsection by showing a version of the squeeze theorem. As in the classical case, this theorem enables one to calculate the $N$-limit of a sequence $\left(
v_{n}\right) $ by comparison with two other flexible sequences whose flexible limits
are equal and already known.

\begin{theorem}
	[Squeeze theorem]\label{Squeeze thm}Let $M,N$ be neutrices and let $(u_n),(v_n)$ and $(w_n)$ be flexible sequences such that $u_n\unright \alpha,$ $w_{n}  \underset{M}{\longrightarrow}  \alpha$, for some $\alpha \in \E$. If
	there is $n_{0}\in\N $ such that $u_n\leq v_{n}\leq w_{n}$ for $n\geq n_{0}$, then
	$v_{n} \underset{N+M}{\longrightarrow}  \alpha$. In particular, if $N=M$ then
	$v_{n}\unright  \alpha$.
\end{theorem}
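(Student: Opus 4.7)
The plan is to adapt the classical $\varepsilon$-style proof of the squeeze theorem, making systematic use of the order-and-absolute-value machinery developed in Subsection~\ref{Section Order relations}. Fix $\varepsilon > N+M$; by Remark~\ref{equiv flexible} we may assume that $\varepsilon$ is a positive real number. Since $N\subseteq N+M$ and $M\subseteq N+M$, we also have $\varepsilon>N$ and $\varepsilon>M$, so the hypotheses $u_n\unright \alpha$ and $w_n\underset{M}{\longrightarrow}\alpha$ provide $n_1,n_2\in\mathbb{N}$ such that $|u_n-\alpha|<\varepsilon$ for $n\geq n_1$ and $|w_n-\alpha|<\varepsilon$ for $n\geq n_2$. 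Because $\varepsilon$ is a zeroless positive external number, Proposition~\ref{phanchiathanhhaibatdangthuc} rewrites these as $-\varepsilon<u_n-\alpha<\varepsilon$ and $-\varepsilon<w_n-\alpha<\varepsilon$.

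Set $n_3=\max\{n_0,n_1,n_2\}$ and fix $n\geq n_3$. Applying Proposition~\ref{baotaonthutu} to add $\alpha$ to the relevant extremes gives $\alpha-\varepsilon<u_n$ and $w_n<\alpha+\varepsilon$. Chaining with the squeeze hypothesis $u_n\leq v_n\leq w_n$ yields $\alpha-\varepsilon<v_n<\alpha+\varepsilon$. Subtracting $\alpha$ leads to $-\varepsilon<v_n-\alpha<\varepsilon$, and a second application of Proposition~\ref{phanchiathanhhaibatdangthuc} delivers $|v_n-\alpha|<\varepsilon$. Since $\varepsilon>N+M$ was arbitrary, this gives $v_n\underset{N+M}{\longrightarrow}\alpha$. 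The ``in particular'' clause is immediate, because $N+N=N$ for every neutrix $N$, as neutrices are additive subgroups.

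The most delicate step is the chaining of a strict inequality between the singleton $\pm\varepsilon$ and an external number with the weak inequality $u_n\leq v_n\leq w_n$: in general the strict order $<$ on external numbers is not preserved under addition of an external number with a nontrivial neutrix part (as one sees via Proposition~\ref{Lemma order relations}, since disjointness of the summands may fail). The saving feature here is that $\alpha\pm\varepsilon$ is just a real translate of $\alpha$, so the singleton-based bounds remain disjoint from the shifted external numbers; this can be verified element-wise from Definition~\ref{Def order relations}, exploiting the convexity and symmetry of external numbers. I expect that the bookkeeping of this step is where the bulk of the care in the formal proof will be spent, while the overall structure remains the classical one.
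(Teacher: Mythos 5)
Your proposal follows the same route as the paper's proof: fix a real $\varepsilon>N+M$, note that $\varepsilon>N$ and $\varepsilon>M$, obtain $n_1,n_2$ from the two hypotheses, take $n_3=\max\{n_0,n_1,n_2\}$, and squeeze $v_n-\alpha$ between $-\varepsilon$ and $\varepsilon$; the ``in particular'' clause is handled identically via $N+N=N$. You are also right to single out the chaining of the strict bounds with the weak inequalities as the crux. However, the justification you give for that step is not correct, and this is a genuine gap. The problematic half is the lower bound: from $-\varepsilon<u_n-\alpha$ and $u_n-\alpha\leq v_n-\alpha$ one cannot in general conclude $-\varepsilon<v_n-\alpha$. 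By Proposition \ref{Lemma order relations} the relation $u_n-\alpha\leq v_n-\alpha$ splits into the case $u_n-\alpha<v_n-\alpha$ (where the chaining is fine) and the case $u_n-\alpha\subseteq v_n-\alpha$ (where it can fail): take $\varepsilon=1$, $\alpha=0$, $u_n=\{-1+\delta\}$ with $\delta>0$ infinitesimal, and $v_n=-1+\oslash$. Then $-1<u_n$ and $u_n\subseteq v_n$, yet $-1<v_n$ fails since $-1-\delta\in v_n$; equivalently $\lvert v_n\rvert=1+\oslash\not<1$. Your claimed ``saving feature'' --- that the singleton bounds remain disjoint from the shifted external numbers --- is exactly what breaks here, as $-1\in-1+\oslash$.

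The theorem itself survives, but the argument must be repaired: apply the hypotheses with a smaller tolerance, say $\varepsilon/3$ (still $>N+M$), and in the inclusion case use the symmetry of the neutrix of $v_n-\alpha$. Writing $v_n-\alpha=c+C$ with $c\in u_n-\alpha$, so $\lvert c\rvert<\varepsilon/3$, the upper chain gives $c+\xi<\varepsilon/3$ for all $\xi\in C$, hence $\xi<\varepsilon/3-c$, hence by symmetry $\xi>c-\varepsilon/3$, and therefore $c+\xi>2c-\varepsilon/3>-\varepsilon$; together with the (unproblematic) upper bound this yields $\lvert v_n-\alpha\rvert<\varepsilon$. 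A secondary remark: your detour of adding $\alpha$ to both sides to get $\alpha-\varepsilon<u_n$ is itself delicate, since Proposition \ref{baotaonthutu} only preserves $\leq$ and translation by an external number with nonzero neutrix can destroy strictness; the paper avoids this by keeping everything in the form $u_n-\alpha$. In fairness, the paper's own write-up performs the same unqualified chaining $-\varepsilon<u_n-\alpha\leq v_n-\alpha$ without comment, so the repair above is needed there as well; but since you identified this step as the one requiring care, it is the part of your argument that must actually be supplied.
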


\begin{proof}
	Assume that there exists $n_{0}\in\N$ such that  $u_n\leq v_{n}\leq w_{n}$ for $n\geq n_{0}$. Let $\varepsilon\in \R$ be such that $\varepsilon>N+M$.  Then $\varepsilon>N$ and $\varepsilon>M$. There exist
	$n_{1},n_{2}\in\N 
	$ such that for $n\geq n_{1}$ we have $\left \vert u_n-\alpha \right \vert
	<\varepsilon$ and for $n\geq n_{2}$ we have $\left \vert w_{n}-\alpha \right \vert
	<\varepsilon$. Let $n_{3}= \max \left \{  n_{0},n_{1},n_{2}\right \}  $.   Then
	for $n\geq n_{3}$  we have%
	\[
	-\varepsilon<u_n-\alpha\leq v_{n}-\alpha\leq w_{n}-\alpha<\varepsilon
	\]
	So  $\left \vert v_{n}-\alpha \right \vert <\varepsilon$. Hence, by Remark \ref{equiv flexible}, one concludes that
	$v_{n} \underset{N+M}{\longrightarrow}  \alpha$.
\end{proof}

\subsection{Operations}\label{Operations}
 The usual rules of operations of limits of sequences are valid in the context of definable flexible sequences. This is a consequence of Theorem \ref{strongconv} and Theorem \ref{Stelling sterke convergentie onbegrensd}.

	\begin{definition}
		A flexible sequence $(u_{n})$ is said to be \emph{zeroless} if $u_{n}\neq
		N(u_{n})$, for all $n\in \N$, i.e. if $0\notin u_{n}$, for all $n\in \N$.
	\end{definition}

	\begin{theorem} \label{strongconv}
		  Let $c\in \mathbb{R}$. Let $(u_{n}),(v_{n})$ be flexible sequences such that $u_{n}\rightsquigarrow \alpha $ and $%
		v_{n}\rightsquigarrow \beta $ for some $\alpha ,\beta \in \mathbb{E}.$ Then
		
		\begin{enumerate}
			\item  $u_n+v_n \rightsquigarrow \alpha+\beta$.
			
			\item  $u_n-v_n \rightsquigarrow \alpha-\beta$.
			
			\item $cu_{n}\rightsquigarrow c\alpha $.
			
			\item  $(u_{n}v_{n}) \rightsquigarrow \alpha \beta$.
			
			\item if $(v_n)$ and $\beta$ are zeroless then $u_n/v_n \rightsquigarrow \alpha/\beta $.
		\end{enumerate}
	\end{theorem}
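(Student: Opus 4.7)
The plan is to unfold strong convergence directly and exploit the set-theoretic nature of the ``$\subseteq$'' relation: by Definition \ref{def strong convergence}, $u_n \rightsquigarrow \alpha$ and $v_n \rightsquigarrow \beta$ mean that there exist $n_1, n_2 \in \N$ with $u_n \subseteq \alpha$ for $n \geq n_1$ and $v_n \subseteq \beta$ for $n \geq n_2$. Setting $n_0 = \max\{n_1, n_2\}$, I would then verify that for each of the five operations, the corresponding inclusion of the resulting term in the target external number holds for $n \geq n_0$. For addition, subtraction, and multiplication by a scalar, the inclusions $u_n + v_n \subseteq \alpha + \beta$, $u_n - v_n \subseteq \alpha - \beta$, and $cu_n \subseteq c\alpha$ follow from the straightforward fact that set-theoretic operations preserve inclusions (every element of $u_n + v_n$ is a sum $x+y$ with $x \in u_n \subseteq \alpha$ and $y \in v_n \subseteq \beta$, hence lies in $\alpha+\beta$, and similarly for the others).

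For the product, the same reasoning applies: any element of $u_n v_n$ has the form $xy$ with $x \in u_n \subseteq \alpha$ and $y \in v_n \subseteq \beta$, and so belongs to $\alpha \beta$ by the definition of multiplication of external numbers. For the quotient, I first note that if $(v_n)$ and $\beta$ are zeroless and $v_n \subseteq \beta$, then $v_n$ is itself automatically zeroless from some stage on (in fact from $n_2$ onwards, since $v_n \subseteq \beta$ with $0 \notin \beta$ forces $0 \notin v_n$). Then for $n \geq n_0$, any element of $u_n/v_n$ is $x/y$ with $x \in u_n \subseteq \alpha$ and $y \in v_n \subseteq \beta$ (zeroless), so $x/y \in \alpha/\beta$, giving $u_n/v_n \subseteq \alpha/\beta$.

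The only subtlety I anticipate is making sure that the definitions of the algebraic operations on external numbers, as set up in \cite{dinisberg 2016} and referenced throughout the paper, indeed satisfy the monotonicity property $A \subseteq A', B \subseteq B' \Rightarrow A \star B \subseteq A' \star B'$ for $\star \in \{+, -, \cdot, /\}$ (with zeroless hypotheses for division). Since external number arithmetic is defined pointwise (the Minkowski-type extension), this monotonicity is immediate from the definitions. Consequently, no epsilontics are needed: the proof reduces, after choosing $n_0 = \max\{n_1, n_2\}$, to observing the set inclusions in each case. Therefore the expected layout is a brief one-paragraph proof per item, or a single compact argument handling all five items in parallel.
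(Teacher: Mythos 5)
Your proposal is correct and follows exactly the route the paper takes: unfold the definition of strong convergence, set $n_0=\max\{n_1,n_2\}$, and conclude each item from the fact that the Minkowski-type operations on external numbers preserve inclusion. The paper's proof is essentially your argument stated more tersely, so there is nothing to add.
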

	
	\begin{proof}
		 By the assumptions there exist $n_{1},n_{2}\in \N	$ such that for all $n\geq n_{1}$, it holds that $u_{n}\subseteq \alpha $
		and for all $n\geq n_{2}$, it holds that $v_{n}\subseteq \beta $. Then   $u_n+v_n \subseteq \alpha+\beta$, $u_n-v_n \subseteq \alpha - \beta$,  $cu_{n} \subseteq c\alpha$, $%
		u_{n}v_{n}\subseteq \alpha \beta $ and $u_n/v_n \subseteq \alpha/\beta$ for all $n\geq n_{0}$, where $n_{0}=\max
		\{n_{1},n_{2}\}$. This implies the proposition.
	\end{proof}
	
	We prove next an analogue of Theorem \ref{strongconv} for $N$-convergence. But first we need the following preparatory results.
	\begin{lemma}[{\protect \cite[p.151]{koudjetivandenberg}, \protect \cite[p. 89]%
			{koudjetithese} }]
		\label{lemma division}Let $\alpha ,\beta \in \mathbb{E}$ be zeroless and
		such that $\alpha =a+N(\alpha )$ with $a\in \R$. Then
		\begin{enumerate}
			\item \label{1/alpha}$\frac{1}{\alpha}=\frac{\alpha}{a^{2}}$,
			
			\smallskip
			
			\item \label{beta/alpha}$\frac{\beta}{\alpha}=\frac{\alpha \beta}{a^{2}}$.
		\end{enumerate}
	\end{lemma}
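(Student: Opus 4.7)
The plan is to prove part (1) by double inclusion, reading $1/\alpha = \{1/x : x \in \alpha\}$ and $\alpha/a^{2} = 1/a + A/a^{2}$, where $A = N(\alpha)$; part (2) then follows immediately from (1) together with associativity of multiplication of external numbers.

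First I would establish the preliminary structural fact that $A/a \subseteq \oslash$. Because $\alpha$ is zeroless we have $a \notin A$, hence $1 \notin A/a$. Since $A/a$ is a convex subgroup of $\mathbb{R}$ not containing $1$, it cannot contain any appreciable element: if it did, the subgroup property would produce arbitrarily large standard integer multiples of that appreciable, and convexity would then force $1 \in A/a$. Consequently, for every $n \in A$ the quotient $n/a$ is infinitesimal, so $1 + n/a$ is appreciable and has an appreciable inverse.

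For the forward inclusion in (1), pick $x = a + n$ with $n \in A$ and compute
$$\frac{1}{x} - \frac{1}{a} \;=\; \frac{-n}{a(a+n)} \;=\; \frac{-n}{a^{2}} \cdot \frac{1}{1 + n/a}.$$
Here $-n/a^{2} \in A/a^{2}$ and $1/(1+n/a)$ is appreciable by the preliminary fact, so the product lies in $A/a^{2}$ because any neutrix is stable under multiplication by an appreciable scalar (an appreciable is bounded in absolute value by a standard integer $k$, and $k\cdot B \subseteq B$ for any neutrix $B$ by the subgroup property, so convexity and symmetry upgrade this to arbitrary appreciable multipliers). Thus $1/x \in 1/a + A/a^{2} = \alpha/a^{2}$. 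For the reverse inclusion, take $y = 1/a + m/a^{2}$ with $m \in A$ and invert to get $1/y = a^{2}/(a+m)$, whence
$$\frac{1}{y} - a \;=\; \frac{-am}{a+m} \;=\; \frac{-m}{1 + m/a} \;\in\; A$$
by the same appreciability argument; hence $1/y \in \alpha$ and so $y = 1/(1/y) \in 1/\alpha$.

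Part (2) is then immediate: using (1) and associativity, $\beta/\alpha = \beta \cdot (1/\alpha) = \beta \cdot (\alpha/a^{2}) = \alpha\beta/a^{2}$. The main technical obstacle I foresee is the neutrix-scaling lemma invoked repeatedly, namely that $cB = B$ for every neutrix $B$ and every appreciable $c \neq 0$; this requires combining the subgroup property (which directly handles standard integer multiples) with convexity and symmetry of $B$ to cover non-integer appreciable multipliers, and is the one place where the formal verification cannot be skipped.
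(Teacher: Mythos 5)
The paper does not prove this lemma at all: it is imported verbatim from Koudjeti--van den Berg \cite{koudjetivandenberg,koudjetithese}, so there is no in-paper argument to compare against. Your blind proof is a correct self-contained derivation. The two pillars both hold: (i) since $\alpha$ is zeroless, $a\notin A$ and $a\neq 0$, so the neutrix $A/a$ omits $1$; being a convex symmetric subgroup it then contains no appreciable (nor unlimited) element, whence $A/a\subseteq\oslash$ and $1+n/a$ is appreciable for every $n\in A$; (ii) the scaling fact $cB=B$ for appreciable $c$ and any neutrix $B$, which you correctly reduce to integer multiples plus convexity and symmetry (this is the standard statement that neutrices are $\pounds$-modules, and the inclusion $B\subseteq cB$ follows by applying the same argument to $c^{-1}$). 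With these, the two algebraic identities $\frac{1}{a+n}-\frac{1}{a}=\frac{-n}{a^{2}}\cdot\frac{1}{1+n/a}$ and $\frac{a^{2}}{a+m}-a=\frac{-m}{1+m/a}$ give the two inclusions of part \ref{1/alpha}, and part \ref{beta/alpha} is indeed just associativity of the elementwise (Minkowski) product once division is read as multiplication by $\{x^{-1}:x\in\alpha\}$, which is the definition in the cited calculus. The only point worth making explicit in a final write-up is that $y=1/a+m/a^{2}$ is nonzero (because $a+m\in\alpha$ is nonzero), so that $y=1/(1/y)$ is legitimate; otherwise the argument is complete.
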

	
	\begin{lemma}
		\label{Lemma an>a/2}Let $N$ be a neutrix and let $(a_{n})$ be a zeroless
		precise sequence such that $a_{n}\underset{N}{\longrightarrow }a$, for some $%
		a\in \R, |a|>N$. Then there exists $n_{0}\in \N	$ such that for $n\geq n_{0}$ it holds that $|a|/2<\left \vert
		a_{n}\right \vert <2\left \vert a\right \vert $.
	\end{lemma}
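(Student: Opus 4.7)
The approach is a direct analogue of the classical proof that a convergent real sequence with nonzero limit is eventually bounded away from zero: apply $N$-convergence to the precise tolerance $\varepsilon = |a|/2$ and then use the triangle inequality.

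First I would verify that $|a|/2 > N$. Since $|a|>N$ and every neutrix is a convex additive subgroup of $\mathbb{R}$ (hence closed under multiplication by the limited number $2$), for each $x\in N$ the element $2x$ lies in $N$, so $|a|>2x$, which gives $|a|/2>x$. As this holds for every $x\in N$, we obtain $|a|/2>N$.

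Next, since $a_{n}\underset{N}{\longrightarrow}a$, I would invoke Remark \ref{equiv flexible} to use a precise tolerance: applying $N$-convergence with $\varepsilon=|a|/2$ (which is a real number strictly greater than $N$), there exists $n_{0}\in\mathbb{N}$ such that $|a_{n}-a|<|a|/2$ for all $n\geq n_{0}$. Note that since $(a_{n})$ is precise and $a\in\mathbb{R}$, the inequality $|a_{n}-a|<|a|/2$ is just a comparison of real numbers.

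Finally, the standard triangle inequalities on real numbers give, for $n\geq n_{0}$,
\begin{equation*}
|a_{n}| \;\geq\; |a|-|a_{n}-a| \;>\; |a|-\tfrac{|a|}{2} \;=\; \tfrac{|a|}{2},
\end{equation*}
and
\begin{equation*}
|a_{n}| \;\leq\; |a|+|a_{n}-a| \;<\; |a|+\tfrac{|a|}{2} \;=\; \tfrac{3|a|}{2} \;<\; 2|a|,
\end{equation*}
yielding the desired bounds. There is no real obstacle here; the only point requiring a word of care is the justification that $|a|/2>N$, which rests on the closure of neutrices under multiplication by limited scalars.
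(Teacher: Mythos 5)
Your proof is correct and follows essentially the same route as the paper's: take $\varepsilon=|a|/2$, note $|a|/2>N$, and apply the triangle inequality to get $|a|/2<|a_n|<\tfrac{3}{2}|a|\leq 2|a|$. The only difference is that you spell out why $|a|/2>N$ (the paper just asserts it as clear), which is a harmless and welcome addition.
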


\begin{proof}
		Let $\varepsilon =\left \vert a\right \vert /2$. Clearly $\left \vert a\right \vert
		/2>N$. Then there exists $n_{0}\in \N$ such that for all $n\geq n_{0}$ one has $|a_{n}-a|<\varepsilon =|a|/2.$ It
		follows that $|a|-|a|/2<|a_{n}|<|a|+|a|/2=\frac{3}{2}|a|.$ Hence $$\frac{|a|}{2}<\left \vert a_{n}\right \vert <\frac{3}{2}\left \vert a\right \vert\leq 2|a|,$$
		for all $n\geq n_{0}$.
	\end{proof}

	\begin{theorem} \label{operationsNconv}
		Let $N,M$ be neutrices and let $c\in \R$. Let $(u_{n}),(v_{n})$ be flexible sequences such that $u_{n}\underset{N%
		}{\longrightarrow }\alpha $ and $v_{n} \underset{M}{\longrightarrow }
		\beta $ for some $\alpha ,\beta \in \mathbb{E}$ . Then
		
		\begin{enumerate}
			\item \label{addition} $\left( u_{n}+v_{n}\right) \underset{N+M}{\longrightarrow}
			\left( \alpha +\beta \right)$.
			
			\item \label{subtraction} $\left( u_{n}-v_{n}\right) \underset{N+M}{\longrightarrow}
			\left( \alpha -\beta \right)$.
			
			\item \label{prodprecise} $\left( cu_{n}\right) \underset{cN}{%
				\longrightarrow}\left( c\alpha \right) $.
				
			\item \label{Theorem product}  $(u_{n}v_{n})\underset{K}{\longrightarrow }\alpha \beta $, with $%
		K=\alpha M+\beta N+MN.$	
		
		\item \label{division} if $(u_n)$ and $\alpha$ are zeroless, then $(1/u_{n})$ is $(N/a^{2})$-convergent to $%
		1/\alpha$.
		\end{enumerate}
		
		In particular, if $M=N$, then $\left( u_{n}+v_{n}\right) \underset{N}{%
			\longrightarrow} \left( \alpha +\beta \right) $ and $\left(
		u_{n}-v_{n}\right) \underset{N}{\longrightarrow} \left( \alpha -\beta
		\right) $.
	\end{theorem}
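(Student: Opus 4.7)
My proof plan is to treat the five parts separately, exploiting Remark \ref{equiv flexible} to work with precise $\varepsilon$, and to observe throughout that for any neutrix $P$ the equality $\varepsilon/k > P$ holds whenever $\varepsilon > P$ and $k$ is a standard positive real (since $P$ is closed under scalar multiplication). This will let me split $\varepsilon$ into two or three pieces, each larger than the relevant neutrix.

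For \ref{addition} and \ref{subtraction}, given precise $\varepsilon > N+M$, I have $\varepsilon/2 > N$ and $\varepsilon/2 > M$ because $N+M \supseteq N,M$. Choose $n_1,n_2$ from $N$-convergence of $(u_n)$ and $M$-convergence of $(v_n)$ using tolerance $\varepsilon/2$, then the triangle inequality \eqref{DT1} applied to $u_n \pm v_n - (\alpha \pm \beta)$ yields the result. Part \ref{prodprecise} is immediate if $c=0$; otherwise for $\varepsilon > cN$ we have $\varepsilon/|c| > N$, apply $N$-convergence, then multiply by $|c|$.

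Part \ref{Theorem product} is the main obstacle. The standard decomposition
\begin{equation*}
u_n v_n - \alpha\beta = \beta(u_n - \alpha) + u_n (v_n - \beta)
\end{equation*}
combined with \eqref{DT1} gives $|u_n v_n - \alpha\beta| \leq |\beta||u_n - \alpha| + |u_n||v_n - \beta|$. By Theorem \ref{e-conv is bounded} the sequence $(u_n)$ is eventually bounded, and in fact for any $\delta > N$ we have $|u_n| \leq |\alpha| + \delta$ from some index onwards, so $|u_n| \leq 2|\alpha|$ eventually (or $|u_n| \leq \delta$ if $\alpha$ is a neutrix, handled separately). Given $\varepsilon > K = \alpha M + \beta N + MN$, the delicate point is to split $\varepsilon$ as a sum of three pieces each dominating the corresponding term $\beta N$, $\alpha M$, $MN$. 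Using the representation $K = \alpha M + \beta N + MN$ and Theorem \ref{canonical form} for the idempotent structure of the summands, I pick precise $\varepsilon_1 > N$ and $\varepsilon_2 > M$ small enough that $|\beta|\varepsilon_1 < \varepsilon/3$, $|\alpha|\varepsilon_2 < \varepsilon/3$, and $\varepsilon_1 \varepsilon_2 < \varepsilon/3$; such choices exist because each target is a neutrix strictly smaller than $\varepsilon$ (by the definition of $K$). Applying $N$- and $M$-convergence at tolerances $\varepsilon_1,\varepsilon_2$ yields the bound for all $n$ beyond the maximum of the three indices.

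For \ref{division}, I follow the formula $1/\alpha = \alpha/a^2$ from Lemma \ref{lemma division}.\ref{1/alpha} and use Proposition \ref{converComponents} to reduce to a precise sequence of representatives $(a_n)$ of $(u_n)$ that $N$-converges to $a$. By Lemma \ref{Lemma an>a/2} (applied on a cofinal internal subsequence of $(a_n)$, whose existence is granted by Corollary \ref{exist internal subsequence} and Proposition \ref{suhoitucuamoidaycon}), we have $|a_n| > |a|/2$ eventually, so that $|u_n \alpha| > a^2/2$ eventually. Then
\begin{equation*}
\left|\frac{1}{u_n} - \frac{1}{\alpha}\right| = \frac{|u_n - \alpha|}{|u_n \alpha|} \leq \frac{2|u_n - \alpha|}{a^2},
\end{equation*}
and given $\varepsilon > N/a^2$ we have $a^2 \varepsilon/2 > N$, which by $N$-convergence of $(u_n)$ bounds the right-hand side above by $\varepsilon$ from some index onwards. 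The final ``in particular'' clause follows directly from \ref{addition} and \ref{subtraction} since $N + N = N$.
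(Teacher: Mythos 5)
Parts \ref{addition}--\ref{prodprecise} are fine and coincide with the paper's argument. The genuine gap is in part \ref{Theorem product}: the claimed three-way split of $\varepsilon$ does not exist in general. You assert that one can pick a precise $\varepsilon_1>N$ with $|\beta|\varepsilon_1<\varepsilon/3$, but writing $\beta=b+B$ this requires in particular $B\varepsilon_1<\varepsilon/3$, and since $\varepsilon_1>N$ there is no reason for $B\varepsilon_1$ to be comparable to $BN\subseteq K$. Concretely, take $N=\oslash$, $\alpha=\oslash$, $M=\pounds$, $\beta=\pounds$ (so $N(\beta)=\pounds\subseteq M$ as required by Proposition \ref{tinhduynhatcuagioihansailechneutrix}); then $K=\alpha M+\beta N+MN=\oslash$ and one may take $\varepsilon=1$, yet every precise $\varepsilon_1>\oslash$ is appreciable or unlimited, so $|\beta|\varepsilon_1=\pounds\varepsilon_1=\pounds\not<1/3$, and likewise every $\varepsilon_2>M=\pounds$ is unlimited, so $\varepsilon_1\varepsilon_2\not<1/3$. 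The conclusion of the theorem is nevertheless true in this instance (e.g. $u_n=1/n$, $v_n=1$), which shows the failure lies in the method, not the statement. This is exactly why the paper does not argue epsilontically when both $M$ and $N$ are nonzero: in that case it invokes Theorem \ref{Stelling sterke convergentie onbegrensd} to upgrade both convergences to strong convergence and then concludes by the set-inclusion argument of Theorem \ref{strongconv}, reserving $\varepsilon$-estimates for the cases where at least one of $M,N$ is zero (where the limit in question is precise and the multiplicative interference of the neutrices disappears). Any repair of your argument for part \ref{Theorem product} must either reproduce this case split or find another way around the fact that products of neutrices do not scale the way products of reals do.

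A secondary point: in part \ref{division} you write $\bigl|\tfrac{1}{u_n}-\tfrac{1}{\alpha}\bigr|=\tfrac{|u_n-\alpha|}{|u_n\alpha|}$ as an identity. External numbers are only subdistributive, so this equality is not available; the paper instead substitutes $1/u_n=u_n/a_n^2$ and $1/\alpha=\alpha/a^2$ from Lemma \ref{lemma division} and estimates the resulting expression term by term, with the neutrix parts $A_n/a_n^2$ and $A/a^2$ appearing separately. Your subsequent bound $2|u_n-\alpha|/a^2$ is of the right shape and the step is likely repairable along the paper's lines, but as written it is unjustified.
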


	\begin{proof} 
	
	\ref{addition}. Let $\varepsilon >N+M$. Then $\varepsilon >N$ and $\varepsilon >M$. So, there exist $n_{1},n_{2}\in 
		\N$ such that $\left \vert u_{n}-\alpha \right \vert <\varepsilon /2$ for $n\geq
		n_{1}$ and $\left \vert v_{n}-\beta \right \vert <\varepsilon /2$ for $n\geq
		n_{2}$. Let $n_0=\max \left \{ n_{1},n_{2}\right \} $. Then, for $n\geq n_0$ 
		\begin{equation*}
			\left \vert \left( u_{n}+v_{n}\right) -\left( \alpha +\beta \right)
			\right \vert \leq \left \vert u_{n}-\alpha \right \vert +\left \vert v_{n}-\beta
			\right \vert <\frac{\varepsilon }{2}+\frac{\varepsilon }{2}=\varepsilon .
		\end{equation*}%
		Hence $\left( u_{n}+v_{n}\right) \underset{N+M}{\longrightarrow }\left(
		\alpha +\beta \right) $.
		
		If $N=M$, then $N+M=N$. Hence $\left( u_{n}+v_{n}\right) \underset{N}{%
			\longrightarrow} \left( \alpha +\beta \right) $.

	\ref{subtraction}. The proof is analogous to the proof of Theorem \ref{operationsNconv}.\ref{addition}.
	
	\ref{prodprecise}. If $c=0$, then $cu_{n}=0 $ for
		all $n\in \N$. Hence $(cu_{n})$ converges to $0$ by Proposition \ref{constant seq e-conv}.
		Assume $c\neq 0$. Let $\varepsilon >cN$. Then $\varepsilon /\left \vert
		c\right \vert >N$. So, there exists $n_{0}\in \N	$ such that $\left \vert u_{n}-\alpha \right \vert <\varepsilon /\left \vert
		c\right \vert $ for all $n\geq n_{0}$. Because $c\in \R$, distributivity
		holds. Hence 
		\begin{equation*}
			\left \vert cu_{n}-c\alpha \right \vert =\left \vert c\left( u_{n}-\alpha
			\right) \right \vert =\left \vert c\right \vert \left \vert u_{n}-\alpha
			\right \vert <\left \vert c\right \vert \frac{\varepsilon }{\left \vert
				c\right \vert }=\varepsilon \text{.}
		\end{equation*}%
		We conclude that $\left( cu_{n}\right) \  \underset{cN}{\longrightarrow }%
		\left( c\alpha \right) $.
	
	\ref{Theorem product}. Let $\alpha=a+A, \beta=b+B$. We will consider three cases: (i) $M,N$ are both nonzero, (ii) exactly one of the neutrices $M,N$ is zero and (iii) $M,N$ are both zero.

(i)	By Proposition \ref{tinhduynhatcuagioihansailechneutrix} we have that $u_{n}\underset{N%
		}{\longrightarrow }a+N $ and $v_{n}  \underset{M}{\longrightarrow }
		b+M $. Then $u_{n}\longrightarrow a+N $ and $v_{n}\longrightarrow b+M$. Hence $u_n \rightsquigarrow a+N$ and $v_n \rightsquigarrow b+N$ by Theorem \ref{Stelling sterke convergentie onbegrensd} and the result follows from Theorem \ref{strongconv}.
	
	For the remaining cases we put $u_{n}=a_{n}+A_{n}$ and $v_{n}=b_{n}+B_{n}$, where $(a_{n}),(b_{n})$ are
		sequences of representatives and for all $n\in \N$, $A_{n}=N(u_{n})$ and $B_{n}=N(v_{n})$.
	
	(ii) Without loss of generality we
		assume that $M=0.$   By Proposition \ref{equiv e-conv} we may assume that $\alpha =a+N$ and $\beta =b\in \R
		$. So, $K=bN$.
		Let $\varepsilon >K$. By Theorem \ref{Stelling sterke convergentie onbegrensd}
		there exists $n_{1}\in \N$ such that $u_{n}\subseteq \alpha $ for all $n\geq n_{1}$%
		. Then, for all $n\geq n_{1}$ we have 
		\begin{equation}
			|u_{n}v_{n}-\alpha \beta |=|u_{n}v_{n}-\alpha b|\leq |\alpha v_{n}-\alpha
			b|=|a||v_{n}-b|+Nv_{n}+Nb.  \label{danhgia0}
		\end{equation}%
		We show that there exists $n_{2}\in \N$ such that 
		\begin{equation}
			|a||v_{n}-b|<\varepsilon /3,  \label{danhgia1}
		\end{equation}%
		for all $n\geq n_{2}$. If $a=0$ the inequality \eqref{danhgia1} is trivial.
		Otherwise, because $\varepsilon >K$, it holds that $\frac{\varepsilon }{3|a|}>0$.
		Since $v_{n}\longrightarrow b$, there exists $n_{2}\in \N	$ such that $|a||v_{n}-b|<\varepsilon /3$ for all $n\geq n_{1}$. 
		
		To estimate
		the term $Nv_{n}$ we consider two cases: $b=0$ and $b\not=0$.
		
		If $b\not=0$, there exists $n_{3}\in \N$ such that $|v_{n}|<2|b|$, for all $n\geq n_{3}$. It follows that 
		\begin{equation}
			Nv_{n}\leq N \cdot(2|b|)=K<\varepsilon /3.  \label{danhgia2}
		\end{equation}%
		From \eqref{danhgia0}-\eqref{danhgia2} we obtain $|u_{n}v_{n}-\alpha \beta
		|<\varepsilon $, for all $n\geq n_{4}$, where $n_{4}=\max \{n_{1},n_{2},n_{3}\}$%
		. Hence $u_{n}v_{n}\underset{K}{\longrightarrow }\alpha \beta .$
		
		If $b=0$, then $K=0$. Because $N\not=\R		$, there exists $\delta >0$ such that $\varepsilon /\delta >N.$ Also, since $%
		v_{n}\longrightarrow 0$, there exists $n_{5}\in \N	$ such that $|v_{n}|<\delta /3$ for all $n\geq n_{5}$. It follows that 
		\begin{equation}
			v_{n}N<\frac{\delta }{3}N<\frac{\varepsilon }{3},  \label{danhgia4}
		\end{equation}%
		for all $n\geq n_{5}$. Put $n_{0}=\max \{n_{1},n_{2},n_{5}\}$. Then, from %
		\eqref{danhgia0},\eqref{danhgia1}, \eqref{danhgia4} we obtain that $%
		|u_{n}v_{n}-\alpha \beta |<\varepsilon $ for all $n\geq n_{0}$. Hence $%
		u_{n}v_{n}\underset{K}{\longrightarrow }\alpha \beta .$
		
		(iii) By Proposition \ref{converComponents} it holds that $a_n \longrightarrow a$, $A_n \longrightarrow 0$, $b_n \longrightarrow b$ and $B_n \longrightarrow 0$. Since $(b_n)$ is precise it is bounded by Proposition \ref{Propboundedprecise} and Theorem \ref{e-conv is bounded}. I.e. there exists $M \in \R$ such that $|b_n|<M$. Let $\varepsilon >0$. Then there exist $n_1,n_2,n_3,n_4 \in \N$ such that $|a_n-a|<\varepsilon/(4(|M|+1))$, for $n \geq n_1$, $|b_n-b|<\varepsilon/(4(|a|+1))$, for $n \geq n_2$, $A_n<\varepsilon/(8(|b|+1))$, for $n \geq n_3$ and $B_n<\varepsilon/(8(|a|+1))$, for $n \geq n_4$. 
		 Because $u_{n}\longrightarrow a$ and $v_{n}%
		\longrightarrow b$, there exists $n_{5}\in \N$ such that $|u_{n}|<2(|a|+1)$, for all $n \geq n_5$ and $|v_{n}|<2(|b|+1)$, for all $n\geq n_{6}$.
		Let $n_0=\max \{n_1,n_2,n_3,n_4,n_5,n_6 \}$. Then, for $n \geq n_0$ we have 
		\begin{align*}
			|u_{n}v_{n}-ab|=& |a_{n}b_{n}-ab|+A_{n}v_{n}+B_{n}u_{n}  \notag  \label{pt0}
			\\
			\leq & |b_{n}||a_{n}-a|+|a||b_{n}-b|+A_{n}v_{n}+B_{n}u_{n} \\
			<& \frac{\varepsilon}{4}+\frac{\varepsilon}{4}+\frac{\varepsilon}{4}+\frac{\varepsilon}{4}=\varepsilon.
		\end{align*}%
	Hence, $u_{n}v_{n}\longrightarrow ab$.

	\ref{division}. Assume that $u_{n}=a_{n}+A_{n}$ for all $n\in \N$. Let $\varepsilon >N/a^{2}$. Then $\frac{a^{2}}{b}\varepsilon >N$ for all appreciable $b$.
		Because $(u_{n})$ is $N$-convergent to $\alpha $, it holds that $(A_{n})$ is 
		$N$-convergent to $N$ and $(a_{n})$ is $N$-convergent to $a$. Then there
		exists $n_{0}\in \N$ such that for all $n\geq n_{0}$ we have $|A_{n}-N|<\frac{a^{2}}{3}	\varepsilon$, that is, $|A_{n}|\leq \frac{a^{2}}{3}\varepsilon$ and $%
		|a-a_{n}|<\frac{a^{2}}{6}\varepsilon$.
		By Lemma \ref{Lemma an>a/2} there exists $n_1 \in \N$ such that $%
		|a|/2<|a_n|<2|a|$, for all $n\geq n_1$. Let $k=\max \{n_0, n_1\}$. Then by Lemma \ref{lemma division}.\ref{1/alpha}, for all $n\geq k$ we have that
		\begin{equation*}
		\begin{split}
			\left \vert \frac{ 1}{u_{n}}-\frac{1}{\alpha }\right \vert = &\left \vert 
			\frac{u_{n}}{a_{n}^{2}}- \frac{\alpha}{a^2 }\right \vert=\frac{|a^2u_n
				-a_n^2\alpha|}{a_n^2a^2} = \frac{|a^2a_n -a_n^2a|}{a_n^2a^2} +\frac{a^2A_n
				+a_n^2N}{a^2a_n^2}  \\
			\leq & \frac{|a -a_n|}{a_n a} +\frac{a^2A_n +4|a|^2N}{a^4/4} \leq \frac{%
				|a -a_n|}{a^2/2} +\frac{A_n +N}{a^2} < \frac{\varepsilon}{3} +\frac{2\varepsilon}{3}%
			=\varepsilon. 
			\end{split}
		\end{equation*}
		Hence $(1/u_n)$ is $(N/a^2)$-convergent
		to $1/\alpha$.
	\end{proof}

We end with an example that ilustrates how the neutrices change under the operations.
	
	\begin{example}
		\label{example product}
		\begin{enumerate}
		
		\item	 Let $(u_n),(v_n),(w_n)$ be flexible sequences defined respectively by $u_n=\frac{1}{n}+ \oslash$, $v_n=\frac{1}{n^2}+\varepsilon \pounds$ and $w_n=\omega^2+\omega \pounds$, where $\varepsilon$ is a fixed infinitesimal and $\omega$ a fixed infinitely large real number. We have that $u_nv_n= \frac{1}{n^3}+\frac{\varepsilon \pounds}{n}+\frac{\oslash}{n^2}+ \varepsilon \oslash$, which is $(\varepsilon \oslash)$-convergent to $0$ and $u_nw_n=\frac{\omega^2}{n}+ \frac{\omega\pounds}{n}+\omega^2\oslash$ wich is $(\omega^2 \oslash)$-convergent to $0$.

	\item Let $\omega $ be unlimited and let $(w_{n})$ be the constant sequence from the previous example. We have that $w_{n}\underset{\omega \pounds }{\longrightarrow }\omega ^{2}$ and $(w_{n}w_{n})\underset{\omega ^{3}\pounds }{%
			\longrightarrow }\omega ^{4}$. Consider the two constant representatives $a=\omega^2$ and $b=\omega^2 +\omega$. Then $ab-a^2=\omega^3$ which is bigger than the neutrix of convergence $\omega \pounds$. So the sequence is $\omega \pounds$-divergent.	
		\end{enumerate}
	\end{example}

	\section{Cauchy flexible sequences}
	
	\label{Section Cauchy}
	
	In this section we prove that the Cauchy characterization of sequences,
	i.e. the fact that a sequence is convergent if and only if it is a Cauchy
	sequence, still holds for the convergence of flexible sequences.
	
	Intuitively, a sequence is a Cauchy sequence if the terms of the sequence
	become arbitrarily close to each other as the sequence progresses. In other
	words the difference between terms of the sequence converges to $0$. For flexible sequences this means that the terms of the sequence are close to each other up to a neutrix $N$, i.e. the difference between terms of the sequence $N$-converges to $0$.

	\subsection{Global Cauchy sequences}
	
	\begin{definition}
		\label{def cauchy sequence} Let $N$ be a neutrix and $(u_{n})$ be a flexible
		sequence. We say that $(u_{n})$ is $N$-\emph{Cauchy} if and only
		if 
		\begin{equation}
			\forall \varepsilon >N\exists k\in 
			\mathbb{N}
			\forall m\in 
			\mathbb{N}
			\forall n\in 
			\mathbb{N}
			(k\leq m\wedge k\leq n\Rightarrow \left \vert u_{m}-u_{n}\right \vert <\varepsilon ).
			\label{N-Cauchy}
		\end{equation}%
		If $N=0$ we simply say that $(u_{n})$ is a \emph{Cauchy sequence}.
	\end{definition}
	
	\begin{definition}
		Let $N$ be a neutrix and $(u_{n})$ be a flexible sequence. We say that $%
		(u_{n})$ is \emph{strongly $N$-Cauchy} if and only if 
		\begin{equation}
			\exists k\in 
			\mathbb{N}
			\forall n\in 
			\mathbb{N}
			\forall m\in 
			\mathbb{N}
			(k \leq m\wedge k \leq n\Rightarrow u_{n}-u_{m}\subseteq N).  \label{Strong Cauchy}
		\end{equation}
	\end{definition}
	
	Reasoning as in Remark \ref{equiv flexible} it is possible to show
	that, in (\ref{N-Cauchy}), the element $\varepsilon$ can be taken precise.
	
	Let $N$ be a neutrix and let $(u_{n})$ be a $N$-Cauchy sequence. Clearly,
	for all $\varepsilon >N$ and for all $k\in \N	$ there exists $p\in \N$ such that $\left \vert u_{n+k}-u_{n}\right \vert <\varepsilon $ for $n>p$.
	
	\begin{example}
		\label{Example e-Cauchy}Let $N$ be a neutrix. Let $%
		(u_{n})$ be the flexibble sequence defined by $u_{n}=s_{n}+N$, where $\left(
		s_{n}\right) $ is a Cauchy sequence. We show that $\left( u_{n}\right) $ is $N$-Cauchy. Let $\varepsilon >N$. Then there exists $n_{0}\in \N$ such that for $m,n>n_{0}$ it holds that $\left \vert s_{m}-s_{n}\right \vert
		<\delta $, for $0<\delta <N$. Then 
		\begin{align*}
			\left \vert u_{m}-u_{n}\right \vert & =\left \vert s_{m}+N-\left(
			s_{n}+N\right) \right \vert  \\
			& =\left \vert s_{m}-s_{n}+N\right \vert \leq \left \vert
			s_{m}-s_{n}\right \vert +N\leq \delta +N\leq N+N=N<\varepsilon \text{.}
		\end{align*}%
		Hence $\left( u_{n}\right) $ is an $N$-Cauchy sequence.
	\end{example}
	
	Given a neutrix $N$, the Cauchy criterion holds for both $N$-convergence and
	strong convergence.
	
	\begin{theorem}
		\label{Theorem Cauchy completeness} Let $N$ be a neutrix and $(u_{n})$ be a flexible sequence.
		Then
		
		\begin{enumerate}
			\item \label{tc cauchy1} $(u_{n})$ is $N$-convergent if and only if it
			is $N$-Cauchy.
			
			\item \label{tc cauchy2} Let $\alpha \in \mathbb{E}$. Then $(u_{n})$
			strongly converges to $\alpha $ if and only if it is strongly $%
			N(\alpha )$-Cauchy.
		\end{enumerate}
	\end{theorem}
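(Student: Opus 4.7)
\textbf{Necessity in both parts} is routine. For (1), if $u_n\unright\alpha$, pick $n_0$ with $|u_n-\alpha|<\varepsilon/2$ for $n\geq n_0$ (using $\varepsilon/2>N$, which holds because $N$ is a group); the triangular inequality \eqref{DT1} yields $|u_m-u_n|\leq|u_m-\alpha|+|u_n-\alpha|<\varepsilon$ for $m,n\geq n_0$. For (2), if $u_n\subseteq\alpha$ for $n\geq n_0$ then $u_n-u_m\subseteq\alpha-\alpha=N(\alpha)$ for $m,n\geq n_0$.

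\textbf{Sufficiency of (1)} is the substantive direction; the plan is to mimic the classical Cauchy-implies-convergent argument inside an internal selection. First apply Proposition \ref{exists internal subsequence2} to the graph of $u$ to obtain an internal cofinal $S\subseteq\mathbb{N}$ and an internal $v:S\to\mathbb{R}$ with $v_n\in u_n$ for all $n\in S$. The inclusion $v_m-v_n\in u_m-u_n$ transfers the $N$-Cauchy property from $(u_n)$ to $(v_n)_{n\in S}$, and fixing a witness for some $\varepsilon_0>N$ then shows this internal sequence is eventually bounded by a real constant. Classical Bolzano--Weierstrass, applied internally, yields a subsequence classically convergent to some $a\in\mathbb{R}$, and the usual Cauchy-plus-subsequence triangle argument upgrades this to $(v_n)_{n\in S}\unright a$.

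To control the neutrix parts $A_n:=N(u_n)$, observe that $u_m-u_n=(a_m-a_n)+(A_m+A_n)$, combined with the fact that $|u_m-u_n|<\varepsilon$ is equivalent to $u_m-u_n\subseteq(-\varepsilon,\varepsilon)$, forces both $|a_m-a_n|<\varepsilon$ (taking $0\in A_m+A_n$) and $A_m+A_n\subseteq(-2\varepsilon,2\varepsilon)$, whence $A_m<2\varepsilon$. Since neutrices are totally ordered by inclusion, a two-case split on $A_m\subseteq N$ versus $N\subsetneq A_m$ gives $|A_m-N|=A_m+N<2\varepsilon$, so halving the tolerance in the Cauchy property yields $(A_n)\unright N$. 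For the final step, given $\varepsilon>N$ choose $k_0$ large enough that $|u_n-u_m|<\varepsilon/3$, $A_m<\varepsilon/3$, and $|v_m-a|<\varepsilon/3$ hold simultaneously for all $m,n\geq k_0$ with $m\in S$. Since $v_m\in u_m$ gives $|u_m-v_m|=A_m$, the triangle inequality delivers $|u_n-a|\leq|u_n-u_m|+|u_m-v_m|+|v_m-a|<\varepsilon$, proving $(u_n)\unright a$.

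\textbf{Sufficiency of (2) and main obstacle.} If $(u_n)$ is strongly $N(\alpha)$-Cauchy with witness $k$, then specialising $m=k$ in the defining condition gives $u_n\subseteq u_k+N(\alpha)$ for all $n\geq k$, so $(u_n)$ strongly converges to the external number $u_k+N(\alpha)$, which shares its neutrix with $\alpha$ and is to be identified with the given $\alpha$ as strong limit. The main obstacle throughout the theorem is the neutrix-control step in Part (1): it rests on recognising that $u_m-u_n$ decomposes as $(a_m-a_n)+(A_m+A_n)$ and that containment of this set in $(-\varepsilon,\varepsilon)$ controls each summand up to a factor of two, so that the scalar $N$-Cauchy bound forces smallness of the $A_m$. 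Once this is in hand, the passage from the internal selection $v$ to the full flexible sequence $u$ is a routine triangle estimate.
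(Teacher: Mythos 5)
Your argument is correct, and for part~(1) it is essentially the paper's proof in different packaging: the paper first splits $u_n=a_n+A_n$ and treats the two components through separate lemmas (Proposition~\ref{Cauchy-component}, Corollary~\ref{exist internal subsequence} applied to the representative sequence, Lemmas~\ref{convergence of internal cauchy sequence}, \ref{convergent of N cauchy} and~\ref{conver of neutrix sequence}, recombined via Proposition~\ref{converComponents}), whereas you extract an internal selection $v_n\in u_n$ directly from the graph via Proposition~\ref{exists internal subsequence2} and control the neutrix parts by the inline estimate $A_m\subseteq A_m+A_n\subseteq(-2\varepsilon,2\varepsilon)$; the two routes are interchangeable and both hinge on the same chain (internal selection, Bolzano--Weierstrass, Cauchy plus convergent subsequence, neutrix control). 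Where you genuinely diverge is the converse of part~(2): the paper splits into $N=0$ and $N\neq 0$ and in the latter case routes through part~(1) together with Theorem~\ref{Stelling sterke convergentie onbegrensd}, which rests on the representation machinery of Section~\ref{Section_nature_external_sequences} and on definability of the sequence. Your observation that specialising $m=k$ in \eqref{Strong Cauchy} already gives $u_n\subseteq u_k+N$ for all $n\geq k$, i.e.\ strong convergence to the external number $u_k+N$ outright, is shorter, uniform in $N$, and dispenses with the definability hypothesis --- a real simplification. Both proofs share the same looseness in identifying the strong limit with the particular $\alpha$ named in the statement (the hypothesis only determines the limit up to a representative, as your $u_k+N(\alpha)$ makes explicit), so nothing is lost on that score.
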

	
	As a consequence we obtain that if $N$ is a nonzero neutrix, then conditions %
	\eqref{N-Cauchy} and \eqref{Strong Cauchy} are equivalent.
	
	\begin{corollary}
		Let $(u_n)$ be a flexible sequence and $N\not=0$ be a neutrix. Then $(u_n)$
		is $N$-Cauchy if and only if it is strongly $N$-Cauchy.
	\end{corollary}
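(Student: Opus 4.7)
The strategy is to chain together the two parts of Theorem \ref{Theorem Cauchy completeness} via the strong convergence Theorem \ref{Stelling sterke convergentie onbegrensd}, using the freedom in the choice of limit granted by Proposition \ref{tinhduynhatcuagioihansailechneutrix}.

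For the easy direction, suppose $(u_n)$ is strongly $N$-Cauchy, with witness $k \in \N$ such that $u_m - u_n \subseteq N$ for all $m,n \geq k$. Then for any $\varepsilon > N$ and for $m, n \geq k$ we have $|u_m - u_n| \subseteq N < \varepsilon$, which gives \eqref{N-Cauchy}. Hence $(u_n)$ is $N$-Cauchy.

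For the converse, assume $(u_n)$ is $N$-Cauchy. By Theorem \ref{Theorem Cauchy completeness}.\ref{tc cauchy1} there exists $\alpha \in \E$ such that $u_n \unright \alpha$. By Proposition \ref{tinhduynhatcuagioihansailechneutrix}.\ref{tcntgh1} we have $N(\alpha) \subseteq N$, and by Proposition \ref{tinhduynhatcuagioihansailechneutrix}.\ref{tcntgh3} the sequence $(u_n)$ also $N$-converges to $\alpha + N$. Putting $\beta = \alpha + N$, we see that $N(\beta) = N$, so $N$-convergence of $(u_n)$ to $\beta$ is in fact convergence in the sense of Definition \ref{Definite convergentie}: $u_n \longrightarrow \beta$.

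Here the hypothesis $N \neq 0$ becomes crucial, since it gives $N(\beta) = N \neq 0$, so Theorem \ref{Stelling sterke convergentie onbegrensd} applies and yields $u_n \rightsquigarrow \beta$. Finally, by Theorem \ref{Theorem Cauchy completeness}.\ref{tc cauchy2}, strong convergence to $\beta$ is equivalent to $(u_n)$ being strongly $N(\beta)$-Cauchy; since $N(\beta) = N$, we conclude that $(u_n)$ is strongly $N$-Cauchy. The only subtle point is the replacement of $\alpha$ by $\alpha + N$ to guarantee that the neutrix of the limit matches $N$ exactly, which is what allows Theorem \ref{Stelling sterke convergentie onbegrensd} to apply; no genuine obstacle arises, as all the heavy lifting was done in the previously established results.
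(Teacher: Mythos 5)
Your proof is correct and follows essentially the same chain as the paper's one-line argument: $N$-Cauchy $\Leftrightarrow$ $N$-convergent (Theorem \ref{Theorem Cauchy completeness}.\ref{tc cauchy1}) $\Leftrightarrow$ strongly convergent (Theorem \ref{Stelling sterke convergentie onbegrensd}) $\Leftrightarrow$ strongly $N$-Cauchy (Theorem \ref{Theorem Cauchy completeness}.\ref{tc cauchy2}). The only difference is that you explicitly carry out the replacement of $\alpha$ by $\alpha+N$ via Proposition \ref{tinhduynhatcuagioihansailechneutrix} so that the limit's neutrix equals $N$ before invoking the strong convergence theorem, a detail the paper leaves implicit.
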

	
	\begin{proof}
		By Theorems \ref{Theorem Cauchy completeness}.\ref{tc cauchy1}, \ref%
		{Stelling sterke convergentie onbegrensd} and \ref{Theorem Cauchy
			completeness}.\ref{tc cauchy2} a sequence $(u_n)$ is $N$-Cauchy if and only
		if it is $N$-convergent if and only if it is $N$-strongly convergent if and
		only if it is strongly $N$-Cauchy.
	\end{proof}
	
	Before we prove Theorem \ref{Theorem Cauchy completeness} we study some
	properties of $N$-Cauchy sequences and strongly $N$-Cauchy sequences and
	present some auxiliary results.
	
	\begin{proposition}
		\label{N-Cauchy bounded} Let $N$ be a neutrix. Every $N$-Cauchy flexible sequence is eventually
		bounded.
	\end{proposition}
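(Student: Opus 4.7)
The plan is to mimic the classical proof of the fact that Cauchy sequences of real numbers are bounded, with attention to the subtlety that the triangle inequality bound $\varepsilon + |u_k|$ must actually be an external number different from $\mathbb{R}$ for the definition of ``eventually bounded'' to apply.

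First, since $N \neq \mathbb{R}$ (by our standing convention on $N$-convergence and $N$-Cauchy sequences), there exists a precise $\varepsilon \in \mathbb{R}$ with $\varepsilon > N$. Applying the $N$-Cauchy condition \eqref{N-Cauchy} (using the analogue of Remark \ref{equiv flexible} that allows us to take $\varepsilon$ precise), we obtain $k \in \mathbb{N}$ such that $|u_m - u_n| < \varepsilon$ for all $m, n \geq k$. In particular, taking $m = n = k$ yields $N(u_k) = |u_k - u_k| < \varepsilon$, so the neutrix $N(u_k)$ is properly contained in $\mathbb{R}$, hence $u_k$ itself is distinct from $\mathbb{R}$ and so is $|u_k|$.

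Next, set $\alpha := \varepsilon + |u_k|$. Then $\alpha \in \mathbb{E}$ and $\alpha \neq \mathbb{R}$, since $|u_k| \neq \mathbb{R}$ and addition by a real number cannot produce $\mathbb{R}$. For every $n \geq k$, by the triangle inequality \eqref{DT1} together with Proposition \ref{baotaonthutu} (compatibility of $\leq$ with addition), we get
\begin{equation*}
|u_n| = |u_n - u_k + u_k| \leq |u_n - u_k| + |u_k| < \varepsilon + |u_k| = \alpha.
\end{equation*}
This shows that $|u_n| \leq \alpha$ for all $n \geq k$ with $\alpha \neq \mathbb{R}$, hence $(u_n)$ is eventually bounded.

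The main point to check carefully — it is not really an obstacle, but the only place where the flexible setting requires care beyond the classical argument — is the verification that the candidate bound $\alpha = \varepsilon + |u_k|$ is not the trivial external number $\mathbb{R}$. This is guaranteed by reading the Cauchy condition at the single pair $(k,k)$, which pins down the neutrix of $u_k$ to be strictly smaller than the precise $\varepsilon$.
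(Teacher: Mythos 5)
Your proof is correct and follows essentially the same route as the paper's: apply the $N$-Cauchy condition to get $k$ with $|u_m-u_n|<\varepsilon$ for $m,n\geq k$, then bound $|u_n|\leq |u_n-u_k|+|u_k|<\varepsilon+|u_k|$. The extra verification that $\varepsilon+|u_k|\neq\mathbb{R}$ (via $N(u_k)=|u_k-u_k|<\varepsilon$) is a detail the paper leaves implicit, and it is a correct and worthwhile addition.
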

	
	\begin{proof}
		Let $(u_{n})$ be a $N$-Cauchy sequence. Let $\varepsilon >N$.
		Then there exists $k\in \N$ such that $|u_{n}-u_{m}|<\varepsilon $ for all $m,n\geq k$. So for $m,n\geq k$
		we have 
		\begin{equation*}
			|u_{n}|\leq |u_{n}-u_{m}|+|u_{m}|<|u_{m}|+\varepsilon .
		\end{equation*}%
		Taking $m=k$ one concludes that $(u_{n})$ is eventually bounded.
	\end{proof}
	
	The components of a flexible $N$-Cauchy sequence are also $N$%
	-Cauchy and the components of a strongly $N$-Cauchy sequence are strongly $N$-Cauchy
	sequences.
	
	\begin{proposition}
		\label{Cauchy-component} Let $N$ be a neutrix. Let $(a_n)$ be a precise sequence and $(A_n)$ be a sequence of neutrices. Let $(u_{n})$ be a flexible
		sequence such that $u_{n}=a_{n}+A_{n}$ for all $n\in \N$. Then 
		
		\begin{enumerate}
			\item \label{cauchy coponent1} the sequence $(u_{n}) $ is $N$-Cauchy if and
			only if $(a_{n})$ and $( A_{n}) $ are both $N$-Cauchy sequences;
			
			\item \label{Cauchy component 2} the sequence $(u_n)$ is strongly $N$-Cauchy	if and only $(a_n)$ and $(A_n)$ are both strongly $N$-Cauchy sequences.
		\end{enumerate}
	\end{proposition}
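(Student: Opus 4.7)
The plan is to mimic the decomposition argument used in Proposition \ref{converComponents}, exploiting the fact that $u_m-u_n=(a_m-a_n)+(A_m+A_n)$ with $(a_m-a_n)$ a precise representative and $A_m+A_n=N(u_m-u_n)$ the neutrix part. The two key algebraic facts are that neutrices are symmetric ($-A_n=A_n$), so that $A_m-A_n=A_m+A_n$ as external sets, and that an absolute value of a neutrix equals the neutrix itself. These, combined with the triangular inequalities \eqref{DT1} and \eqref{DT2}, reduce everything to routine $\varepsilon/2$-splitting.

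For the forward direction of \ref{cauchy coponent1}, I would fix $\varepsilon>N$ and take $k\in\N$ witnessing the $N$-Cauchy property for $(u_n)$. Since $a_m-a_n\in u_m-u_n$, the inequality $|u_m-u_n|<\varepsilon$ forces $|a_m-a_n|<\varepsilon$, giving the $N$-Cauchy property for $(a_n)$. Moreover, by \eqref{DT2} and since $N(u_m-u_n)\leq |u_m-u_n|$, one gets $|A_m-A_n|=A_m+A_n=N(u_m-u_n)<\varepsilon$, giving the $N$-Cauchy property for $(A_n)$. The converse is symmetric: given $\varepsilon>N$, choose $k$ large enough that both $|a_m-a_n|<\varepsilon/2$ and $|A_m-A_n|<\varepsilon/2$ for $m,n\geq k$, then apply \eqref{DT1} to conclude
\[
|u_m-u_n|\leq |a_m-a_n|+|A_m+A_n|=|a_m-a_n|+|A_m-A_n|<\varepsilon.
\]

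For \ref{Cauchy component 2} the proof parallels the previous one but with $\varepsilon$ replaced by the inclusion in $N$. For the forward direction, if $u_m-u_n\subseteq N$ for all $m,n\geq k$, then on the one hand $a_m-a_n\in u_m-u_n\subseteq N$, so $(a_n)$ is strongly $N$-Cauchy; on the other hand, the neutrix part $N(u_m-u_n)=A_m+A_n=A_m-A_n$ is contained in $N$, so $(A_n)$ is strongly $N$-Cauchy as well. For the converse, if $k$ is chosen so that both $a_m-a_n\in N$ and $A_m-A_n\subseteq N$ for $m,n\geq k$, then $u_m-u_n=(a_m-a_n)+(A_m+A_n)\subseteq N+N=N$, as required.

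No real obstacle is expected: the content of the proposition lies entirely in keeping track of the identification $A_m-A_n=A_m+A_n$ and the fact that both the ordering \emph{and} the strict inclusion behave well under the neutrix decomposition. The technical care is limited to verifying that absolute values and set-theoretic inclusions interact correctly with the external sums, which follows directly from the definitions of absolute value (Definition \ref{trituyetdoidinhnghia}) and the basic properties of neutrices recalled at the beginning of Section \ref{Section_nature_external_sequences}.
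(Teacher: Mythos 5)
Your proposal is correct and follows essentially the same route as the paper: the forward direction of part \ref{cauchy coponent1} extracts the Cauchy property of $(a_n)$ and $(A_n)$ directly from $|u_m-u_n|<\varepsilon$ via the decomposition $u_m-u_n=(a_m-a_n)+(A_m+A_n)$, the converse is the same $\varepsilon/2$-splitting, and part \ref{Cauchy component 2} reduces to the observation that $(a_m-a_n)+(A_m+A_n)\subseteq N$ if and only if $a_m-a_n\in N$ and $A_m+A_n\subseteq N$. The only difference is that you spell out the identifications $A_m-A_n=A_m+A_n=N(u_m-u_n)$ that the paper leaves implicit.
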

	
	\begin{proof}
		\ref{cauchy coponent1}. Assume that $(u_n)$ is $N$-Cauchy. Let $\varepsilon>N$%
		. Then there exists $n_0 \in \N$ such that $|a_n+A_n-(a_m+A_m)|<\varepsilon$ for all $%
		m,n\geq n_0$. It follows that $|a_n-a_m|<\varepsilon$ and $|A_n-A_m|<\varepsilon$
		for all $m, n\geq n_0$. Hence $(a_n), (A_n)$ are both $N$-Cauchy.
		
		Conversely, assume that $(a_{n}),(A_{n})$ are both $N$-Cauchy. Let $\varepsilon >N$.
		Then there exists $n_{0}\in \N	$ such that $|a_{n}-a_{m}|<\varepsilon /2$ and $|A_{n}-A_{m}|<\varepsilon /2$ for
		all $m,n\geq n_{0}.$ It follows that for all $n,m\geq n_{0}$
		\begin{equation*}
			|a_{n}+A_{n}-(a_{m}+A_{m})|=|a_{n}-a_{m}|+|A_{n}-A_{m}|<\varepsilon /2+\varepsilon
			/2=\varepsilon.
		\end{equation*}%
		 Hence $(u_{n})$ is $N$-Cauchy.
		
		\ref{Cauchy component 2}. The result follows from the fact that $%
		a_n+A_n-a_m+A_m\subseteq N$ if and only if $a_n-a_m\in N$ and $%
		A_n+A_m\subseteq N.$
	\end{proof}
	
	\begin{lemma}
		\label{conver of neutrix sequence} \label{converge-neutrixs} Let $N$ be a
		neutrix and $(A_{n}) $ be a flexible sequence of neutrices. If $(A_{n}) $ is 
		$N$-Cauchy then $A_{n} \longrightarrow N.$
	\end{lemma}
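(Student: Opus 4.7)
The plan is to exploit the neutrix structure of the terms: for any neutrix $M$ one has $-M=M$, $M+M=M$, and $|M|=M$, so that the difference of a neutrix with itself collapses to the neutrix. Consequently, the $N$-Cauchy condition, applied with $m=n$, immediately produces a uniform bound on $A_n$ itself, which is essentially what we need.

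More precisely, given $\varepsilon>N$, I would use the $N$-Cauchy hypothesis to pick $k\in\N$ with $|A_m-A_n|<\varepsilon$ for all $m,n\geq k$. Specializing to $m=n\geq k$ and using $A_n-A_n=A_n+A_n=A_n$ together with $|A_n|=A_n$, this yields the bound
\begin{equation*}
A_n<\varepsilon\qquad\text{for all }n\geq k.
\end{equation*}

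It remains to convert this into $|A_n-N|<\varepsilon$, which is the requirement for $A_n\underset{N}{\longrightarrow}N$ (equivalently $A_n\longrightarrow N$, since $N(N)=N$). By the same neutrix calculus, $|A_n-N|=A_n+N$. Now two neutrices both contain $0$, so by the fact that two external numbers are either disjoint or one contains the other (cited in the excerpt from \cite[Prop.~7.4.1]{koudjetivandenberg}) they must be linearly ordered by inclusion. Hence $A_n+N$ equals either $A_n$ or $N$: in the first subcase the bound $A_n<\varepsilon$ established above gives the conclusion, while in the second subcase it follows directly from the hypothesis $\varepsilon>N$.

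I do not anticipate any substantive obstacle. The whole argument reduces to the two algebraic observations $A_n-A_n=A_n$ and $A_n+N\in\{A_n,N\}$, both elementary consequences of neutrices being additive convex subgroups of $\R$; the only care needed is in verifying that the ``absolute value'' and ``subtraction'' on external numbers behave as expected for neutrix-valued terms, which is immediate from Definition~\ref{trituyetdoidinhnghia} and the discussion following it.
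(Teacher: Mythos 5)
Your proposal is correct and rests on the same key observation as the paper's own proof: because neutrix differences do not cancel but reproduce the neutrices themselves, the $N$-Cauchy condition directly yields a bound on $A_n$, which then bounds $|A_n-N|=A_n+N$. The paper packages this as $|A_n-N|\le |A_{n+p}-A_n|+N<\varepsilon/2+\varepsilon/2=\varepsilon$ for $p>0$, whereas you specialize the Cauchy condition to $m=n$ to get $A_n=|A_n-A_n|<\varepsilon$ and finish via $A_n+N=\max(A_n,N)$ under inclusion; the difference is purely cosmetic, and your endgame even avoids the $\varepsilon/2$ splitting.
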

	
	\begin{proof}
		Let $\varepsilon >N$. Then $\varepsilon /2>N$. Because $(A_{n})$ is $N$ -Cauchy
		there exists $n_{0}\in 
		\mathbb{N}
		$ such that $|A_{n}-A_{n+p}|<\varepsilon /2$ for all $n\geq n_{0}$ and for all $%
		p>0$. It follows that for all $n \geq n_0$ and all $p>0$
		\begin{equation*}
			|A_{n}-N|\leq |A_{n+p}+A_{n}-N|=|A_{n+p}-A_{n}|+N<\frac{\varepsilon}{2}+\frac{\varepsilon}{2}=\varepsilon .
		\end{equation*}
		Hence $A_n \longrightarrow N$.
	\end{proof}
	
	\begin{lemma}
		\label{convergence of internal cauchy sequence} Let $N$ be a neutrix and $%
		(a_n)$ be an internal precise sequence. If $(a_n)$ is $N$-Cauchy then $(a_n)$
		is $N$-convergent.
	\end{lemma}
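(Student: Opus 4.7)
The plan is to mirror the classical proof that a Cauchy sequence of reals converges, adapted to the $N$-Cauchy setting and relying on the Bolzano--Weierstrass-type statement already established for internal precise sequences.

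First I would note that $(a_n)$ is eventually bounded by Proposition~\ref{N-Cauchy bounded}, and since it is precise, Corollary~\ref{Propboundedprecise} upgrades this to boundedness on all of $\mathbb{N}$. Applying Proposition~\ref{bounded convergent} to the internal sequence $(a_n)$ then yields an internal subsequence $(a_{n_k})$ which converges in the classical sense to some $a \in \mathbb{R}$; this $a$ is the candidate for the $N$-limit.

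The remainder is a standard $\varepsilon/2$-argument. By the analogue of Remark~\ref{equiv flexible} for Cauchy sequences (stated in the paragraph following Definition~\ref{def cauchy sequence}), it suffices to consider $\varepsilon \in \mathbb{R}$ with $\varepsilon > N$; since $N$ is an additive subgroup, $\varepsilon/2 \notin N$ and $\varepsilon/2 > 0$, so $\varepsilon/2 > N$ as well. I would then pick $k \in \mathbb{N}$ with $|a_m - a_n| < \varepsilon/2$ for all $m, n \geq k$ (from the $N$-Cauchy hypothesis), and an index $p$ with $n_p \geq k$ and $|a_{n_p} - a| < \varepsilon/2$ (from the classical convergence of $(a_{n_k})$ applied to the positive real $\varepsilon/2$). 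Then for every $n \geq k$,
\[
|a_n - a| \leq |a_n - a_{n_p}| + |a_{n_p} - a| < \frac{\varepsilon}{2} + \frac{\varepsilon}{2} = \varepsilon,
\]
which yields $a_n \unright a$.

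The only slightly delicate point is passing from the ``eventually bounded'' conclusion of Proposition~\ref{N-Cauchy bounded} (which is phrased with a possibly external bound) to a hypothesis strong enough to extract an internal convergent subsequence; Corollary~\ref{Propboundedprecise} performs this upgrade, and Proposition~\ref{bounded convergent} then supplies an internal subsequence with a genuine real limit, so no real obstacle arises. The rest of the argument is purely classical.
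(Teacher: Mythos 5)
Your proof is correct and follows essentially the same route as the paper: boundedness from the $N$-Cauchy property, extraction of an internal subsequence converging classically to some $a\in\mathbb{R}$ (the paper invokes Bolzano--Weierstrass directly since $(a_n)$ is internal, whereas you route through Proposition~\ref{bounded convergent}, which amounts to the same thing), followed by the standard $\varepsilon/2$ triangle-inequality argument. No gaps.
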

	
	\begin{proof}
		Because $(a_{n})$ is precise and $N$-Cauchy, we have that $(a_{n})$ is
		bounded. Also, since $(a_{n})$ is internal, it admits a subsequence $(a_{m_{n}})
		$ convergent to some $%
		a\in \R$. Let $\varepsilon >N$. Then $\varepsilon /2>N$. There exist $n_{1},n_{2}\in 
		\mathbb{N}
		$ such that for all $m_{n}\geq n_{1}$ we have $|a_{m_{n}}-a|<\varepsilon /2$
		and for all $m,n\geq n_{2}$ and for all $p\geq 0$ we have $%
		|a_{m}-a_{n}|<\varepsilon /2.$ Let $n_{0}=\max \{n_{1},n_{2}\}$. Then for all $%
		n\geq n_{0}$ we have 
		\begin{equation*}
		|a_{n}-a|=|a_{n}-a_{m_{n}}+a_{m_{n}}-a|\leq
		|a_{n}-a_{m_{n}}|+|a_{m_{n}}-a|<\varepsilon .
		\end{equation*}
		Hence $(a_n)$
		is $N$-convergent.
	\end{proof}
	
	\begin{lemma}
		\label{convergent of N cauchy} Let $N$ be a neutrix and let $(a_{n})$ be a precise $N$-Cauchy sequence. If $(a_{n})$ has a $N$-convergent subsequence, then it is $N$-convergent.
	\end{lemma}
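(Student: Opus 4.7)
The plan is to mimic the classical epsilon/2 argument, adapted to $N$-convergence, using the triangle inequality \eqref{DT1} and the fact that $\varepsilon > N$ implies $\varepsilon/2 > N$ (since $N/2 = N$).

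Let $(a_n)$ be an $N$-Cauchy precise sequence and suppose $a_{n \upharpoonright S} \unright \alpha$ for some $S \subseteq \N$ cofinal with $\N$ and some $\alpha \in \E$. Fix $\varepsilon > N$; then $\varepsilon/2 > N$. By the $N$-Cauchy property, there exists $k_1 \in \N$ such that $|a_m - a_n| < \varepsilon/2$ for all $m, n \geq k_1$. By the $N$-convergence of the subsequence, there exists $k_2 \in \N$ such that $|a_n - \alpha| < \varepsilon/2$ for all $n \in S$ with $n \geq k_2$. Let $k_0 = \max\{k_1, k_2\}$. Since $S$ is cofinal with $\N$, there exists $p \in S$ with $p \geq k_0$.

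Then for every $n \geq k_0$, applying the triangular inequality gives
\begin{equation*}
|a_n - \alpha| \leq |a_n - a_p| + |a_p - \alpha| < \frac{\varepsilon}{2} + \frac{\varepsilon}{2} = \varepsilon.
\end{equation*}
Hence $a_n \unright \alpha$, which is what we wanted.

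There is no serious obstacle here; the argument is the standard completion-by-subsequence trick. The only point requiring a brief remark is that the index set $S$ of the convergent subsequence (in the sense of Definition \ref{Def subsequence-ad}) is merely cofinal with $\N$ rather than an increasing sequence of integers, but cofinality is exactly what is needed to guarantee the existence of the anchor point $p \geq k_0$ used in the triangle estimate. The use of $\varepsilon/2 > N$ relies on the fact that a neutrix is absorbing under division by standard positive reals, which is implicit throughout the paper and was already exploited in the proofs of Proposition \ref{equiv e-conv} and Theorem \ref{operationsNconv}.
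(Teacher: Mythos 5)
Your proof is correct and follows essentially the same route as the paper's: use the Cauchy property and the subsequential convergence to get two thresholds, take their maximum, invoke cofinality of the index set to find an anchor term $a_p$ of the subsequence beyond that threshold, and conclude by the triangle inequality with $\varepsilon/2$. The only cosmetic difference is that the paper writes the subsequential limit as a real number $a_0$ while you allow a general external number $\alpha$; the argument is unaffected.
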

	
	\begin{proof}
		Let $J \subseteq  \Gamma(a)$ be cofinal with $\Gamma(a)$. Assume that $a_{n{\upharpoonright{\pi(J)}}}\underset{N}{\longrightarrow }a_{0}$ for some $a_0 \in 
		\mathbb{R}
		$. Let $\varepsilon >N$. Then $\varepsilon /2>N.$ There exists $n_{1}\in 
		\mathbb{N}
		$ such that for all $k\in \pi(J)$, such that $k\geq n_{1}$ we have $|a_{k}-a_0|\leq \varepsilon /2$. Also, since $(a_{n})$ is 
		$N$-Cauchy, there exists $n_{2}\in 
		\mathbb{N}
		$ such that $|a_{m}-a_{n}|<\varepsilon /2$ for all $m,n\geq n_{2}$. Let $%
		n_{0}=\max \{n_{1},n_{2}\}$. Because $J$ is cofinal with $(a_n)$, there exists $%
		m_{0}\in \pi(J)$ such that $m_{0}\geq n_{0}$. So for all $n\in 
		\mathbb{N}
		,n\geq m_{0}\geq n_{0}$ we have 
		\begin{equation*}
		|a_{n}-a_0|=|a_{n}-a_{m_{0}}+a_{m_{0}}-a_0|%
		\leq |a_{m_{0}}-a_{n}|+|a_{m_{0}}-a_0|<\varepsilon .
		\end{equation*}
		 Hence $(a_{n})$ $N$%
		-converges to $a_0$.
	\end{proof}

	\begin{proof}[Proof of Theorem \protect \ref{Theorem Cauchy completeness}]
		\ref{tc cauchy1}. Assume that $u_{n}\underset{N}{\longrightarrow }\alpha $,
		for some $\alpha \in \mathbb{E}$. Then there exists $n_{0}\in 
		\mathbb{N}
		$ such that for $n\geq n_{0}$ we have $\left \vert u_{n}-\alpha \right \vert
		<\varepsilon /2$. So for $m,n \geq n_{0}$ 
		\begin{align*}
			\left \vert u_{m}-u_{n}\right \vert & \leq \left \vert u_{m}-u_{n}+N\left(
			\alpha \right) \right \vert =\left \vert u_{m}-\alpha -u_{n}+\alpha
			\right \vert  \\
			& \leq \left \vert u_{m}-\alpha \right \vert +\left \vert u_{n}-\alpha
			\right \vert <\frac{\varepsilon }{2}+\frac{\varepsilon }{2}=\varepsilon .
		\end{align*}%
		Hence $\left( u_{n}\right) $ is $N$-Cauchy.
		
		Conversely, we assume that $(u_{n})$ is $N$-Cauchy. Then $(a_{n})$ and $%
		(A_{n})$ are also $N$-Cauchy, by Proposition \ref{Cauchy-component}.\ref{cauchy coponent1}. By Corollary \ref{exist internal subsequence},
		$(a_{n})$ has an internal subsequence $(a_{n_{m}})$ which is also $N$-Cauchy. Then $(a_{n_{m}})$ is $N$-convergent by Lemma \ref{convergence of internal cauchy
			sequence} and we conclude that $(a_{n})$ is $N$-convergent by Lemma \ref%
		{convergent of N cauchy}. Also, $A_{n}%
		\longrightarrow N$, by Lemma \ref{conver of neutrix sequence}.
		Hence $(u_{n})$ is $N$-convergent by Proposition \ref{converComponents}.
		
		\ref{tc cauchy2}. Assume that $(u_{n})$ is strongly convergent to some $\alpha =a+A\in \mathbb{E}$. Then there exists $n_{0}\in \mathbb{N}$ such that $%
		u_{n}\subseteq \alpha $ for all $n\geq n_{0}$. It follows that $%
		u_{n}-a\subseteq A$ for all $n\geq n_{0}$. So for all $n,m\geq n_{0}$ we
		have $u_{n}-u_{m}=u_{n}-a+a-u_{m}\subseteq A+A=A$. Hence $(u_{n})$ is
		strongly $A$-Cauchy.
		
		Conversely, assume that $N$ is a neutrix and $(u_{n})$ is strongly $N$%
		-Cauchy. If $N=0$, there exists $n_{0}\in 
		\mathbb{N}
		$ such that $u_{n}=a$ for all $n\geq n_{0}$. So $u_{n}\rightsquigarrow a.$
		If $N\not=0$ then the sequence $(u_{n})$ is $N$-Cauchy. By Theorem \ref{Theorem Cauchy completeness}.\ref{tc
			cauchy1} the sequence $(u_{n})$ is $N$-convergent and by Theorem \ref{Stelling
			sterke convergentie onbegrensd} it is strongly convergent.
	\end{proof}
	
	\subsection{Local Cauchy sequences}
	\begin{definition}\label{Definition local Cauchy}
		Let $S,C\subseteq 
		\mathbb{N}
		$ be initial segments of $%
		\mathbb{N}
		$ with $C\subseteq S$. Let $u:S \rightarrow %
		\R
		$ be a local flexible sequence and $N\subseteq 
		\R
		$. We say that $(u_n)$ is $N$-Cauchy on $S$ with respect to $C$ if 
		\begin{equation*}
			\forall \varepsilon >N\exists c\in C\forall m,n\in S(m \geq c \wedge  n \geq c \Rightarrow \left \vert u_{n}-u_{m}\right \vert \leq \varepsilon ).
		\end{equation*}
	\end{definition}

	We recall that the external numbers satisfy a generalized form of Dedekind Completeness \cite[Cor. 4.25]{vdbnaa} (see also \cite[Thm. 3.2]{dinisberg 2016}). The element immediately above (resp. below) the interval is called weak supremum (resp. weak infimum).

	\begin{theorem}[Generalized Completeness]
	Let $C$ be a convex subset of $\R$. Then there exist two unique additive convex subgroups $K$ and $L$ of $\R$ and two real numbers $a$ and $b$ such that $C$ takes exactly one of the following forms:
	\begin{enumerate}
	\item $C= [a,b] \cup \left(a+L^-\right) \cup \left(b+K^+\right)$;
	\item $ C= [a,b[ \cup \left(a+L^-\right) \setminus \left(b+K^-\right)$;
     \item $C= \left([a,b] \setminus \left(a+L^+\right)\right) \cup \left(b+K^+\right)$;
     \item $C= \left([a,b] \setminus \left(a+L^+\right)\right) \setminus \left(b+K^-\right)$;
	\end{enumerate}
	where $L^-$ and $L^+$ denote respectively the negative and positive elements of $L$, and similarly to $K$.
	\end{theorem}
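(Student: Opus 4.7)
The plan is to characterize $C$ by its weak supremum and weak infimum, both of which are external numbers provided by the generalized Dedekind completeness of $\mathbb{E}$ (see \cite{vdbnaa, dinisberg 2016}), and then to perform a dichotomy at each endpoint. Degenerate cases are first disposed of: if $C$ is empty, a singleton, or unbounded on either side, the result is obtained by letting the corresponding neutrix $K$ or $L$ be $\{0\}$ or $\mathbb{R}$ and choosing trivial real representatives. From now on assume $C$ is bounded and contains at least two points.

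At the upper boundary I would define the upper neutrix $K$ to be the set of $k \in \mathbb{R}$ for which the translated set $C + k$ agrees with $C$ near the top of $C$, that is, the largest neutrix under whose translations $C$ is stable at the top; equivalently, $K$ is the neutrix part of the weak supremum $\beta = b + K$ of $C$, for a suitable representative $b \in \mathbb{R}$. One then verifies that $K$ is an additive convex subgroup of $\mathbb{R}$, and so a genuine neutrix. Analogously construct $L$ and a representative $a$ so that $\alpha = a + L$ is the weak infimum.

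With $a, b, K, L$ in hand, I next establish the key dichotomy at the upper endpoint: either $b + K^+ \subseteq C$ (top fuzz included, so in particular $b \in C$), or $(b + K^-) \cap C = \emptyset$ (top fuzz excluded, so $C$ is cut off strictly below $b$). No intermediate situation can occur, because by the additive structure of $K$ and the convexity of $C$ a single point of $b + K^+$ lying in $C$ drags the entire fuzzy neighborhood into $C$. An analogous dichotomy at the lower endpoint combined with the upper one yields exactly the four combinations (1)--(4). The main obstacle is the rigorous construction of $K$ (and $L$) as a neutrix, especially the verification of additive closure: one must show that $k_1, k_2 \in K$ implies $k_1 + k_2 \in K$, which ultimately rests on the convexity of $C$ near its top together with a careful choice of ``near-top'' witnesses. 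A secondary technicality is ensuring that the dichotomy is exhaustive when $b - a$ lies inside $K + L$, so that the four forms may partially coalesce but without leaving a gap in the description of $C$.
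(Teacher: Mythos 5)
You should first note that the paper itself contains no proof of this statement: it is explicitly \emph{recalled} from the literature (Cor.~4.25 of van den Berg's \emph{Nonstandard Asymptotic Analysis} and Thm.~3.2 of Dinis--van den Berg 2016), and the ``weak supremum/infimum'' are \emph{defined} only afterwards, from this very decomposition. This makes your proposal partially circular: you propose to obtain $K$ and $L$ as the neutrix parts of the weak supremum and infimum ``provided by the generalized Dedekind completeness of $\mathbb{E}$'', but that completeness statement is precisely the theorem to be proved. The non-circular route is your alternative description of $K$ as a stabilizer, say $K=\{k\in\R : \hat C+k=\hat C\}$ where $\hat C$ is the lower halfline generated by $C$; but then the step you single out as the ``main obstacle'' --- verifying that $K$ is closed under addition --- is in fact the easy part: a stabilizer is automatically a group, and its convexity is immediate from $\hat C$ being a halfline.

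The genuine gap is elsewhere: you never produce the real representative $b$. Given the neutrix $K$, one must still show that the upper boundary zone of $C$ is a single coset $b+K$ --- equivalently, that the $K$-invariant cut which $\hat C$ induces on $\R$ is realized by a coset of $K$. This is the real content of generalized completeness and is not a formality: for a definable external convex set it requires the classification of convex external sets (pregalaxy/prehalo structure, the Fehrele Principle, Saturation) or a careful internal approximation exploiting the Dedekind completeness of $\R$; nothing in your sketch supplies it. Once $b$ and $K$ exist with the stated properties, your endpoint dichotomy is essentially correct, but the justification should rest on $K$-invariance rather than convexity alone: a single point $b+k_0\in C$ yields $b+k=(b+k_0)+(k-k_0)\in\hat C+K=\hat C$ for \emph{every} $k\in K$, whereas convexity by itself only drags in the points of $b+K^+$ lying \emph{below} $b+k_0$. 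Finally, the theorem asserts uniqueness of $K$ and $L$, which your proposal does not address; it follows readily from the stabilizer description, but it needs to be said.
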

	
	\begin{theorem}
		\label{Stelling Cauchy limiet}Let $S,C\subseteq 
		\mathbb{N}
		$ be initial segments of $ \N$ such that $C\subseteq S$. Let $N$ be a neutrix and $u:S \rightarrow %
		\mathbb{E}
		$ be a $N$-Cauchy sequence on $S$ with respect to $C$. Then there
		exists $a\in 
		\mathbb{R}
		$ such that $(u_n)$ converges to $a+N$ on $S$ with respect to $C$.
	\end{theorem}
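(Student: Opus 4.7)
The plan is to parallel the proof of Theorem \ref{Theorem Cauchy completeness}.\ref{tc cauchy1}, adapted to the local setting of initial segments. If $C=\N$ then $S=\N$ and the local and global $N$-Cauchy conditions coincide, so the result reduces directly to Theorem \ref{Theorem Cauchy completeness}. Henceforth I assume $D:=\N\setminus C$ is nonempty.

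Decompose $u_n=a_n+A_n$, where $a_n\in u_n$ is a representative and $A_n=N(u_n)$. The $N$-Cauchy condition yields $|a_m-a_n|+A_m+A_n<\varepsilon$ for $m,n\in S$ with $m,n\geq c_\varepsilon\in C$, which has two consequences. First, $A_n\subseteq (-\varepsilon,\varepsilon)$ for $n\geq c_\varepsilon$; arguing as in Lemma \ref{conver of neutrix sequence} but locally (estimating $|A_n-N|\leq |A_n-A_m|+N$ and choosing $m\geq c_{\varepsilon/2}$), one obtains $A_n\longrightarrow N$ on $S$ with respect to $C$. Second, the representatives are Cauchy up to $N$, but $(a_n)$ is not generally definable. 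To obtain a definable precise sequence, I would apply Theorem \ref{Stelling flexible brug} to $u_{\upharpoonright C}$ to get an internal bridge $b:K\to \R$ from $C$ to $D$ along $u$, with a cofinal subset $E\subseteq K\cap C$ on which $b_n\in u_n$. For $m,n\in E$ with $m,n\geq c_\varepsilon$, we have $b_m-b_n\in u_m-u_n$ and hence $|b_m-b_n|<\varepsilon$ as real numbers, so $b_{\upharpoonright E}$ inherits the $N$-Cauchy property with respect to $C$.

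To extract the candidate limit $a\in \R$, the idea is to adapt Proposition \ref{bounded convergent} locally. The external precise sequence $b_{\upharpoonright E}$ is eventually bounded by the Cauchy property; mimicking Proposition \ref{exists internal subsequence2} (with $E$ cofinal in $C$ rather than in $\N$) and invoking classical Bolzano--Weierstrass on the resulting internal finite subsequence, one should obtain an internal convergent subsequence with limit some $a\in\R$. The Cauchy property on $E$ then propagates this to convergence of the whole sequence $b_{\upharpoonright E}$ to $a$ on $E$ with respect to $C$, yielding the local analogue of Lemma \ref{convergent of N cauchy}. To conclude, for $\varepsilon>N$ and $n\in S$ with $n\geq c_\varepsilon$, choose $m\in E$ with $m\geq c_\varepsilon$ and $|b_m-a|<\varepsilon$; the triangle inequality gives
\begin{equation*}
|u_n-a|\leq |u_n-u_m|+|u_m-b_m|+|b_m-a|<\varepsilon+A_m+\varepsilon<3\varepsilon,
\end{equation*}
so $|u_n-(a+N)|\leq |u_n-a|+N<3\varepsilon$. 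After the usual rescaling of $\varepsilon$ (in the spirit of Remark \ref{equiv flexible}), this establishes that $(u_n)$ converges to $a+N$ on $S$ with respect to $C$.

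The principal obstacle is the local adaptation of the internal-subsequence extraction (Corollary \ref{exist internal subsequence} and Proposition \ref{bounded convergent}): the precise sequence $b_{\upharpoonright E}$ has external domain cofinal in $C$ rather than in $\N$, so one must re-derive the existence of an internal convergent subsequence via Saturation and Idealization in the style of Proposition \ref{exists internal subsequence2} and Lemma \ref{Lemma internal extension}, and then show that convergence of this subsequence is transmitted to the whole of $b_{\upharpoonright E}$.
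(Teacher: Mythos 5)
There is a genuine gap at the heart of your argument: the production of the candidate limit $a\in\mathbb{R}$. You propose to extract it by ``mimicking Proposition \ref{exists internal subsequence2} with $E$ cofinal in $C$'' and then ``invoking classical Bolzano--Weierstrass on the resulting internal finite subsequence''. This cannot work as stated. When $C=S$ is an external initial segment, the limit in Definition \ref{Definition convergence local} is a limit as $n$ tends to the cut $(C,D)$, not as $n\to\infty$; Bolzano--Weierstrass and Lemma \ref{convergent of N cauchy} concern the latter and say nothing about the former. Moreover, by the Cauchy Principle no internal subset of the external set $C$ can be cofinal in $C$ (any nonempty internal subset of $C$ has a maximum lying in $C$, and $C$ has no maximum), so there is no internal subsequence indexed cofinally in $C$ to which a classical convergence argument could be applied; and a finite internal sequence has no distinguished ``value at the cut'' unless one can run a permanence argument, which is blocked here because your Cauchy estimates for the bridge $b$ hold only on the external set $E$, not on the internal domain $K$. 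You correctly identify this as ``the principal obstacle'', but you do not overcome it, and the tools you point to are not the right ones.

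The paper resolves exactly this point by a different mechanism: after disposing of the easy cases ($C=\mathbb{N}$ via Theorem \ref{Theorem Cauchy completeness}; $C$ internal, where $a$ is a representative of $u_{\max C}$; $C$ external with $C\subset S$, where $a$ is a representative of $u_{\nu}$ for any $\nu\in S\setminus C$ --- cases your uniform bridge construction obscures), it treats $C=S$ external by working directly with the external numbers and the order structure. It forms the convex set $L=\bigcup\{[s,t]: s\in u_{0}\wedge t\in u_{m}\wedge s\leq t\wedge m\in C\}$ (or the analogous set for a weak infimum in the decreasing case), invokes the Generalized Dedekind Completeness theorem to obtain a weak supremum $\alpha=a+B$, and then uses the local $N$-Cauchy property to prove $B=N$ and $\alpha=\lim_{n\to D}u_{n}$. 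Some appeal to this completeness property (or an equivalent saturation argument over the external index set $\{\varepsilon:\varepsilon>N\}$) appears unavoidable; your representative-and-bridge framework, together with the correct reduction $A_{n}\longrightarrow N$ and the final triangle inequality, is sound but does not substitute for it.
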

	
	\begin{proof}
		The case where $ C= \mathbb{N} $ is contained in Theorem \ref{Theorem Cauchy completeness}. So we may assume that $ C $ is bounded in $  \mathbb{N} $. If $ C $ is internal, it has a maximum, say $\mu  $. Then it follows from Definitions \ref{Definition local Cauchy} and \ref{Definition convergence local} that $(u_n) $ converges to $ u_{\mu}+N $ with respect to $ C $. If $ C $ is external and $ C\subset S $, let $ \nu\in S\setminus C $ be arbitrary. Again it follows from Definitions \ref{Definition local Cauchy} and \ref{Definition convergence local} that $(u_n) $ converges to $ u_{\nu}+N $ with respect to $ C $.
		  
		From now on we suppose that $ C $ is external and $ C=S $. Put $ D=\mathbb{R}\setminus C $. Assume first that there exists $k\in C$ such that $u_{n}-u_{m}\subseteq N$ for all 
		$m,n\in C$ with $m,n\geq k$. Then $(u_n)$ converges (strongly) to $u_{k}+N$.
		In the remaining case, for all $m\in C$ there exists $n\in C,n>m$ such that $u_{n}-u_{m} \nsubseteq N$; then either $ u_{m}\subset u_{n}$, or  $ u_{m}+N< u_{n}$, or $ u_{m}+N>u_{n}$. Observe that in the first and second cases there exists a real number $ \eta $ such that $%
		u_{m}+N<\eta\leq u_{n}$, and in the third case there exists a real number $ \eta $ such that $%
		u_{n}<\eta< u_{m}+N$; in the latter case we may also assume that $ \eta $ is such that $ u_{n}+N<\eta\leq u_{m}+N $. So it may happen that (i) there exists $ k\in C $ such that for all $m\in C, m\geq k$ there exists $n\in C,n>m$ and there exists a real number $ \eta $ such that $%
		u_{m}+N<\eta\leq u_{n}$, or (ii) there exists $ k\in C $ such that for all $m\in C,m\geq k$ there exists $n\in C,n>m$ such that  $u_{n}+N<\eta\leq u_{m}+N $.
		
		In case (i), without restriction of generality we may assume that $ k=0 $. Put
		\begin{equation*}
			L=\bigcup \left\lbrace \left[ s,t\right] :s\in u_{0}\wedge t\in u_{m}\wedge s\leq t \wedge m\in C \right\rbrace  .
		\end{equation*} 
		By Generalized
		Completeness $L$ has a weak supremum of the form $\alpha =a+B$ with $a\in \R$ and $B$ a neutrix. We show that $B=N$ and $\alpha
		=\lim_{n\rightarrow D}u_{n}$.
		
		Suppose $B\subset N$. Let $\delta \in \R$ be such that $B<\delta \leq N$. Then there exists $m\in C$ such that $%
		a-\delta \leq u_{m} $. Let $n\in C,n>m$ and $ \eta \in \R $ be such that $	u_{m}+N<\eta\leq u_{n}$. Then 
		\begin{equation*}
			a+B\leq u_{m} + \delta +B\leq u_{m}+N+B=u_{m}+N<\eta\leq u_{n},
		\end{equation*}
		in contradiction with the fact that $\alpha $ is the weak
		supremum of $L$. Hence $N\subseteq B$.
		
		Suppose that $N\subset B$.  Assume first that $ a\in L $. Then $a\leq u_{m}$ for some $m\in C$. Let again $n\in C,n>m$ and $ \eta \in \R $ be such that $	u_{m}+N<\eta\leq u_{n}$. Then $ a+B \leq u_{m}+N<\eta\leq u_{n}$. So $ a+B $ cannot be the supremum of $ L $, a contradiction. Secondly, assume that $a \notin L$. Let $ \varepsilon\in \R $ and $N<\varepsilon \leq B$ and $p \in C$ be such that $\left \vert u_{n }-u_{m }\right \vert <\varepsilon $ for all $n,m \in C$ with $n,m\geq p$. Then $ u_{p}< a+B $, hence also $ u_{p}+B < a+B $. Then $u_{n}< u_{p}+2\varepsilon$ for all $n\in C,n\geq p$, and because $u_{p}+2\varepsilon\leq u_{p}+B< a+B $  
		again $ a+B $ cannot be the supremum of $ L $. We conclude that 
		$N=B$. 
		
		Finally, let $\zeta \in \R$ be such that $N<\zeta $. Because $N<\zeta /2$, by the $ N $-Cauchy property there exists $q\in C$ such that $\left \vert u_{n}-u_{m}\right \vert \leq \zeta /2$ for all $n\in C,n\geq q
		$, and because $\alpha $ is the weak supremum of $ L $, there exists $r\in C$ such that $%
		r\geq q$ and $\left \vert u_{r}-\alpha \right \vert <\zeta /2$. Then for all
		for all $n\in C,n\geq r$%
		\begin{equation*}
			\left \vert u_{n}-\alpha \right \vert \leq \left \vert u_{n}-u_{r}\right \vert
			+\left \vert u_{r}-\alpha \right \vert <\frac{\zeta}{2}+\frac{\zeta}{2}=\zeta .
		\end{equation*}%
		Hence $\alpha =\lim_{n\rightarrow D}u_{n}$.
		
		Case (ii) is analogous to the former case, now working with
		a weak infimum. This completes the proof.
	\end{proof}

	\section{Applications}
	
	\label{Section applications} 
		\subsection{Nonstandard Borel-Ritt Theorem}
	Expansions in $ \varepsilon $-shadow of numbers have been introduced by Francine Diener as a nonstandard alternative to asymptotic expansions of functions, in fact the two notions coincide for standard functions, see \cite	{Diener-Reeb}. Given an asymptotic expansion, the classical Borel-Ritt theorem shows the existence of a real function having this expansion. This theorem is particularly useful for divergent series: they are not empty, in the sense that they are always the expansion of some function. Proofs of existence of real numbers having a prescribed expansion in $ \varepsilon $-shadow are given in  \cite	{Diener-Reeb}, and \cite{vdbnaa}, they form a sort of nonstandard Borel-Ritt theorems.
	
	Let $\varepsilon \simeq 0$. We interpret here an expansion in $ \varepsilon $-shadow in terms of a $N$-Cauchy sequence. Here $N$ is the set of all real numbers having shadow expansion identicaly zero; i.e. the so-called \emph{microhalo}	$ m_{\varepsilon}:=\bigcap _{\st(n)\in \mathbb{N}} [-\varepsilon^{n},\varepsilon^{n}]$. The microhalo is a neutrix built from unlimited powers of $ \varepsilon $, and may be denoted by $ m_{\varepsilon}=\pounds \varepsilon ^{\centernot{\infty}} $. By Theorem \ref{Stelling Cauchy limiet} such a $N$-Cauchy sequence has a limit. This yields a new proof of such a nonstandard Borel-Ritt theorem, in fact it defines the set of real numbers having a given expansion in $ \varepsilon $-shadow in the form of an external number. 
	
	We recall that the \emph{shadow} of a limited real number $x$, denoted by $^{\circ}x$, is the unique standard real number infinitely close to $x$.
	\begin{definition}
		Let $\varepsilon$ be a positive infinitesimal real number, $(a_{n})$ be a standard sequence of real numbers and $b\in \R$. Then the formal series $ \sum_{n=0}^{\infty}a_{n}\varepsilon ^{n} 	$ is called an \emph{expansion in $\varepsilon $-shadow} of $b$ if 
		\begin{equation*}
			^{^\circ}\left( \frac{b-\sum_{k=0}^{n}a_{k}\varepsilon ^{k}}{\varepsilon ^{n+1}}%
			\right) =a_{n+1}
		\end{equation*} for all standard $ n\in \mathbb{N} $. 
	\end{definition}

	\begin{theorem}[Nonstandard Borel-Ritt Theorem]
		\label{Stelling Borel-Ritt}Let $\varepsilon$ be a positive infinitesimal real number and $(a_{n})$ be a standard sequence of real numbers. Then there exists $b\in 
		\R$ such that $\sum_{n=0}^{\infty}a_{n}\varepsilon ^{n}$ is an expansion in $\varepsilon $-shadow of $b$.
	\end{theorem}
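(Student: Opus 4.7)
The plan is to apply Theorem \ref{Stelling Cauchy limiet} to the sequence of partial sums, restricted to standard indices. Concretely, I would define the precise local sequence $s\colon {}^{\sigma}\mathbb{N}\to \mathbb{R}$ by $s_n = \sum_{k=0}^{n} a_k \varepsilon^k$, taking $S = C = {}^{\sigma}\mathbb{N}$, which is a galaxy. Since $(a_n)$ is standard and $\varepsilon$ is given, $s$ is definable. The $b$ sought will be any representative of the $m_{\varepsilon}$-limit.

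The first step is to verify that $s$ is $m_\varepsilon$-Cauchy on ${}^{\sigma}\mathbb{N}$ with respect to ${}^{\sigma}\mathbb{N}$. Given a real $\delta > m_\varepsilon$, there is a standard $N_0$ with $\delta \geq \varepsilon^{N_0}$; set $c := N_0 + 1$. For standard $m, n$ with $c \leq m \leq n$,
\[
|s_n - s_m| \;\leq\; \sum_{k=m+1}^{n} |a_k|\,\varepsilon^k \;\leq\; K\,\varepsilon^{N_0+1},
\]
where $K = \sum_{k=m+1}^{n} |a_k|$ is a standard-finite sum of standard reals, hence standard. Since $K\varepsilon \simeq 0$, this gives $|s_n - s_m| \leq \varepsilon^{N_0} \leq \delta$, as required.

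Theorem \ref{Stelling Cauchy limiet} then yields $b \in \mathbb{R}$ such that $s$ converges to $b + m_\varepsilon$ with respect to ${}^{\sigma}\mathbb{N}$. It remains to check that $\sum a_n \varepsilon^n$ is the $\varepsilon$-shadow expansion of $b$, which amounts to $b - s_{n_0+1} \in \varepsilon^{n_0+1}\oslash$ for every standard $n_0$. Given a standard $\eta > 0$, the real $\delta' := \eta\,\varepsilon^{n_0+2}$ exceeds $m_\varepsilon$ (as $\delta' \geq \varepsilon^{n_0+3}$), so by convergence there is a standard $c'$ with $|s_n - b| < \delta'$ for all standard $n \geq c'$. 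Fixing any standard $n \geq \max(n_0+2, c')$ and applying the same triangular estimate on $|s_n - s_{n_0+1}|$,
\[
|b - s_{n_0+1}| \;\leq\; |b - s_n| + |s_n - s_{n_0+1}| \;\leq\; (\eta + K')\,\varepsilon^{n_0+2}
\]
for some standard $K'$. Dividing by $\varepsilon^{n_0+1}$ gives the infinitesimal bound $(\eta + K')\varepsilon$, which is below every positive standard real; hence $b - s_{n_0+1} \in \varepsilon^{n_0+1}\oslash$.

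The main technical delicacy lies in the bookkeeping that translates $\delta > m_\varepsilon$ into the existence of an appropriate standard power of $\varepsilon$, and that keeps every auxiliary sum indexed by standard quantities so that its coefficient bound remains standard (hence limited relative to any positive power of $\varepsilon$). Once this is managed, the Cauchy criterion of Theorem \ref{Stelling Cauchy limiet} yields the Borel–Ritt conclusion essentially at once, in the guise of an external-number description of the set of all $b$ having the prescribed expansion.
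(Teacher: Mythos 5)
Your proposal is correct and follows essentially the same route as the paper: show the partial sums form an $m_{\varepsilon}$-Cauchy local sequence with respect to the initial segment of standard (limited) indices, invoke Theorem \ref{Stelling Cauchy limiet} to obtain the limit $b+m_{\varepsilon}$, and then verify the shadow-expansion identity by a triangle-inequality estimate against a suitable standard partial sum. The only cosmetic difference is that the paper phrases the tail bound as containment in $\pounds\varepsilon^{m+1}$ while you track an explicit standard constant $K$, which amounts to the same thing.
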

	
	\begin{proof}
		Put $s_{n}=\sum_{k=0}^{n}a_{k}\varepsilon ^{k}$ for $n\in 
		\N$. Let $\eta >\pounds \varepsilon ^{\centernot{\infty}}$. Then there exists a standard $%
		m\in \N$ such that $\varepsilon ^{m}<\eta $. Then for standard $n\in \N$ with $n\geq m+1$ we have 
		\begin{equation*}
		\left \vert s_{n}-s_{m}\right \vert =\left \vert \sum_{k=m+1}^{n}a_{k}\varepsilon
		^{k}\right \vert 
		\subset \pounds \varepsilon ^{m+1}<\eta 
		\end{equation*}%
		Hence $s$ is a $\pounds \varepsilon ^{\centernot{\infty}}$-Cauchy sequence with
		respect to $\pounds $. By Theorem \ref{Stelling Cauchy limiet} there exists $%
		b\in 
		\mathbb{R}
		$ such that $\lim_{n\rightarrow \centernot{\infty}}s_{n}=b+\pounds \varepsilon ^{\centernot%
			{\infty}}$. Let $n\in 
		\mathbb{N}
		$ be standard.
		Then there exists standard $p\in 
		\mathbb{N}
		,p>n$ such that $b-s_{p}\in \oslash \varepsilon ^{n+1} $. Then 
		\begin{equation*}\begin{array}{lll}
				\frac{b-\sum_{k=0}^{n}a_{k}\varepsilon ^{k}}{\varepsilon ^{n+1}} &=&\frac{%
				b-\sum_{k=0}^{p}a_{k}\varepsilon ^{k}+\sum_{k=n+1}^{p}a_{k}\varepsilon ^{k}}{%
				\varepsilon ^{n+1}} \\
			&\in &\frac{\oslash \varepsilon ^{n+1} +a_{n+1}\varepsilon ^{n+1}+\oslash \varepsilon ^{n+1}}{\varepsilon
				^{n+1}} \\
			&\subseteq &a_{n+1}+\oslash.
			\end{array}
		\end{equation*}%
		Hence $^{^\circ}\left( \frac{b-\sum_{k=0}^{n}a_{k}\varepsilon ^{k}}{\varepsilon ^{n+1}}%
		\right) =a_{n+1}$, meaning that $\sum_{n=0}^{\infty}a_{n}\varepsilon ^{n}$ is an expansion in $\varepsilon $-shadow of $b$.
	\end{proof}

	\subsection{Matching Principles}\label{Subsection Matching Principles}
	
	Theorem \ref{Stelling sterke convergentie intern} on internal sequences may
	be seen as a form of "matching". Indeed the
	internal sequence exhibits two approximative behaviors: being outside a limit set, while approaching it, and being inside this limit set. Observe that we have only information on the sequence outside the limit set, and that the theorem adds information on the sequence within the limit set: it enters and stays there for some time. Matching is
	particularly relevant to singularly perturbated differential equations. Usually one has separate information on slow behavior of solutions close to a slow curve, and fast behavior when approaching it. These separate behaviors have to be "matched" into an overall description of the behavior of a solution, showing how 
	the transition from fast behavior to slow behavior is made, and vice-versa. 
	
		Classical tools are based on the Du Bois-Reymond Lemma \cite{Hardy} on
	persistence of convergence outside a sequence of convergent functions or Van
	Dyke's Principle \cite{VanDyke} on equality of terms in two locally valid
	asymptotic expansions. The practice shows that sometimes a general result
	can be applied, such as the Extension Principle of Eckhaus \cite{Eckhaus},
	but mostly matching has to be proved case-by-case, with {\it ad hoc} methods.
	
    Somewhat surprisingly, Nonstandard Analysis happened to yield a general method which is based on logic, \textit{in casu} the Fehrele Principle mentioned in Section \ref{Section_nature_external_sequences} saying that no halo is a galaxy \cite{Diener-Van den Berg}. Alternatively one may say also that an external universal formula of the form $ \forall^{\st}x\Phi(x,y) $ can never be equivalent to an external existential formula of the form $ \exists^{\st}x\Psi(x,y) $, here $ \Phi,\Psi $ are internal; more precisely, if they were equivalent, they would be equivalent to an internal formula. The well-known Robinson's Lemma \cite{Robinson} (discovered before) is seen to be an instance of the Fehrele Principle, and illustrates how it is used in matching. Indeed, let $ (u_{n}) $ be an internal sequence. Suppose that it is proved that $ u_{n}\simeq 0 $ for all standard $ n\in \N $. Then there exists some unlimited $ \nu\in \N $ such that still $ u_{\nu}\simeq 0 $, meaning that the behavior "being infinitesimal" matches partly with the (possibly different) behavior of the sequence outside the set $ ^{\sigma}\N $ where it originally was proved. As for the proof, note that the galaxy $ ^{\sigma}\N $ must be stricly included in the prehalo $ H=\{n: u_{n}\simeq 0\}$, by the Cauchy Principle if $ H $ is internal and by the Fehrele Principle if $ H $ is external.  

	We start by adapting the definitions on convergence and strong convergence
	of sequences to functions, and relating convergence and strong convergence
	of sequences and functions, which are assumed to be continuous.
	
	\begin{definition}
		Let $(C,D)$ be a cut of $\R$ into a nonempty initial segment $C$ and a nonempty final segment $D$. Let $%
		S$ be an initial segment of $\R$ such that $C\subseteq S$. Let $f:S\rightarrow \R$ be an internal function and $\alpha =a+A\in \mathbb{E}$. We say that $f$ 
		\emph{converges} to $\alpha $ on $S$ with respect to $C$ if%
		\begin{equation*}
		\forall \varepsilon >A\exists c\in C\forall y\in S(y\geq
		c\Rightarrow \left \vert f(y)-\alpha \right \vert < \varepsilon ).
		\end{equation*}%
		We say that $f$ \emph{strongly converges} to $\alpha $ on $S$ with respect
		to $C$ if there exists a nonempty final segment $T$ of $S$ such that $%
		D\subseteq T$ and 
		\begin{equation*}
		\forall y\in S(y\in T\Rightarrow f(y)\in \alpha ).
		\end{equation*}%
		In the first case we write $\lim_{x\rightarrow D}f(x)=\alpha $ and in the
		second case \emph{Lim}$_{x\rightarrow D}f(x)=\alpha $. If in the above $C=S$%
		, or $S=\R$, we simply speak of (strong) convergence with respect to $C$.
	\end{definition}
	
	\begin{lemma}
		\label{Lemma rij functie}Let $(C,D)$ be a cut of $\R^{+}$ into an external initial segment $C$ and a final segment $D$. Let $S$ be an (internal) initial segment of $%
		\mathbb{R}
		$ such that $C\subset S$. Let $f:S\rightarrow %
		\mathbb{R}
		$ be an internal continuous function and $\alpha =a+A\in \mathbb{E}$. For every $%
		t\in 
		\mathbb{R}
		,t>0$, define $%
		S_{t}=\left \{ n\in \N: nt\in S\right \} $ and $u_{t}:S_{t}\rightarrow \mathbb{R}	$ by $u_{t}(n)=f(tn)$. Then
		
		\begin{enumerate}
			\item \label{Lemma rij functie1}$f$ converges to $\alpha $ on $S$ with respect to $C$ if and only if $%
			u_{t}$ converges to $\alpha $ on $S_{t}$ with respect to $C/t$ for all $t\in 
			\mathbb{R}
			,t>0$ be such that $c+t\in C\Leftrightarrow c\in C$ for all $c\in 
			\mathbb{R}
			$.
			
			\item \label{Lemma rij functie2} $f$ converges strongly to $\alpha $ on $S$ with respect to $C$ if and
			only if $u_{t}$ converges to $\alpha $ on $S_{t}$ with respect to $C/t$ for
			all $t\in 
			\mathbb{R}
			,t>0$ be such that $c+t\in C\Leftrightarrow c\in C$ for all $c\in 
			\mathbb{R}
			$.
		\end{enumerate}
	\end{lemma}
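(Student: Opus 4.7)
The plan is to exploit the hypothesis that $t$ satisfies $c+t\in C\Leftrightarrow c\in C$, which says that $t$ is small enough that shifts by $t$ preserve the external initial segment $C$. In particular, if $c\in C$ then $n_0 t\in C$ for $n_0=\lceil c/t\rceil$, because $n_0 t\le c+t\in C$ and $C$ is an initial segment. This is the bridge between the discrete ``sampling'' on $S_t$ and the continuous behaviour of $f$ on $S$.

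For the forward direction of \ref{Lemma rij functie1}, I would fix such a $t$ and let $\varepsilon>A$. The convergence of $f$ on $S$ with respect to $C$ produces $c\in C$ with $|f(y)-\alpha|<\varepsilon$ for every $y\in S$ with $y\ge c$. Setting $n_0=\lceil c/t\rceil$, the observation above gives $n_0\in C/t$; for every $n\in S_t$ with $n\ge n_0$ we then have $nt\in S$ and $nt\ge c$, so $|u_t(n)-\alpha|=|f(nt)-\alpha|<\varepsilon$. The forward direction of \ref{Lemma rij functie2} is the same argument, replacing $|f(y)-\alpha|<\varepsilon$ by $f(y)\in\alpha$ on a final segment $T\supseteq D$ of $S$ and observing that $\{n\in S_t:nt\in T\}$ contains a final segment of $S_t$ that dominates $D/t\supseteq\N\setminus (C/t)$.

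For the converse directions, the plan is to use the continuity of the internal function $f$ together with the freedom to take $t$ small within the set of admissible shifts. Given $\varepsilon>A$, I would pick an admissible $t$ small enough that $|f(y)-f(nt)|<\varepsilon/2$ whenever $y\in [nt,(n+1)t]\cap S$; this is permissible because on the internal initial segment $S$ the function $f$ is internally uniformly continuous on compact subintervals, and for $C$ external there is always a cofinal supply of admissible $t$ (arbitrarily small $t$ still satisfy $c+t\in C\Leftrightarrow c\in C$). The assumed convergence of $u_t$ then yields $n_0\in C/t$ with $|u_t(n)-\alpha|<\varepsilon/2$ for $n\in S_t$, $n\ge n_0$. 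Putting $c=n_0 t\in C$, any $y\in S$ with $y\ge c$ gives $n=\lfloor y/t\rfloor\ge n_0$ and
\begin{equation*}
|f(y)-\alpha|\le |f(y)-f(nt)|+|f(nt)-\alpha|<\varepsilon.
\end{equation*}
For \ref{Lemma rij functie2} the same strategy works: we pick $t$ small enough that the continuity oscillation $|f(y)-f(nt)|$ is absorbed by the neutrix $A$, and then strong convergence of $u_t$ upgrades to strong convergence of $f$.

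The main obstacle I anticipate is the continuity argument in the converse direction: one must verify that the admissible set of $t$ (those satisfying $c+t\in C\Leftrightarrow c\in C$) is rich enough to contain values arbitrarily small with respect to the modulus-of-continuity requirement and, in case \ref{Lemma rij functie2}, with respect to the neutrix $A$. This is where the externality of $C$ is essential: for an external halo or galaxy, the admissible $t$ form a cofinal family in the positive infinitesimals (respectively in the positive limited reals), so by Saturation or by the Fehrele Principle one can choose $t$ satisfying simultaneously the admissibility and the smallness condition needed to make the continuity estimate swallow $\varepsilon/2$ or fit inside $A$.
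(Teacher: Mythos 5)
Your proposal is correct and follows essentially the same route as the paper's proof: the forward direction by sampling at multiples of $t$ and using the shift-invariance $c+t\in C\Leftrightarrow c\in C$ to place $n_0t$ in $C$, and the converse by uniform continuity of $f$ on compact subintervals of $S$ together with a choice of admissible $t$ below the modulus of continuity, closed off by the triangle inequality. The only cosmetic difference is that you invoke Saturation/Fehrele to secure small admissible $t$, whereas the paper simply uses that the admissible $t$ form a downward-closed nontrivial set of positive reals; both observations suffice.
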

	
	\begin{proof}
		Without restriction of generality we may suppose that $\alpha =A$. 
		
		\ref{Lemma rij functie1}. Assume first that $f$ converges to $\alpha $ on $S$ with respect to $%
			C$.  Let $\varepsilon \in 
			\mathbb{R}
			,\varepsilon >A$. Then there is $x\in C$ such that $\left \vert f(y)\right \vert
			<\varepsilon $ for all $y\in S,y\geq x$. Let $%
			t\in 
			\mathbb{R}
			,t>0$ be such that $c+t\in C\Leftrightarrow c\in C$ for all $c\in 
			\mathbb{R}
			$. Let $m\in 
			\mathbb{N}
			$ be minimal such that $mt\geq x$. Then $\left \vert u_{t}(n)\right \vert
			<\varepsilon $ for all $n\in S_{t},n\geq m$. Hence $u_{t}$ converges to $A$ on $%
			S_{t}$ with respect to $C/t$.\newline
			As for the converse, let $\varepsilon \in 
			\mathbb{R}
			,\varepsilon >A$. Let $s\in S$ be such that $C<s$. By uniform continuity of $f$
			on $[0,s]$ there exists $t>0$ such that $c+t\in C\Leftrightarrow c\in C$
			for all $c\in 
			\mathbb{R}
			$ and $\left \vert f(v)-f(w)\right \vert <\varepsilon /2$ for all $v,w\in \lbrack
			0,s]$ with $\left \vert v-w\right \vert <t$. Because $u_{t}$ converges to $A$
			on $S_{t}$ with respect to $C/t$, there exists $m\in C/t$ such that $%
			\left \vert u_{t}(n)\right \vert <\varepsilon /2$ for all $n\in S_{t},n\geq m$.
			Let $x\in \lbrack 0,s]$ be such that $x>mt$ and $n\in C/t$ be minimal such
			that $x\leq nt$. Then%
			\begin{equation*}
			\left \vert f(x)\right \vert =\left \vert f(x)-f(nt)\right \vert +\left \vert
			u_{t}(n)\right \vert <\frac{\varepsilon}{2}+\frac{\varepsilon}{2}=\varepsilon .
			\end{equation*}%
			Hence $f$ converges to $A$ on $[0,s]$ with respect to $C$. Because $s$ is
			arbitrary $f$ converges to $A$ on $S$ with respect to $C$.
			
			\ref{Lemma rij functie2}. The proof is similar to the proof of Lemma \ref{Lemma rij functie}.\ref{Lemma rij functie1}.
	\end{proof}
	
	\begin{theorem}[Matching for sequences]
		Let $S$ be an initial segment of $%
		\mathbb{N}
		$. Let $u:S		\rightarrow		\mathbb{R}		$ be an internal sequence and $\alpha =a+A\in \mathbb{E}$, with $A \neq 0$. Assume $u_{0}\notin \alpha $. Let
		\begin{equation}
		C=\left \{ n\in S : \forall m(0\leq m\leq n\Rightarrow u_{m}\notin
		\alpha )\right \}.   \label{Formule C u}
		\end{equation}%
		If $u_{\upharpoonright C}$ converges to $\alpha $ with respect to $C$, then there
		exists $\nu \in S,\nu >C$ such that $u$ converges strongly to $\alpha $ with
		respect to $C$ on $\{0,...,\nu \}$.
	\end{theorem}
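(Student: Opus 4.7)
The plan is to deduce the matching theorem from Theorem \ref{Stelling sterke convergentie intern}. The crucial technical step is to verify that the initial segment $C$ defined by \eqref{Formule C u} has complexity dual to that of $A$, so that the pair $(C, A)$ falls into one of the two cases covered by that theorem.

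First, I would analyze the form of $C$. Setting $F = \{n \in S : u_n \in \alpha\}$, one has $C = \{n \in S : [0, n] \cap F = \emptyset\}$. Suppose first that $A$ is a halo, and write $A = \bigcap_{\mathrm{st}(y) \in Y} I_y$ with $I_y = [-r_y, r_y]$, which may be assumed decreasing and closed under finite intersections. Then $F = \bigcap_{\mathrm{st}(y)} F_y$ where $F_y = \{n \in S : |u_n - a| \leq r_y\}$ is internal, and the sets $[0,n] \cap F_y$ are internal and decreasing. By Idealization (the Cauchy Principle), $[0, n] \cap F = \emptyset$ if and only if $[0, n] \cap F_y = \emptyset$ for some standard $y \in Y$, so
\[
C = \bigcup_{\mathrm{st}(y) \in Y} C_y, \qquad C_y = \{n \in S : [0,n] \cap F_y = \emptyset\},
\]
each $C_y$ being an internal initial segment. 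Hence $C$ is a pregalaxy. Dually, if $A$ is a galaxy, then $C$ is a prehalo.

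Second, I would rule out the case where $C$ is internal. If $C = \{0, \dots, c\}$ were internal, the convergence hypothesis taken with $m = c$ in Definition \ref{Definition convergence local} would force $\left|u_c - \alpha\right| < \varepsilon$ for every $\varepsilon > A$. But $u_c \notin \alpha$ gives $|u_c - a| \notin A$, hence $|u_c - \alpha| = |u_c - a| + A > A$, producing some $\varepsilon \in \mathbb{R}$ with $A < \varepsilon < |u_c - \alpha|$, which contradicts convergence. Since $S$ is internal (being the domain of an internal sequence) and $C \subseteq S$ is external, one has $C \subsetneq S$; by the Fehrele Principle, $C$ is then a galaxy when $A$ is a halo, and a halo when $A$ is a galaxy.

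Finally, applying Theorem \ref{Stelling sterke convergentie intern} to the cut $(C, D)$ of $\mathbb{N}$ with $D = \mathbb{N} \setminus C$ yields some $\nu \in S \cap D$ such that $u$ converges strongly to $\alpha$ on $\{0, \dots, \nu\}$ with respect to $C$, which is the desired conclusion. The main obstacle will be justifying the first step: carefully combining Nelson's Reduction and Idealization to rewrite $C$ as an explicitly standardly-indexed union or intersection of internal initial segments, so that the complexity of $C$ is forced to be dual to that of $A$.
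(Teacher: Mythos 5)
Your proof follows the paper's own route: show that $C$ is strictly external with complexity dual to that of $A$, then invoke Theorem \ref{Stelling sterke convergentie intern}; your first step merely spells out the Reduction/Idealization computation that the paper leaves implicit, and your argument against internal $C$ via its maximum is a clean variant of the paper's minimum argument. The one hole is that an internal initial segment of $\mathbb{N}$ need not have a maximum, namely when $C=S=\mathbb{N}$, a case your argument does not cover; the paper excludes it by noting that convergence on all of $\mathbb{N}$ with $A\neq 0$ forces $u_n\in\alpha$ for large $n$ by Theorem \ref{Stelling sterke convergentie onbegrensd}, contradicting the definition of $C$.
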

	
	\begin{proof}
		Without restriction of generality we may suppose that $\alpha =A$. We put $D=S\setminus C$.
		
		We show first that $C$ is external. Indeed, if  $C$ is internal it must be finite
		as a consequence of Theorem \ref{Stelling sterke convergentie onbegrensd}. Let $\mu :=\min \{ \left \vert u_{m}\right \vert : m\in	C \wedge u_{m}\notin A \}$. Then $\mu >A$, hence $u_{\upharpoonright C}$ does
		not converge to $A$ with respect to $C$, a contradiction. Hence $C$ is external. As a consequence $C\subset S$
		and we conclude that $D\neq \emptyset $.
		
		It 	follows from (\ref{Formule C u}) that if $A$ is a halo, the set $C$ is a complement of a prehalo, hence, being external it is a galaxy, and similarly, if $ A$ is a galaxy 
		the set $C$ is a halo. Then, knowing that $u_{\upharpoonright C}$ converges to 
		$A$ with respect to $C$, by Theorem \ref{Stelling sterke convergentie intern}
		there exists $\nu \in S,\nu >C$ such that $u$ converges strongly to $\alpha $
		with respect to $C$ on $\{0,...,\nu \}$.
	\end{proof}
	
	\begin{theorem}[Matching for functions]\label{Matching for functions}
		Let $S$ be an initial segment of $%
		\mathbb{R}
		^{+}$. Let $f:S\rightarrow %
		\mathbb{R}
		$ be an internal continuous function and $\alpha =a+A\in \mathbb{E}$, with $A
		$ external. Assume $f(0)\notin \alpha $. Let
		\begin{equation}
		C=\left \{ y\in S : \forall x(0\leq x\leq y\Rightarrow f(x)\notin
		\alpha ) \right \}.   \label{Formule C}
		\end{equation}%
		If $f_{\upharpoonright C}$ converges to $\alpha $ with respect to $C$, then there
		exists $s\in S,s>C$ such that $f$ converges strongly to $\alpha $ with
		respect to $C$ on $[0,s]$.
	\end{theorem}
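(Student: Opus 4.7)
The plan is to discretize and reduce to the Matching Theorem for sequences via Lemma~\ref{Lemma rij functie}.

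First, translating by $a$, we may assume $\alpha=A$. We show that $C$ must be external by an argument analogous to the sequence case: were $C$ internal, the internal continuous function $|f|$ would attain on $C$ an infimum $\mu$ with $\mu>A$ (since $f(x)\notin A$ for every $x\in C$ and $A$ is external); taking $\varepsilon=\mu/2>A$ would then contradict the convergence of $f_{\upharpoonright C}$ to $A$ with respect to $C$. Hence $C$ is external, $D=S\setminus C$ is nonempty, and the form~(\ref{Formule C}) forces $C$ to be a galaxy when $A$ is a halo and a halo when $A$ is a galaxy, exactly as in the sequence proof.

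Next, by uniform continuity of $f$ on an internal bounded interval $[0,M]$ with $C\subset[0,M]\subseteq S$, together with the Saturation and Fehrele Principles, we choose $t>0$ with two properties: (i) $c+t\in C\Leftrightarrow c\in C$ for every $c\in\mathbb{R}$, so that Lemma~\ref{Lemma rij functie} applies; and (ii) $|f(x)-f(x')|<\varepsilon$ whenever $x,x'\in[0,M]$, $|x-x'|\leq t$, and $\varepsilon>A$. Define $u_t:S_t\rightarrow\mathbb{R}$ by $u_t(n)=f(nt)$, where $S_t=\{n\in\mathbb{N}:nt\in S\}$. By Lemma~\ref{Lemma rij functie}.\ref{Lemma rij functie1}, $u_t$ converges to $A$ on $S_t$ with respect to $C/t$.

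Set $C_t:=\{n\in S_t:u_t(m)\notin A\text{ for every }m\leq n\}$. The inclusion $C/t\subseteq C_t$ is immediate, and the reverse follows from (ii): if $n\in C_t\setminus C/t$, some $x\in[mt,(m+1)t]\subseteq[0,nt]$ satisfies $f(x)\in A$, yet $|f(mt)-f(x)|<\varepsilon$ for every $\varepsilon>A$ forces $f(mt)\in f(x)+A=A$, contradicting $m\in C_t$. Thus $C_t=C/t$, so $u_t$ also converges to $A$ on $S_t$ with respect to $C_t$. Since $u_t(0)=f(0)\notin A$, the Matching Theorem for sequences yields $\nu\in S_t$ with $\nu>C_t$ such that $u_t$ converges strongly to $A$ on $\{0,\ldots,\nu\}$ with respect to $C_t$. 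Put $s:=\nu t$, so $s>C$. The strong convergence produces $n_0\leq\nu$ with $u_t(n)\in A$ for $n_0\leq n\leq\nu$, and (ii) extends this to $f(x)\in A$ for all $x\in[n_0t,s]$. Since $C_t=C/t$ forces $[0,s]\setminus C\subseteq[n_0 t,s]$, this gives precisely the strong convergence of $f$ to $\alpha$ on $[0,s]$ with respect to $C$.

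The principal obstacle will be the simultaneous choice of $t$ satisfying (i) and (ii): $t$ must be small enough to respect the external boundary of $C$ \emph{and} finer than every real modulus of continuity compatible with $A$. This should be achieved by Saturation over the standard indexing data defining $C$ and $A$, followed by the Fehrele Principle to descend to a single internal real $t$, mirroring the technique of the proof of Lemma~\ref{Lemma rij functie}.
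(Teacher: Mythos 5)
Your proof is correct and is in substance the paper's own argument: both discretize $f$ at a scale $t>0$ chosen so that the oscillation of $f$ over intervals of length $t$ lies in $A$ and so that translation by $t$ preserves $C$, and both then invoke the sequence-level strong convergence results (you via the Matching Theorem for sequences, for which your verification $C_t=C/t$ is exactly what is needed; the paper by applying Theorem \ref{Stelling sterke convergentie intern} directly to $u_{\tau}$ after noting from \eqref{Formule C} that $C$ is a galaxy when $A$ is a halo and vice versa — but the sequence Matching Theorem is itself a corollary of that theorem, so the routes coincide). The one step you defer, the simultaneous choice of $t$, needs neither Saturation nor Fehrele: fixing any positive $a\in A$ (which exists since $A$ is external) and some $d\in D$, uniform continuity of $f$ on $[0,d]$ gives an internal interval $(0,\tau_{0})$ of values of $t$ for which the oscillation is below $a$, hence in $A$ by convexity; on the other hand, condition (i) holds for every $t$ in the invariance neutrix $\{t:C+t=C\}$, which is nonzero precisely because $C$ is strictly external, so any positive $t$ in the intersection works. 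One last caveat, shared with the paper: your externality argument for $C$ assumes $|f|$ attains its infimum on $C$, which fails if $C$ is a half-open internal interval $[0,c)$ with $f(c)\in A$; that case is harmless (take $s=c$), but strictly speaking it should be mentioned.
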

	
	\begin{proof}
		Without restriction of generality we may suppose that $\alpha =A$ and $%
		f(0)>A $. We put $D=S\setminus C$.
		
		We show first that $D\neq \emptyset $. If $C=S$, the internal set $f(S)\cap
		\lbrack 0,f(0)]$ is contained in the external set $(A,f(0)]$. By the Cauchy
		Principle $f(S)\subset (A,f(0)]$ Hence there exists $\varepsilon \in 
		\mathbb{R}
		,\varepsilon >A$ such that $f(x)\geq \varepsilon $ for all $x\in S$.
		Hence $f$ does not converge to $A$, a contradiction. Hence $C\subset S$ and
		we conclude that $D\neq \emptyset $. Choose $ d\in D $, then $f$ is
		uniformly continuous on $[0,d ]$. Hence there exists $\tau >0,\tau \leq t$
		such that $\left \vert f(v)-f(w)\right \vert \in A$ for all $v,w\in
		\lbrack 0,d ]$ with $\left \vert v-w\right \vert <\tau $.
		
		Let $S_{\tau}=\left \{ n\in \N	: n\tau\in S \right \} $ and define $u_{\tau}:S_{\tau}\rightarrow \R$
		by $u_{\tau}(n)=f(\tau n)$. Then the local sequence $u_{\tau_{\upharpoonright C/\tau}}$
		converges to $A$ with respect to $C/\tau$. It follows from (\ref{Formule C}) and the continuity of $f$ that $C$ is a
		galaxy if $A$ is a halo, and $C$ is a halo if $A$ is a galaxy. Then by Theorem \ref{Stelling sterke convergentie
			intern} there exists $\nu \in D/\tau$ such that the sequence $u_{\tau}$ converges
		strongly to $A $ on $\{0,...,\nu \}$ with respect to $C/\tau$. Let $x\in
		\lbrack 0,\nu \tau ]\cap D$. Let $n\in \{0,...,\nu \}$ be minimal
		such that $x\leq n\tau$. Then 
		\begin{equation*}
		\left \vert f(x)\right \vert =\left \vert f(x)-f(n\tau)\right \vert +\left \vert
		u_{\tau}(n)\right \vert \in A+A=A.
		\end{equation*}%
		Hence $f$ converges strongly to $A$ on $[0,\nu \tau ]$ with respect to $C
		$.
	\end{proof}
	
	We present here two examples of matching in the setting of singular
	perturbations. Consider the differential equation%
	\begin{equation}
	\varepsilon \frac{dy}{dt}=f\left( t,y\right)   \label{singulierestoring}
	\end{equation}%
	with $\varepsilon >0,\varepsilon \simeq 0$. We assume that $f:%
	\mathbb{R}
	^{2}\rightarrow 
	\mathbb{R}
	$ is internal, continuously differentiable (of class $C^{1}$) and limited,
	and $S$-continuously $S$-differentiable at limited points (of class $S^{1}$, see \cite{Diener-Reeb}). Let us write 
	\begin{equation*}
	\frac{dy}{dt}=\frac{f\left( t,y\right) }{\varepsilon }.
	\end{equation*}%
	We observe two types of behavior of solutions $\lambda (t)$: \emph{fast}
	behavior at points where the derivative $\frac{f\left( t,\lambda (t)\right) 
	}{\varepsilon }$ is unlimited, and \emph{slow} behavior at points where the
	derivative $\frac{f\left( t,\lambda (t)\right) }{\varepsilon }$ is limited.
	We define the \emph{slow curve} $L$ by%
	\begin{equation*}
	L=\text{}^{\circ}\left \{ \left( t,y\right) \in 
	\mathbb{R}
	^{2}: f\left( t,y\right) \simeq 0 \right \} .
	\end{equation*}%
	Typically the slow curve consists of one or more standard differentiable
	curves; to fix ideas we consider the case of only one differentiable curve, say $ g $,
	which we take to be identically zero, and defined on a standard interval $ [a,b] $ with $ a<b $. Consider the halo $H:= [a,b]\times \oslash	$; here the derivative of the solution is no longer of order $1/\varepsilon $.
	Observe that the galaxy $G:= \left \{ \left( t,y\right) \in [a,b]\times 
	\mathbb{R}
	: f\left( t,y\right) \in \pounds \varepsilon \right \} $ is
	strictly included in $H$. So $H\setminus G$ may be seen as an intermediate
	region, where the transition between fast behavior
	and slow behavior takes place.
	
	All these regions are external. To obtain a global description of the
	behavior of a solution, which is internal, the behaviors of the solution on
	those external regions need to be matched.
	
    The slow curve $ g $ is said to be \emph{attractive}
	if%
	\begin{equation*}	
	f_{2}^{\prime }\left( t,0\right) \lnapprox 0 , 
	\end{equation*}%
	for $  a\leq t\leq b 	 $. Observe that using the $ S $-continuity of $ f_{2}^{\prime } $ one proves that there exists standard $c\in 
	\R	,c>0$ such that for all $ t $ with $  a\leq t\leq b 	 $ and $\left \vert y\right \vert \leq c $
	\begin{equation}
		\begin{cases}
	f\left( t,y\right)\lnapprox 0 &   y\gnapprox 0 \\ 
	f\left( t,y\right)\simeq 0 &   y\simeq 0 \\ 
	f\left( t,y\right)\gnapprox 0 &   y\lnapprox 0%
	\end{cases}%
	,  \label{voorwaardenf}
	\end{equation}%
	meaning that trajectories approach $g$ at points at appreciable distance to $%
	g$, less than $c$.
	
	Next theorem gives a first matching result: solutions approaching the slow curve become infinitely close to the slow curve.	
	\begin{theorem}
		\label{tyhknonov}Consider the equation (\ref{singulierestoring}), with $\varepsilon
		\simeq 0,\varepsilon >0$ and $f$ of class $C^{1}\cap S^{1}$. Let $a,b$ be
		limited and such that $a\lnapprox b$ and let $g:[a,b]\rightarrow 
		\mathbb{R}
		$, defined by $g(t)=0$, be an attractive slow curve. Let $\lambda $ be a
		solution such that $\lambda (a)\not \simeq 0$. Then there exists $\tau
		>a$ such that $\lambda (\tau)\simeq 0$.
	\end{theorem}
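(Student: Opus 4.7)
The plan is to argue by contradiction using the sign structure \eqref{voorwaardenf} together with a shadow/compactness argument to obtain a uniform lower bound on $|f|$, and then to integrate. Suppose no such $\tau$ exists, so $\lambda(t)\not\simeq 0$ for every $t\in[a,b]$. I would reduce to the case $\lambda(a)>0$ (the negative case is symmetric by \eqref{voorwaardenf}), and tacitly assume $\lambda(a)$ lies in the basin $(0,c]$ described after \eqref{voorwaardenf}. The intermediate value theorem, applied to the internal continuous function $\lambda$, forbids a sign change without passing through $0$; since $0\in\oslash$, we conclude $\lambda>0$ on the whole of $[a,b]$. As $[a,b]$ is an internal compact interval, $\lambda$ attains its minimum at some $t^{*}$, and $\lambda(t^{*})\not\simeq 0$ together with $\lambda(t^{*})>0$ yields a standard $r>0$ with $\lambda(t)\geq r$ throughout $[a,b]$.

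Next I would confine $\lambda$ to the attractive strip $[r,c]$: while $\lambda(t)\in(0,c]$, formula \eqref{voorwaardenf} forces $\lambda'(t)=f(t,\lambda(t))/\varepsilon<0$, so $\lambda$ is non-increasing and bounded above by $\lambda(a)\leq c$. Combining with the previous step, $\lambda(t)\in[r,c]$ for all $t\in[a,b]$. The crux is then to upgrade the pointwise inequality $f(t,y)\lnapprox 0$ on this strip to a uniform standard bound: using the $S^{1}$-regularity of $f$ and taking shadows, $^{\circ}\!f$ is a standard continuous function strictly negative on the standard compact set $[\,^{\circ}a,\,^{\circ}b\,]\times[r,c]$, hence there exists a standard $K>0$ with $^{\circ}\!f\leq -K$ there, which by $S$-continuity lifts to $f(t,\lambda(t))\leq -K/2$ on $[a,b]$.

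With this uniform bound in hand, $\lambda'(t)\leq -K/(2\varepsilon)$ on $[a,b]$, and integration yields
\begin{equation*}
\lambda(b)\leq \lambda(a)-\frac{K(b-a)}{2\varepsilon},
\end{equation*}
whose right-hand side is unlimited negative since $b-a$ is appreciable and $\varepsilon$ is infinitesimal positive. This contradicts $\lambda(b)\geq r>0$. Therefore some $\tau\in[a,b]$ satisfies $\lambda(\tau)\simeq 0$, and $\tau>a$ because $\lambda(a)\not\simeq 0$.

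The main obstacle I anticipate is the shadow/uniformization step in the second paragraph: the hypotheses gathered in \eqref{voorwaardenf} are stated only pointwise with infinitesimal margins, so passing to a single standard gap $K>0$ on the whole strip requires careful use of the $S^{1}$-regularity of $f$ together with standardization of $a,b,r,c$ via their shadows. An alternative, more conceptual route would avoid the explicit integration by instead verifying that $\lambda_{\upharpoonright C}$ converges to $\oslash$ with respect to $C$, where $C$ is the set defined in \eqref{Formule C} for $\alpha=\oslash$, and then invoking Theorem \ref{Matching for functions} to produce $s>C$ with $\lambda(s)\in\oslash$; the bulk of the analytic work, however, remains the same.
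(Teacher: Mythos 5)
Your argument is essentially correct, but it follows a genuinely different route from the paper. The paper does not argue by contradiction with a uniform bound: it uses the Mean Value Theorem to show that $\lambda$ converges to $\oslash$ with respect to the external initial segment $C=\left\{ t\geq 0: \forall s (0\leq s\leq t \wedge\lambda (s)\gnapprox 0 )\right\}$, and then invokes the Matching Theorem for functions (Theorem \ref{Matching for functions}), i.e.\ the strong-convergence machinery built on internal bridges and the Fehrele Principle, to conclude that $\lambda$ actually enters $\oslash$; the ``alternative, more conceptual route'' you sketch in your last paragraph is precisely the paper's proof. Your main route---trap $\lambda$ in $[r,c]$, extract a standard gap $f\leq -K/2$, and integrate to a negative unlimited value of $\lambda(b)$---is sound, and the uniformization step you flag as the main obstacle is in fact easier than your shadow argument suggests: $[a,b]\times[r,c]$ is an \emph{internal} compact rectangle and $f$ is internal and continuous, so $f$ attains its maximum there, and that maximum is $\lnapprox 0$ by \eqref{voorwaardenf}, hence $\leq -K$ for some standard $K>0$, with no need to pass to $^{\circ}f$ or to transfer. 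What the elementary argument buys is self-containedness; what it loses is reach: the paper's matching proof carries over essentially unchanged to the refinement of Theorem \ref{tyhknonov epsilon}, where the limit set is the galaxy $\varepsilon\pounds$ and no standard uniform gap on $f$ exists (for infinitesimal $y$ one only has $f(t,y)\simeq 0$, so your compactness-plus-integration scheme degenerates), and it illustrates the strong-convergence theorems that the section is meant to apply. Two shared loose ends, not specific to your write-up: both proofs tacitly place $\lambda(a)$ inside the strip where \eqref{voorwaardenf} holds (the paper normalizes $\lambda(0)=1=c$), and the existence of $\lambda$ on all of $[a,b]$ under the contradiction hypothesis should be noted (the paper remarks on it via the existence theorem; in your version it follows from $f$ being limited).
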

	
	\begin{proof}
		Without loss of generality we may suppose that $a=0$, that the value of $c$
		such that (\ref{voorwaardenf}) holds on $[a,b]\times \lbrack -c,c]$ is equal
		to $1$, and that $\lambda (0)=1$. Let 
		\begin{equation*}
		C=\left \{ t\geq 0: \forall s (0\leq s\leq t \wedge\lambda (s)\gnapprox
		0 )\right \} .
		\end{equation*}%
		Since possible singular points only may occurr at infinitesimal
		values, as long as $ t$ is such that $\lambda (s)> \oslash  $ for all $ s $ with $ 0\leq t $, by the Existence Theorem for Differential Equations $ \lambda (s) $ is defined up to $ t $. Then by the Mean Value Theorem there exists 
		$\overline{t}$ such that $0\leq \overline{t}\leq t $ and
		\begin{equation}\label{labda1}
		\lambda (t )-1=\lambda (t )-\lambda (0)=\lambda ^{\prime }\left(\overline{t}\right)t
		=\frac{f\left(\overline{t},\lambda \left(\overline{t}\right)\right)}{\varepsilon }t.
		\end{equation}
		We show first that $\lambda $ converges
		to $\oslash $ with respect to $C$. Let $0\lnapprox \eta \leq 1$. Suppose that $ \lambda (t )> \eta $ for all $ t\in [0,b] $. Then it follows from \eqref{labda1} that $\lambda (t )-1 $, hence also $\lambda (t ) $, is negative unlimited for all appreciable $ t\in [0,b] $, a contradiction. Hence there exists $t
		_{\eta }\in [0,b] $, such that $\lambda (t _{\eta })=\eta $. Because $\lambda $ remains decreasing for times $t\in C$, one
		has $\lambda (t)< \eta $ as long as $t\in C,t> t _{\eta } $. Hence $\lambda $ converges
		to $\oslash $ with respect to $C$.
		
		By Theorem \ref{Matching for functions} the solution $ \lambda $ strongly converges to $%
		\oslash $ with respect to $ C $ on some interval $[0,\tau ]$.
		\end{proof}

	The well-known Theorem of Tykhonov \cite{Tykhonov} (see \cite{Sari} for a nonstandard proof) says that a solution remains close to the
	slow curve, at least until meeting a singular point $(s,0)$, where $%
	f_{2}^{\prime }\left( s,0\right) =0$. Typically $f_{2}^{\prime }\left(
	t,0\right) >0$ for $t>s$, so the slow curve turns repulsive, and nearby
	solutions may leave. Also, solutions become closer to the slow curve, in fact the distance becomes of order $ \varepsilon \pounds $; this refinement of Theorem \ref{tyhknonov} can also be proved using strong convergence.

	\begin{theorem}
		\label{tyhknonov epsilon}Consider the equation (\ref{singulierestoring}), with $%
		\varepsilon \simeq 0,\varepsilon >0$ and $f$ of class $C^{1}\cap S^{1}$. Let $a,b$
		be limited with $a\lnapprox b$ and let $g:[a,b]\rightarrow 
		\mathbb{R}
		$, defined by $g(t)=0$, be an attractive slow curve. Let $\lambda $ be a
		solution such that $\lambda (a)\not \simeq 0$. Then there exists $\tau
		>a$ such that $\lambda (\tau )\in \varepsilon \pounds $.
	\end{theorem}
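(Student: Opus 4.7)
The plan is to mirror the proof of Theorem~\ref{tyhknonov}, now with the smaller target $\alpha = \varepsilon\pounds$ in place of $\oslash$, and to invoke the function-valued matching theorem (Theorem~\ref{Matching for functions}). Normalize as before so that $a=0$, $c=1$ and $\lambda(0)=1$, and put
\begin{equation*}
C' = \{t\geq 0 : \forall s\,(0\leq s\leq t \Rightarrow \lambda(s)\notin \varepsilon\pounds)\}.
\end{equation*}
By Theorem~\ref{tyhknonov} there is some $t_{0}>0$ with $\lambda(t_{0})\simeq 0$, so $C'$ is bounded. It therefore suffices to check the hypotheses of Theorem~\ref{Matching for functions} with $\alpha=\varepsilon\pounds$: that $\lambda_{\upharpoonright C'}$ converges to $\varepsilon\pounds$ with respect to $C'$, and that $C'$ and $\varepsilon\pounds$ are of opposite (halo versus galaxy) nature. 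The matching theorem then delivers $s>C'$ with $\lambda$ strongly convergent to $\varepsilon\pounds$ on $[0,s]$, and any $\tau\in [0,s]\setminus C'$ is the required witness.

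The substantive point is the convergence, which rests on linearising $f$ near the slow curve. Once $\lambda$ has entered $[-1,1]\cap\oslash$, the class $S^{1}$ of $f$ furnishes the expansion
\begin{equation*}
f(t,y) = f(t,0) + \bigl(f_{2}'(t,0) + \eta(t,y)\bigr)\,y, \qquad \eta(t,y)\simeq 0,
\end{equation*}
while attractivity combined with $S$-continuity of $f_{2}'$ provides a standard $k>0$ with $f_{2}'(t,0)\leq -k$ throughout $[a,b]$. While $\lambda(t)>0$ is infinitesimal this yields the differential inequality $\varepsilon\lambda'(t) \leq f(t,0) - (k/2)\lambda(t)$, whose Gronwall integration from $t_{0}$ produces
\begin{equation*}
\lambda(t) \leq \lambda(t_{0})\, e^{-k(t-t_{0})/(2\varepsilon)} + \tfrac{2}{k}\sup_{s\in[a,b]}|f(s,0)|.
\end{equation*}
For any $\zeta > \varepsilon\pounds$ an appreciable $t-t_{0}$ brings the exponential term below any prescribed infinitesimal, in particular below $\zeta/2$, and hence $|\lambda(t)|<\zeta$ for $t\in C'$ past a suitable threshold. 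This is precisely the required convergence of $\lambda_{\upharpoonright C'}$ to $\varepsilon\pounds$.

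The set $C'$ is external (otherwise a finite maximum would furnish $\lambda(m)\notin\varepsilon\pounds$ at an appreciable margin, contradicting the convergence just established); by the shape analysis at the end of the proof of Theorem~\ref{Matching for functions} it is a halo, because $\varepsilon\pounds$ is a galaxy. All hypotheses of Theorem~\ref{Matching for functions} are thus met and it delivers the required $\tau$. The main obstacle is the absorption of the forcing term $\tfrac{2}{k}\sup_{s}|f(s,0)|$ in the Gronwall estimate: the $S^{1}$-hypothesis together with the slow-curve hypothesis only yield $f(t,0)\in\oslash$, whereas the comparison with arbitrary $\zeta > \varepsilon\pounds$ needs $f(t,0)\in\varepsilon\pounds$. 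When the data is standard, so that $f(t,0)=0$ identically on the standard slow curve $g\equiv 0$, this is automatic; in general one must bootstrap, reapplying the linearisation (possibly after a time-rescaling) at successively smaller scales until the forcing drops to order $\varepsilon\pounds$. This iterative refinement is where the bulk of the technical work will concentrate.
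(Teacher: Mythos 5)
Your overall strategy coincides with the paper's: put $C=\{t\geq 0:\forall s\,(0\leq s\leq t\Rightarrow \lambda(s)>\varepsilon \pounds)\}$, prove that $\lambda$ converges to $\varepsilon \pounds$ with respect to $C$, and let Theorem \ref{Matching for functions} produce the entrance time $\tau$. The difference lies in how the convergence is obtained: the paper reruns the Mean Value Theorem argument of Theorem \ref{tyhknonov}, resting on the single assertion that $f(t,\lambda(t))/\varepsilon$ is negative unlimited for all $t\in C$, whereas you linearise $f$ and integrate a differential inequality. Both routes reduce to the same quantitative fact, namely that on $C$ the attractive term $-f_{2}'(t,0)\lambda(t)$, which exceeds $\varepsilon\pounds$ there, dominates the forcing $f(t,0)$.

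That is exactly where your argument stops being a proof. You correctly observe that the hypotheses only give $f(t,0)\in\oslash$ --- the slow curve is the shadow of $\{f\simeq 0\}$, so $f(t,0)$ need not vanish, only be infinitesimal --- and you defer the resolution to an unspecified ``bootstrap''. This gap cannot be closed by iteration. Take $f(t,y)=-y+\sqrt{\varepsilon}$, modified outside a compact neighbourhood of $[a,b]\times[-1,1]$ so as to be limited: it is of class $C^{1}\cap S^{1}$, its slow curve is $g\equiv 0$ and is attractive, yet the solution with $\lambda(0)=1$ is $\lambda(t)=\sqrt{\varepsilon}+(1-\sqrt{\varepsilon})e^{-t/\varepsilon}>\sqrt{\varepsilon}>\varepsilon\pounds$ for all $t\geq 0$, so it never enters $\varepsilon\pounds$; relinearising at smaller scales merely reproduces the quasi-stationary level $\sqrt{\varepsilon}$, which is the true limit of the solution. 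What is needed --- and what the paper's claim that $f(t,\lambda(t))/\varepsilon$ is negative unlimited on $C$ silently presupposes --- is that $f(t,0)\in\varepsilon\pounds$ on $[a,b]$ (automatic, for instance, when $f$ is standard, since then the slow curve is the exact zero set of $f$). Under that reading your Gronwall estimate immediately yields the required convergence and the remainder of your argument (externality of $C$, its halo nature opposite to the galaxy $\varepsilon\pounds$, and the appeal to Theorem \ref{Matching for functions}) is sound; without it, no amount of iterative refinement can succeed, because the statement itself then fails.
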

	
	\begin{proof}
		Without loss of generality we may suppose that $a=0$, that the value of $c$
		such that (\ref{voorwaardenf}) holds on $[a,b]\times \lbrack -c,c]$ is equal
		to $1$, and that $\lambda (0)=1$. Let 
		\begin{equation*}
		C=\left \{ t\geq 0: \forall s (0\leq s\leq t \wedge \lambda (s)>\varepsilon 
		\pounds )\right \} .
		\end{equation*}%
		Noting that $ \frac{f(t,\lambda (t))}{\varepsilon }$ is negative unlimited for all $ t\in C $, the proof that $\lambda $ converges to $\varepsilon \pounds $ with respect to $C$ is similar to the argument used in the proof of Theorem \ref{tyhknonov}. By Theorem \ref{Matching for functions} the convergence is strong.
	    	\end{proof}

	\subsection{External recurrence relations and near stability}\label{subsection stability}
	We apply some aspects of the theory of flexible sequences to external recurrence relations and properties of near stability. To that end, we need to be able to define flexible sequences by recursion as well. This will be done in Definition \ref{recurrentsolution} below, through internal representatives. To see that some care is needed we consider the example of sequences $(\alpha_n)$ defined by $ \alpha_0=1 $ and 
	\begin{equation}  \label{pt1}
		\alpha_n=\oslash \alpha_{n-1}.
	\end{equation}
	Observe that we may not write naively $ \alpha_n=\oslash^{n} $ for $ n\in \mathbb{N} $, in the sense of
	\begin{equation*}
	\alpha_n=\{x_n : x_n=\varepsilon x_{n-1} \wedge \varepsilon \simeq 0\},
	\end{equation*}
	for we would obtain that
	\begin{equation*}
	\alpha_n=%
	\begin{cases}
	\oslash & \mbox{ if } n=2k-1 \\ 
	\oslash^+ & \mbox{ if } n=2k%
	\end{cases}.%
	\end{equation*}

	We would not get a sequence of external numbers, and obtain only part of the solution. Indeed, for even standard $ n\in \mathbb{N} $ we have also $ \alpha_n=\oslash $. To see this, observe that the External Induction Principle \cite{Nelson} applied to the rules of multiplication of external numbers yields the local sequence $(\alpha_n)_{\mathrm{st} (n)\in \mathbb{N}}$ defined by
	\begin{equation}\label{alfa standaard}
	\alpha_n=\oslash,
	\end{equation}
	for, if $ \alpha_n=\oslash $, we have $ \alpha_{n+1}=\oslash \alpha_n=\oslash\cdot \oslash =\oslash$.
		
	Also, formula \eqref{pt1} is external, so {\it a priori} it does not satisfy the classical Induction Principle, and we cannot as such extend the local sequence $(\alpha_n)_{\mathrm{st} (n)\in \mathbb{N}}$ defined by \eqref{alfa standaard} to a global sequence. 
	
	Again because the classical Induction Principle is not valid for external formulas, for unlimited $ \omega\in \mathbb{N} $ the expression $ \oslash^{\omega} $ is not directly defined. Due to the presence of negative numbers, also the definition of $ \oslash^{\omega} $ through exponential and logarithmic functions is not straightforward, in \cite{koudjetithese} an adapted, symmetrical logarithmic function is used. Even within this interpretation the sequence $ (\alpha_n) $ defined by $ \alpha_n=\oslash^{n} $ is not the only solution of \eqref{pt1}, for a second solution is given by the sequence $(\beta_n)$ defined by
 	\begin{equation*}\label{alfa omega}\beta_n=
	\begin{cases}
	\oslash^{n}& \mbox{ if $ n $ is limited} \\ 
	\oslash^{\omega+n} & \mbox{ if $ n $ is unlimited }
	\end{cases}.
	\end{equation*}
	We will now consider a general class of external recursive relations, for which we define admissible solutions, through sequences of internal representatives.
	\begin{definition}
		Let $ k\in \mathbb{N} $ be standard and $ f:\mathbb{R}^{k}\rightarrow \mathbb{R} $ be internal. Let $ \alpha_1,...,\alpha_k $ be external numbers. We define $ f(\alpha_1,...,\alpha_k ) $ by
		\begin{equation*}
		f(\alpha_1,...,\alpha_k ) =\{f(a_1,...,a_k )	: a_1\in	\alpha_1 \wedge ... \wedge a_k\in \alpha_k\}.
		\end{equation*}	   
	\end{definition}
	\begin{definition}\label{recurrentsolution}
		Let $ k\in \mathbb{N} $ be standard and $ f:\mathbb{R}^{k+1}\rightarrow \mathbb{R} $ be internal. Let $ f:\mathbb{R}^{k+2}\rightarrow \mathbb{R} $ be internal and $ \alpha_1,...,\alpha_k $ be external numbers. A \emph{flexible recurrent relation} is a recurrent relation of the form
		\begin{equation}\label{recurrent}
		u_{n+1}=f(n,u_{n},\alpha_1,...,\alpha_k ).
		\end{equation}
		The solution of \eqref{recurrent} is defined to be the sequence of external sets $ (u_{n}) $, where $u_{0}:=\alpha_0 $ is an external number, and for every $ n\in \N $
		\begin{equation}\label{solutionrecurrent}\begin{split}
			u_{n+1}=\{&t_{n+1}	: \exists a_1,...,a_k\forall i\in\{1,...,k  \}(a_i:\mathbb{N}\rightarrow \alpha_i, a_i \mbox{ internal }\wedge\\
		& t_{n+1}=f(n,u_{n}, a_1(n),...,a_k(n)))\} .
		\end{split}
		\end{equation}
				We call the solution \emph{admissible} if all $u_{n}  $	   are external numbers.
	\end{definition}
 Proposition \ref{solpt1} determines an admissible solution to the equation \eqref{pt1} in the sense of Definition \ref{recurrentsolution}. It follows from the proof that in \eqref{solutionrecurrent}, at least in some cases, it suffices to consider representative functions with constant absolute value.

\begin{proposition}\label{solpt1}
	The solution of \eqref{pt1} is given by $\alpha_n=\pounds e^{-n\centernot\infty}$ for all $n\in \N$.
\end{proposition}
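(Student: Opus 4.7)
The plan is to unpack Definition~\ref{recurrentsolution} for the recurrence at hand and carry out the ensuing set-theoretic computation. Here $k=1$, $\alpha_{1}=\oslash$, and $f(n,x,a)=a\cdot x$, so an admissible trajectory of internal representatives is a sequence $x_{0}=1$, $x_{i+1}=a(i)\,x_{i}$ for some fixed internal $a\colon \N\to\oslash$, giving $x_{n}=\prod_{i=0}^{n-1}a(i)$. For $n\geq 1$ this yields
\[
u_{n} \;=\; \Bigl\{\,\textstyle\prod_{i=0}^{n-1}a(i) \;:\; a\colon \N\to\oslash\ \text{internal}\Bigr\},
\]
with $u_{0}=1$ by the initial condition. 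The task is to identify, for $n\geq 1$, this set with $\pounds e^{-n\centernot\infty}$, which I interpret as the neutrix $\{x\in\R : |x|\leq e^{-n\omega}\text{ for some unlimited }\omega>0\}$.

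For the inclusion $u_{n}\subseteq \pounds e^{-n\centernot\infty}$, I fix an internal $a\colon \N\to\oslash$ and set $M:=\sup_{i\in\N}|a(i)|$. Since $a$ is internal, $M$ is a genuine real number, and since every $|a(i)|\in\oslash$, a Cauchy Principle argument forces $M\in\oslash$: otherwise $M$ would exceed some standard $\eta>0$, and then some $|a(i)|$ would likewise exceed $\eta/2$, contradicting $a(i)\in\oslash$. Writing $M=e^{-\omega}$ with $\omega>0$ unlimited (or $M=0$ trivially), one has $|x_{n}|\leq M^{n}=e^{-n\omega}$, so $x_{n}\in\pounds e^{-n\centernot\infty}$.

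For the reverse inclusion $\pounds e^{-n\centernot\infty}\subseteq u_{n}$, given $y\in\R$ with $|y|\leq e^{-n\omega}$ for some unlimited $\omega>0$, I define the internal function $a\colon \N\to\R$ by $a(0):=y\,e^{(n-1)\omega}$ and $a(i):=e^{-\omega}$ for $i\geq 1$. Then $|a(0)|\leq e^{-\omega}$ and $|a(i)|=e^{-\omega}$ for $i\geq 1$, so $a$ maps $\N$ into $\oslash$, and the product telescopes:
\[
\textstyle\prod_{i=0}^{n-1}a(i)\;=\;y\,e^{(n-1)\omega}\cdot e^{-(n-1)\omega}\;=\;y,
\]
witnessing $y\in u_{n}$. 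Bounds of the more general form $|y|\leq k\,e^{-n\omega}$ with $k$ limited are absorbed into the exponent by replacing $\omega$ by the still unlimited $\omega-(\log k)/n$.

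The main obstacle is the correct reading of Definition~\ref{recurrentsolution}: it must be interpreted as specifying a single trajectory generated by one fixed internal representative sequence $a$, rather than as permitting a fresh internal choice at each step (which would collapse every $u_{n}$, $n\geq 1$, to $\oslash$ and destroy the finer exponentially small structure). Once this reading is accepted, the construction above—where near-constant sequences, constant except at a single index, already exhaust $\pounds e^{-n\centernot\infty}$—matches the remark preceding the proposition that it suffices to use representative functions of constant absolute value. The case $n=0$ is the initial condition.
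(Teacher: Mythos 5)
Your proof is correct and follows essentially the same route as the paper's: both unpack Definition~\ref{recurrentsolution} into $n$-fold products of infinitesimals drawn from an internal representative (so that internality forces a uniform infinitesimal bound on the factors) and then identify the resulting set with $\pounds e^{-n\centernot\infty}$ via logarithms. The only cosmetic difference is that the paper first shows every such product is an exact $n$-th power $\varepsilon^{n}$ of a single infinitesimal (via the intermediate value theorem, treating signs by symmetry), whereas you bound the product by $\bigl(\sup_{i}|a(i)|\bigr)^{n}$ and exhibit a near-constant representative for the reverse inclusion.
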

\begin{proof}
Let $n\in \N$. Let $ \alpha_{n} $ be the solution of \eqref{pt1} in the sense of Definition \ref{recurrentsolution}. We write $ \overline{\oslash}^{n}=\{{\varepsilon}^{n}: \varepsilon \in \oslash\wedge \varepsilon\geq 0  \}$. We show first that $  \alpha_{n}^{+}=\overline{\oslash}^{n}$. Both sets contain $ 0 $. As for the non-zero elements, let $ y\in \alpha_{n},y>0 $. Then there exists an internal sequence of positive infinitesimals $ (\varepsilon_k)_{1\leq k\leq n } $ such that $ \prod_{1\leq k\leq  n}\varepsilon_k =y$.
 Let $ \varepsilon_{-}=\min\left\{\lvert\varepsilon_k\rvert :1\leq k\leq n \right\} $ and $ \varepsilon_{+}=\max\{\lvert\varepsilon_k\rvert:1\leq k\leq n \} $. Then $ \varepsilon_{-},\varepsilon_{+}\simeq 0 $, so $ \varepsilon_{-}^{n},\varepsilon_{+}^{n} \in \alpha_{n}^{+}$. By continuity there exists $  \varepsilon$ with $ \varepsilon_{-}\leq \varepsilon\leq \varepsilon_{+} $ such that $ \varepsilon^{n}=y $, clearly $ \varepsilon \simeq 0,\varepsilon > 0  $. Hence $  \alpha_{n}^{+}\subseteq \overline{\oslash}^{n}$. The other inclusion is obvious. Hence $  \alpha_{n}^{+}=\overline{\oslash}^{n}$.

Secondly, we prove that $  \overline{\oslash}^{n}=\pounds^{+} e^{-n\centernot\infty}$. Clearly $ 0 $ is contained in both sets. Let $ \delta \simeq 0,\delta>0 $. Then $\log (\delta) \in -\centernot\infty$. Hence 
\begin{equation*}
	\delta^{n} \in \pounds^{+} \delta^{n}= \pounds^{+} e^{n\log (\delta)} \subseteq \pounds^{+} e^{-n\centernot\infty}, 
\end{equation*}
implying that $  \overline{\oslash}^{n}\subseteq\pounds^{+} e^{-n\centernot\infty}$.
The other inclusion is a consequence of the fact that a number of the form $ e^{-\omega n} $ with $ \omega $ unlimited is equal to $ \varepsilon^{n}$, with $\varepsilon$  infinitesimal and equal to $e^{-\omega } $. We conclude that $ \alpha_{n}^{+} =\overline{\oslash}^{n}=\pounds^{+} e^{-n\centernot\infty}$. Then $ \alpha_{n} =\pounds e^{-n\centernot\infty}$ by symmetry.
\end{proof} 

	Below we adapt some classical notions of stability like Lyapunov stability and asymptotic stability to our setting. In this manner one may model equilibria which are stable on a macroscopic scale, but on a microscopic scale are subject to small uncertainties or instabilities. For the classical notions we refer to \cite{Anishchencko et al.}. 

	\begin{definition}
		\label{N-lyapunov stability} Let $ N $ be a neutrix. A solution  $ (u_{n}) $ of the flexible recurrence relation \eqref{recurrent} is called \emph{$ N$-stable} if for every solution $ (v_{n}) $ such that $ v_{0}-u_{0}\subseteq N $ it holds that $ v_{n}-u_{n}\subseteq N $ for all $ n\in \N $.
	\end{definition}
\begin{definition}
	\label{N-asymptotic stability} Let $ N $ be a neutrix. A solution $ (u_{n}) $ of the flexible recurrence relation \eqref{recurrent} is called \emph{$ N$-asymptotically stable} if there exists $ \varepsilon >N$ such that every solution $ (v_{n}) $ with $ v_{0}-u_{0}< \varepsilon$ satifies $ \lim\limits_{n\to \infty}v_{n}-u_{n}= N $. It is called \emph{strongly $ N$-asymptotically stable} if for all such solutions  $ (v_{n}) $ the difference $ (v_{n}-u_{n}) $ has $ N $ as a strong limit.
\end{definition}
Next proposition shows that Definitions \ref{N-lyapunov stability} and \ref{N-asymptotic stability} are natural extensions of nonstandard equivalences of the classical notions of (asymptotic) stability.
\begin{proposition}
	Consider a standard recurrence relation. Then a stable standard solution $ (u_{n}) $ is $ \oslash$-stable and an asymptotically stable standard solution $ (u_{n}) $ is $ \oslash$-asymptotically stable.
\end{proposition}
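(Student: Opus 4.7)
My plan is to reduce both assertions to the familiar nonstandard characterizations of classical (asymptotic) stability for standard discrete systems, and then lift those pointwise statements to the external level by applying them to every representative of the admissible flexible solution.

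For the first assertion, I will first recall that for the standard recurrence $u_{n+1}=f(n,u_n)$ with classically stable standard solution $(u_n)$, Transfer applied to the $\varepsilon$--$\delta$ definition delivers the equivalent nonstandard formulation: whenever a precise trajectory $(t_n)$ satisfies $t_0\simeq u_0$, then $t_n\simeq u_n$ for \emph{every} $n\in\mathbb{N}$, not merely standard $n$. Taking an admissible flexible solution $(v_n)$ with $v_0-u_0\subseteq\oslash$, Definition \ref{recurrentsolution} tells me that every $t_n\in v_n$ has the form $f(n-1,\cdot)\circ\cdots\circ f(0,\cdot)(t_0)$ for some $t_0\in v_0$, and each such $t_0$ satisfies $t_0\simeq u_0$. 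The characterization then yields $t_n-u_n\in\oslash$ for every $n$, and unioning over representatives gives $v_n-u_n\subseteq\oslash$, which is $\oslash$-stability.

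For the second assertion, I will use that classical asymptotic stability of $(u_n)$ supplies a standard $\delta_0>0$ together with uniform attraction on the compact ball $[u_0-\delta_0,u_0+\delta_0]$: for every standard $\eta>0$ there exists a standard $n_0=n_0(\eta)\in\mathbb{N}$ such that $|t_n-u_n|<\eta$ whenever $|t_0-u_0|\leq\delta_0$ and $n\geq n_0$. I take $\varepsilon:=\delta_0$, which is certainly $>\oslash$. If $(v_n)$ is admissible with $|v_0-u_0|<\delta_0$, then every representative $t_0\in v_0$ lies in the $\delta_0$-ball. Given $\eta>\oslash$, Remark \ref{equiv flexible} permits me to assume $\eta$ standard positive, so the uniform bound provides a standard $n_0$ with $|v_n-u_n|<\eta$ for all $n\geq n_0$. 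This is exactly $\oslash$-convergence of $(v_n-u_n)$ to $\oslash$, i.e.\ $\oslash$-asymptotic stability with witness $\varepsilon=\delta_0$.

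The main obstacle is securing the uniformity of attraction used in the second part, namely that the convergence rate should not depend on the particular representative $t_0\in v_0$. Pointwise attractivity of individual orbits is not by itself sufficient; one must use uniform attraction on a full neighborhood of the equilibrium. For standard autonomous systems this uniformity is classical, arising from the interplay of stability with attractivity. Alternatively, a nonstandard permanence argument is available: for each standard $\eta>0$ the set of $n$ at which $|v_n-u_n|<\eta$ contains all unlimited $n$ by the first assertion propagated along the trajectory, and the Fehrele Principle then delivers a standard threshold from which the bound holds uniformly.
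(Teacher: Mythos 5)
Your first part is essentially the paper's argument: the paper likewise uses Transfer to make the stability radius $\delta$ standard and concludes $t_n\simeq u_n$ for \emph{all} $n$ whenever $t_0\simeq u_0$, then passes to representatives. That half is fine.

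The second part contains a genuine problem, and it is precisely the step you flag as the ``main obstacle''. You require uniform attraction on a ball ($n_0$ depending only on $\eta$, not on the initial point), justify it by appeal to the classical autonomous theory, and offer a permanence argument as a fallback. Neither works here. The proposition concerns general standard recurrences $u_{n+1}=f(n,u_n,\dots)$, which are non-autonomous, so the classical ``attractivity $+$ stability $\Rightarrow$ uniform attractivity'' result is not available. The fallback is also incorrect: an internal solution $(v_n)$ with $|v_0-u_0|<\delta_0$ is internal but in general \emph{not standard}, so classical convergence $v_n-u_n\to 0$ does not give $|v_n-u_n|<\eta$ at all unlimited $n$ (consider $w_n=\omega/n$ with $\omega$ unlimited), and the first assertion cannot be ``propagated along the trajectory'' since $v_0-u_0$ need not be infinitesimal. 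The repair is that no uniformity is needed at all. Definition \ref{N-asymptotic stability} is a per-solution condition, and by Definition \ref{recurrentsolution} it suffices to check it on internal representatives; moreover $N$-convergence (Definition \ref{Definite convergentie}) allows the threshold $n_0$ to depend on $\varepsilon$. So the paper argues: by Transfer the attraction radius $\eta$ may be taken standard, hence $\eta>\oslash$; each internal solution with $|v_0-u_0|<\eta$ satisfies $v_n-u_n\to 0$ classically; and since every $\zeta>\oslash$ is in particular a positive real, classical convergence to $0$ immediately yields $\oslash$-convergence of $(v_n-u_n)$ to $\oslash$ (take $n_0$ for $\zeta/2$ and note $\oslash<\zeta/2$). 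Also note that Remark \ref{equiv flexible} lets you take $\zeta$ \emph{precise}, not \emph{standard}; your reduction to standard $\eta$ happens to be harmless for the target $\oslash$, but the cited justification is not the right one.
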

\begin{proof}
	Without restriction of generality we may assume that $ (u_{n}) $ is identically zero. Also, by Definition \ref{recurrentsolution} we only need to consider internal solutions.
	As regards to stability, let $ (v_n) $ be an internal solution such that $ v_{0}\simeq 0 $. Suppose that there exist a standard $\varepsilon>0 $ and  $ n\in N $ such that $ \lvert v_{n}\rvert>\varepsilon  $. However there exists $ \delta>0 $ such that for all solutions $ v $ with $ \lvert v_{0}\rvert <\delta$ it holds that $ \lvert v_{n}\rvert<\varepsilon  $ for all  $ n\in \N $. By the Transfer Principle $ \delta>0 $ may be considered standard, a contradiction. Hence $ v_{n}\simeq 0 $ for all $ n\in \N $, meaning that $ 0 $ is $ \oslash$-stable.
	
	As regards to asymptotic stability, there exists $\eta>0 $ such that for every internal solution $ v $ with $ \lvert v_{0}\rvert < \eta$ it holds that $ \lim\limits_{n\to \infty}v_{n}= 0 $. By the Transfer Principle $ \eta>0 $ may be considered standard, i.e. $ \eta>\oslash $. Because $ \lim\limits_{n\to \infty}v_{n}= 0 $ for all solutions $ v $ with $ \lvert v_{0}\rvert < \eta$, clearly $ \lim\limits_{n\to \infty}v_{n}=\oslash  $. Hence $ 0 $ is $ \oslash$-asymptotically stable.
	\end{proof}
	
The following definition is adapted from \cite{Bergfamilysolutions}, and permits to describe a class of difference equations with a  $ \oslash $-stable solution, which is not $ \oslash $-asymptotically stable.

\begin{definition}\label{definitieafwatering}
	Consider a standard difference equation $D: x_{n+1}=f(n, x_{n})$, where $ f $ is of class $ C^{1} $. A standard solution $ u $ is called a \emph{drain} if it is eventually positive or eventually negative, and for all unlimited $\omega \in \mathbb{N}$ the following hold.
	\begin{enumerate}
			\item \label{subrational} For all appreciable $ p $ such that $p\omega \in \mathbb{N}$ it holds that
		\begin{equation*}
		\frac{u_{pn}}{u_{n}}\simeq 1. 
		\end{equation*}
		\item \label{asymp1}For every solution $v$ such that $v_{\omega }/u_{\omega }\simeq 1$ it holds
		that $v_n/u_n\simeq 1$ for all $n\simeq \infty $.
	\end{enumerate}
\end{definition}
Theorems 3.1 and 3.4 of \cite{Bergfamilysolutions} give conditions of existence and characterization for drains, and also some examples. By Definition \ref{definitieafwatering}.\ref{subrational} the growth of a drain must be less than $ n^{r} $ for all rational $ r>0 $. If a drain $ (u_{n}) $ is, say, positive from the beginning, and has a non-zero limit $ L $, the drain is $ \oslash $-stable, but not $ \oslash $-asymptotically stable. Indeed, using the $ S $-continuity of $ f $, one shows that all solutions $ v $ such that $ v_0\simeq u_0 $ satisfy $ v_n\simeq u_n $ for all standard $ n\in \N $. By Robinson's Lemma this property persists up to some unlimited $ \omega \in N$. Note that $ L $ is appreciable, hence all terms $ u_{n},v_{n} $ with unlimited $ n $ satisfy $ u_{n}\simeq v_{n}\simeq L $, hence are also appreciable. Then it follows from  Definition \ref{definitieafwatering}.\ref{asymp1} that $v_n\simeq u_n $ for all $ n\in \N $. Hence the drain is $ \oslash $-stable. It is not $ \oslash $-asymptotically stable, because if a solution $ w $ satisfies $ w_{\nu}\not\simeq 0 $ for some unlimited $ \nu \in \N $, there cannot exist $ n \in \N,n>\nu  $ such that $ w_{n}\simeq 0 $.

With the help of the results mentioned above, in \cite{Bergfamilysolutions} it was verified that for standard $ a>1  $ the solution $%
\overline{u}_{n}=1+\frac{(-1)^{n}}{n^{a}}$ of
\begin{equation*}
u_{n+1}=\left( -1+\frac{1}{n^{a}}\right) u_{n}+2-\frac{1}{n^{a}}%
+(-1)^{n}\left( \frac{1}{n^{a}}-\frac{1}{\left( n+1\right) ^{a}}-\frac{1}{%
	n^{2a}}\right) 
\end{equation*}%
is a drain. Hence $ (\overline{u}_{n}) $ is $ \oslash $-stable, but not $ \oslash $-asymptotically stable indeed.

As a consequence of Theorem \ref{Stelling sterke convergentie onbegrensd} solutions nearby an $ N$-asymptotically stable sequence  satisfy a strong convergence property, as given by the next proposition.
\begin{proposition}
	\label{strong N-asymptotic stability} Let $ N \neq 0 $ be a neutrix. If a solution $ (u_{n}) $ of the flexible recurrence relation \eqref{recurrent} is $ N$-asymptotically stable, then it is strongly $ N$-asymptotically stable. 
	\end{proposition}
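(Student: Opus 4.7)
The plan is a direct application of Theorem \ref{Stelling sterke convergentie onbegrensd} to the difference sequence. First, I would fix $\varepsilon > N$ as supplied by the $N$-asymptotic stability of $(u_n)$, and let $(v_n)$ be any solution of the recurrence \eqref{recurrent} with $v_0 - u_0 < \varepsilon$. By Definition \ref{N-asymptotic stability} we have $\lim_{n\to\infty}(v_n - u_n) = N$, i.e.\ the flexible sequence $(w_n) := (v_n - u_n)$ converges to the external number $N$ in the sense of Definition \ref{Definite convergentie}.

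Next I would observe that $(w_n)$ is itself a definable flexible sequence: since $(u_n)$ and $(v_n)$ are admissible solutions of \eqref{recurrent}, both are flexible sequences with definable graphs, and the difference operation preserves this (one obtains $w_n = v_n - u_n \in \E$ pointwise, with the graph expressible from those of $u$ and $v$). Moreover, the limit $N$ has neutrix part $N(N) = N \neq 0$ by hypothesis, so the hypotheses of Theorem \ref{Stelling sterke convergentie onbegrensd} are satisfied for $(w_n)$ with $\alpha = N$. Invoking that theorem yields $w_n \rightsquigarrow N$, so there exists $n_0 \in \N$ such that $v_n - u_n \subseteq N$ for every $n \geq n_0$. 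This is precisely strong $N$-asymptotic stability.

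There is really no substantial obstacle here; the entire content lies in the strong convergence theorem itself, which has already been established. The only point that deserves a brief check is that $(v_n - u_n)$ is in fact a flexible (definable) sequence, so that Theorem \ref{Stelling sterke convergentie onbegrensd} applies; this follows from Definition \ref{recurrentsolution} together with the closure of definable flexible sequences under subtraction (already used implicitly in Theorem \ref{operationsNconv}.\ref{subtraction}).
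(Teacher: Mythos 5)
Your proof is correct and follows essentially the same route as the paper: both reduce the statement to Theorem \ref{Stelling sterke convergentie onbegrensd} applied to the difference sequence $(v_n-u_n)$ with limit $N$, using $N\neq 0$. The only minor difference is that the paper first passes to an internal representative $(t_n)$ of $(v_n-u_n)$ before invoking the strong convergence theorem, thereby sidestepping the definability check that you instead carry out directly on the flexible difference sequence.
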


\begin{proof}
	There exists $ \varepsilon >N$ such that for every solution $ (v_{n}) $ such that $ v_{0}-u_{0}< \varepsilon$ it holds that $ (v_{n}-u_{n})$ converges to $ N $. Let $(t_{n}) $ be an internal representative of $ (v_{n}-u_{n})$.  Then $(t_{n}) $ converges to $ N $. By Theorem \ref{Stelling sterke convergentie onbegrensd} it converges strongly to $ N $. Hence $ (v_{n}-u_{n})$ strongly converges to $ N $. Hence $ (u_{n}) $ is strongly $ N$-asymptotically stable. 
\end{proof}	
We end with a class of difference equations exhibiting asymptotically stable solutions with respect to some neutrix, which can be arbitrary.
\begin{proposition}
	Let $N \neq 0  $ be a neutrix and $ \alpha\in \E, |\alpha|<1 $. Consider the recurrent relation
	\begin{equation*}
	u_{n+1} =\alpha u_{n}+N. 
	\end{equation*}  
	Then $ 0 $ is a strongly $ (N/(1-|\alpha|)) $-asymptotically stable solution.
\end{proposition}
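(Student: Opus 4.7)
The strategy has two movements: a reduction via Proposition \ref{strong N-asymptotic stability}, followed by an explicit iteration argument.

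Since $N \neq 0$, also $N/(1-|\alpha|) \neq 0$, so by Proposition \ref{strong N-asymptotic stability} it suffices to establish ordinary $(N/(1-|\alpha|))$-asymptotic stability of $0$. I would first observe that the constant sequence $u_n \equiv 0$ is an admissible solution in the sense of Definition \ref{recurrentsolution}, since it corresponds to the internal representative choice $m_k \equiv 0 \in N$. Next, if $(v_n)$ is any other solution, then $w_n := v_n - u_n$ satisfies $w_{n+1} = \alpha w_n + N$ (using $N-N = N$). Thus the problem reduces to exhibiting an $\varepsilon > N/(1-|\alpha|)$ such that every solution $(w_n)$ with $|w_0| < \varepsilon$ satisfies $w_n \underset{N/(1-|\alpha|)}{\longrightarrow} N/(1-|\alpha|)$.

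Fix such an $\varepsilon$ (for instance $\varepsilon = 1$, shrinking or enlarging as convenient). By Definition \ref{recurrentsolution}, each $b_n \in w_n$ arises from internal selections $a:\mathbb{N}\rightarrow \alpha$ and $m:\mathbb{N}\rightarrow N$ satisfying $b_{k+1} = a(k) b_k + m(k)$, with $b_0 \in w_0$. Unrolling the recurrence gives
\[
b_n = \Bigl(\prod_{k=0}^{n-1} a(k)\Bigr) b_0 + \sum_{j=0}^{n-1} \Bigl(\prod_{k=j+1}^{n-1} a(k)\Bigr) m(j).
\]
The two key estimates I would establish are: (i) $\bigl|\prod_{k=0}^{n-1} a(k)\bigr| \leq |\alpha|^{n}$, so the first summand is dominated by $|\alpha|^{n} \varepsilon$ and tends to $0$ because $|\alpha| < 1$; and (ii) the $j$-th term of the second summand is dominated by $|\alpha|^{n-1-j}|m(j)|$ with $|m(j)| \in N$, so the whole sum lies in $(1+|\alpha|+\cdots+|\alpha|^{n-1})\, N \subseteq N/(1-|\alpha|)$. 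Since both bounds are uniform in the choice of representatives $(a, m)$, we conclude $w_n \underset{N/(1-|\alpha|)}{\longrightarrow} 0$, which by Proposition \ref{tinhduynhatcuagioihansailechneutrix}.\ref{tcntgh4} is equivalent to $w_n \underset{N/(1-|\alpha|)}{\longrightarrow} N/(1-|\alpha|)$.

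The main technical obstacle is estimate (i) for unlimited $n$ in the borderline case where the gap $1 - |\alpha|$ has infinitesimal size: one cannot simply sandwich by $q^n$ for a precise $q < 1$. In the comfortable case where there is a precise $q$ with $|\alpha| < q < 1$, the bound $\prod|a(k)| \leq q^n \to 0$ is immediate. In general one would combine the External Induction Principle for standard $n$ (where $|\alpha|^n$ is a well-defined external number tending to $0$) with a permanence argument in the spirit of Robinson's Lemma, i.e., the Fehrele Principle of Section \ref{Section_nature_external_sequences}, to extend the decay to some unlimited indices. This unlimited reach is precisely what is needed to land inside $N/(1-|\alpha|)$ and close the proof.
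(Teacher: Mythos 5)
Your overall architecture coincides with the paper's: pass to an internal representative of the solution, exploit the linear recurrence to get a geometric estimate, and invoke Proposition \ref{strong N-asymptotic stability} at the end to upgrade ordinary $N/(1-|\alpha|)$-asymptotic stability to strong asymptotic stability. The difficulty lies at the step you yourself single out as the crux, and there your argument has a real gap. The ``borderline case'' you worry about does not in fact arise: since $(a(k))$ is an \emph{internal} selection from $\alpha$, a permanence argument (an internal set contained in the external number $|\alpha|$ cannot be cofinal in it) produces a single \emph{precise} $q\in|\alpha|$ with $|a(k)|<q$ for all $k$; in particular $q$ is a real number less than $1$, and $q^{n}\rightarrow 0$ is then a purely classical statement about an internal sequence, valid for all sufficiently large $n$ even when $1-q\simeq 0$. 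This is exactly what the paper does, together with the analogous uniform bound $|b_{n}|<c$ for a single precise $c\in N$ --- a bound you also need but leave implicit: without it, the passage from $\sum_{j}|\alpha|^{\,n-1-j}|m(j)|$ to $(1+|\alpha|+\dots+|\alpha|^{n-1})N\subseteq N/(1-|\alpha|)$ is unjustified when $n$ is unlimited, since a sum of unlimitedly many elements of a neutrix need not stay in that neutrix. Note also that choosing $q$ \emph{inside} $|\alpha|$ (rather than merely $|\alpha|<q<1$) is what guarantees $c/(1-q)\in N/(1-|\alpha|)$, i.e.\ that the limit neutrix is the one stated.

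Your fallback for the supposed hard case would moreover not close the proof even if that case occurred: External Induction combined with a Robinson/Fehrele permanence argument yields decay ``up to some unlimited index'', whereas $N/(1-|\alpha|)$-asymptotic stability requires $|v_{n}|<\varepsilon$ for \emph{all} $n$ beyond some $n_{0}$; reaching a single unlimited $\nu$ is not convergence. Finally, the bound $\bigl|\prod_{k}a(k)\bigr|\leq|\alpha|^{n}$ is itself problematic as written, since powers of external numbers with unlimited exponent are not directly defined (see the discussion preceding Definition \ref{recurrentsolution} and Proposition \ref{solpt1}); the precise bound $q^{n}$ avoids this as well. With the precise $q\in|\alpha|$ and $c\in N$ in hand, the paper's inductive estimate $|t_{n}|\leq\bigl(|t_{0}|+\tfrac{c}{1-q}\bigr)q^{n}+\tfrac{c}{1-q}$ finishes the argument, and your unrolled formula gives the same bound once these two uniformizations are inserted.
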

\begin{proof}
		Let $ (u_{n}) $ be a solution and $ (t_{n}) $ be an internal representative of $ (u_{n}) $. Then there are internal sequences   $ (a_{n}) $ of elements of $ \alpha $ and $ (b_{n}) $ of elements of $ N $ such that $ t_{n+1}=a_{n}t_{n}+b_{n} $ for all $ n\in N $. There exists $q\in |\alpha| $ such that $ |a_{n}|<q $ and $ c\in N,c>0 $ such that $ |b_{n}|<c $ for all $ n\in \N $. Then
	\begin{equation*}
	|t_{n+1}| \leq q|t_{n}|+c 
	\end{equation*} 
	for all $ n\in \N $, hence also    
	\begin{equation*}
	|t_{n}|\leq \left(  |t_{0}|+\frac{c}{1-q}\right)  q^{n}+\frac{c}{1-q}.
	\end{equation*} 
	Hence $ 0 $ is a $ (N/(1-|\alpha|)) $-asymptotically stable solution and by Proposition \ref{strong N-asymptotic stability} it is strongly $ (N/(1-|\alpha|)) $-asymptotically stable.
\end{proof}

\end{document}